\newtheorem{thm}{Theorem}[section]
\newtheorem*{thm*}{Theorem}
\newtheorem{lemma}[thm]{Lemma}
\newtheorem{prop}[thm]{Proposition}
\newtheorem{cor}[thm]{Corollary}
\theoremstyle{definition}
\newtheorem{defn}[thm]{Definition}
\newtheorem{example}[thm]{Example}
\theoremstyle{remark}
\newcommand {\hypco} {\ensuremath{\mbox{$\mathcal{H}$}}}
\newcommand {\La}    {\ensuremath{\mathcal{L}}}
\newcommand {\Da}    {\ensuremath{\mbox{$\mathcal{D}$}}}
\newcommand {\Ea}    {\ensuremath{\mbox{$\mathcal{E}$}}}
\newcommand {\Fa}    {\ensuremath{\mbox{$\mathcal{F}$}}}
\newcommand {\Ga}    {\ensuremath{\mbox{$\mathcal{G}$}}}
\newcommand {\Sa}    {\ensuremath{\mbox{$\mathcal{S}$}}}
\newcommand {\Xa}    {\ensuremath{\mathcal{X}}}
\newcommand {\der}   {\ensuremath{\textbf{H}}}
\newcommand {\real}  {\ensuremath{\mathbb{R}}}
\newcommand {\intg}  {\ensuremath{\mathbb{Z}}}
\newcommand {\cplx}  {\ensuremath{\mathbb{C}}}
\newcommand {\rat}   {\ensuremath{\mathbb{Q}}}
\newcommand {\Hom}   {\ensuremath{\operatorname{Hom}}}
\newcommand {\cok}   {\operatorname{coker}}
\newcommand {\im}    {\operatorname{im}}
\newcommand {\rk}    {\operatorname{rk}}
\newcommand {\interi}{\operatorname{int}}
\newcommand {\supp}  {\ensuremath{\operatorname{supp}}}
\newcommand {\emb}   {\ensuremath{\operatorname{emb}}}
\newcommand {\ip}    {\ensuremath{I^{\bar{p}}}}
\newcommand {\iq}    {\ensuremath{I^{\bar{q}}}}
\newcommand {\imi}   {\ensuremath{I^{\bar{m}}}}
\newcommand {\redh}  {\ensuremath{\widetilde{H}}}
\newcommand {\cone}  {\ensuremath{\operatorname{cone}}}
\newcommand {\pt}    {\ensuremath{\operatorname{pt}}}
\newcommand {\id}    {\ensuremath{\operatorname{id}}}
\newcommand {\Harm}  {\ensuremath{\operatorname{Harm}}}
\newcommand {\cyl}   {\ensuremath{\operatorname{cyl}}}
\newcommand {\bc}    {\ensuremath{\partial \mathcal{C}}}
\newcommand {\rel}   {\ensuremath{\operatorname{rel}}}
\newcommand {\si}    {\ensuremath{S^\infty}}
\newcommand {\sps}    {\ensuremath{S^\propto}}
\newcommand {\ms}    {\ensuremath{\mathcal{MS}}}
\newcommand {\bms}    {\ensuremath{{\partial \mathcal{MS}}}}
\newcommand {\oms}    {\ensuremath{\Omega_{\ms}}}
\newcommand {\omsc}    {\ensuremath{\Omega_{\ms, c}}}
\newcommand {\oip}    {\ensuremath{\Omega I_{\bar{p}}}}
\newcommand {\oiq}    {\ensuremath{\Omega I_{\bar{q}}}}
\newcommand {\ft}   {\ensuremath{\operatorname{ft}}}
\newcommand {\proj}   {\ensuremath{\operatorname{proj}}}
\newcommand {\CWkcb}   {\operatorname{\mathbf{CW}}_{k\supset \partial}}
\newcommand {\CWccb}   {\operatorname{\mathbf{CW}}_{c\supset \partial}}
\newcommand {\HoCW}    {\operatorname{\mathbf{HoCW}}}
\begin{document}


\title[A De Rham Complex Describing Intersection Space Cohomology]{Foliated Stratified Spaces and a De 
   Rham Complex Describing Intersection Space Cohomology}

\author{Markus Banagl}

\address{Mathematisches Institut, Universit\"at Heidelberg,
  Im Neuenheimer Feld 288, 69120 Heidelberg, Germany}

\email{banagl@mathi.uni-heidelberg.de}

\thanks{The author was in part supported by a research grant of the
 Deutsche Forschungsgemeinschaft.}

\date{February, 2011}

\subjclass[2010]{Primary: 55N33, 14J17, 58A10, 58A12; secondary: 57P10, 57R30, 14J33, 81T30}

\keywords{Singularities, stratified spaces, pseudomanifolds, Poincar\'e duality,
intersection cohomology,
de Rham theory, differential forms, foliations, locally symmetric spaces,
compactifications, deformation of singularities, mirror symmetry, scattering metric}


\begin{abstract}
The method of intersection spaces associates cell-complexes depending on a perversity
to certain types of stratified pseudomanifolds in such a way that Poincar\'e duality
holds between the ordinary rational cohomology groups of the cell-complexes
associated to complementary perversities. The cohomology of these intersection spaces
defines a cohomology theory HI for singular spaces, which is not isomorphic to
intersection cohomology IH. Mirror symmetry tends to interchange IH and HI.
The theory IH can be tied to type IIA string theory, while HI can be tied to IIB theory.
For pseudomanifolds with stratification depth 1 and flat link bundles, the present
paper provides a de Rham-theoretic description of the theory HI by a complex of
global smooth differential forms on the top stratum. We prove that the wedge product
of forms introduces a perversity-internal cup product on HI, for every perversity.
Flat link bundles arise for example
in foliated stratified spaces and in reductive Borel-Serre compactifications of 
locally symmetric spaces. A precise topological definition of the notion of a 
stratified foliation is given.
\end{abstract}

\maketitle


\tableofcontents


\section{Introduction}

Let $\bar{p}$ be a perversity in the sense of intersection homology theory,
\cite{gm1}, \cite{gm2}, \cite{kirwanwoolf}, \cite{banagl-tiss}.
In \cite{banagl-intersectionspaces}, we introduced a general homotopy-theoretic
framework that assigns to certain types of $n$-dimensional stratified pseudomanifolds
$X$ CW-complexes
\[ \ip X, \]
the \emph{perversity-$\bar{p}$ intersection spaces} of $X$, such that for
complementary perversities $\bar{p}$ and $\bar{q}$, there is a Poincar\'e duality
isomorphism
\[ \redh^i (\ip X;\rat) \cong \redh_{n-i} (\iq X;\rat) \]
when $X$ is compact and oriented. In particular, this framework yields a new
cohomology theory $HI^\bullet_{\bar{p},s} (X) = H^\bullet_s (\ip X)$ for singular spaces,
where $H^\bullet_s$ denotes ordinary singular cohomology. For the lower middle
perversity $\bar{p} =\bar{m}$, we shall briefly write $IX = \imi X$ and
$HI^\bullet_s (X) = HI^\bullet_{\bar{m},s} (X)$.
That this theory is indeed
not isomorphic to intersection cohomology $IH^\bullet_{\bar{p}} (X)$ or to Cheeger's
$L^2$-cohomology $H^\bullet_{(2)} (X)$ is apparent from the observation that,
for every $\bar{p}$, $HI^\bullet_{\bar{p},s} (X)$ is an algebra under cup product,
whereas it is well-known that $IH^\bullet_{\bar{p}} (X)$ and $H^\bullet_{(2)} (X)$
cannot generally be endowed with a $\bar{p}$-internal algebra structure compatible
with the cup product. \\

The present paper serves a twofold purpose: It provides a de Rham-type description of
$HI^\bullet_{\bar{p},s} (X;\real)$ in terms of certain global differential forms on the
top stratum of $X$. But by doing so, it simultaneously opens up a way of defining the
theory $HI^\bullet_{\bar{p}} (X)$ on spaces $X$, for which the intersection space
$\ip X$ has not been constructed yet. 
The construction of intersection spaces is reviewed in Section \ref{ssec.backinterspace}. 
That section also lists the space classes 
for which $\ip X$ has been presently constructed and
Poincar\'e duality established. In these constructions, the singularity links are generally
assumed to be simply connected.
Let $X^n$ be a compact, oriented, stratified pseudomanifold of stratification depth $1$ possessing
Mather control data (see Definitions \ref{def.twostrataspace}, \ref{def.stratdepthone} for details),
in particular a link bundle for every component of the singular set $\Sigma$.
Assume that all of these link bundles are flat and that each link can be endowed with
a Riemannian metric such that the structure group of the bundle is contained in the
isometries of the link. (Such a metric can always be found if the structure group is a compact
Lie group.) Do \emph{not} assume that the links are simply connected --- they may or may not be.
For such $X$, we define a subcomplex $\oip^\bullet (X-\Sigma)$ of the complex
$\Omega^\bullet (X-\Sigma)$ of smooth differential forms on the top stratum $X-\Sigma$,
set
\[ HI^\bullet_{\bar{p}} (X) = H^\bullet (\oip^\bullet (X-\Sigma)), \]
and show \\

\noindent \textbf{Theorem \ref{thm.10.6}.}
\textit{(Generalized Poincar\'e Duality.) Let $\bar{p}$ and $\bar{q}$ be complementary perversities.
Wedge product followed by integration induces
a nondegenerate bilinear form 
\[ \begin{array}{rcl}
\int: HI^r_{\bar{p}} (X) \times
 HI^{n-r}_{\bar{q}} (X) & \longrightarrow & \real, \\
([\omega], [\eta]) & \mapsto & \int_{X-\Sigma} \omega \wedge \eta. 
\end{array} \]
}

\noindent For $HI^\bullet_{\bar{p},s} (X;\rat)$, the proofs of the duality 
Theorems 2.12 and 2.47 in \cite{banagl-intersectionspaces}
require choosing certain splittings. Thus the above Theorem \ref{thm.10.6} demonstrates in
particular that the intersection product on $HI^\bullet_{\bar{p}}$ is canonically defined
independent of choices. We prove our de Rham theorem for spaces with only isolated 
singularities. \\

\noindent \textbf{Theorem \ref{thm.derham}.}
\textit{(De Rham description of $HI^\bullet_{\bar{p},s}$.)
Let $X$ be a compact, oriented pseudomanifold with only isolated singularities and
simply connected links. Then integrating a form in $\oip^\bullet (X-\Sigma)$ over a smooth
singular simplex in $X-\Sigma$ induces an isomorphism
\[ HI^\bullet_{\bar{p}} (X) \cong \redh I^\bullet_{\bar{p},s} (X;\real). \]
}

\noindent Again, we will briefly put $HI^\bullet (X) = HI^\bullet_{\bar{m}} (X)$.
An important advantage of the differential form approach adopted in this paper
is that it eliminates the simple connectivity assumption on links. This assumption is generally
needed in forming the intersection space, since the homotopy-theoretic method uses the
Hurewicz theorem. As there is presently no general construction of $\ip X$ available for
$X$ with flat link bundles, this paper extends the theory $HI^\bullet_{\bar{p}}$ to such 
spaces. Let us indicate some fields of application. 
If the link bundle is flat, then the total space of the bundle possesses a foliation so that
the bundle becomes a transversely foliated fiber bundle. Conversely,
flat link bundles arise in foliated stratified spaces.
A precise definition of stratified foliations is given in Section \ref{sec.folstrat}
(Definitions \ref{def.stratfol}, \ref{def.stratfoldepth1}), at least for stratification depth $1$.
Such foliations play a role for instance in the work of Farrell and Jones on the
topological rigidity of negatively curved manifolds, \cite{farrelljonesfolcont1}, 
\cite{farrelljonespnas}. Our definition of a stratified foliation is inspired by the conical
foliations of Saralegi-Aranguren and Wolak, \cite{wolaksaralbicabgrpisom}.
The orbits of an isometric Lie group action on a compact Riemannian manifold,
for example, form a conical foliation. Theorem \ref{thm.strfolflatlinkbundle} of the present paper
confirms that if a stratified foliation is zero-dimensional on the links, then the restrictions
of the link bundle to the leaves of the singular stratum are flat bundles.

Reductive Borel-Serre compactifications of locally symmetric spaces constitute
another field of stratified spaces to which the theory $HI^\bullet$ can be applied.
Let $G$ be a connected reductive algebraic group defined over $\rat$ and
$\Gamma \subset G(\rat)$ an arithmetic subgroup. Let $K\subset G(\real)$
be a maximal compact subgroup and $A_G$ the connected component of the real
points of the maximal $\rat$-split torus in the center of $G$. The associated
symmetric space is $D= G(\real)/KA_G$. The arithmetic quotient
$X=\Gamma \backslash D$ is generally not compact and several compactifications
of $X$ have been studied. For simplicity, let us assume that $\Gamma$ is neat,
so that $X$ is a manifold. (Otherwise, $X$ may have mild singularities; it is in general
a V-manifold. Any arithmetic group contains a neat subgroup of finite index.)
The Borel-Serre compactification $\overline{X}$ (\cite{borelserrecomp}) is a manifold with corners
whose interior is $X$ and whose faces $Y_P$ are indexed by the $\Gamma$-conjugacy
classes of parabolic $\rat$-subgroups $P$ of $G$. Each $Y_P$ fits into a \emph{flat}
bundle $Y_P \to X_P$, called the nilmanifold fibration because the fiber is a compact
nilmanifold. The $X_P$ are arithmetic quotients of the symmetric space associated
to the Levi quotient of $P$. The reductive Borel-Serre compactification
$\widehat{X}$, introduced by Zucker (\cite{zucker}), is the quotient of $\overline{X}$
obtained by collapsing the fibers of the nilmanifold fibrations. The $X_P$ are the strata
of $\widehat{X}$ and their link bundles are the flat nilmanifold fibrations. A basic
class of examples is given by Hilbert modular surfaces $X$ associated to real
quadratic fields $\rat(\sqrt{d})$. For these, the $X_P$ are circles, the nilmanifold links
are $2$-tori and the flat link bundles are mapping tori, see \cite{bk1}. \\

Let us describe some of the features of $HI^\bullet_{\bar{p}}$. Since there is no
general cup product $H^i (M)\otimes H^j (M)\to H^{i+j}(M,\partial M)$ for a manifold
$M$ with boundary $\partial M$, intersection cohomology $IH^\bullet_{\bar{p}} (X)$,
for most $\bar{p}$,
cannot be endowed with a $\bar{p}$-internal cup product.
Similarly, the complex $\Omega^\bullet_{(2)} (X-\Sigma)$ of $L^2$-forms on the
top stratum equipped with a conical metric in the sense of Cheeger
(\cite{cheeger2}, \cite{cheeger1}, \cite{cheeger3}) is not a 
differential graded algebra (DGA) under wedge product of forms --- the product of
two $L^2$-functions need not be $L^2$ anymore. We prove that for every
perversity $\bar{p}$, the DGA-structure $(\Omega^\bullet (X-\Sigma),d,\wedge),$
where $d$ denotes exterior derivation,
restricts to a DGA-structure $(\oip^\bullet (X-\Sigma),d,\wedge)$
(Theorem \ref{thm.dga}). Consequently, the wedge product induces a cup product
\[ \cup: HI^i_{\bar{p}} (X)\otimes HI^j_{\bar{p}}(X)\longrightarrow HI^{i+j}_{\bar{p}} (X). \]
This is of course consistent with our de Rham theorem and our earlier (trivial)
observation that $HI^\bullet_{\bar{p},s} (X)$ possesses a cup product. \\

Contrary to $IH^\bullet_{\bar{p}}$ and $H^\bullet_{(2)}$, the theory $HI^\bullet_{\bar{p}}$
is quite stable under deformation of complex algebraic singularities. Consider for example
the Calabi-Yau quintic
\[ V_s = \{ z\in \cplx P^4 ~|~ z^5_0 +  z^5_1 +  z^5_2 +  z^5_3 +  z^5_4 
 - 5(1+s)z_0 z_1 z_2 z_3 z_4 =0 \}, \]
depending on a complex parameter $s$. The variety $V_s$ is smooth for small $s\not= 0,$
while $V=V_0$ has 125 isolated singular points. Its ordinary cohomology
has Betti numbers $\rk H^2 (V)=1,$ $\rk H^3 (V)=103,$ $\rk H^4 (V)=25$ and its
middle perversity intersection cohomology has ranks
 $\rk IH^2 (V)=25,$ $\rk IH^3 (V)=2,$ $\rk IH^4 (V)=25$. Both of these sets of
Betti numbers differ considerably from the Betti numbers of the nearby smooth
deformation $V_s$ ($s\not= 0$):
$\rk H^2 (V_s)=1,$ $\rk H^3 (V_s)=204,$ $\rk H^4 (V_s)=1$.
Now the calculations of \cite[Section 3.9]{banagl-intersectionspaces}, together
with our de Rham theorem, show that
\[ \rk HI^2 (V)=1, \rk HI^3 (V)=204, \rk HI^4 (V)=1, \]
in perfect agreement with the Betti numbers of $V_s, s\not= 0$. Indeed, jointly with
L. Maxim, we have established the following Stability Theorem, see
\cite{banaglmaximdefostab}: Let $V$ be a complex $n$-dimensional projective
hypersurface with one isolated singularity and let $V_s$ be a nearby smooth
deformation of $V$. Then for all $i<2n,$ and $i\not= n,$
$\redh I^i_s (V;\rat)\cong \redh^i (V_s;\rat).$ For the middle dimension
$HI^n_s (V;\rat)\cong H^n (V_s;\rat)$ if, and only if, the monodromy operator
acting on the cohomology of the Milnor fiber of the singularity is trivial.
At least if $H_{n-1}(L;\intg)$ is torsionfree, where $L$ is the link of the singularity,
the isomorphism is induced by a continuous map $IV\to V_s$ and is thus a 
ring isomorphism. We use this in \cite{banaglmaximdefostab} to endow
$HI^\bullet_s (V;\rat)$ with a mixed Hodge structure so that the canonical map
$IV\to V$ induces homomorphisms of mixed Hodge structures in cohomology.
Even if the monodromy is not trivial, $IV\to V_s$ induces a monomorphism on
homology. This statement for $HI^\bullet$ may be viewed as a ``mirror image''
of the well-known fact that the intersection homology of a complex variety $V$
is a linear subspace of the ordinary homology of any resolution
$\widetilde{V}\to V$, as follows from the Beilinson-Bernstein-Deligne-Gabber
decomposition theorem, \cite{bbd}. If the resolution is small, then
$IH^i (V)\cong H^i (\widetilde{V}).$ Thus the monodromy condition for
deformations may be viewed as a ``mirror image'' of the smallness condition
for resolutions. \\

The relationship between $IH^\bullet$ and $HI^\bullet$ is indeed illuminated well
by mirror symmetry, which tends to exchange resolutions and deformations.
In \cite{morrison} for example, it is conjectured that the mirror of a conifold
transition, which consists of a degeneration $s \to 0$ 
followed by a small resolution, is again a conifold transition, but
performed in the reverse direction. The results of Section 3.8 in
\cite{banagl-intersectionspaces} together with the de Rham theorem of this paper
imply that if $V^\circ$ is the mirror
of a conifold $V$, both sitting in  mirror symmetric conifold transitions, then
\[ \begin{array}{lcl} 
\rk IH^3 (V) & = & \rk HI^2 (V^\circ) + \rk HI^4 (V^\circ)+2, \\
\rk IH^3 (V^\circ) & = & \rk HI^2 (V) + \rk HI^4 (V)+2, \\
\rk HI^3 (V) & = & \rk IH^2 (V^\circ) + \rk IH^4 (V^\circ)+2,
 \text{ and} \\ 
\rk HI^3 (V^\circ) & = & \rk IH^2 (V) + \rk IH^4 (V)+2.
\end{array} \]
Since mirror symmetry is a phenomenon that arose originally in string theory,
it is not surprising that the theories $IH^\bullet,$ $HI^\bullet$ have a specific
relevance for type IIA, IIB string theories, respectively.
While $IH^\bullet$ yields the correct count of massless $2$-branes on a conifold
in type IIA theory, the theory $HI^\bullet$ yields the correct count of massless
$3$-branes on a conifold in type IIB theory, see \cite{banagl-intersectionspaces}. 
The author hopes that the de Rham description of $HI^\bullet$ by differential forms
offered here is closer to physicists' intuition of cohomology than the homotopy
theory of \cite{banagl-intersectionspaces}. The present paper makes it possible,
for example, to obtain differential form representatives for the above mentioned
massless $3$-branes in IIB string theory. \\

A few words about the technical aspects of the paper: Overall, our approach is
topological, as we do not use a Riemannian metric on the top stratum.
We do not even require a metric on the link bundle, only a fixed metric on a
particular copy $L$ of the link. To obtain a de Rham description of intersection
cohomology, one uses a truncation $\tau_{<k}\Omega^\bullet (L)$ of the forms
on the link, as is well-known. To pass from this local normal truncation to a global
complex, one must perform fiberwise normal truncation. This is technically
easy to accomplish, since an automorphism of $L$ induces an automorphism
of $\Omega^\bullet (L)$, which restricts to an automorphism of
$\tau_{<k} \Omega^\bullet (L)$. Ultimately, the result will indeed be a subcomplex of
$\Omega^\bullet (X-\Sigma)$, since there is a canonical monomorphism
$\tau_{<k} \Omega^\bullet (L) \to \Omega^\bullet (L)$. By contrast, a de Rham
model for $HI^\bullet_s$ requires the use of \emph{cotruncation}
$\tau_{\geq k} \Omega^\bullet (L)$. If one uses standard cotruncation of a
complex, one runs into two problems: standard cotruncation comes with a
canonical epimorphism $\Omega^\bullet (L)\to \tau_{\geq k} \Omega^\bullet (L)$,
so one will not obtain a \emph{sub}complex of $\Omega^\bullet (X-\Sigma)$.
Furthermore, one must implement normal cotruncation as a subcomplex in such
a way that it can be carried out in a fiberwise fashion. This paper solves these
problems as follows: In Section \ref{sec.truncoverpoint}, we use Riemannian Hodge theory to define
cotruncation as a subcomplex $\tau_{\geq k} \Omega^\bullet (L)\subset 
\Omega^\bullet (L)$ (Definition \ref{def.cotruncoverpoint}). This is the reason for
requiring a metric on $L$. By Proposition \ref{prop.indepriemmetric},
$\tau_{\geq k} \Omega^\bullet (L)$ is independent (up to isomorphism)
of the metric on $L$. An isometry $L\to L$ induces an automorphism of
$\tau_{\geq k} \Omega^\bullet (L)$, a property that is important for fiberwise
cotruncation and explains why we assume the structure group of the link bundle
to lie in the isometries of $L$. In order to implement fiberwise cotruncation,
we develop a model, called the \emph{multiplicatively structured forms}, for the
forms on the total space of the link bundle, which is structured enough so that
fiberwise cotruncation is fairly straightforward, but at the same time rich enough
so that it computes the ordinary cohomology of the link bundle
(Theorem \ref{thm.fhcomputescohtotspace}). The multiplicative structuring
of forms uses the flatness assumption on the bundle in an essential way. 
These techniques then allow us to construct the subcomplex
$\oip^\bullet (X-\Sigma) \subset \Omega^\bullet (X-\Sigma)$ on
page \pageref{page.oipdef}. Additional tools are required in proving the
de Rham theorem, since the intersection space $\ip X$ is not smooth,
but only a CW-complex. In Section \ref{ssec.partialsmooth}, we introduce a 
\emph{partial smoothing} tool that enables us to recover enough
smoothness of singular simplices $\Delta \to \ip X$ so that forms in
$\oip^\bullet (X-\Sigma)$ can be integrated over them and this induces
an isomorphism. \\

The methods introduced in the present paper radiate out into fields that
are not (directly) linked to singularities. For example, let $\pi: E\to B$
be a flat fiber bundle of closed, smooth manifolds with oriented fiber and
compact Lie structure group.
Then the above method of fiberwise
cotruncation and multiplicatively structured forms
can be used to show that the cohomological Leray-Serre
spectral sequence of $\pi$ for real coefficients collapses at the $E_2$-term.
We can furthermore show that if $M$ is an oriented,
closed, Riemannian manifold and $G$ a discrete group, whose
Eilenberg-MacLane space $K(G,1)$ may be taken to be a closed, smooth
manifold (e.g. $G=\intg^n$), and which acts isometrically on $M$, then the
equivariant cohomology $H^\bullet_G (M;\real)$ of this action can be computed
as
\[ H^k_G (M;\real)\cong \bigoplus_{p+q=k} H^p (G;\der^q (M;\real)), \]
where the $\der^q (M;\real)$ are the cohomology $G$-modules determined by the action.
(We do not assume that $G$ is closed in the isometry group of $M$.)
These consequences will be detailed elsewhere. In a similar vein, the
fiberwise spatial homology truncation methods used to construct intersection spaces
yield, for simply connected singular sets where nontrivial link bundles are not flat,
information on cases of the Halperin conjecture, \cite{halperin78}, \cite{fht}. \\

An analytic description of the cohomology theory $HI^\bullet$ remains to be found. 
A partial result in this direction is the following.
Let $M$ be a smooth, compact manifold with boundary $\partial M$. Let $x$ be
a boundary-defining function, i.e. on $\partial M$ we have $x\equiv 0,$ and
$dx\not= 0$. A Riemannian metric $g$ on the interior $N$ of $M$ is called a
\emph{scattering metric} if near $\partial M$ it has the form
\[ g= \frac{dx^2}{x^4} + \frac{h}{x^2}, \]
where $h$ is a metric on $\partial M$. Let $L^2 \hypco^\bullet (N,g)$ denote
the Hodge cohomology space of $L^2$-harmonic forms on $N$.
From Melrose \cite{melroseasympteuclid}, the work of Hausel, Hunsicker and Mazzeo
\cite{hauselhunsmazzeo}, and the results of \cite{banagl-intersectionspaces}, one can
readily derive:
\begin{prop}
Suppose that $X^n$ is an even-dimensional pseudomanifold
with only one isolated singularity so that $X = M\cup \cone (\partial M)$, where $M$
is a compact manifold with boundary. If the complement $N$ of the singular point is endowed with a
scattering metric $g$ and the restriction map $H^{n/2}(M)\to H^{n/2}(\partial M)$ is zero
(a ``Witt-type'' condition), then
\[ HI^\bullet (X) \cong L^2 \hypco^\bullet (N,g). \]
\end{prop}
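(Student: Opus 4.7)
The plan is to reduce both sides of the claimed isomorphism to topological invariants of the manifold-with-boundary pair $(M,\partial M)$ and then to let the Witt-type hypothesis reconcile them in each cohomological degree. Write $k = n/2$ and $L = \partial M$.

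For $HI^\bullet(X)$ I would apply the de Rham theorem of the present paper to replace the form description by the singular cohomology of the intersection space $IX$, and then compute the latter by means of the construction recalled in Section \ref{ssec.backinterspace}. Since $X$ has a single isolated singularity, $IX$ is the mapping cone of a map $t_{<k}L \xrightarrow{f} M$, where $f$ is the composition of the spatial homology truncation $t_{<k}L \to L$ with the inclusion $L \hookrightarrow M$. The associated cofiber long exact sequence, together with the vanishing $H^j(t_{<k}L;\real)=0$ for $j \geq k$ and the naturality of the comparison map $IX \to M/\partial M$, yields via the five-lemma
\[ HI^j(X) \cong H^j(M,\partial M;\real) \text{ for } j<k, \qquad HI^j(X) \cong H^j(M;\real) \text{ for } j>k, \]
while in degree $k$ one obtains a short exact sequence
\[ 0 \longrightarrow \operatorname{coker}\bigl(H^{k-1}(M)\to H^{k-1}(L)\bigr) \longrightarrow HI^k(X) \longrightarrow H^k(M;\real) \longrightarrow 0, \]
whose left-hand term coincides, via the long exact sequence of the pair $(M,\partial M)$, with $\ker\bigl(H^k(M,\partial M)\to H^k(M)\bigr)$.

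For $L^2\hypco^\bullet(N,g)$ I would invoke the Hodge-theoretic identification of Melrose and Hausel--Hunsicker--Mazzeo for scattering (asymptotically conic) metrics, which gives the degreewise topological description
\[ L^2\hypco^j(N,g) \cong
\begin{cases}
H^j(M,\partial M;\real), & j < k, \\
\operatorname{im}\bigl(H^k(M,\partial M;\real) \to H^k(M;\real)\bigr), & j = k, \\
H^j(M;\real), & j > k.
\end{cases}\]
The two descriptions already agree in the two outer ranges, so no further input is needed for $j \neq k$. In the middle degree, the Witt-type hypothesis enters twice. On the analytic side, the long exact sequence of the pair $(M,\partial M)$ together with the vanishing of $H^k(M)\to H^k(L)$ implies that $H^k(M,\partial M) \to H^k(M)$ is surjective, so $L^2\hypco^k(N,g) \cong H^k(M;\real)$. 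On the topological side, the same vanishing, combined with the five-lemma applied to the diagram comparing the pair long exact sequence with the cofiber long exact sequence of $IX$, upgrades the natural map $H^k(M,\partial M) \to HI^k(X)$ to an isomorphism; since $k = n/2$, Poincar\'e--Lefschetz duality gives $\dim H^k(M,\partial M) = \dim H^k(M)$, so $HI^k(X)$ and $L^2\hypco^k(N,g)$ also agree in the middle degree.

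The main obstacle is not a mathematical but an expository one: locating in the scattering-calculus literature the precise statement of the Hodge identification in the degree-by-degree form used above, and tracking through the identifications so that the resulting isomorphism can be upgraded from a comparison of finite-dimensional vector spaces to a canonical map --- ideally realized by pushing the de Rham description $\oip^\bullet(X - \Sigma)$ forward through the collapse $IX \to M/\partial M$ into the $L^2$ complex on $N$. The topological computation of $HI^\bullet$ and the Witt-type reconciliation of the middle degree are, by contrast, short consequences of the five-lemma applied to the pair sequence.
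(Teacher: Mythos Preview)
Your proposal is correct and is precisely the argument the paper has in mind: the paper does not supply a proof beyond the sentence ``From Melrose, the work of Hausel, Hunsicker and Mazzeo, and the results of \cite{banagl-intersectionspaces}, one can readily derive,'' and your three ingredients --- the cofiber-sequence computation of $\widetilde H^\bullet(\ip X)$, the Hausel--Hunsicker--Mazzeo identification of $L^2\hypco^\bullet$ for a scattering metric, and the Witt hypothesis together with Poincar\'e--Lefschetz duality to match the middle degree --- are exactly those three references unpacked. One small remark: your appeal to the five-lemma for the isomorphism $H^k(M,\partial M)\cong HI^k(X)$ is really a short diagram chase (the map $H^k(L)\to H^k(t_{<k}L)=0$ at the fifth spot is not injective in general), but the chase goes through precisely because the Witt hypothesis forces $H^k(M,\partial M)\to H^k(M)$ to be surjective, which is what you use.
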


\textbf{Acknowledgments.} Parts of this paper were written during the author's
visits to Kagoshima University, the Courant Institute of New York University, the
University of Wisconsin at Madison, and Loughborough University. I would like to
thank these institutions for their hospitality and my respective hosts
Shoji Yokura, Sylvain Cappell, Laurentiu Maxim and Eugenie Hunsicker for many
stimulating discussions. \\

\textbf{General Notation.} For a real vector space $V$, we denote the linear
dual $\Hom (V,\real)$ by $V^\dagger$. The tangent space of a smooth manifold $M$ at a
point $x\in M$ is written as $T_x M$. For a smooth manifold $M$, $H^\bullet (M)$ will
always denote the de Rham cohomology of $M$, whereas $H^\bullet_s (X)$ denotes
the singular cohomology with real coefficients of a topological space $X$. Singular
homology with real coefficients will be written as $H_\bullet (X)$. Reduced cohomology
and homology are indicated by $\redh^\bullet, \redh^\bullet_s, \redh_\bullet$.

\section{Preparatory Material on Differential Forms}
\label{sec.abcomplexes}

Let $X^n$ be a stratified compact pseudomanifold (in the sense of 
Definition \ref{def.twostrataspace}) with two strata, the connected,
compact singular stratum $\Sigma^b$ and the top stratum $X-\Sigma$.
The singular set $\Sigma$ has a link bundle which we assume to be flat and isometrically
structured. Thus $\Sigma$ possesses an open tubular neighborhood $T$ in $X$
such that the boundary $\partial M$ of the compact manifold
$M= X- T$ is the total space of a flat fiber bundle 
$p: \partial M \rightarrow \Sigma$ with fiber $F^m,$ a closed
Riemannian $(m=n-1-b)$-dimensional manifold called the \emph{link}
of $\Sigma$. The structure group of $p$ is the isometries of $F$.
We shall write $B =\Sigma$ whenever we think of the singular
stratum as the base space of its link bundle.
Let $c: (-2,+1] \times \partial M \cong U$ be a smooth collar onto
an open neighborhood $U\subset M$ of the boundary,
$c(1,x) = x$ for $x\in \partial M.$ Via this diffeomorphism, we shall
subsequently write $(-2,+1] \times \partial M$ instead of $U$.
Let $N$ denote the interior of $M$. The noncompact manifold $N$ has
an \emph{end}, $E = (-1, +1) \times \partial M.$
Let $j: E \subset N$ be the inclusion of the end and
$\pi: E \rightarrow \partial M$ the second factor projection.
For any smooth manifold $X$, let $\Omega^\bullet (X)$ denote the
de Rham complex of smooth differential forms on $X$ and let
$\Omega^\bullet_c (X) \subset \Omega^\bullet (X)$ denote the subcomplex
of forms with compact support. The exterior
differential will be denoted by $d_X$ or simply $d$, if $X$ is understood. \\

We define a subspace $\Omega^p_{\rel} (N) \subset \Omega^p (N)$ by
\[ \Omega^p_{\rel} (N) = \{ \omega \in \Omega^p (N) ~|~
   j^\ast \omega = 0 \}. \]
The differential on $\Omega^\bullet (N)$ obviously restricts to
$\Omega^\bullet_{\rel} (N),$ so that we have a subcomplex
$(\Omega^\bullet_{\rel} (N), d)\subset (\Omega^\bullet (N), d)$.
Furthermore, any form on $N$ which vanishes on $E$ has compact support on $N$.
Thus, there is a subcomplex-inclusion
$\Omega^\bullet_{\rel} (N) \subset \Omega^\bullet_c (N)$.
Section \ref{sec.rel} is devoted to a proof of the following result. \\

\noindent \textbf{Proposition \ref{prop.rel}.}
\textit{The inclusion $\Omega^\bullet_{\rel} (N) \subset \Omega^\bullet_c (N)$
induces an isomorphism
\[ H^\bullet (\Omega^\bullet_{\rel} (N)) \cong H^\bullet_c (N), \]
that is, $\Omega^\bullet_{\rel} (N)$ computes the cohomology 
with compact supports of $N$.} \\

\noindent We shall henceforth also write $H^\bullet_{\rel} (N) = 
H^\bullet (\Omega^\bullet_{\rel} (N)).$ \\

\subsection{Forms Constant in the Collar Direction}
\label{sec.bh}

The goal of this section is to show that the complex
\[ \Omega^\bullet_{\bc} (N) = \{ \omega \in \Omega^\bullet (N) ~|~
 j^\ast \omega = \pi^\ast \eta,~ \text{ some } \eta \in \Omega^\bullet
 (\partial M) \} \]
of differential forms constant in the collar direction near the end of $N$
computes the cohomology of $N$. This goal will be achieved in Proposition \ref{lem.ombcinom}.
The restriction $j^\ast: \Omega^\bullet (N) \rightarrow
\Omega^\bullet (E)$ is not surjective. We put
$X^\bullet = \im j^\ast \subset \Omega^\bullet (E)$
and call a form in $X^\bullet$ \emph{extendable}.
The inclusion 
$j_{\cyl} = j\times \id_I: E\times I \subset N \times I$
induces a restriction map
\[ j^\ast_{\cyl}: \Omega^\bullet (N\times I) \longrightarrow
  \Omega^\bullet (E\times I). \]
Set
$X^\bullet (I) = \im j^\ast_{\cyl}.$
For $s\in [0,1]=I,$ let $i_{s,E}: E \rightarrow E\times I$ be the
embedding $i_{s,E}(x) = (x,s).$ These embeddings induce restriction
maps
$i^\ast_{s,E}: \Omega^\bullet (E\times I) \longrightarrow
  \Omega^\bullet (E).$
\begin{lemma} \label{lem.isx}
The maps $i^\ast_{s,E}$ restrict to maps
$i^\ast_{s,X}: X^\bullet (I) \longrightarrow X^\bullet.$
\end{lemma}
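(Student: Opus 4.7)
The plan is to unwind the definitions and exploit the commutativity of the obvious square of inclusions. Start with an arbitrary $\beta \in X^\bullet(I)$. By definition of $X^\bullet(I) = \im j^\ast_{\cyl}$, there exists $\Omega \in \Omega^\bullet(N\times I)$ with $\beta = j^\ast_{\cyl} \Omega$. The goal is to produce some $\omega \in \Omega^\bullet(N)$ with $j^\ast \omega = i^\ast_{s,E} \beta$, since that is exactly what it means for $i^\ast_{s,E} \beta$ to lie in $X^\bullet = \im j^\ast$.

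The key observation is the commutative diagram
\[
\begin{CD}
E @>{i_{s,E}}>> E \times I \\
@V{j}VV @VV{j_{\cyl}}V \\
N @>>{i_{s,N}}> N \times I
\end{CD}
\]
where $i_{s,N}: N \to N\times I$ is the slice embedding $y \mapsto (y,s)$. Commutativity is immediate from $j_{\cyl} = j \times \id_I$ and the definitions of $i_{s,E}$, $i_{s,N}$: both compositions send $x \in E$ to $(x,s) \in N \times I$. Passing to pullbacks, this yields $i^\ast_{s,E} \circ j^\ast_{\cyl} = j^\ast \circ i^\ast_{s,N}$.

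Applying this identity to $\Omega$ gives $i^\ast_{s,E} \beta = i^\ast_{s,E} j^\ast_{\cyl} \Omega = j^\ast (i^\ast_{s,N} \Omega)$, which lies in $\im j^\ast = X^\bullet$ by construction, with $\omega := i^\ast_{s,N} \Omega \in \Omega^\bullet(N)$ serving as the required extension. Hence $i^\ast_{s,E}$ restricts to a well-defined map $i^\ast_{s,X}: X^\bullet(I) \to X^\bullet$, as claimed. There is no real obstacle here — the statement is essentially functoriality of pullback applied to the above pullback square, and the argument works degree by degree so it automatically respects the grading (and, incidentally, commutes with $d$, although that is not asserted in the lemma).
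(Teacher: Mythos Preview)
Your proof is correct and follows essentially the same approach as the paper: both arguments set up the commutative square relating $i_{s,E}, i_{s,N}, j, j_{\cyl}$, pass to pullbacks, and chase an arbitrary element of $X^\bullet(I)$ through the resulting identity $i^\ast_{s,E}\circ j^\ast_{\cyl} = j^\ast \circ i^\ast_{s,N}$ to land in $\im j^\ast$.
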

\begin{proof}
Define embeddings $i_{s,N}: N \rightarrow N\times I,$
$i_{s,N}(x)=(x,s),$ $x\in N,$ $s\in I$. The commutative square
\begin{equation} \label{equ.eeinni}
\xymatrix{
E \ar[r]^{i_{s,E}} \ar[d]_j & E\times I \ar[d]^{j_{\cyl}} \\
N \ar[r]_{i_{s,N}} & N\times I
} \end{equation}
induces a commutative square
\[ \xymatrix{
\Omega^\bullet (E\times I) \ar[r]^{i^\ast_{s,E}} & \Omega^\bullet (E) \\
\Omega^\bullet (N\times I) \ar[r]_{i^\ast_{s,N}} 
\ar[u]^{j^\ast_{\cyl}} & \Omega^\bullet (N). \ar[u]_{j^\ast}
} \]
Let $\omega \in X^\bullet (I).$ There is a form
$\overline{\omega} \in \Omega^\bullet (N\times I)$ such that
$j^\ast_{\cyl} \overline{\omega} = \omega$. The calculation
\[ i^\ast_{s,E}(\omega) =
  i^\ast_{s,E} j^\ast_{\cyl} (\overline{\omega}) =
  j^\ast i^\ast_{s,N} (\overline{\omega}) \]
shows that $i^\ast_{s,E}(\omega)$ lies in $\im j^\ast = X^\bullet$.
\end{proof}

\begin{lemma} \label{lem.kx}
There exists a homotopy operator $K_X: X^\bullet (I) \rightarrow
X^{\bullet -1}$ between $i^\ast_{0,X}$ and $i^\ast_{1,X}$, that is,
for $\omega \in X^\bullet (I),$ the formula
\[ dK_X (\omega) + K_X d(\omega) = i^\ast_{1,X}(\omega) - 
  i^\ast_{0,X}(\omega) \]
holds.
\end{lemma}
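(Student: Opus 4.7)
The plan is to obtain $K_X$ by restricting the standard fiber integration (Poincaré homotopy) operator on $E \times I$ to the subspace $X^\bullet(I)$ of extendable forms, and then check that the image lies in $X^\bullet$ via naturality of the construction with respect to $j_{\cyl}$.

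First I would recall the classical homotopy operator: for any smooth manifold $Y$, decomposing $\omega \in \Omega^\bullet(Y \times I)$ as $\omega = \alpha + dt \wedge \beta$ (with $\alpha,\beta$ having no $dt$-component), the operator
\[ K_Y(\omega)(y) = \int_0^1 \beta(y,t)\, dt \]
satisfies $d K_Y + K_Y d = i_1^\ast - i_0^\ast$, where $i_s: Y \to Y \times I$ is $y \mapsto (y,s)$. Applying this to $Y = E$ and $Y = N$ yields operators $K_E$ and $K_N$. Because the decomposition $\alpha + dt \wedge \beta$ is preserved by pullback along maps of the form $f \times \id_I$, one has the naturality identity
\[ j^\ast K_N = K_E \, j^\ast_{\cyl}, \]
where $j_{\cyl} = j \times \id_I$.

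Next I would show that $K_E$ carries $X^\bullet(I)$ into $X^\bullet$. Given $\omega \in X^\bullet(I)$, choose $\overline{\omega} \in \Omega^\bullet(N \times I)$ with $j^\ast_{\cyl}\overline{\omega} = \omega$. Naturality then gives
\[ K_E(\omega) = K_E\bigl(j^\ast_{\cyl}\overline{\omega}\bigr) = j^\ast K_N(\overline{\omega}) \in \im j^\ast = X^\bullet. \]
Thus setting $K_X := K_E|_{X^\bullet(I)}$ produces a well-defined map $X^\bullet(I) \to X^{\bullet-1}$.

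Finally, the homotopy identity for $K_X$ is inherited from the one for $K_E$. For $\omega \in X^\bullet(I)$, both $d\omega$ and $\omega$ lie in $X^\bullet(I)$ (the latter is extendable by hypothesis, and $d\omega = d j^\ast_{\cyl}\overline{\omega} = j^\ast_{\cyl} d\overline{\omega}$ is extendable as well). Hence
\[ dK_X(\omega) + K_X d(\omega) = dK_E(\omega) + K_E d(\omega) = i^\ast_{1,E}(\omega) - i^\ast_{0,E}(\omega) = i^\ast_{1,X}(\omega) - i^\ast_{0,X}(\omega), \]
where the last equality uses Lemma \ref{lem.isx} to interpret the restrictions as taking values in $X^\bullet$. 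The only substantive point in the whole argument is the naturality identity $j^\ast K_N = K_E j^\ast_{\cyl}$, and this is immediate from the product structure of the cylinders and the fact that $j^\ast_{\cyl}$ respects the $dt$-decomposition of forms.
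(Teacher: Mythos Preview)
Your proof is correct and follows essentially the same approach as the paper: both define $K_X$ as the restriction of the standard homotopy operator $K_E$ to $X^\bullet(I)$, establish the naturality identity $j^\ast K_N = K_E\, j^\ast_{\cyl}$, and use it (together with an extension $\overline{\omega}$) to show that the image lands in $X^\bullet = \im j^\ast$. The only cosmetic difference is that the paper writes $K_E$ via contraction with $\partial/\partial s$ and verifies the naturality square by an explicit pointwise computation, whereas you phrase $K_E$ via the $dt$-decomposition and invoke preservation of that decomposition under $(j\times\id_I)^\ast$.
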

\begin{proof}
Let $K_E: \Omega^\bullet (E\times I) \rightarrow \Omega^{\bullet -1}(E)$
be the standard homotopy operator given by
\[ K_E (\omega) = \int_0^1 (\frac{\partial}{\partial s} \righthalfcup
 \omega) ds, \]
where $\frac{\partial}{\partial s} \righthalfcup \omega$ denotes
contraction of $\omega$ along the vector $\frac{\partial}{\partial s}$.
The operator $K_E$ satisfies
\[ dK_E + K_E d = i^\ast_{1,E} - i^\ast_{0,E} \]
on $\Omega^\bullet (E\times I)$. Similarly, let 
$K_N: \Omega^\bullet (N\times I) \rightarrow \Omega^{\bullet -1}(N)$
be the standard homotopy operator for $N$, constructed analogously and
satisfying
\[ dK_N + K_N d = i^\ast_{1,N} - i^\ast_{0,N}. \]
For $e\in E,$ $v_1, \ldots, v_{p-1} \in T_e E = T_{j(e)} N$ and
$\omega \in \Omega^p (N\times I),$ we calculate
\begin{eqnarray*}
(j^\ast K_N \omega)_e (v_1,\ldots, v_{p-1}) & = &
   (K_N \omega)_{j(e)} (v_1,\ldots, v_{p-1}) \\
& = & \int_0^1 \omega_{(j(e),s)} (\frac{\partial}{\partial s},
   v_1, \ldots, v_{p-1}) ds  \\
& = & \int_0^1 \omega_{j_{\cyl}(e,s)} (\frac{\partial}{\partial s},
   v_1, \ldots, v_{p-1}) ds  \\
& = & \int_0^1 (j^\ast_{\cyl} \omega)_{(e,s)} (\frac{\partial}{\partial s},
   v_1, \ldots, v_{p-1}) ds  \\
& = & (K_E j^\ast_{\cyl} \omega)_e (v_1, \ldots, v_{p-1}).
\end{eqnarray*}
Thus, the square
\[ \xymatrix{
\Omega^\bullet (N\times I) \ar[r]^{K_N} \ar[d]_{j^\ast_{\cyl}} &
 \Omega^{\bullet -1} (N) \ar[d]_{j^\ast} \\
\Omega^\bullet (E\times I) \ar[r]^{K_E}  &
 \Omega^{\bullet -1} (E)
} \]
commutes. We claim that $K_E$ restricts to an operator
$K_X: X^\bullet (I) \rightarrow X^{\bullet -1}.$
To verify the claim, let $\omega \in X^\bullet (I)$ be an extendable
form on the cylinder. By definition, there is a form
$\overline{\omega} \in \Omega^\bullet (N\times I)$ such that
$j^\ast_{\cyl} (\overline{\omega}) = \omega.$ Using the
commutativity of the above square, we compute
\[ K_E (\omega) = K_E j^\ast_{\cyl} (\overline{\omega}) =
 j^\ast K_N (\overline{\omega}) \in \im j^\ast = X^\bullet, \]
verifying the claim. This defines $K_X$. It is now easily verified that
this operator indeed satisfies
$dK_X (\omega) + K_X d(\omega) = i^\ast_{1,X}(\omega) - 
  i^\ast_{0,X}(\omega).$ 
\end{proof}
Let $\sigma_0:\partial M\to E$ be given by $\sigma_0 (x)=(0,x)\in
(-1,1)\times \partial M =E$.
\begin{lemma} \label{lem.homotopyxtoxi}
Let $H: E\times I \rightarrow E$ be the smooth homotopy
$H(t,x,s) = (ts,x),$ $(t,x)\in E,$ $s\in I,$ from
$H(\cdot, \cdot, 0)=\sigma_0 \pi$ to
$H(\cdot, \cdot, 1)=\id_E$. Then the induced map
$H^\ast: \Omega^\bullet (E) \rightarrow \Omega^\bullet (E\times I)$
restricts to a map
\[ H^\ast_X: X^\bullet \longrightarrow X^\bullet (I). \]
\end{lemma}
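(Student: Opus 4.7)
The task is, given $\omega = j^\ast \overline{\omega} \in X^\bullet$ with $\overline{\omega} \in \Omega^\bullet(N)$, to construct a form on $N\times I$ whose pullback under $j^\ast_{\cyl}$ equals $H^\ast \omega$. My plan is to extend the homotopy $H$ itself to a smooth map $\widetilde{H} \colon N \times I \to N$ fitting into a commutative square
\[
\xymatrix{
E \times I \ar[r]^H \ar[d]_{j_{\cyl}} & E \ar[d]^j \\
N \times I \ar[r]_{\widetilde{H}} & N.
}
\]
Once $\widetilde{H}$ is available, the form $\widetilde{\omega} = \widetilde{H}^\ast \overline{\omega} \in \Omega^\bullet(N \times I)$ satisfies
\[
j^\ast_{\cyl} \widetilde{\omega} = (\widetilde{H} \circ j_{\cyl})^\ast \overline{\omega} = (j \circ H)^\ast \overline{\omega} = H^\ast (j^\ast \overline{\omega}) = H^\ast \omega,
\]
exhibiting $H^\ast \omega \in \im j^\ast_{\cyl} = X^\bullet(I)$, which is precisely the claim.

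The entire content of the lemma is thus the construction of $\widetilde{H}$, which I would carry out using collar coordinates and a bump function. Fix a smooth $\chi \colon \real \to [0,1]$ with $\chi(t) = 1$ for $t \in [-1,1]$ and $\chi(t) = 0$ for $t \le -3/2$. In the collar coordinates $(t,x) \in (-2,1) \times \partial M$ on the open set $U \cap N \subset N$, set
\[
\widetilde{H}(t,x,s) \;=\; \bigl(\, t - t\,\chi(t)(1-s),\; x\,\bigr),
\]
and on the complement $N \setminus (U \cap N)$ put $\widetilde{H}(y,s) = y$. The two prescriptions agree throughout the zone $t \le -3/2$ where $\chi$ vanishes, so $\widetilde{H}$ is globally smooth on $N \times I$. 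Moreover $|\,t - t\chi(t)(1-s)\,| \le |t|$, so the first coordinate stays inside $(-2,1)$ and the image genuinely lies in $N$.

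It remains to verify commutativity of the square. On $E \times I$ we have $|t| < 1$ and hence $\chi(t) = 1$, giving
\[
\widetilde{H}(t,x,s) = (t - t(1-s),\, x) = (ts,\, x) = j\bigl(H(t,x,s)\bigr),
\]
as needed. I do not anticipate any serious obstacle; the only subtlety is the smoothness of $\widetilde{H}$ at the seam of the cutoff, which is handled automatically by the standard bump-function construction. The underlying geometric point is simply that the flat product structure of the collar near the end of $N$ provides enough room to smoothly interpolate between $H$ and the identity map, extending $H$ to all of $N \times I$.
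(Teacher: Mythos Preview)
Your proof is correct. Both your argument and the paper's rest on the same geometric fact: the extra collar room $(-2,-1]\times\partial M$ outside the end $E$ is enough to extend $H$. The organizational difference is where the cutoff function is applied. You modify the \emph{map} itself, building a global $\widetilde{H}:N\times I\to N$ that agrees with $H$ on $E\times I$ and with the identity away from the collar; then $j^\ast_{\cyl}(\widetilde{H}^\ast\overline{\omega}) = H^\ast\omega$ in one stroke. The paper instead keeps the homotopy formula unchanged on the enlarged collar $E_{-2}=(-2,1)\times\partial M$, obtaining $H_{-2}:E_{-2}\times I\to E_{-2}$, and then applies the cutoff at the level of \emph{forms}: it shows that restriction $\iota^\ast:\Omega^\bullet(E_{-2})\to\Omega^\bullet(E)$ surjects onto $X^\bullet$ and that $\iota^\ast_{\cyl}$ lands in $X^\bullet(I)$, so $H^\ast\omega = \iota^\ast_{\cyl}H^\ast_{-2}\overline{\omega}\in X^\bullet(I)$ for any lift $\overline{\omega}\in\Omega^\bullet(E_{-2})$. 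Your route is a bit more direct and avoids the intermediate bookkeeping about $\iota^\ast_X$ being surjective; the paper's route keeps the homotopy formula cleaner and isolates a reusable fact about how $\Omega^\bullet(E_{-2})$ relates to $X^\bullet$. Either way the content is the same.

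One cosmetic remark: when you say ``the two prescriptions agree throughout the zone $t\le -3/2$,'' it would be slightly cleaner to phrase the gluing as follows. The collar formula is smooth on the open set $(-2,1)\times\partial M$ and equals the identity for $t\le -3/2$; the identity map is smooth on the open set $N\setminus\overline{\{t>-3/2\}}$; these two open sets cover $N$ and the prescriptions agree on the overlap. This makes the smoothness of $\widetilde{H}$ transparent.
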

\begin{proof}
We enlarge the end slightly by setting $E_{-2} = (-2,1)\times \partial M$
with inclusion $j_{-2}: E_{-2} \hookrightarrow N$.
Define $H_{-2}: E_{-2}\times I \rightarrow E_{-2}$ by
\[ H_{-2} (t,x,s) = (ts,x),~ -2<t<+1,~ 0\leq s \leq 1. \]
For $t\in (-1,1),$ we have $H(t,x,s) = (ts,x) = H_{-2} (t,x,s)$ for all
$s\in [0,1].$ Thus $H_{-2}$ is an extension of $H$:
\[ \xymatrix{
E_{-2} \times I \ar[r]^{H_{-2}} & E_{-2} \\
E\times I \ar@{^{(}->}[u]^{\iota_{\cyl}} \ar[r]^H & 
E. \ar@{^{(}->}[u]^{\iota}
} \]
This square induces a commutative diagram
\[ \xymatrix{
\Omega^\bullet (E_{-2}) \ar[r]^{H^\ast_{-2}} \ar[d]_{\iota^\ast} &
 \Omega^\bullet (E_{-2} \times I) \ar[d]^{\iota^\ast_{\cyl}} \\
\Omega^\bullet (E) \ar[r]^{H^\ast} & \Omega^\bullet (E\times I).
} \]
We claim that $\im \iota^\ast \subset X^\bullet$: Let $\omega \in
\Omega^\bullet (E_{-2})$ and let $f: E_{-2} \rightarrow \real$ be
a smooth cutoff function which is identically $1$ on $E$ 
(where the collar coordinate $t$ has values $t\in (-1,1)$) and
identically zero for $t\leq -\frac{3}{2}$. Multiplication by this
cutoff function and extension by zero to all of $N$ yields a smooth form
$f\cdot \omega \in \Omega^\bullet (N)$
such that $j^\ast (f\cdot \omega) = \iota^\ast \omega$. It follows
that $\iota^\ast \omega \in \im j^\ast = X^\bullet$, which proves the claim.
This shows that we can restrict $\iota^\ast$ to obtain a map
$\iota^\ast_X: \Omega^\bullet (E_{-2}) \longrightarrow X^\bullet.$
Let us show that $\iota^\ast_X$ is surjective: If $\omega \in X^\bullet$
is an extendable form, then there exists a form
$\overline{\omega} \in \Omega^\bullet (N)$ with $j^\ast \overline{\omega}
= \omega.$ The surjectivity follows from
\[ \omega = j^\ast \overline{\omega} = (\overline{\omega}|_{E_{-2}})|_E
  = \iota^\ast_X (j^\ast_{-2} \overline{\omega}). \]

We shall next provide a similar construction for the cylinder.
We claim that $\im \iota^\ast_{\cyl} \subset X^\bullet (I)$: 
Let $\omega \in
\Omega^\bullet (E_{-2} \times I)$ and let 
$f_{\cyl}: E_{-2} \times I \rightarrow \real$ be
the smooth cutoff function $f_{\cyl} (t,s) = f(t),$ where $f$ is as above.
Multiplication by $f_{\cyl}$
and extension by zero to all of $N \times I$ yields a smooth form
$f_{\cyl} \cdot \omega \in \Omega^\bullet (N\times I)$
such that $j^\ast_{\cyl} (f_{\cyl}\cdot \omega) 
= \iota^\ast_{\cyl} \omega,$ since $f_{\cyl}$ is identically $1$ on
$E\times I$. It follows
that $\iota^\ast_{\cyl} \omega \in \im j^\ast_{\cyl} = X^\bullet (I)$, 
which proves the claim.
This shows that we can restrict $\iota^\ast_{\cyl}$ to obtain a map
\[ \iota^\ast_{\cyl,X}: \Omega^\bullet (E_{-2}\times I) 
  \longrightarrow X^\bullet (I). \]

Let $\omega \in X^\bullet$ be an extendable form. As $\iota^\ast_X$
is surjective, there is an $\overline{\omega} \in \Omega^\bullet (E_{-2})$
such that $\iota^\ast_X (\overline{\omega}) = \omega.$ We calculate
\[ H^\ast (\omega) = H^\ast \iota^\ast (\overline{\omega}) =
 \iota^\ast_{\cyl} (H^\ast_{-2}(\overline{\omega})) \in X^\bullet (I), \]
since $\im \iota^\ast_{\cyl} \subset X^\bullet (I).$ Hence $H^\ast$
is seen to map $X^\bullet$ into $X^\bullet (I)$ and the lemma is proved.
\end{proof}

The image of $\pi^\ast: \Omega^\bullet (\partial M) \rightarrow
\Omega^\bullet (E)$ lies in $X^\bullet$. Thus $\pi^\ast$ restricts to
a map
$\pi^\ast_X: \Omega^\bullet (\partial M) \longrightarrow X^\bullet.$
Restricting $\sigma^\ast_0: \Omega^\bullet (E) \rightarrow
\Omega^\bullet (\partial M)$ to $X^\bullet$, we get a map
$\sigma^\ast_{0,X}: X^\bullet \longrightarrow \Omega^\bullet (\partial M).$
\begin{lemma} \label{lem.xombndrymhe}
The maps
\[ \xymatrix{X^\bullet & \Omega^\bullet (\partial M) 
\ar@<1ex>[l]^{\pi^\ast_X}  \ar@<1ex>[l];[]^{\sigma^\ast_{0,X}}
} \]
are chain homotopy equivalences, which are chain homotopy inverse to
each other.
\end{lemma}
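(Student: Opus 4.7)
The plan is to use the homotopy $H: E \times I \to E$ of Lemma \ref{lem.homotopyxtoxi}, together with the extendable homotopy operator $K_X$ of Lemma \ref{lem.kx}, to build an explicit chain homotopy on $X^\bullet$.

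First I would verify the easy composition. Since $\pi \circ \sigma_0 = \id_{\partial M}$, we get
\[ \sigma^\ast_{0,X} \circ \pi^\ast_X = (\pi \sigma_0)^\ast = \id_{\Omega^\bullet(\partial M)}. \]
This disposes of one direction without any need of a homotopy.

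For the other composition, define
\[ K := K_X \circ H^\ast_X : X^\bullet \longrightarrow X^{\bullet - 1}, \]
which is well-defined by Lemmas \ref{lem.homotopyxtoxi} and \ref{lem.kx}. Since $H^\ast$ is a chain map and $K_X$ satisfies $dK_X + K_X d = i^\ast_{1,X} - i^\ast_{0,X}$, for $\omega \in X^\bullet$ one computes
\[ dK(\omega) + Kd(\omega) = i^\ast_{1,X}(H^\ast_X \omega) - i^\ast_{0,X}(H^\ast_X \omega) = (H \circ i_{1,E})^\ast \omega - (H \circ i_{0,E})^\ast \omega. \]
Here the identities $H \circ i_{1,E} = \id_E$ and $H \circ i_{0,E} = \sigma_0 \pi$ (built into the definition of $H$) immediately give
\[ dK(\omega) + Kd(\omega) = \omega - \pi^\ast_X \sigma^\ast_{0,X}(\omega), \]
so $\pi^\ast_X \circ \sigma^\ast_{0,X}$ is chain homotopic to $\id_{X^\bullet}$ via $K$.

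There is no genuine obstacle in this proof; everything has been set up by the preceding lemmas. The only thing to watch is that each operator remains inside the appropriate subspace at every step: $H^\ast$ must land in $X^\bullet(I)$ rather than merely in $\Omega^\bullet(E \times I)$, and $K_X$ must land in $X^{\bullet - 1}$ rather than in $\Omega^{\bullet - 1}(E)$. Both containments are exactly the content of Lemmas \ref{lem.homotopyxtoxi} and \ref{lem.kx}, so the composite $K = K_X H^\ast_X$ makes sense as a map $X^\bullet \to X^{\bullet -1}$ and the standard homotopy formula transfers verbatim from $\Omega^\bullet(E \times I)$ to $X^\bullet(I)$.
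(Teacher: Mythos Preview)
Your proof is correct and follows essentially the same approach as the paper: both verify $\sigma^\ast_{0,X}\pi^\ast_X=\id$ directly from $\pi\sigma_0=\id_{\partial M}$, then define the homotopy operator as $K_X\circ H^\ast_X$ and use the identities $H\circ i_{1,E}=\id_E$, $H\circ i_{0,E}=\sigma_0\pi$ together with the formula for $K_X$ to obtain $dK+Kd=\id-\pi^\ast_X\sigma^\ast_{0,X}$. The paper records the passage from $i^\ast_{s,E}H^\ast$ to $i^\ast_{s,X}H^\ast_X$ via an explicit cube diagram, but your direct appeal to Lemmas~\ref{lem.kx} and~\ref{lem.homotopyxtoxi} accomplishes the same thing.
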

\begin{proof}
The composition
\[ \Omega^\bullet (\partial M) \stackrel{\pi^\ast_X}{\longrightarrow}
 X^\bullet \stackrel{\sigma^\ast_{0,X}}{\longrightarrow}
 \Omega^\bullet (\partial M) \]
is equal to the identity on $\Omega^\bullet (\partial M)$, since
$\pi_X \sigma_{0,X} = \id_{\partial M}.$ We have to prove that
\[ X^\bullet \stackrel{\sigma^\ast_{0,X}}{\longrightarrow}
 \Omega^\bullet (\partial M) \stackrel{\pi^\ast_X}{\longrightarrow}
 X^\bullet \]
is homotopic to the identity on $X^\bullet$.
Let $H: E\times I \rightarrow E$ be the homotopy of Lemma
\ref{lem.homotopyxtoxi}, from $H(\cdot, \cdot, 0)=\sigma_0 \pi$ to
$H(\cdot, \cdot, 1)=\id_E$, that is,
$H \circ i_{0,E} = \sigma_0 \pi,$ $H \circ i_{1,E} = \id_E.$
From the cube
\[ \xymatrix{
& \Omega^\bullet (\partial M) \ar[rr]^{\pi^\ast_X} \ar@{^{(}->}'[d][dd]
 & & X^\bullet \ar@{^{(}->}[dd] \\
X^\bullet \ar[ur]^{\sigma^\ast_{0,X}} \ar[rr]^>>>>{H^\ast_X \mbox{   }} 
 \ar@{^{(}->}[dd]
 & & X^\bullet (I) \ar[ur]_{i^\ast_{0,X}} \ar@{^{(}->}[dd] \\
& \Omega^\bullet (\partial M) \ar'[r][rr]^{\pi^\ast}
 & & \Omega^\bullet (E) \\
\Omega^\bullet (E) \ar[rr]^{H^\ast} \ar[ur]^{\sigma^\ast_0}
 & & \Omega^\bullet (E\times I), \ar[ur]_{i^\ast_{0,E}}
} \]
obtained by restricting the bottom face to the top face,
we see that for $\omega \in X^\bullet,$
\[ i^\ast_{0,X} H^\ast_X (\omega) = i^\ast_{0,E} H^\ast (\omega) =
 \pi^\ast \sigma^\ast_0 (\omega) = \pi^\ast_X \sigma^\ast_{0,X} (\omega). \]
(The map $H^\ast_X$ is provided by Lemma \ref{lem.homotopyxtoxi}.)
Analogously,
\[ i^\ast_{1,X} H^\ast_X (\omega) = i^\ast_{1,E} H^\ast (\omega)
=\omega. \]
Composing the homotopy operator $K_X$ of Lemma \ref{lem.kx}
with $H^\ast_X,$ we obtain a map
\[ L = K_X \circ H^\ast_X: X^\bullet
 \longrightarrow X^{\bullet -1} \]
such that for $\omega \in X^\bullet,$
\begin{eqnarray*}
Ld(\omega) + dL(\omega) & = &
  K_X H^\ast_X d(\omega) + d K_X H^\ast_X
  (\omega) 
 =  K_X d (H^\ast_X \omega) + d K_X (H^\ast_X \omega) \\
& = & i^\ast_{1,X} (H^\ast_X \omega) - 
        i^\ast_{0,X} (H^\ast_X \omega) 
 =  \id_{X^\bullet} (\omega) - \pi^\ast_X \sigma^\ast_{0,X} (\omega).
\end{eqnarray*}
Thus $L$ is a cochain homotopy between $\pi^\ast_X \sigma^\ast_{0,X}$ and the
identity.
\end{proof}

Put
$\Omega^\bullet_{\bc} (E) = \{ \omega \in \Omega^\bullet (E) ~|~
 \omega = \pi^\ast \eta,~ \text{ some } \eta \in \Omega^\bullet
 (\partial M) \}.$
\begin{prop} \label{lem.ombcinom}
The inclusion $\Omega^\bullet_{\bc} (N) \subset \Omega^\bullet (N)$
induces a cohomology isomorphism.
\end{prop}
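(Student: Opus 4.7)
The plan is to use the chain homotopy $L = K_X \circ H^\ast_X$ constructed in the proof of Lemma \ref{lem.xombndrymhe} to correct any form on $N$ near the end so that its restriction becomes a pullback from $\partial M$, without changing its cohomology class. Recall that $L$ satisfies
\[ Ld + dL = \id_{X^\bullet} - \pi^\ast_X \sigma^\ast_{0,X} \]
on the complex $X^\bullet$ of extendable forms. The crucial additional observation I will exploit is that $L$ annihilates pullbacks from $\partial M$: since the homotopy is $H(t,x,s) = (ts,x)$, the composite $\pi \circ H$ is just the projection onto $\partial M$, so $H^\ast \pi^\ast \eta$ carries no $ds$-component and is killed by $K_E$. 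Thus $L\pi^\ast = 0$.

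For surjectivity on cohomology, I would take a closed $\omega \in \Omega^\bullet(N)$ and let $\bar\omega = j^\ast \omega \in X^\bullet$, which is also closed. The homotopy formula gives
\[ \bar\omega = \pi^\ast_X \sigma^\ast_{0,X} \bar\omega + dL\bar\omega. \]
Since $L\bar\omega$ lies in $X^{\bullet - 1} = \im j^\ast$, choose any extension $\widetilde{L\bar\omega} \in \Omega^{\bullet-1}(N)$ with $j^\ast \widetilde{L\bar\omega} = L\bar\omega$, and set $\omega' = \omega - d\widetilde{L\bar\omega}$. Then $[\omega'] = [\omega]$ in $H^\bullet(N)$, and $j^\ast \omega' = \pi^\ast \sigma^\ast_0 \bar\omega$ lies in $\pi^\ast \Omega^\bullet(\partial M)$, so $\omega' \in \Omega^\bullet_{\bc}(N)$.

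For injectivity, suppose $\omega \in \Omega^\bullet_{\bc}(N)$ is exact in $\Omega^\bullet(N)$, say $\omega = d\beta$; write $j^\ast \omega = \pi^\ast \eta$ and put $\bar\beta = j^\ast \beta$, so that $d\bar\beta = \pi^\ast \eta$. The homotopy formula applied to $\bar\beta$ gives
\[ \bar\beta - \pi^\ast \sigma^\ast_0 \bar\beta = L d\bar\beta + dL\bar\beta = L(\pi^\ast \eta) + dL\bar\beta = dL\bar\beta, \]
where the last equality uses the vanishing $L \pi^\ast = 0$. Extending $L\bar\beta \in X^{\bullet-2}$ to some $\widetilde{L\bar\beta} \in \Omega^{\bullet-2}(N)$ and setting $\beta' = \beta - d\widetilde{L\bar\beta}$, one checks that $d\beta' = \omega$ and $j^\ast \beta' = \pi^\ast \sigma^\ast_0 \bar\beta$, so $\beta' \in \Omega^{\bullet-1}_{\bc}(N)$ is a $\bc$-primitive of $\omega$.

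The one genuine point to verify is the vanishing $L \pi^\ast = 0$; this would fail if the homotopy $H$ had any nontrivial interaction with the $\partial M$-coordinate, but here the explicit formula $H(t,x,s)=(ts,x)$ keeps $x$ fixed throughout, so the argument goes through. The rest of the proof is a formal bookkeeping using extendability of forms in $X^\bullet$.
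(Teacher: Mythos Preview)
Your proof is correct and takes a genuinely different route from the paper's argument. The paper proceeds structurally: it first shows that the inclusion $\iota:\Omega^\bullet_{\bc}(E)\hookrightarrow X^\bullet$ is a homotopy equivalence (by factoring it as $\pi^\ast_X\circ\sigma^\ast_0$ and invoking Lemma~\ref{lem.xombndrymhe}), then compares the two short exact sequences
\[
0\to\Omega^\bullet_{\rel}(N)\to\Omega^\bullet(N)\to X^\bullet\to 0,\qquad
0\to\Omega^\bullet_{\rel}(N)\to\Omega^\bullet_{\bc}(N)\to\Omega^\bullet_{\bc}(E)\to 0
\]
via the $5$-lemma. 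Your argument instead works directly at the level of representatives, correcting forms by hand using the explicit homotopy $L=K_X\circ H^\ast_X$. The crucial extra ingredient you supply, which the paper does not isolate, is the vanishing $L\circ\pi^\ast=0$; this is what makes the injectivity step go through without any appeal to $\Omega^\bullet_{\rel}(N)$ or the $5$-lemma. Your verification of that vanishing is correct: since $\pi\circ H(t,x,s)=x$ is independent of $s$, the form $H^\ast\pi^\ast\eta$ has no $ds$-component and is annihilated by $K_E$. What your approach buys is a self-contained, constructive argument that produces explicit $\bc$-representatives and $\bc$-primitives; what the paper's approach buys is a cleaner separation of concerns, packaging the ``end behaviour'' into the comparison $\Omega^\bullet_{\bc}(E)\simeq X^\bullet$ and then invoking a standard diagram chase.
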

\begin{proof}
If a form on $E$ is constant in the collar coordinate, then it is
extendable to all of $N$ by using a slightly larger collar and multiplication 
by a cutoff function. Thus there is an inclusion map
$\iota: \Omega^\bullet_{\bc} (E) \rightarrow X^\bullet$. We shall show
first that this map induces a cohomology isomorphism, in fact, that it
is a homotopy equivalence. 
The maps
\[ \xymatrix{\Omega^\bullet_{\bc}(E) & \Omega^\bullet (\partial M) 
\ar@<1ex>[l]^{\pi^\ast}  \ar@<1ex>[l];[]^{\sigma^\ast_0}
} \]
are mutually inverse isomorphisms of cochain complexes.
(Compare to Lemma \ref{lem.ombheisoharmbm} and its proof.)
By Lemma \ref{lem.xombndrymhe}, the map
$\pi^\ast_X: \Omega^\bullet (\partial M)\rightarrow X^\bullet$ is a
homotopy equivalence.
For $\omega \in \Omega^\bullet_{\bc} (E),$ there is an 
$\eta \in \Omega^\bullet (\partial M)$ with $\omega = \pi^\ast \eta$
and we compute
\[ \pi^\ast_X \sigma^\ast_0 \omega = \pi^\ast_X \sigma^\ast_0 \pi^\ast \eta
 = \pi^\ast_X \eta = \pi^\ast \eta = \omega = \iota (\omega). \]
Thus we have expressed $\iota = \pi^\ast_X \sigma^\ast_0$ as the
composition of an isomorphism and a homotopy equivalence, whence
$\iota$ itself is a homotopy equivalence. 
The kernel of the restriction $j^\ast: \Omega^\bullet (N)\rightarrow
\Omega^\bullet (E)$ is $\Omega^\bullet_{\rel} (N)$.
Consequently, there is a short exact sequence
\[ 0 \rightarrow \Omega^\bullet_{\rel} (N) \longrightarrow
 \Omega^\bullet (N) \stackrel{j^\ast}{\longrightarrow}
 X^\bullet \rightarrow 0. \]
The restriction map $\Omega^\bullet_{\bc} (N) \rightarrow
\Omega^\bullet_{\bc} (E)$ is onto. Since its kernel is again
$\Omega^\bullet_{\rel} (N),$ we get another exact sequence
\[ 0 \rightarrow \Omega^\bullet_{\rel} (N) \longrightarrow
 \Omega^\bullet_{\bc} (N) \longrightarrow
 \Omega^\bullet_{\bc} (E) \rightarrow 0. \]
The various inclusions yield a commutative diagram
\[ \xymatrix{
0 \ar[r] & \Omega^\bullet_{\rel} (N) \ar[r] &
 \Omega^\bullet (N) \ar[r]^{j^\ast} & X^\bullet \ar[r] & 0 \\
0 \ar[r] & \Omega^\bullet_{\rel} (N) \ar[r] \ar@{=}[u] &
 \Omega^\bullet_{\bc} (N) \ar[r] \ar@{^{(}->}[u]^{\iota_N}
& \Omega^\bullet_{\bc} (E) \ar[r] \ar@{^{(}->}[u]^{\iota} & 0, 
} \]
which induces on cohomology a commutative diagram
\[ \xymatrix{
H^\bullet_{\rel} (N) \ar[r] &
 H^\bullet (N) \ar[r]^{j^\ast} & H^\bullet (X^\bullet) \ar[r] & 
  H^{\bullet +1}_{\rel} (N) \\
H^\bullet_{\rel} (N) \ar[r] \ar@{=}[u] &
 H^\bullet_{\bc} (N) \ar[r] \ar[u]^{\iota^\ast_N}
& H^\bullet_{\bc} (E) \ar[r] \ar[u]^{\iota^\ast}_{\cong} & 
 H^{\bullet +1}_{\rel} (N). \ar@{=}[u] 
} \]
By the $5$-lemma, $\iota^\ast_N$ is an isomorphism.
\end{proof}

\subsection{Forms Vanishing Near the Boundary}
\label{sec.rel}

This section is devoted to a proof of Proposition \ref{prop.rel}.
Recall that $i_{s,N}: N \rightarrow N\times I$ are the embeddings
$i_{s,N} (x) = (x,s),$ $x\in N,$ $s\in I$, and 
$j_{\cyl} = j\times \id_I: E\times I \hookrightarrow N\times I$.
We put
\[ \Omega^\bullet_{\rel} (N\times I) = \{ \omega \in
 \Omega^\bullet (N\times I) ~|~ j^\ast_{\cyl} \omega =0 \}. \]
The $i_{s,N}$ induce maps $i^\ast_{s,N}: \Omega^\bullet (N\times I)
\rightarrow \Omega^\bullet (N),$ which restrict to maps
\[ i^\ast_{s,\rel}: \Omega^\bullet_{\rel} (N\times I)
 \longrightarrow \Omega^\bullet_{\rel} (N) \]
because
$j^\ast i^\ast_{s,N} (\omega) = i^\ast_{s,E} j^\ast_{\cyl} (\omega)=0$
for $\omega \in \Omega^\bullet_{\rel} (N\times I),$ as follows from
the commutative diagram (\ref{equ.eeinni}).
\begin{lemma} \label{lem.existskrel}
There exists a homotopy operator $K_{\rel}: \Omega^\bullet_{\rel}
(N\times I) \rightarrow \Omega^{\bullet -1}_{\rel} (N)$ between
$i^\ast_{0,\rel}$ and $i^\ast_{1,\rel}$, that is, for 
$\omega \in \Omega^\bullet_{\rel} (N\times I),$ the formula
\[ dK_{\rel} (\omega) + K_{\rel} d(\omega) =
  i^\ast_{1,\rel}(\omega) - i^\ast_{0,\rel}(\omega) \]
holds.
\end{lemma}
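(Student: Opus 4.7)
The plan is to simply restrict the standard homotopy operator $K_N \colon \Omega^\bullet(N\times I)\to \Omega^{\bullet-1}(N)$ already introduced in the proof of Lemma \ref{lem.kx} to the subcomplex of relative forms, and check that the restriction lands in $\Omega^\bullet_{\rel}(N)$.

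Concretely, I would define $K_{\rel}(\omega) := K_N(\omega)$ for $\omega \in \Omega^\bullet_{\rel}(N\times I)$. The one thing that needs verification is that $K_N\omega$ actually lies in $\Omega^\bullet_{\rel}(N)$, i.e.\ that $j^\ast K_N\omega = 0$ whenever $j^\ast_{\cyl}\omega=0$. But this is immediate from the commutative square
\[ \xymatrix{
\Omega^\bullet (N\times I) \ar[r]^{K_N} \ar[d]_{j^\ast_{\cyl}} &
 \Omega^{\bullet -1} (N) \ar[d]^{j^\ast} \\
\Omega^\bullet (E\times I) \ar[r]^{K_E}  &
 \Omega^{\bullet -1} (E)
} \]
established in the proof of Lemma \ref{lem.kx}: if $j^\ast_{\cyl}\omega = 0$, then $j^\ast K_N \omega = K_E j^\ast_{\cyl}\omega = K_E(0) = 0$, so $K_N\omega \in \Omega^\bullet_{\rel}(N)$. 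Thus the restriction $K_{\rel}$ is well-defined.

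The homotopy identity is then inherited directly from $K_N$: for $\omega \in \Omega^\bullet_{\rel}(N\times I)$, the exterior derivative $d\omega$ again satisfies $j^\ast_{\cyl}(d\omega) = d\,j^\ast_{\cyl}\omega = 0$, so $d\omega \in \Omega^\bullet_{\rel}(N\times I)$, and
\[ dK_{\rel}(\omega) + K_{\rel}d(\omega) = dK_N(\omega) + K_N d(\omega) = i^\ast_{1,N}(\omega) - i^\ast_{0,N}(\omega) = i^\ast_{1,\rel}(\omega) - i^\ast_{0,\rel}(\omega), \]
where the final equality uses that $i^\ast_{s,N}$ restricts to $i^\ast_{s,\rel}$ on relative forms (which is precisely the observation made just above the lemma).

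There is no serious obstacle here; the entire content of the lemma is the compatibility of $K_N$ with the restriction $j^\ast$, which was already essentially proved when constructing $K_X$ in Lemma \ref{lem.kx}. The present lemma is the easier analogue for the kernel of $j^\ast_{\cyl}$, rather than its image.
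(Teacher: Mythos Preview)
Your proof is correct and follows essentially the same approach as the paper: both define $K_{\rel}$ as the restriction of the standard operator $K_N$ and verify that this restriction lands in $\Omega^\bullet_{\rel}(N)$. The only cosmetic difference is that the paper checks $j^\ast K_N\omega = 0$ by a direct pointwise computation (noting $\omega_{(x,s)}=0$ for $x\in E$), whereas you invoke the commutative square $j^\ast K_N = K_E\, j^\ast_{\cyl}$ already established in Lemma~\ref{lem.kx}; since that square was itself proved by exactly this pointwise computation, the two arguments are really the same.
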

\begin{proof}
Let $K_N: \Omega^\bullet (N\times I) \rightarrow \Omega^{\bullet -1}(N)$
be the homotopy operator for $N$ used in the proof of Lemma \ref{lem.kx}, given by
\[ (K_N \omega)_{x} (v_1,\ldots, v_{p-1}) 
= \int_0^1 \omega_{(x,s)} (\frac{\partial}{\partial s},
   v_1, \ldots, v_{p-1}) ds, \]
$\omega \in \Omega^p (N\times I),$ $x\in N,$ $v_1, \ldots, v_{p-1} \in
T_x N.$ If $\omega \in \Omega^p_{\rel} (N\times I)$ and $x\in E$, then
$\omega_{(x,s)} =0$ for all $s\in I$. Thus $(K_N \omega)_x =0$ for all
$x\in E$, which places $K_N \omega$ in $\Omega^{p-1}_{\rel} (N)$.
We conclude that $K_N$ restricts to an operator
$K_{\rel}: \Omega^\bullet_{\rel} (N\times I) \longrightarrow
 \Omega^{\bullet -1}_{\rel} (N).$
It possesses the desired property:
\begin{eqnarray*}
dK_{\rel} (\omega) + K_{\rel} d(\omega) & = &
  dK_N (\omega) + K_N d(\omega) 
 =  i^\ast_{1,N}(\omega) - i^\ast_{0,N}(\omega) \\
& = & i^\ast_{1,\rel}(\omega) - i^\ast_{0,\rel}(\omega).
\end{eqnarray*}
\end{proof}
We omit the straightforward proof of the next lemma.
\begin{lemma} \label{lem.phistarrel}
Let $\phi: N\times I \rightarrow N$ be a smooth homotopy such that
$\phi (E\times I) \subset E.$ Then the induced map
$\phi^\ast: \Omega^\bullet (N) \rightarrow \Omega^\bullet (N\times I)$
restricts to a map
\[ \phi^\ast_{\rel}: \Omega^\bullet_{\rel} (N) 
  \longrightarrow \Omega^\bullet_{\rel} (N\times I). \]
\end{lemma}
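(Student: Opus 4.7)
The plan is to verify this by a direct diagram chase using naturality of pullback of forms, together with the hypothesis that $\phi$ carries $E\times I$ into $E$.

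First, I would fix $\omega \in \Omega^\bullet_{\rel}(N)$, so by definition $j^\ast \omega = 0$. The task is to show $j^\ast_{\cyl}(\phi^\ast \omega) = 0$, which will place $\phi^\ast \omega$ in $\Omega^\bullet_{\rel}(N\times I)$.

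Next, I would invoke the hypothesis $\phi(E\times I) \subset E$ to factor the composition $\phi \circ j_{\cyl} : E\times I \to N$ through the end: define $\phi' : E\times I \to E$ by $\phi'(e,s) = \phi(j(e),s)$, so that $j \circ \phi' = \phi \circ j_{\cyl}$ as smooth maps $E\times I \to N$. In other words, there is a commutative square
\[
\xymatrix{
E\times I \ar[r]^{\phi'} \ar@{^{(}->}[d]_{j_{\cyl}} & E \ar@{^{(}->}[d]^{j} \\
N\times I \ar[r]_{\phi} & N.
}
\]
Applying the contravariant de Rham functor yields $j^\ast_{\cyl} \circ \phi^\ast = (\phi')^\ast \circ j^\ast$ as maps $\Omega^\bullet(N) \to \Omega^\bullet(E\times I)$.

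Finally, I would evaluate on $\omega$: $j^\ast_{\cyl}(\phi^\ast \omega) = (\phi')^\ast(j^\ast \omega) = (\phi')^\ast 0 = 0$, so $\phi^\ast \omega \in \Omega^\bullet_{\rel}(N\times I)$ as required. Since the exterior derivative commutes with pullback, this restriction is automatically a chain map. There is no real obstacle here — the only thing to be careful about is ensuring that the factorization $\phi \circ j_{\cyl} = j \circ \phi'$ is legitimate as a smooth factorization, which is immediate since $E$ is an open subset of $N$ and $\phi$ is smooth.
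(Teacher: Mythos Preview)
Your proof is correct and is exactly the straightforward argument the paper has in mind; in fact the paper omits the proof entirely, writing ``We omit the straightforward proof of the next lemma.'' Your commutative-square factorization $\phi\circ j_{\cyl}=j\circ\phi'$ and the resulting identity $j^\ast_{\cyl}\circ\phi^\ast=(\phi')^\ast\circ j^\ast$ is precisely the intended verification.
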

Let $\phi_s,$ $s\in \real,$ be a smooth one-parameter family of
diffeomorphisms $\phi_s: N\rightarrow N$ such that
$\phi_0 = \id_N$,
$\phi_s (E) \subset E$ for all $s$, and
$\phi_1 ((-2,1)\times \partial M) =E.$
By Lemma \ref{lem.phistarrel}, $\phi$ induces a map
$\phi^\ast_{\rel}: \Omega^\bullet_{\rel} (N) 
  \rightarrow \Omega^\bullet_{\rel} (N\times I).$

\begin{lemma} \label{lem.phionestarhtpcid}
The map $\phi^\ast_1: \Omega^\bullet_{\rel} (N) \rightarrow
\Omega^\bullet_{\rel} (N)$ is homotopic to the identity.
\end{lemma}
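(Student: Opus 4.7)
The plan is to build an explicit chain homotopy out of the tools already assembled in this subsection. The family $\phi_s$ defines a smooth map $\phi : N \times I \to N$ by $\phi(x,s) = \phi_s(x)$, and the hypothesis $\phi_s(E) \subset E$ for all $s$ is exactly the condition $\phi(E \times I) \subset E$ needed to invoke Lemma \ref{lem.phistarrel}. Thus $\phi$ induces a cochain map
\[ \phi^\ast_{\rel}: \Omega^\bullet_{\rel} (N) \longrightarrow \Omega^\bullet_{\rel} (N \times I). \]
Composing with the homotopy operator $K_{\rel}$ from Lemma \ref{lem.existskrel}, I would define
\[ L = K_{\rel} \circ \phi^\ast_{\rel}: \Omega^\bullet_{\rel} (N) \longrightarrow \Omega^{\bullet - 1}_{\rel} (N). \]

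The verification that $L$ is the desired chain homotopy is then a direct computation. For $\omega \in \Omega^\bullet_{\rel}(N)$, the fact that $\phi^\ast_{\rel}$ commutes with the differential and the defining property of $K_{\rel}$ give
\[ dL(\omega) + Ld(\omega) = dK_{\rel}(\phi^\ast_{\rel} \omega) + K_{\rel} d(\phi^\ast_{\rel} \omega) = i^\ast_{1,\rel}(\phi^\ast_{\rel} \omega) - i^\ast_{0,\rel}(\phi^\ast_{\rel} \omega). \]
The one remaining step is to identify the right-hand side: since $\phi \circ i_{s,N} = \phi_s$, we have $i^\ast_{s,N} \phi^\ast = \phi_s^\ast$ on $\Omega^\bullet(N)$, and this identity restricts to $i^\ast_{s,\rel} \phi^\ast_{\rel} = \phi_s^\ast$ on $\Omega^\bullet_{\rel}(N)$. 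Using $\phi_0 = \id_N$, the right-hand side becomes $\phi_1^\ast \omega - \omega$, as required.

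There is no real obstacle here; the only point deserving a brief check is that $\phi_s^\ast$ actually preserves $\Omega^\bullet_{\rel}(N)$ for each individual $s$, but this is immediate from $\phi_s(E) \subset E$ (equivalently, it is the composition of $\phi^\ast_{\rel}$ with $i^\ast_{s,\rel}$, both of which we have already shown land in the rel subcomplex). The hypothesis $\phi_1((-2,1) \times \partial M) = E$ plays no role in this particular lemma; it will presumably be needed later to combine this homotopy with the statements of Section \ref{sec.bh} in the proof of Proposition \ref{prop.rel}.
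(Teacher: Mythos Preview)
Your proof is correct and follows essentially the same approach as the paper: define $L = K_{\rel} \circ \phi^\ast_{\rel}$ using Lemmas \ref{lem.existskrel} and \ref{lem.phistarrel}, then compute $dL + Ld = \phi_1^\ast - \id$. Your additional remarks (that $\phi_s^\ast$ preserves $\Omega^\bullet_{\rel}(N)$, and that the condition on $\phi_1((-2,1)\times \partial M)$ is only used later in Proposition \ref{prop.rel}) are accurate and helpful.
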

\begin{proof}
Composing the homotopy operator $K_{\rel}$ of Lemma \ref{lem.existskrel}
with $\phi^\ast_{\rel},$ we obtain a map
$L = K_{\rel} \circ \phi^\ast_{\rel}: \Omega^\bullet_{\rel} (N)
 \longrightarrow \Omega^{\bullet -1}_{\rel} (N)$
such that for $\omega \in \Omega^\bullet_{\rel} (N),$
\begin{eqnarray*}
Ld(\omega) + dL(\omega) & = &
  K_{\rel} \phi^\ast_{\rel} d(\omega) + d K_{\rel} \phi^\ast_{\rel}
  (\omega) 
 =  K_{\rel} d (\phi^\ast_{\rel} \omega) + d K_{\rel} (\phi^\ast_{\rel}
  \omega) \\
& = & i^\ast_{1,\rel} (\phi^\ast_{\rel} \omega) - 
        i^\ast_{0,\rel} (\phi^\ast_{\rel} \omega) 
 =  \phi^\ast_1 (\omega) - \omega.
\end{eqnarray*}
Thus $L$ is a cochain homotopy between $\phi^\ast_1$ and the
identity.
\end{proof}

\begin{prop} \label{prop.rel}
The inclusion $\Omega^\bullet_{\rel} (N) \subset \Omega^\bullet_c (N)$
induces an isomorphism
\[ H^\bullet (\Omega^\bullet_{\rel} (N)) \cong H^\bullet_c (N), \]
that is, $\Omega^\bullet_{\rel} (N)$ computes the cohomology 
with compact supports of $N$.
\end{prop}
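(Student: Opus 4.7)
The plan is to use the squeezing diffeomorphism $\phi_1$ to push every compactly supported cohomology class into the subcomplex $\Omega^\bullet_\rel(N)$ without changing it. Two ingredients are needed: (i) an analog of Lemma \ref{lem.phionestarhtpcid} on $\Omega^\bullet_c(N)$, giving $\phi_1^* \simeq \id$ there; and (ii) the geometric shrinking statement, that the iterates $\phi_1^k(E)$ contract into arbitrarily small collar neighborhoods of the missing boundary $\partial M$.

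For (i), the entire chain of arguments culminating in Lemma \ref{lem.phionestarhtpcid} goes through verbatim with the subscript $\rel$ replaced by $c$: the operator $K_N$ restricts to $K_c\colon \Omega^\bullet_c(N\times I)\to \Omega^{\bullet-1}_c(N)$ because $\pi_N(\supp \omega)$ is compact whenever $\omega \in \Omega^\bullet_c(N \times I)$; the pullback $\phi^*_c\colon \Omega^\bullet_c(N)\to \Omega^\bullet_c(N\times I)$ is well-defined because $\phi|_{N\times[0,1]}$ is proper; and the composite $L_c = K_c \circ \phi^*_c$ is a cochain homotopy from $\phi_1^*$ to the identity on $\Omega^\bullet_c(N)$. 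Iterating yields $(\phi_1^k)^*\simeq\id$ on both $\Omega^\bullet_c(N)$ and $\Omega^\bullet_\rel(N)$ for every $k\geq 1$. For (ii), since $\phi_1$ strictly contracts the large collar $(-2,1) \times \partial M$ onto $E = (-1,1) \times \partial M$, the nested open sets $\phi_1^k(E)$ shrink toward $\partial M$ as $k\to\infty$; any compact $K\subset N$, being bounded away from $\partial M$ in the collar direction, satisfies $\phi_1^k(E)\cap K=\emptyset$ for all sufficiently large $k$.

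Given these ingredients, surjectivity on cohomology is immediate: for closed $\omega\in\Omega^\bullet_c(N)$ with $K = \supp\omega$, picking $k$ with $\phi_1^k(E)\cap K=\emptyset$ gives $\supp((\phi_1^k)^*\omega) = \phi_1^{-k}(K)\subset N\setminus E$, so $(\phi_1^k)^*\omega\in\Omega^\bullet_\rel(N)$, and by (i) this is cohomologous to $\omega$ in $\Omega^\bullet_c(N)$. For injectivity, given $\omega\in\Omega^\bullet_\rel(N)$ with $\omega = d\eta$ for some $\eta\in\Omega^\bullet_c(N)$, the same $k$ (now chosen for $\supp \eta$) produces $(\phi_1^k)^*\eta\in\Omega^\bullet_\rel(N)$ satisfying $d((\phi_1^k)^*\eta) = (\phi_1^k)^*\omega$; using Lemma \ref{lem.phionestarhtpcid} to write $(\phi_1^k)^*\omega = \omega + d\zeta$ with $\zeta\in\Omega^{\bullet-1}_\rel(N)$ then yields $\omega = d((\phi_1^k)^*\eta - \zeta)$ as a $\rel$-exact form.

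The main obstacle I anticipate is justifying the shrinking claim (ii): one must know that $\phi_1$ has no fixed points in the collar, so that its iterates genuinely march $E$ inward toward $\partial M$. This should be arranged by choosing $\phi_s$ to be the time-$s$ flow of a vector field pointing toward $\partial M$ inside the collar, extended by zero outside; if iterating $\phi_1$ proves awkward, one may instead work with $\phi_s$ for arbitrarily large real $s$ in place of $\phi_1^k$, invoking the very same homotopy mechanism.
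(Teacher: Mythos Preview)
Your approach is correct and genuinely different from the paper's. The paper does not iterate $\phi_1$; instead it applies $\phi_1$ once, observing that for $\omega\in\Omega^\bullet_\rel(N)$ the form $\phi_1^*\omega$ vanishes on the \emph{entire} collar $(-2,1)\times\partial M$ (because $\phi_1$ sends that collar into $E$), hence lies in the compactly supported forms on the smaller manifold $N_{<-3/2}=N\setminus([-3/2,1)\times\partial M)$. This yields a factorization
\[
\phi_1^*\colon\ \Omega^\bullet_\rel(N)\ \longrightarrow\ \Omega^\bullet_c(N_{<-3/2})\ \stackrel{\rho}{\hookrightarrow}\ \Omega^\bullet_\rel(N),
\]
so by Lemma~\ref{lem.phionestarhtpcid} the extension map $\rho$ is surjective on cohomology. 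Separately, the long exact sequence in compactly supported cohomology together with $H^\bullet_c([-3/2,1)\times\partial M)=0$ shows that $H^\bullet_c(N_{<-3/2})\to H^\bullet_c(N)$ is an isomorphism; since this map factors through $H^\bullet_\rel(N)$ via $\rho$, the conclusion follows.

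Your route trades the intermediate space and the long exact sequence for repeated squeezing, which is conceptually cleaner and entirely elementary. The cost is that you must impose the shrinking property $\bigcap_k\phi_1^k(E)=\varnothing$, which does \emph{not} follow from the three properties the paper lists for $\phi_s$ (one can build such a $\phi_1$ with a fixed point in $E$). Your proposed fix---taking $\phi_s$ to be the flow of a collar-supported vector field pointing toward $\partial M$---is the right one, and for that you only need $\phi_s(E)\subset E$ for $s\in[0,1]$ plus the shrinking condition; you should drop the paper's third property $\phi_1((-2,1)\times\partial M)=E$, which a genuine flow cannot satisfy (it would force $\phi_s(E)=E$ for all $s$) and which your argument never uses.
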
 
\begin{proof}
Set
$N_{<-3/2} = N - ([-\frac{3}{2},1) \times \partial M)$
and
\[ \Omega^\bullet_{-2,\rel} (N) = \{ \omega \in \Omega^\bullet (N)
  ~|~ \omega|_{(-2,1)\times \partial M} =0 \}. \]
Suppose that $x\in N$ lies in $(-2,1)\times \partial M$ and
$\omega \in \Omega^\bullet_{\rel} (N).$ Then, as
\[ (\phi^\ast_1 \omega)_x (v_1, \ldots, v_p) = \omega_{\phi_1 (x)}
 (\phi_{1\ast} v_1, \ldots, \phi_{1\ast} v_p) \]
and $\phi_1 (x)\in E,$ we have that $\phi^\ast_1 \omega \in
\Omega^\bullet_{-2,\rel} (N).$ Therefore, the map
$\phi^\ast_1: \Omega^\bullet_{\rel} (N) \rightarrow \Omega^\bullet_{\rel}(N)$
of Lemma \ref{lem.phionestarhtpcid} factors as
\[ \phi^\ast_1: \Omega^\bullet_{\rel}(N) \rightarrow
 \Omega^\bullet_{-2,\rel}(N) \hookrightarrow
  \Omega^\bullet_c (N_{<-3/2}) \stackrel{\rho}{\hookrightarrow}
  \Omega^\bullet_{\rel} (N), \]
where $\rho$ is extension by zero.
Let us denote the composition of the first two maps by
$\phi^\ast_{1,c}: \Omega^\bullet_{\rel}(N) \rightarrow
  \Omega^\bullet_c (N_{<-3/2}).$
By Lemma \ref{lem.phionestarhtpcid}, $\rho \circ \phi^\ast_{1,c}$ is
homotopic to $\id_{\Omega^\bullet_{\rel}(N)}$. Thus, the induced
composition on cohomology,
\[ H^\bullet_{\rel} (N) \stackrel{\phi^\ast_{1,c}}{\longrightarrow}
  H^\bullet_c (N_{<-3/2}) \stackrel{\rho}{\longrightarrow}
  H^\bullet_{\rel} (N) \]
is equal to the identity. We deduce that
$\rho: H^\bullet_c (N_{<-3/2}) \twoheadrightarrow
  H^\bullet_{\rel} (N)$
is surjective. Since
$H^\bullet_c (N - N_{<-3/2}) = 
  H^\bullet_c ([-\frac{3}{2},1)\times \partial M) =0,$
the long exact sequence
\[ \cdots \longrightarrow
  H^\bullet_c (N_{<-3/2}) \stackrel{\gamma}{\longrightarrow}
  H^\bullet_c (N) \longrightarrow
  H^\bullet_c (N - N_{<-3/2}) \longrightarrow \cdots \]
implies that the map $\gamma$ induced by the inclusion
$\Omega^\bullet_c (N_{<-3/2}) \subset \Omega^\bullet_c (N)$
(extension by zero) is an isomorphism. 
Let $\alpha: H^\bullet_{\rel} (N) \rightarrow H^\bullet_c (N)$ be the
map induced by the inclusion $\Omega^\bullet_{\rel} (N) \subset
\Omega^\bullet_c (N).$ The inclusion
$\Omega^\bullet_c (N_{<-3/2}) \subset \Omega^\bullet_c (N)$
factors through $\Omega^\bullet_{\rel} (N)$. Thus there is a commutative
diagram
\[ \xymatrix{
H^\bullet_c (N_{<-3/2}) \ar[rd]^{\gamma}_{\cong}
  \ar@{->>}[d]_{\rho} & \\
H^\bullet_{\rel} (N) \ar[r]_\alpha & H^\bullet_c (N). 
} \]
Since $\gamma$ is an isomorphism, $\rho$ is injective, hence an
isomorphism. Thus $\alpha$ is an isomorphism as well.
\end{proof}

\section{A Complex of Multiplicatively Structured Forms on Flat Bundles}
\label{sec.fibharmflat}

Let $F$ be a closed, oriented, Riemannian manifold and $p: E\rightarrow B$
a flat, smooth fiber bundle over the closed smooth base manifold $B^n$ with
fiber $F$. An open cover of an
$n$-manifold is called \emph{good}, if all nonempty finite intersections of
sets in the cover are diffeomorphic to $\real^n$. Every smooth manifold
has a good cover and if the manifold is compact, then the cover can be
chosen to be finite. Let $\mathfrak{U} = \{ U_\alpha \}$ be a finite good open
cover of the base $B$ such that $p$ trivializes with respect to $\mathfrak{U}$.
Let $\{ \phi_\alpha \}$ be a system of local trivializations, that is, the
$\phi_\alpha$ are diffeomorphisms such that
\[ \xymatrix{
p^{-1} (U_\alpha) \ar[rr]^{\phi_\alpha} \ar[rd]_{p|} 
& & U_\alpha \times F \ar[ld]^{\pi_1} \\
& U_\alpha & } \]
commutes for every $\alpha$. Flatness means that the transition functions
\[ \rho_{\beta \alpha} = \phi_\beta| \circ \phi_\alpha|^{-1}:
 (U_\alpha \cap U_\beta)\times F \longrightarrow
  p^{-1} (U_\alpha \cap U_\beta) \longrightarrow 
  (U_\alpha \cap U_\beta)\times F \]
are of the form
$\rho_{\beta \alpha} (t,x) = (t, g_{\beta \alpha} (x)).$
If $X$ is a topological
space, let $\pi_2: X\times F \rightarrow F$ denote the second-factor
projection. Let $V \subset B$ be a $\mathfrak{U}$-small open subset
and suppose that $V\subset U_{\alpha}$.
\begin{defn}
A differential form $\omega \in \Omega^q (p^{-1} (V))$ is called
\emph{$\alpha$-multiplicatively structured}, if it has the form
\[ \omega = \phi^\ast_\alpha \sum_j \pi^\ast_1 \eta_j \wedge
  \pi^\ast_2 \gamma_j,~ 
 \eta_j \in \Omega^\bullet (V),~ \gamma_j \in \Omega^\bullet (F) \]
(finite sums).
\end{defn}

Flatness is crucial for the following basic lemma.
\begin{lemma} \label{lem.abinvariant}
Suppose $V \subset U_\alpha \cap U_\beta.$ Then $\omega$ is
$\alpha$-multiplicatively structured if, and only if, $\omega$ is $\beta$-multiplicatively structured.
\end{lemma}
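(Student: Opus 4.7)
The plan is to transport the $\alpha$-multiplicative expression over to chart $\beta$ by inserting $\phi_\beta^\ast (\phi_\beta^{-1})^\ast$ and using the hypothesis that the transition diffeomorphism $\rho_{\alpha\beta} = \phi_\alpha \circ \phi_\beta^{-1}$ has the split form $\rho_{\alpha\beta}(t,x) = (t, g_{\alpha\beta}(x))$ coming from flatness. The whole point is that this split form makes the two projections $\pi_1,\pi_2$ behave well under $\rho_{\alpha\beta}^\ast$: pullback along $\rho_{\alpha\beta}$ leaves $\pi_1$-pullbacks unchanged and converts $\pi_2$-pullbacks into $\pi_2$-pullbacks of a reparametrized fiber form.

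By symmetry it suffices to prove one direction, say $\alpha \Rightarrow \beta$. Write $\omega = \phi_\alpha^\ast \sum_j \pi_1^\ast \eta_j \wedge \pi_2^\ast \gamma_j$ and insert $(\phi_\beta^{-1})^\ast \phi_\beta^\ast = \id$ to obtain
\[ \omega \;=\; \phi_\beta^\ast \rho_{\alpha\beta}^\ast \sum_j \pi_1^\ast \eta_j \wedge \pi_2^\ast \gamma_j. \]
The identities $\pi_1 \circ \rho_{\alpha\beta} = \pi_1$ and $\pi_2 \circ \rho_{\alpha\beta} = g_{\alpha\beta} \circ \pi_2$ (which are exactly the translation of flatness into statements about projections) give
\[ \rho_{\alpha\beta}^\ast (\pi_1^\ast \eta_j \wedge \pi_2^\ast \gamma_j) \;=\; \pi_1^\ast \eta_j \wedge \pi_2^\ast (g_{\alpha\beta}^\ast \gamma_j). \]
Substituting, $\omega = \phi_\beta^\ast \sum_j \pi_1^\ast \eta_j \wedge \pi_2^\ast (g_{\alpha\beta}^\ast \gamma_j)$, which is a legitimate $\beta$-multiplicatively structured expression since $g_{\alpha\beta}^\ast \gamma_j \in \Omega^\bullet(F)$.

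There is essentially no obstacle here; the only thing one must be careful about is bookkeeping with the direction of the transition function (distinguishing $\rho_{\beta\alpha}$ from $\rho_{\alpha\beta}$) and checking that flatness is really being used in precisely the step $\pi_1 \circ \rho_{\alpha\beta} = \pi_1$. Without flatness, $\rho_{\alpha\beta}$ would mix base and fiber coordinates, so $\rho_{\alpha\beta}^\ast \pi_1^\ast \eta_j$ would pick up fiber components and the resulting form would no longer be expressible as a sum of products $\pi_1^\ast(\cdot) \wedge \pi_2^\ast(\cdot)$ after applying $\phi_\beta^\ast$. This is the conceptual content of the lemma, and the reason why the multiplicative structuring is well-defined precisely for flat bundles.
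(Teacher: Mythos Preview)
Your proof is correct and essentially identical to the paper's: both insert $\phi_\beta^\ast(\phi_\beta^{-1})^\ast$ into the $\alpha$-expression, recognize $(\phi_\beta^{-1})^\ast\phi_\alpha^\ast = \rho_{\alpha\beta}^\ast$, and use the flatness identities $\pi_1\circ\rho_{\alpha\beta}=\pi_1$, $\pi_2\circ\rho_{\alpha\beta}=g_{\alpha\beta}\circ\pi_2$ to conclude. Your closing remark on why flatness is essential is exactly the conceptual point the paper highlights as well.
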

\begin{proof}
The flatness allows us to construct a commutative diagram
\[ \xymatrix{
(U_\alpha \cap U_\beta)\times F \ar[r]^{\rho_{\alpha \beta}}
 \ar[d]_{\pi_2} & (U_\alpha \cap U_\beta)\times F 
 \ar[d]_{\pi_2} \\
F \ar[r]^{g_{\alpha \beta}} & F. } \]
If the form is $\alpha$-multiplicatively structured, then, using the equations
\[ \pi_1 \rho_{\alpha \beta} = \pi_1,~ \pi_2 \rho_{\alpha \beta} =
 g_{\alpha \beta} \pi_2, \]
we derive the \emph{transformation law}
\begin{eqnarray*}
\omega & = & \phi^\ast_\alpha \sum_j \pi^\ast_1 \eta_j \wedge
  \pi^\ast_2 \gamma_j 
 =  \phi^\ast_\beta (\phi^{-1}_\beta)^\ast \phi^\ast_\alpha \sum_j \pi^\ast_1 \eta_j \wedge
  \pi^\ast_2 \gamma_j \\
& = & \phi^\ast_\beta \sum_j \rho^\ast_{\alpha \beta}\pi^\ast_1 \eta_j \wedge
  \rho^\ast_{\alpha \beta} \pi^\ast_2 \gamma_j 
 =  \phi^\ast_\beta \sum_j \pi^\ast_1 \eta_j \wedge
  \pi^\ast_2 (g^\ast_{\alpha \beta} \gamma_j).
\end{eqnarray*}
Thus $\omega$ is $\beta$-multiplicatively structured.
The converse implication follows from symmetry.
\end{proof}
The lemma shows that the property of being
multiplicatively structured over $V$ is invariantly defined, independent
of the choice of $\alpha$ such that $V\subset U_\alpha$. 
We will use the shorthand notation
\[ U_{\alpha_0 \ldots \alpha_k} = U_{\alpha_0} \cap \cdots \cap U_{\alpha_k} \]
for multiple intersections. (Repetitions are allowed.)
Since $\mathfrak{U}$ is a good cover, every
$U_{\alpha_0 \ldots \alpha_k}$ is diffeomorphic to $\real^n,$ $n = \dim B$.
A linear subspace, the subspace of \emph{multiplicatively structured forms}, of $\Omega^q (E)$
is obtained by setting
\[ \Omega^q_{\ms} (B) = \{ \omega \in \Omega^q (E) ~|~ \omega|_{p^{-1} U_\alpha} \text{ is }
   \alpha \text{-multiplicatively structured for all } \alpha \}. \]
The Leibniz rule applied to a term of the form $\pi^\ast_1 \eta \wedge \pi^\ast_2 \gamma$ shows:
\begin{lemma} \label{lem.domfh}
The de Rham differential $d: \Omega^q (E) \rightarrow \Omega^{q+1}
(E)$ restricts to a differential
\[ d: \Omega^q_\ms (B) \longrightarrow \Omega^{q+1}_\ms (B). \]
\end{lemma}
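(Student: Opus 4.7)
The plan is to work locally over each trivializing chart $U_\alpha$ and use the fact that exterior differentiation commutes with smooth pullbacks and satisfies the graded Leibniz rule. Since multiplicative structuredness is a condition imposed chart-by-chart (and by Lemma \ref{lem.abinvariant} is independent of which chart we pick), it will suffice to verify that if $\omega|_{p^{-1}U_\alpha}$ is $\alpha$-multiplicatively structured, so is $(d\omega)|_{p^{-1}U_\alpha}$.

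Write $\omega|_{p^{-1}U_\alpha}=\phi_\alpha^\ast\sum_j \pi_1^\ast\eta_j\wedge\pi_2^\ast\gamma_j$, a finite sum with $\eta_j\in\Omega^\bullet(U_\alpha)$ and $\gamma_j\in\Omega^\bullet(F)$. Because $d$ commutes with the pullback $\phi_\alpha^\ast$, and with the projection pullbacks $\pi_1^\ast$ and $\pi_2^\ast$, the Leibniz rule gives
\[
d\omega|_{p^{-1}U_\alpha}=\phi_\alpha^\ast\sum_j\Bigl(\pi_1^\ast(d\eta_j)\wedge\pi_2^\ast\gamma_j+(-1)^{|\eta_j|}\pi_1^\ast\eta_j\wedge\pi_2^\ast(d\gamma_j)\Bigr).
\]
Each summand on the right is again of the required product form $\phi_\alpha^\ast(\pi_1^\ast(\cdot)\wedge\pi_2^\ast(\cdot))$, with the new base factors $d\eta_j,(-1)^{|\eta_j|}\eta_j\in\Omega^\bullet(U_\alpha)$ and new fiber factors $\gamma_j,d\gamma_j\in\Omega^\bullet(F)$. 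Hence $d\omega$ is $\alpha$-multiplicatively structured on $p^{-1}U_\alpha$.

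Running this argument for every $\alpha$ in the cover $\mathfrak{U}$ shows that $d\omega$ lies in $\Omega^{q+1}_\ms(B)$. Since $d\circ d=0$ on $\Omega^\bullet(E)$, the restriction is automatically a differential on $\Omega^\bullet_\ms(B)$. I do not anticipate any genuine obstacle here: the statement is essentially a verification that the local product form is preserved under the Leibniz rule, and flatness is not needed for this step (it was only needed to make the local definition coherent globally, as in Lemma \ref{lem.abinvariant}).
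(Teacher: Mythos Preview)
Your proof is correct and follows the same approach as the paper: the paper simply states that the Leibniz rule applied to a term $\pi_1^\ast\eta\wedge\pi_2^\ast\gamma$ shows the result, and you have written out exactly that computation in detail.
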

This lemma shows that $\Omega^\bullet_\ms (B) \subset \Omega^\bullet (E)$ is a 
subcomplex. We shall eventually see that this inclusion is a quasi-isomorphism, that is,
induces isomorphisms on cohomology. For any $\alpha$, set
\[ \Omega^\bullet_{\ms} (U_\alpha) = \{ \omega \in \Omega^\bullet (p^{-1} U_\alpha) 
  ~|~ \omega \text{ is } \alpha \text{-multiplicatively structured} \}. \]
Let $r$ denote the obvious restriction map
\[ r: \Omega^\bullet_\ms (B) \longrightarrow \prod_\alpha \Omega^\bullet_\ms (U_\alpha). \]
If $k$ is positive, then we set
\[ \Omega^\bullet_{\ms} (U_{\alpha_0 \ldots \alpha_k}) = \{ \omega \in \Omega^\bullet (p^{-1} U_{\alpha_0 \ldots \alpha_k}) 
  ~|~ \omega \text{ is } \alpha_0 \text{-multiplicatively structured} \}. \]
Lemma \ref{lem.abinvariant} implies that for any $1\leq j \leq k,$
\[ \Omega^\bullet_{\ms} (U_{\alpha_0 \ldots \alpha_k}) = \{ \omega \in \Omega^\bullet (p^{-1} U_{\alpha_0 \ldots \alpha_k}) 
  ~|~ \omega \text{ is } \alpha_j \text{-multiplicatively structured} \}. \]
In particular, if $\sigma$ is any permutation of $0,1,\ldots, k$, then
\[ \Omega^\bullet_\ms (U_{\alpha_{\sigma (0)} \ldots \alpha_{\sigma (k)}}) =
  \Omega^\bullet_\ms (U_{\alpha_0 \ldots \alpha_k}). \]
The components of an element 
\[ \xi \in \prod_{\alpha_0, \ldots, \alpha_k} \Omega^\bullet_\ms (U_{\alpha_0 \ldots
\alpha_k}) \]
will be written as
$\xi_{\alpha_0 \ldots \alpha_k} \in \Omega^\bullet_\ms (U_{\alpha_0 \ldots
\alpha_k}).$ We impose the antisymmetry restriction 
$\xi_{\ldots \alpha_i \ldots \alpha_j \ldots} = - \xi_{\ldots \alpha_j \ldots \alpha_i \ldots}$
upon interchange of two indices. In particular, if $\alpha_0,\ldots, \alpha_k$ contains a repetition,
then $\xi_{\alpha_0 \ldots \alpha_k}=0.$
The difference operator
\[ \delta: \prod \Omega^\bullet (p^{-1} U_{\alpha_0 \ldots \alpha_k}) \longrightarrow
\prod \Omega^\bullet (p^{-1} U_{\alpha_0 \ldots \alpha_{k+1}}), \]
defined by
\[ (\delta \xi)_{\alpha_0 \ldots \alpha_{k+1}} = \sum_{j=0}^{k+1} (-1)^j
 \xi_{\alpha_0 \ldots \hat{\alpha}_j \ldots \alpha_{k+1}}|_{p^{-1}
  U_{\alpha_0 \ldots \alpha_{k+1}}} \]
and satisfying $\delta^2 =0,$ restricts to a difference operator
\[ \delta: \prod \Omega^\bullet_\ms (U_{\alpha_0 \ldots \alpha_k}) \longrightarrow
\prod \Omega^\bullet_\ms (U_{\alpha_0 \ldots \alpha_{k+1}}). \]
Since the de Rham differential $d$ commutes with restriction to open subsets, we have
$d\delta = \delta d$. Thus
\[ C^k (\mathfrak{U}; \Omega^q_\ms) = \prod \Omega^q_\ms (U_{\alpha_0 \ldots
 \alpha_k}) \]
is a double complex with horizontal differential $\delta$ and vertical differential $d$.
The associated simple complex $C^\bullet_\ms (\mathfrak{U})$ has groups
\[ C^j_\ms (\mathfrak{U}) = \bigoplus_{k+q=j} C^k (\mathfrak{U}; \Omega^q_\ms) \]
in degree $j$ and differential $D = \delta + (-1)^k d$ on 
$C^k (\mathfrak{U}; \Omega^q_\ms)$.
We shall refer to the double
complex $(C^\bullet (\mathfrak{U}; \Omega^\bullet_\ms), \delta, d)$ as the
\emph{multiplicatively structured \v{C}ech-de Rham complex}. Let us explicitly record
the following standard tool:
\begin{lemma} \label{lem.ext}
Let $M$ be a smooth manifold, $U\subset M$ an open subset and $\omega \in
\Omega^\bullet (U)$. If $f\in \Omega^0 (M)$ is a function with $\operatorname{supp}
(f)\subset U,$ then
\[ \overline{\omega}(x) = \begin{cases} f(x)\cdot \omega (x), & x\in U \\
0 & x\in M-U \end{cases} \]
defines a smooth form $\overline{\omega} \in \Omega^\bullet (M)$ on all of $M$.
\end{lemma}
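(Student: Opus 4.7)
The plan is to verify smoothness locally by exhibiting an open cover of $M$ on each element of which $\overline{\omega}$ is manifestly smooth. Two natural open sets come to mind: $U$ itself, and $V := M \setminus \operatorname{supp}(f)$. Since $\operatorname{supp}(f)$ is closed in $M$, the set $V$ is open; and because $\operatorname{supp}(f) \subset U$ by hypothesis, the complement $M \setminus U$ is contained in $V$, so $U \cup V = M$.

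First I would check smoothness on $U$: there, $\overline{\omega}$ coincides with $f|_U \cdot \omega$, which is the wedge product of a smooth function and a smooth form on a smooth manifold, hence smooth. Next I would check smoothness on $V$: by definition of the support, $f$ vanishes identically on $V$, so $f \cdot \omega$ vanishes on $U \cap V$; combined with the defining value $0$ on $(M\setminus U)\cap V = M\setminus U$, we see $\overline{\omega}$ is identically zero on $V$, hence smooth there.

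Finally, since smoothness is a local property and the two descriptions of $\overline{\omega}$ agree on the overlap $U\cap V$ (both equal $f\omega = 0$ there), the two smooth local forms patch to give a globally smooth form $\overline{\omega}\in\Omega^\bullet(M)$. There is no genuine obstacle; the only point requiring care is the inclusion $\operatorname{supp}(f)\subset U$, which is precisely what ensures that $\{U,V\}$ covers $M$ and hence that the argument applies at points of $M\setminus U$.
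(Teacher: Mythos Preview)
Your proof is correct and is the standard argument. The paper itself does not prove this lemma; it merely records it as a ``standard tool'' and states it without proof, so there is nothing further to compare.
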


\begin{lemma} \label{lem.gmv} (Generalized Mayer-Vietoris sequence.)
The sequence
\[ 0 \longrightarrow \Omega^\bullet_\ms (B)
 \stackrel{r}{\longrightarrow} C^0 (\mathfrak{U}; \Omega^\bullet_\ms)
 \stackrel{\delta}{\longrightarrow} C^1 (\mathfrak{U}; \Omega^\bullet_\ms)
 \stackrel{\delta}{\longrightarrow} C^2 (\mathfrak{U}; \Omega^\bullet_\ms)
 \stackrel{\delta}{\longrightarrow} \cdots \]
is exact.
\end{lemma}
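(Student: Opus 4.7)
The plan is to run the standard proof of the generalized Mayer--Vietoris sequence (using a partition of unity subordinate to $\mathfrak{U}$) and, at each step, verify that we do not leave the class of multiplicatively structured forms. Exactness at $\Omega^\bullet_\ms(B)$ is immediate from the definition of $r$, and exactness at $C^0(\mathfrak{U};\Omega^\bullet_\ms)$ follows by local gluing: if $\xi=(\xi_\alpha)$ is a $0$-cocycle, the equalities $\xi_\alpha=\xi_\beta$ on $p^{-1}(U_\alpha\cap U_\beta)$ glue the $\xi_\alpha$ to a form $\omega\in\Omega^\bullet(E)$, and since being multiplicatively structured is a local property (by Lemma \ref{lem.abinvariant}), $\omega\in\Omega^\bullet_\ms(B)$ with $r\omega=\xi$.

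For $k\geq 1$, I will construct a contracting homotopy. Pick a smooth partition of unity $\{\rho_\alpha\}$ on $B$ with $\supp(\rho_\alpha)\subset U_\alpha$ and define
\[ K: C^k(\mathfrak{U};\Omega^\bullet_\ms)\longrightarrow C^{k-1}(\mathfrak{U};\Omega^\bullet_\ms), \qquad (K\xi)_{\alpha_0\ldots\alpha_{k-1}}=\sum_\alpha \overline{p^\ast\rho_\alpha\cdot \xi_{\alpha\alpha_0\ldots\alpha_{k-1}}}, \]
where the overline denotes extension by zero from $p^{-1}(U_{\alpha\alpha_0\ldots\alpha_{k-1}})$ to $p^{-1}(U_{\alpha_0\ldots\alpha_{k-1}})$, which is legitimate by Lemma \ref{lem.ext} because $p^\ast\rho_\alpha$ is supported in $p^{-1}(U_\alpha)$. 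The same computation as in the ordinary de Rham setting yields $\delta K+K\delta=\mathrm{id}$, so any $k$-cocycle $\xi$ satisfies $\xi=\delta(K\xi)$, giving exactness at $C^k(\mathfrak{U};\Omega^\bullet_\ms)$ for $k\geq 1$.

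The crucial step (and the main obstacle) is verifying that $K\xi$ lies in $\Omega^\bullet_\ms$, i.e.\ that multiplying by $p^\ast\rho_\alpha$ and extending by zero preserves the multiplicative structure. Here the flatness of $p$ is essential. Fix indices $\alpha_0,\ldots,\alpha_{k-1}$ and trivialize over $p^{-1}(U_{\alpha_0})$ via $\phi_{\alpha_0}$. By Lemma \ref{lem.abinvariant}, $\xi_{\alpha\alpha_0\ldots\alpha_{k-1}}$ is $\alpha_0$-multiplicatively structured, say
\[ \xi_{\alpha\alpha_0\ldots\alpha_{k-1}}=\phi_{\alpha_0}^\ast \sum_j \pi_1^\ast\eta_j\wedge\pi_2^\ast\gamma_j, \qquad \eta_j\in\Omega^\bullet(U_{\alpha\alpha_0\ldots\alpha_{k-1}}). \]
On $p^{-1}(U_{\alpha_0})$ we have $p=\pi_1\circ\phi_{\alpha_0}$, hence $p^\ast\rho_\alpha=\phi_{\alpha_0}^\ast\pi_1^\ast(\rho_\alpha|_{U_{\alpha_0}})$, and therefore
\[ p^\ast\rho_\alpha\cdot\xi_{\alpha\alpha_0\ldots\alpha_{k-1}}=\phi_{\alpha_0}^\ast\sum_j \pi_1^\ast(\rho_\alpha\eta_j)\wedge\pi_2^\ast\gamma_j. \]
Each $\rho_\alpha\eta_j$ extends smoothly by zero to all of $U_{\alpha_0\ldots\alpha_{k-1}}$ (again by Lemma \ref{lem.ext}), so the extended form is $\alpha_0$-multiplicatively structured on $p^{-1}(U_{\alpha_0\ldots\alpha_{k-1}})$. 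Summing over $\alpha$ and invoking Lemma \ref{lem.abinvariant} once more shows that $K\xi$ is multiplicatively structured relative to every trivializing chart, as required.

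Finally, the cosimplicial identity $\delta K+K\delta=\mathrm{id}$ is a direct calculation: using $\sum_\alpha \rho_\alpha=1$ (and hence $\sum_\alpha p^\ast\rho_\alpha=1$) together with the standard telescoping of the \v{C}ech coboundary, one matches the $j=0$ term in $K\delta\xi$ with $\xi_{\alpha_0\ldots\alpha_k}$ and cancels the remaining terms against $\delta K\xi$ index by index. Since all manipulations take place after the multiplicative verification above, the homotopy lives in $\Omega^\bullet_\ms$, completing the proof of exactness of the sequence.
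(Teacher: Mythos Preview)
Your proof is correct and follows essentially the same approach as the paper: use a partition of unity $\{\rho_\alpha\}$ on $B$ pulled back via $p$, and verify that multiplication by $p^\ast\rho_\alpha$ followed by extension by zero preserves the multiplicatively structured form, via the identity $p^\ast\rho_\alpha\cdot\phi_{\alpha_0}^\ast(\pi_1^\ast\eta_j\wedge\pi_2^\ast\gamma_j)=\phi_{\alpha_0}^\ast(\pi_1^\ast(\rho_\alpha\eta_j)\wedge\pi_2^\ast\gamma_j)$. The only difference is cosmetic: you package the argument as a full contracting homotopy $K$ with $\delta K+K\delta=\id$, whereas the paper takes a cocycle $\omega$, defines $\tau=K\omega$, and checks $\delta\tau=\omega$ directly using the cocycle relation; the underlying computation is identical.
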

\begin{proof}
The injectivity of $r$ is clear.
If $\{ \omega_{\alpha_0} \},$ $\omega_{\alpha_0} \in \Omega^\bullet_\ms (U_{\alpha_0})
\subset \Omega^\bullet (p^{-1} U_{\alpha_0}),$ is a family of forms which agree on
overlaps $p^{-1} (U_{\alpha_0 \alpha_1})$, then there exists a unique global differential
form $\omega \in \Omega^\bullet (E),$ which restricts to $\omega_{\alpha_0}$ on
$p^{-1} (U_{\alpha_0})$ for every $\alpha_0$. By definition of $\Omega^\bullet_\ms (B)$,
$\omega$ actually lies in $\Omega^\bullet_\ms (B)\subset \Omega^\bullet (E)$.
Thus the sequence is exact at $C^0 (\mathfrak{U}; \Omega^\bullet_\ms)$.
Now let $k$ be positive. Let $\{ \rho_\alpha \}$ be a smooth partition of unity on $B$
subordinate to $\mathfrak{U},$ $\supp (\rho_\alpha)\subset U_\alpha$.
The family of inverses $p^{-1}\mathfrak{U} = 
\{ p^{-1} U_\alpha \}$ is an open cover of $E$. The family $\{ \overline{\rho}_\alpha \}$
of functions $\overline{\rho}_\alpha = \rho_\alpha \circ p:E \rightarrow [0,\infty )$ is a 
smooth partition of unity subordinate to $p^{-1}\mathfrak{U}.$
Let $\omega \in C^k (\mathfrak{U}; \Omega^\bullet_\ms)$ be a cocycle,
$\delta \omega =0.$ This implies that
\begin{equation} \label{equ.delcocyc}
0 = (\delta \omega)_{\alpha \alpha_0 \ldots \alpha_k} =
\omega_{\alpha_0 \ldots \alpha_k} + \sum_j (-1)^{j+1}
\omega_{\alpha \alpha_0 \ldots \hat{\alpha}_j \ldots \alpha_k}.
\end{equation}
Applying Lemma \ref{lem.ext} with $M= p^{-1}(U_{\alpha_0 \ldots \alpha_{k-1}}),$
$U = p^{-1}(U_{\alpha \alpha_0 \ldots \alpha_{k-1}}),$ to the form
$\omega_{\alpha \alpha_0 \ldots \alpha_{k-1}} \in \Omega^\bullet (U),$ and taking
$f = \overline{\rho}_\alpha| \in \Omega^0 (M)$ with
\[ \operatorname{supp}(\overline{\rho}_\alpha|) \subset p^{-1}(U_\alpha)\cap M =
 p^{-1}(U_{\alpha \alpha_0 \ldots \alpha_{k-1}}) = U, \]
we receive a smooth form $\overline{\omega}_{\alpha \alpha_0 \ldots \alpha_{k-1}} \in
\Omega^\bullet (p^{-1}U_{\alpha_0 \ldots \alpha_{k-1}}),$ obtained from
$\overline{\rho}_\alpha \cdot \omega_{\alpha \alpha_0 \ldots \alpha_{k-1}}$ by
extension by zero. We shall show that in fact
\[ \overline{\omega}_{\alpha \alpha_0 \ldots \alpha_{k-1}} \in
\Omega^\bullet_\ms (U_{\alpha_0 \ldots \alpha_{k-1}}) \subset
\Omega^\bullet (p^{-1}U_{\alpha_0 \ldots \alpha_{k-1}}). \]
Since $\omega_{\alpha \alpha_0 \ldots \alpha_{k-1}} \in \Omega^\bullet_\ms
(U_{\alpha \alpha_0 \ldots \alpha_{k-1}}),$ it is $\alpha$-multiplicatively structured and thus, by
Lemma \ref{lem.abinvariant}, $\alpha_0$-multiplicatively structured. Hence it has the form
\[ \omega_{\alpha \alpha_0 \ldots \alpha_{k-1}} = \phi^\ast_{\alpha_0}
 \sum_j \pi^\ast_1 \eta_j \wedge \pi^\ast_2 \gamma_j, \]
for some $\eta_j \in \Omega^\bullet (U_{\alpha \alpha_0 \ldots \alpha_{k-1}}),$
$\gamma_j \in \Omega^\bullet (F).$ Therefore,
\begin{eqnarray*}
\overline{\omega}_{\alpha \alpha_0 \ldots \alpha_{k-1}} & = &
\overline{\rho}_\alpha \cdot \phi^\ast_{\alpha_0} \sum_j 
  \pi^\ast_1 \eta_j \wedge \pi^\ast_2 \gamma_j 
 =  p^\ast (\rho_\alpha) \wedge \phi^\ast_{\alpha_0} \sum_j 
  \pi^\ast_1 \eta_j \wedge \pi^\ast_2 \gamma_j \\
& = & \phi^\ast_{\alpha_0} (\pi^\ast_1 \rho_\alpha) \wedge \phi^\ast_{\alpha_0} \sum_j 
  \pi^\ast_1 \eta_j \wedge \pi^\ast_2 \gamma_j 
 =  \phi^\ast_{\alpha_0} (\pi^\ast_1 \rho_\alpha \wedge \sum_j 
  \pi^\ast_1 \eta_j \wedge \pi^\ast_2 \gamma_j) \\
& = & \phi^\ast_{\alpha_0} \sum_j 
  \pi^\ast_1 \rho_\alpha \wedge \pi^\ast_1 \eta_j \wedge \pi^\ast_2 \gamma_j 
 =  \phi^\ast_{\alpha_0} \sum_j 
  \pi^\ast_1 (\rho_\alpha \eta_j) \wedge \pi^\ast_2 \gamma_j.
\end{eqnarray*}
Again by Lemma \ref{lem.ext}, extension by zero allows us to regard $\rho_\alpha \eta_j$ as
a smooth form on $U_{\alpha_0 \ldots \alpha_{k-1}}$. We have thus exhibited
$\overline{\omega}_{\alpha \alpha_0 \ldots \alpha_{k-1}}$ as an element of
$\Omega^\bullet_\ms (U_{\alpha_0 \ldots \alpha_{k-1}})$. Define an element $\tau \in
C^{k-1} (\mathfrak{U}; \Omega^\bullet_\ms)$ by
\[ \tau_{\alpha_0 \ldots \alpha_{k-1}} = \sum_{\alpha} \overline{\omega}_{\alpha
\alpha_0 \ldots \alpha_{k-1}} \in \Omega^\bullet_\ms (U_{\alpha_0 \ldots \alpha_{k-1}}). \]
The calculation
\begin{eqnarray*}
(\delta \tau)_{\alpha_0 \ldots \alpha_k}
& = & \sum_j (-1)^j \tau_{\alpha_0 \ldots \hat{\alpha}_j \ldots \alpha_k} 
 =  \sum_j (-1)^j \sum_\alpha \overline{\omega}_{\alpha \alpha_0 \ldots 
  \hat{\alpha}_j \ldots \alpha_k} \\
& = & \sum_j (-1)^j \sum_\alpha \overline{\rho}_\alpha  \omega_{\alpha \alpha_0 \ldots 
  \hat{\alpha}_j \ldots \alpha_k} 
 =  \sum_\alpha \overline{\rho}_\alpha \sum_j (-1)^j \omega_{\alpha \alpha_0 \ldots 
  \hat{\alpha}_j \ldots \alpha_k} \\
& = & \sum_\alpha \overline{\rho}_\alpha \cdot \omega_{\alpha_0 \ldots 
  \alpha_k} \hspace{1cm} \text{ (by (\ref{equ.delcocyc}))} \\
& = & \omega_{\alpha_0 \ldots \alpha_k}
\end{eqnarray*}
shows that $\delta \tau = \omega.$ Since $\delta^2 =0,$ the exactness of the $\delta$-sequence
follows.
\end{proof}
We recall a fundamental fact about double complexes.
\begin{prop} \label{prop.rowex}
If all the rows of an augmented double complex are exact, then the augmentation map
induces an isomorphism from the cohomology of the augmentation column to the
cohomology of the simple complex associated to the double complex.
\end{prop}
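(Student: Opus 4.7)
The plan is to run the standard double complex staircase/zig-zag argument, which is the hands-on version of the first spectral sequence of the double complex (filter by columns, use row-exactness to see that $E_1$ is concentrated in the zeroth column where it equals $A^q$; the sequence then collapses at $E_2 = H^\bullet(A^\bullet, d)$, with edge map induced by $r$). Since spectral sequences have not been set up earlier in the paper, I would give a direct staircase proof. Write the double complex as $C^{p,q}$ with row differential $\delta$, column differential $d$, and total differential $D = \delta + (-1)^p d$ on $C^{p,q}$. The hypothesis reads: for every $q$, the augmented row
\[ 0 \longrightarrow A^q \xrightarrow{\;r\;} C^{0,q} \xrightarrow{\;\delta\;} C^{1,q} \xrightarrow{\;\delta\;} C^{2,q} \longrightarrow \cdots \]
is exact.

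For surjectivity of $r^\ast$ on cohomology, I would start with a $D$-cocycle $\xi = \sum_{p+q=n} \xi^{p,q}$ and let $p_0$ be the largest column index with $\xi^{p_0, n-p_0} \neq 0$. If $p_0 > 0$, the $(p_0+1, n-p_0)$-component of $D\xi = 0$ reads $\delta \xi^{p_0, n-p_0} = 0$, so row-exactness produces $\eta \in C^{p_0 - 1, n-p_0}$ with $\delta \eta = \xi^{p_0, n-p_0}$. Replacing $\xi$ by $\xi - D(\pm \eta)$, with sign chosen to compensate the $(-1)^{p_0-1}$ in $D$, kills the component in column $p_0$, and the added $D$-coboundary has top column index $p_0$, so the largest column of the new representative strictly decreases. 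Iterating drives $\xi$ into column zero; then $D\xi^{0,n} = 0$ splits into $\delta \xi^{0,n} = 0$ and $d\xi^{0,n} = 0$, whence row-exactness gives a unique $a \in A^n$ with $r(a) = \xi^{0,n}$, and injectivity of $r$ together with $r(da) = d\,r(a) = d \xi^{0,n} = 0$ yields $da = 0$.

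For injectivity of $r^\ast$, given $a \in A^n$ with $da = 0$ and $r(a) = D\tau$ for some $\tau \in \operatorname{Tot}^{n-1}(C)$, I would apply the same staircase to replace $\tau$ by $\tau - D(\cdot)$ and push it into column zero while preserving $D\tau = r(a)$. Once $\tau = \tau^{0,n-1}$, matching bidegrees in $D\tau = r(a)$ forces $\delta \tau^{0,n-1} = 0$, so $\tau^{0,n-1} = r(b)$ for a unique $b \in A^{n-1}$, and then $r(a) = d\,r(b) = r(db)$, giving $a = db$ by injectivity of $r$. The only real obstacle is the sign and index bookkeeping: one must confirm that for $\eta \in C^{p_0-1, n-p_0}$ the coboundary $D\eta$ has top column index equal to $p_0$ rather than $p_0+1$, so that the descent terminates. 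This is immediate from the bidegrees of $\delta\eta$ and $d\eta$, and everything else is forced.
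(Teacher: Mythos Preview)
The paper does not actually prove this proposition: it is introduced with ``We recall a fundamental fact about double complexes'' and then immediately applied, so there is nothing to compare against. Your staircase argument is the standard direct proof (equivalently, the column-filtration spectral sequence argument you mention) and it is correct; the sign and bidegree bookkeeping you flag is handled exactly as you describe, and the injectivity step works because $D\tau = r(a)$ lives entirely in column zero, so the top column component of $\tau$ is again $\delta$-closed whenever its column index is positive.
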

This fact is applied in showing:
\begin{prop} \label{prop.simp1}
The restriction map $r: \Omega^\bullet_\ms (B) \rightarrow
C^0 (\mathfrak{U}; \Omega^\bullet_\ms)$ induces an isomorphism
\[ r^\ast: H^\bullet (\Omega^\bullet_\ms (B)) \stackrel{\cong}{\longrightarrow}
H^\bullet (C^\bullet_\ms (\mathfrak{U}), D). \]
\end{prop}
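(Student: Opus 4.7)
The plan is to apply Proposition \ref{prop.rowex} to the augmented double complex obtained by prepending $\Omega^\bullet_\ms (B)$ as the augmentation column of $(C^\bullet (\mathfrak{U}; \Omega^\bullet_\ms), \delta, d)$, with augmentation map $r$ (horizontal) and de Rham differential $d$ (vertical). The augmentation column, equipped with $d$, is exactly the complex $(\Omega^\bullet_\ms (B), d)$, so its cohomology is $H^\bullet (\Omega^\bullet_\ms (B))$, which is the left-hand side of the claim. Similarly, the total complex of the non-augmented double complex is by definition $(C^\bullet_\ms (\mathfrak{U}), D)$, so its cohomology is the right-hand side.

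The main step is verifying that every row of the augmented double complex is exact. For each fixed form-degree $q\ge 0$, the $q$-th row reads
\[ 0 \longrightarrow \Omega^q_\ms (B) \stackrel{r}{\longrightarrow}
 C^0 (\mathfrak{U}; \Omega^q_\ms) \stackrel{\delta}{\longrightarrow}
 C^1 (\mathfrak{U}; \Omega^q_\ms) \stackrel{\delta}{\longrightarrow} \cdots. \]
This is almost Lemma \ref{lem.gmv}, except we must check it holds degreewise. Inspection of the proof of that lemma shows this is automatic: the gluing argument at $C^0$, the \v{C}ech difference operator $\delta$, and the partition-of-unity construction $\tau_{\alpha_0 \ldots \alpha_{k-1}} = \sum_\alpha \overline{\omega}_{\alpha \alpha_0 \ldots \alpha_{k-1}}$ all preserve form-degree, since they only involve restriction, extension by zero, and wedging with the smooth $0$-forms $\overline{\rho}_\alpha$. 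Hence the exactness proved in Lemma \ref{lem.gmv} is exactness in each fixed degree $q$, and every row of the augmented double complex is exact.

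With row exactness in hand, Proposition \ref{prop.rowex} directly delivers the desired isomorphism
\[ r^\ast: H^\bullet (\Omega^\bullet_\ms (B)) \stackrel{\cong}{\longrightarrow} H^\bullet (C^\bullet_\ms (\mathfrak{U}), D). \]
There is no essential obstacle: the substantive work has already been done in the generalized Mayer-Vietoris Lemma \ref{lem.gmv}, and Proposition \ref{prop.simp1} is a formal consequence of it together with the standard principle encoded in Proposition \ref{prop.rowex}. The only minor verification required — that Lemma \ref{lem.gmv}'s exactness holds in each form-degree separately — is routine from the explicit nature of the maps involved.
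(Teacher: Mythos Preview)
Your proof is correct and follows essentially the same approach as the paper: view $r$ as an augmentation of the double complex $C^\bullet(\mathfrak{U};\Omega^\bullet_\ms)$, invoke Lemma \ref{lem.gmv} for row exactness, and apply Proposition \ref{prop.rowex}. The only difference is that you make explicit the (routine) observation that the Mayer--Vietoris exactness holds in each form-degree separately, which the paper leaves implicit.
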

\begin{proof}
The map $r$ makes $C^\bullet (\mathfrak{U}; \Omega^\bullet_\ms)$ into an
augmented double complex. By the generalized Mayer-Vietoris sequence, Lemma \ref{lem.gmv},
all rows of this augmented complex are exact. According to Proposition \ref{prop.rowex},
$r^\ast$ is an isomorphism.
\end{proof}

Let us recall next that the double complex $(C^\bullet (p^{-1} \mathfrak{U}; \Omega^\bullet),
\delta, d)$ given by
\[ C^k (p^{-1} \mathfrak{U}; \Omega^q) = \prod \Omega^q (p^{-1}
  U_{\alpha_0 \ldots \alpha_k}) \]
can be used to compute the cohomology of the total space $E$. The restriction map
\[ \overline{r}: \Omega^\bullet (E) \longrightarrow \prod_\alpha \Omega^\bullet
 (p^{-1} U_\alpha) = C^0 (p^{-1} \mathfrak{U}; \Omega^\bullet) \]
makes $C^\bullet (p^{-1} \mathfrak{U}; \Omega^\bullet)$ into an augmented
double complex. By the standard generalized Mayer-Vietoris sequence, 
\cite{botttu}, the rows of this augmented double complex are exact. From
Proposition \ref{prop.rowex}, we thus deduce:
\begin{prop} \label{prop.simp2}
The restriction map $\overline{r}: \Omega^\bullet (E) \rightarrow
C^0 (p^{-1}\mathfrak{U}; \Omega^\bullet)$ induces an isomorphism
\[ \overline{r}^\ast: H^\bullet (E) = H^\bullet (\Omega^\bullet (E)) \stackrel{\cong}{\longrightarrow}
H^\bullet (C^\bullet (p^{-1}\mathfrak{U}), D), \]
where $(C^\bullet (p^{-1}\mathfrak{U}), D)$ is the simple complex of
$(C^\bullet (p^{-1}\mathfrak{U}; \Omega^\bullet), \delta, d)$.
\end{prop}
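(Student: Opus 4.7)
The plan is to mirror exactly the strategy used for Proposition \ref{prop.simp1}, but without the multiplicatively-structured constraint, which makes things easier rather than harder.

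First, I would observe that $\overline{r}$ extends the double complex $(C^\bullet(p^{-1}\mathfrak{U}; \Omega^\bullet), \delta, d)$ to an augmented double complex, with $\Omega^\bullet(E)$ as the $(-1)$-st column. The sign convention for $D = \delta + (-1)^k d$ and the compatibility $d\delta = \delta d$ follow exactly as in the multiplicatively structured case, since restriction commutes with $d$ and with the Čech differential $\delta$.

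Next, I would verify exactness of each augmented row
\[ 0 \longrightarrow \Omega^q(E) \stackrel{\overline{r}}{\longrightarrow} C^0(p^{-1}\mathfrak{U}; \Omega^q) \stackrel{\delta}{\longrightarrow} C^1(p^{-1}\mathfrak{U}; \Omega^q) \stackrel{\delta}{\longrightarrow} \cdots \]
This is the ordinary generalized Mayer-Vietoris sequence for the open cover $p^{-1}\mathfrak{U} = \{p^{-1}U_\alpha\}$ of $E$. Injectivity of $\overline{r}$ and exactness at $C^0$ are formal (a form on $E$ is determined by, and can be assembled from, its restrictions to an open cover). For exactness at $C^k$ with $k \geq 1$, I would pull back a smooth partition of unity $\{\rho_\alpha\}$ on $B$ subordinate to $\mathfrak{U}$ to obtain $\{\overline{\rho}_\alpha = \rho_\alpha \circ p\}$, a smooth partition of unity on $E$ subordinate to $p^{-1}\mathfrak{U}$, and then define, for a cocycle $\omega \in C^k(p^{-1}\mathfrak{U}; \Omega^q)$, a preimage $\tau \in C^{k-1}(p^{-1}\mathfrak{U}; \Omega^q)$ by
\[ \tau_{\alpha_0 \ldots \alpha_{k-1}} = \sum_\alpha \overline{\rho}_\alpha \cdot \omega_{\alpha \alpha_0 \ldots \alpha_{k-1}}, \]
where each summand is extended by zero outside $p^{-1}U_\alpha$ using Lemma \ref{lem.ext}. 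The verification $\delta\tau = \omega$ is identical to the corresponding computation in the proof of Lemma \ref{lem.gmv}, using the cocycle identity. This is the standard argument reproduced in Bott-Tu, and it is in fact strictly simpler than Lemma \ref{lem.gmv} because there is no multiplicative structure to preserve.

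Finally, with row exactness established, I would invoke Proposition \ref{prop.rowex}: the augmentation map $\overline{r}$ induces an isomorphism from the cohomology of the augmentation column $(\Omega^\bullet(E), d)$ onto the cohomology of the simple complex $(C^\bullet(p^{-1}\mathfrak{U}), D)$ associated to the double complex. Since $H^\bullet(\Omega^\bullet(E)) = H^\bullet(E)$ by de Rham's theorem (which is how we are defining $H^\bullet(E)$ here), this yields the stated isomorphism $\overline{r}^\ast$. There is no real obstacle; the one technical point is confirming that $\{\overline{\rho}_\alpha\}$ is genuinely a locally finite partition of unity on $E$, which holds because $p$ is a proper submersion when restricted to suitable compact pieces (or, more directly, because local finiteness pulls back under any continuous map and the pulled-back functions still sum to $1$).
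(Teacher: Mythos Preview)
Your proposal is correct and follows essentially the same approach as the paper: the paper simply cites the standard generalized Mayer-Vietoris sequence from Bott--Tu to obtain row exactness and then invokes Proposition~\ref{prop.rowex}, while you spell out that Bott--Tu argument explicitly via the pulled-back partition of unity. There is no substantive difference.
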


Regarding $\real^n \times F$ as a trivial fiber bundle over $\real^n$ with projection $\pi_1$, the
multiplicatively structured complex $\Omega^\bullet_\ms (\real^n)$ is defined as
\[ \Omega^\bullet_\ms (\real^n) = \{ \omega \in \Omega^\bullet (\real^n \times F) ~|~
 \omega = \sum_j \pi^\ast_1 \eta_j \wedge \pi^\ast_2 \gamma_j,~
 \eta_j \in \Omega^\bullet (\real^n),~ \gamma_j \in \Omega^\bullet (F) \}. \]
Let $s: \real^{n-1} \hookrightarrow \real \times \real^{n-1} = \real^n$ be the standard inclusion
$s(u) = (0,u),$ $u\in \real^{n-1}.$ Let $q: \real^n = \real \times \real^{n-1} 
\rightarrow \real^{n-1}$
be the standard projection $q(t,u) =u,$ so that
$qs = \id_{\real^{n-1}}.$
Set
\[ S = s\times \id_F: \real^{n-1} \times F \hookrightarrow \real^n \times F,~ 
 Q = q\times \id_F: \real^{n} \times F \rightarrow \real^{n-1} \times F \]
so that
$QS = \id_{\real^{n-1} \times F}.$
The equations
\[ \pi_1 \circ S = s\circ \pi_1,~ \pi_2 \circ S = \pi_2,~
\pi_1 \circ Q = q\circ \pi_1,~ \pi_2 \circ Q = \pi_2 \]
hold. The induced map
$S^\ast: \Omega^\bullet (\real^n \times F) \rightarrow \Omega^\bullet (\real^{n-1} \times F)$
restricts to a map
\[ S^\ast: \Omega^\bullet_\ms (\real^n) \rightarrow \Omega^\bullet_\ms (\real^{n-1}), \]
since
$S^\ast (\pi^\ast_1 \eta \wedge \pi^\ast_2 \gamma) = S^\ast \pi^\ast_1 \eta 
\wedge S^\ast \pi^\ast_2 \gamma = \pi^\ast_1 (s^\ast \eta)\wedge \pi^\ast_2 \gamma,$
$s^\ast \eta \in \Omega^\bullet (\real^{n-1}),$ $\gamma \in \Omega^\bullet (F)$. The induced
map $Q^\ast: \Omega^\bullet (\real^{n-1} \times F) \rightarrow \Omega^\bullet (\real^n \times F)$
restricts to a map
\[ Q^\ast: \Omega^\bullet_\ms (\real^{n-1}) \rightarrow \Omega^\bullet_\ms (\real^{n}), \]
since
$Q^\ast (\pi^\ast_1 \eta \wedge \pi^\ast_2 \gamma) = Q^\ast \pi^\ast_1 \eta 
\wedge Q^\ast \pi^\ast_2 \gamma = \pi^\ast_1 (q^\ast \eta)\wedge \pi^\ast_2 \gamma,$
$q^\ast \eta \in \Omega^\bullet (\real^{n}),$ $\gamma \in \Omega^\bullet (F)$.
\begin{prop} \label{prop.rnfrnm1f}
The maps
\begin{equation} \label{equ.restrSQ}
\xymatrix{\Omega^\bullet_{\ms}(\real^n) & \Omega^\bullet_\ms (\real^{n-1}) 
\ar@<1ex>[l]^{Q^\ast}  \ar@<1ex>[l];[]^{S^\ast}
} \end{equation}
are chain homotopy inverses of each other and thus induce mutually inverse
isomorphisms
\[ \xymatrix{H^\bullet (\Omega^\bullet_{\ms}(\real^n)) & H^\bullet (\Omega^\bullet_\ms (\real^{n-1})) 
\ar@<1ex>[l]^{Q^\ast}  \ar@<1ex>[l];[]^{S^\ast}
} \]
on cohomology.
\end{prop}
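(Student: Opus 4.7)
One direction is immediate: since $QS = \id_{\real^{n-1}\times F}$, we have $S^\ast \circ Q^\ast = (QS)^\ast = \id$ on $\Omega^\bullet_\ms(\real^{n-1})$, so in particular $S^\ast$ and $Q^\ast$ are cochain maps. It remains to produce a cochain homotopy between $Q^\ast \circ S^\ast$ and $\id$ on $\Omega^\bullet_\ms(\real^n)$. The geometric idea is entirely standard: the composition $SQ: \real^n \times F \to \real^n \times F$ sends $(t,u,x)$ to $(0,u,x)$, which is linearly homotopic to the identity through the homotopy
\[ \widetilde H: \real^n \times F \times I \longrightarrow \real^n\times F, \quad \widetilde H(t,u,x,s) = (st,u,x), \]
with $\widetilde H_0 = SQ$ and $\widetilde H_1 = \id$. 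Thus $\widetilde H$ is $H\times \id_F$ for the homotopy $H: \real^n\times I \to \real^n$, $H(t,u,s)=(st,u)$.

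The plan is to run the usual Poincar\'e-lemma integration-along-$I$ argument entirely inside the multiplicatively structured subcomplex. Let $K: \Omega^\bullet(\real^n\times F\times I) \to \Omega^{\bullet-1}(\real^n\times F)$ be the standard homotopy operator
\[ K(\omega) = \int_0^1 \bigl( \tfrac{\partial}{\partial s} \righthalfcup \omega \bigr) ds, \]
which satisfies $dK + Kd = \widetilde H_1^\ast - \widetilde H_0^\ast = \id - Q^\ast S^\ast$ after precomposition with $\widetilde H^\ast$. Define
\[ L = K \circ \widetilde H^\ast : \Omega^\bullet(\real^n\times F) \longrightarrow \Omega^{\bullet-1}(\real^n\times F); \]
then on all of $\Omega^\bullet(\real^n\times F)$ one has $dL + Ld = \id - Q^\ast S^\ast$, in complete parallel with Lemmas \ref{lem.kx}, \ref{lem.existskrel} and \ref{lem.phionestarhtpcid}.

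The one step that actually uses the multiplicative structure is the verification that $L$ restricts to a map $\Omega^\bullet_\ms(\real^n) \to \Omega^{\bullet-1}_\ms(\real^n)$; this is the only point requiring genuine checking. Let $\Pi_1: \real^n\times F\times I \to \real^n\times I$ and $\Pi_2: \real^n\times F \times I \to F$ be the evident projections, so that $\pi_1\circ \widetilde H = H\circ \Pi_1$ and $\pi_2\circ \widetilde H = \Pi_2$. For $\omega = \sum_j \pi_1^\ast \eta_j \wedge \pi_2^\ast \gamma_j \in \Omega^\bullet_\ms(\real^n)$,
\[ \widetilde H^\ast \omega = \sum_j \Pi_1^\ast (H^\ast \eta_j) \wedge \Pi_2^\ast \gamma_j. \]
Decompose $H^\ast \eta_j = A_j + B_j \wedge ds$ with $A_j, B_j \in \Omega^\bullet(\real^n\times I)$ containing no $ds$-factor. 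Since $\Pi_2^\ast \gamma_j$ has no $ds$-factor and $\tfrac{\partial}{\partial s}$ is tangent to $I$, contraction kills the $A_j$ piece, and fiber integration in $s$ yields
\[ L(\omega) = K(\widetilde H^\ast \omega) = \sum_j \pm \pi_1^\ast \Bigl(\int_0^1 B_j(\cdot,s)\, ds\Bigr) \wedge \pi_2^\ast \gamma_j, \]
where $\int_0^1 B_j(\cdot,s)\,ds \in \Omega^\bullet(\real^n)$. Hence $L(\omega) \in \Omega^\bullet_\ms(\real^n)$, so $L$ restricts as claimed, establishes the required cochain homotopy, and completes the proof. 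The induced statement on cohomology is then formal.
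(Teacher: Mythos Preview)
Your proof is correct and follows essentially the same strategy as the paper: build the standard Poincar\'e-lemma homotopy operator and verify that it preserves the multiplicative structure because the fiber factor $\gamma_j$ is never touched. The only cosmetic difference is that the paper writes the operator directly as $K(f(t,x)\,dt\wedge Q^\ast\alpha)=\bigl(\int_0^t f(\tau,x)\,d\tau\bigr)Q^\ast\alpha$ on $\real\times M$ (with $M=\real^{n-1}\times F$), rather than factoring through the explicit homotopy $\widetilde H$ and integrating over a separate interval $I$; both packagings yield operators satisfying $dK+Kd=\id-Q^\ast S^\ast$ and the key restriction check is the same.
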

\begin{proof}
We start out by defining a homotopy operator $K: \Omega^\bullet (\real^n \times F) \to
\Omega^{\bullet -1} (\real^n \times F)$ satisfying
\begin{equation} \label{equ.dkpluskd}
dK + Kd = \id - Q^\ast S^\ast. 
\end{equation}
Think of $\real^n \times F$ as $\real \times M,$ with $M = \real^{n-1} \times F$.
In this notation, $Q$ and $S$ are the canonical projections and inclusions
\[ \xymatrix{\real \times M & M.
\ar@<1ex>[l]^{S}  \ar@<1ex>[l];[]^{Q}
} \]
Let $(t, t_2, \ldots, t_n)$ be coordinates on $\real^n = \real \times \real^{n-1}$ and
let $y$ denote (local) coordinates on $F$. Then $x = (t_2, \ldots, t_n, y)$ are coordinates
on $M$. Every form on $\real \times M$ can be uniquely written as a linear combination
of forms that do not contain $dt$, that is, forms
$f(t,x) Q^\ast \alpha,$
where $\alpha \in \Omega^\bullet (M)$,
and forms that do contain $dt$, that is, forms
$f(t,x) dt \wedge Q^\ast \alpha.$
We define $K$ by $K(f(t,x) Q^\ast \alpha)=0$ and
\[ K(f(t,x) dt \wedge Q^\ast \alpha) = g(t,x)Q^\ast \alpha, 
\text{ with } 
 g(t,x) = \int_0^t f(\tau,x)d\tau. \]
Equation (\ref{equ.dkpluskd}) is verified by a standard calculation.
We shall show that $K$ restricts to a homotopy operator $K_\ms$:
\[ \xymatrix{
\Omega^\bullet (\real^n \times F) \ar[r]^K & \Omega^{\bullet -1} (\real^n \times F) \\
\Omega^\bullet_\ms (\real^n) \ar@{^{(}->}[u] \ar@{..>}[r]^{K_\ms} & 
  \Omega^{\bullet -1}_\ms (\real^n). \ar@{^{(}->}[u] 
} \]
We shall use the commutative diagrams
\[ \xymatrix{
\real^n \times F \ar[d]^Q \ar[r]^{\pi_1} & \real^n = \real \times \real^{n-1}
  \ar[d]^q \\
\real^{n-1} \times F \ar[r]_{\hat{\pi}_1} & \real^{n-1}
} \hspace{.6cm} \text{and} \hspace{.6cm}
 \xymatrix{
\real^n \times F \ar[rd]_{\pi_2} \ar[r]^Q & \real^{n-1} \times F \ar[d]^{\hat{\pi}_2} \\
& F. 
} \]
Any form in $\Omega^\bullet_\ms (\real^n)$ can be written as a sum of forms
$\omega = \pi^\ast_1 \eta \wedge \pi^\ast_2 \gamma.$ We have to demonstrate that
$K(\omega)$ again has this multiplicatively structured form.
The form $\eta \in \Omega^\bullet (\real^n)$
can be uniquely written as a linear combination
of forms that do not contain $dt$, that is, forms
$f(t, t_2,\ldots, t_n) q^\ast \eta_{n-1},$
where $\eta_{n-1} \in \Omega^\bullet (\real^{n-1})$,
and forms that do contain $dt$, that is, forms
$f(t, t_2,\ldots, t_n) dt \wedge q^\ast \eta_{n-1}.$
In the former case,
\begin{eqnarray*}
\omega & = & \pi^\ast_1 ( f(t, t_2,\ldots, t_n) q^\ast \eta_{n-1}) \wedge Q^\ast
   \hat{\pi}^\ast_2 \gamma 
 =  f(t, t_2, \ldots, t_n) (Q^\ast \hat{\pi}^\ast_1 \eta_{n-1})\wedge Q^\ast
  \hat{\pi}^\ast_2 \gamma \\
& = & f\cdot Q^\ast \alpha
\end{eqnarray*}
with $\alpha = \hat{\pi}^\ast_1 \eta_{n-1} \wedge \hat{\pi}^\ast_2 \gamma$.
Thus $K(\omega)=0$ in this case. In the case where $\eta$ contains $dt$,
\[
\omega =  \pi^\ast_1 ( f(t, t_2,\ldots, t_n) dt \wedge q^\ast \eta_{n-1}) \wedge Q^\ast
   \hat{\pi}^\ast_2 \gamma 
=   f(t, t_2, \ldots, t_n) dt \wedge Q^\ast (\hat{\pi}^\ast_1 \eta_{n-1} \wedge 
  \hat{\pi}^\ast_2 \gamma) 
\]
so that
\[
 K(\omega)  =  g(t, t_2, \ldots, t_n)\cdot Q^\ast (\hat{\pi}^\ast_1 \eta_{n-1}\wedge 
  \hat{\pi}^\ast_2 \gamma) 
 =  \pi^\ast_1 (g q^\ast \eta_{n-1}) \wedge \pi^\ast_2 \gamma, 
\]
which is multiplicatively structured. We have thus constructed a homotopy operator
$K_\ms:\Omega^\bullet_\ms (\real^n) \to 
  \Omega^{\bullet -1}_\ms (\real^n)$ satisfying equation 
(\ref{equ.dkpluskd}) for the restricted maps (\ref{equ.restrSQ}).
Since $S^\ast Q^\ast = \id,$
$S^\ast$ and $Q^\ast$ are thus chain homotopy inverse chain homotopy
equivalences through multiplicatively structured forms.
\end{proof}

Let $S_0: F = \{ 0 \} \times F \hookrightarrow \real^n \times F$ be the inclusion
at $0$. The equations
$\pi_1 \circ S_0 = c_0,$ $\pi_2 \circ S_0 = \id_F$
hold, where $c_0: F\rightarrow \real^n$ is the constant map $c_0 (y)=0$ for all
$y\in F$. Thus, if $\eta \in \Omega^\bullet (\real^n)$ and $\gamma \in
\Omega^\bullet (F),$ then
\[ S^\ast_0 (\pi^\ast_1 \eta \wedge \pi^\ast_2 \gamma) =
 c^\ast_0 \eta \wedge \gamma = \begin{cases}
 \eta (0)\gamma, & \text{ if } \deg \eta =0 \\
 0, & \text{ if } \deg \eta >0. \end{cases} \]
The inclusion $S_0$ induces a map
$S^\ast_0: \Omega^\bullet_\ms (\real^n) \longrightarrow \Omega^\bullet (F).$
The map $\pi^\ast_2: \Omega^\bullet (F) \rightarrow \Omega^\bullet
(\real^n \times F)$ restricts to a map
$\pi^\ast_2: \Omega^\bullet (F) \longrightarrow \Omega^\bullet_\ms
(\real^n),$
as
\[ \pi^\ast_2 \gamma = 1 \wedge \pi^\ast_2 \gamma = \pi^\ast_1 (1)\wedge
 \pi^\ast_2 \gamma. \]

\begin{prop} \label{prop.rntfharmf}
The maps
\[ \xymatrix{\Omega^\bullet_{\ms}(\real^n) & \Omega^\bullet (F) 
\ar@<1ex>[l]^{\pi^\ast_2}  \ar@<1ex>[l];[]^{S^\ast_0}
} \]
are chain homotopy inverses of each other and thus induce mutually inverse
isomorphisms
\[ \xymatrix{H^\bullet (\Omega^\bullet_{\ms}(\real^n)) & H^\bullet (F) 
\ar@<1ex>[l]^>>>>>{\pi^\ast_2}  \ar@<1ex>[l];[]^>>>>{S^\ast_0}
} \]
on cohomology.
\end{prop}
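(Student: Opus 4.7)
\medskip

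\noindent\textbf{Proof proposal for Proposition \ref{prop.rntfharmf}.}
The easy direction is purely formal: since $\pi_2 \circ S_0 = \id_F$, we have $S^\ast_0 \circ \pi^\ast_2 = \id_{\Omega^\bullet (F)}$ on the nose, so in particular it is the identity on cohomology. The content of the proposition is therefore the reverse composition, and the plan is to reduce it to the preceding Proposition \ref{prop.rnfrnm1f} by iteration on the dimension $n$.

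For $1 \leq k \leq n$, write $s_k : \real^{k-1} \hookrightarrow \real^k$ for the inclusion at $0$ into the first coordinate, $q_k : \real^k \to \real^{k-1}$ for the projection dropping the first coordinate, and set $S_k = s_k \times \id_F$, $Q_k = q_k \times \id_F$. A direct unpacking of the definitions gives
\[ S_n \circ S_{n-1} \circ \cdots \circ S_1 = S_0 : F \hookrightarrow \real^n \times F, \qquad Q_1 \circ Q_2 \circ \cdots \circ Q_n = \pi_2 : \real^n \times F \to F, \]
so that $S^\ast_0 = S^\ast_1 \circ S^\ast_2 \circ \cdots \circ S^\ast_n$ and $\pi^\ast_2 = Q^\ast_n \circ \cdots \circ Q^\ast_1$ on multiplicatively structured forms. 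By Proposition \ref{prop.rnfrnm1f}, each of the pairs $(S^\ast_k, Q^\ast_k)$ is a pair of mutually chain homotopy inverse chain maps between $\Omega^\bullet_{\ms} (\real^k)$ and $\Omega^\bullet_{\ms} (\real^{k-1})$. Since compositions of chain homotopy equivalences are chain homotopy equivalences, the iterated compositions $S^\ast_0$ and $\pi^\ast_2$ are chain homotopy inverse to one another, observing that $\Omega^\bullet_{\ms} (\real^0) = \Omega^\bullet (F)$ because $\real^0 \times F = F$ and the multiplicative structure becomes vacuous in this case.

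The only step that requires any care is the base case: one must verify that at $n=0$ the complex $\Omega^\bullet_{\ms} (\real^0)$ really does coincide with $\Omega^\bullet (F)$, which is immediate from the definition. An alternative, more self-contained approach would be to repeat the homotopy construction in the proof of Proposition \ref{prop.rnfrnm1f} globally, using the linear contraction $H : (\real^n \times F) \times I \to \real^n \times F$, $H((x,y),s) = (sx, y)$, and checking that the resulting homotopy operator preserves multiplicatively structured forms coordinate by coordinate; however, the iterative argument above is considerably shorter and avoids repeating the same calculation. No genuine obstacle is anticipated, since each ingredient is already in place.
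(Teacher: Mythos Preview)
Your proposal is correct and follows essentially the same approach as the paper: factor $S_0$ and $\pi_2$ as iterated compositions of the one-step maps $S$ and $Q$, observe that $\Omega^\bullet_{\ms}(\real^0)=\Omega^\bullet(F)$, and invoke Proposition~\ref{prop.rnfrnm1f} at each stage. The paper phrases this as an induction on $n$ rather than spelling out the composition of chain homotopy equivalences, but the content is identical.
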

\begin{proof}
The statement holds for $n=0,$ since then $S_0: \{ 0 \} \times F \rightarrow
\real^0 \times F$ is the identity map, $\pi_2: \real^0 \times F \rightarrow F$
is the identity map, and $\Omega^\bullet_\ms (\real^0) = \Omega^\bullet (F).$
For positive $n$, we factor $S_0$ as
\[ F = \real^0 \times F \stackrel{S}{\hookrightarrow} 
\real^1 \times F \stackrel{S}{\hookrightarrow} \ldots
\stackrel{S}{\hookrightarrow} 
\real^n \times F \]
and $\pi_2$ as
\[ \real^n \times F \stackrel{Q}{\longrightarrow} 
\real^{n-1} \times F \stackrel{Q}{\longrightarrow} \ldots
\stackrel{Q}{\longrightarrow} 
\real^0 \times F=F. \]
The statement then follows from Proposition \ref{prop.rnfrnm1f} by an 
induction on $n$.
\end{proof}

\begin{prop} \label{prop.fhrnrntfiso}
The inclusion $\Omega^\bullet_\ms (\real^n)\subset \Omega^\bullet
(\real^n \times F)$ induces an isomorphism
\[ H^\bullet (\Omega^\bullet_\ms (\real^n)) \cong H^\bullet
(\real^n \times F) \]
on cohomology.
\end{prop}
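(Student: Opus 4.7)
The plan is to deduce this from Proposition \ref{prop.rntfharmf} together with the standard fact that $\Omega^\bullet(F) \hookrightarrow \Omega^\bullet(\real^n \times F)$ (via $\pi_2^\ast$) is a quasi-isomorphism. The key observation is that the inclusion $\iota: \Omega^\bullet_\ms(\real^n) \hookrightarrow \Omega^\bullet(\real^n \times F)$ fits into a commutative diagram
\[ \xymatrix{
\Omega^\bullet_\ms (\real^n) \ar@{^{(}->}[r]^{\iota} & \Omega^\bullet (\real^n \times F) \\
& \Omega^\bullet (F) \ar[ul]^{\pi^\ast_2} \ar[u]_{\pi^\ast_2}
} \]
in which the left-hand $\pi_2^\ast$ lands in the multiplicatively structured subcomplex (as noted just before Proposition \ref{prop.rntfharmf}, since $\pi_2^\ast \gamma = \pi_1^\ast(1) \wedge \pi_2^\ast \gamma$), and the right-hand $\pi_2^\ast$ is the usual pullback along the second-factor projection $\pi_2: \real^n \times F \to F$.

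First, I would invoke Proposition \ref{prop.rntfharmf} to conclude that the diagonal map $\pi_2^\ast: \Omega^\bullet(F) \to \Omega^\bullet_\ms(\real^n)$ is a quasi-isomorphism. Next, I would argue that the vertical map $\pi_2^\ast: \Omega^\bullet(F) \to \Omega^\bullet(\real^n \times F)$ is also a quasi-isomorphism: the projection $\pi_2: \real^n \times F \to F$ is a smooth homotopy equivalence (with homotopy inverse $S_0$, the inclusion at $0 \in \real^n$), so the induced map on de Rham cohomology is an isomorphism by standard homotopy invariance of de Rham cohomology. Alternatively, this follows directly from the Poincaré lemma applied fiberwise, or from a product-complex argument.

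Finally, the commutativity of the triangle gives $\pi_2^\ast = \iota \circ \pi_2^\ast$ on cohomology, and since both occurrences of $\pi_2^\ast$ are isomorphisms, two-out-of-three forces $\iota^\ast: H^\bullet(\Omega^\bullet_\ms(\real^n)) \to H^\bullet(\real^n \times F)$ to be an isomorphism as well. There is no real obstacle here; the substantive work has already been done in Propositions \ref{prop.rnfrnm1f} and \ref{prop.rntfharmf}, and the present statement is a formal consequence of those results combined with the homotopy invariance of de Rham cohomology on the total space.
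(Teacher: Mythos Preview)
Your proof is correct and follows essentially the same approach as the paper. The paper uses the dual triangle with $S_0^\ast$ in place of your $\pi_2^\ast$ (mapping out of the big complexes into $\Omega^\bullet(F)$ rather than into them), but since $S_0^\ast$ and $\pi_2^\ast$ are homotopy inverses by Proposition~\ref{prop.rntfharmf}, the two arguments are interchangeable; both reduce to Proposition~\ref{prop.rntfharmf} plus homotopy invariance and a two-out-of-three step.
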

\begin{proof}
The factorization
\[ \xymatrix{
\Omega^\bullet_\ms (\real^n) \ar@{^{(}->}[r] \ar[rd]_{S^\ast_0}
& \Omega^\bullet (\real^n \times F) \ar[d]_{S^\ast_0} \\
 & \Omega^\bullet (F)
} \]
induces the diagram
\[ \xymatrix{
H^\bullet (\Omega^\bullet_\ms (\real^n)) \ar[r] \ar[rd]_{S^\ast_0}
& H^\bullet (\real^n \times F) \ar[d]_{S^\ast_0} \\
 & H^\bullet (F)
} \]
on cohomology. The diagonal arrow is an isomorphism by
Proposition \ref{prop.rntfharmf}. The vertical arrow is an isomorphism
by the homotopy invariance (Poincar\'e Lemma) of de Rham cohomology.
Thus the horizontal arrow is an isomorphism as well.
\end{proof}

\begin{prop} \label{prop.lociso}
For any $U_{\alpha_0 \ldots \alpha_k},$ the inclusion
\[ \Omega^\bullet_\ms (U_{\alpha_0 \ldots \alpha_k}) \hookrightarrow
 \Omega^\bullet (p^{-1} U_{\alpha_0 \ldots \alpha_k}) \]
induces an isomorphism on cohomology (with respect to the de Rham differential $d$).
\end{prop}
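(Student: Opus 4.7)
The plan is to reduce Proposition \ref{prop.lociso} to Proposition \ref{prop.fhrnrntfiso} via the local trivialization of $p$. First, pick any index among $\alpha_0, \ldots, \alpha_k$, say $\alpha_0$, so that by Lemma \ref{lem.abinvariant} the space $\Omega^\bullet_\ms(U_{\alpha_0 \ldots \alpha_k})$ may be described using the trivialization $\phi_{\alpha_0}$. Restricting $\phi_{\alpha_0}$ to $p^{-1}(U_{\alpha_0 \ldots \alpha_k})$ yields a diffeomorphism
\[ \phi_{\alpha_0}| : p^{-1}(U_{\alpha_0 \ldots \alpha_k}) \xrightarrow{\cong} U_{\alpha_0 \ldots \alpha_k} \times F \]
that commutes with the projections. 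Since $\mathfrak{U}$ is a good cover, $U_{\alpha_0 \ldots \alpha_k}$ is diffeomorphic to $\real^n$, and we may compose with such a diffeomorphism crossed with $\id_F$ to obtain an identification $p^{-1}(U_{\alpha_0 \ldots \alpha_k}) \cong \real^n \times F$ covering $U_{\alpha_0 \ldots \alpha_k} \cong \real^n$.

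Under this identification, the definition of $\alpha_0$-multiplicatively structured forms on $p^{-1}(U_{\alpha_0 \ldots \alpha_k})$ corresponds precisely to the definition of $\Omega^\bullet_\ms(\real^n)$ used in Section \ref{sec.fibharmflat}, since in both cases a form is required to be a finite sum $\sum_j \pi_1^\ast \eta_j \wedge \pi_2^\ast \gamma_j$ with $\eta_j$ pulled back from the base and $\gamma_j$ from the fiber. Thus the inclusion
\[ \Omega^\bullet_\ms(U_{\alpha_0 \ldots \alpha_k}) \hookrightarrow \Omega^\bullet(p^{-1} U_{\alpha_0 \ldots \alpha_k}) \]
is carried by the induced pullback isomorphism of de Rham complexes to the inclusion
\[ \Omega^\bullet_\ms(\real^n) \hookrightarrow \Omega^\bullet(\real^n \times F). \]

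Finally, apply Proposition \ref{prop.fhrnrntfiso}, which asserts that the latter inclusion induces an isomorphism on cohomology. Since pullback along a diffeomorphism is a chain isomorphism of de Rham complexes, the original inclusion also induces an isomorphism on cohomology, completing the proof. There is no real obstacle here; the only point requiring care is invoking Lemma \ref{lem.abinvariant} to guarantee that the notion of multiplicatively structured form on $U_{\alpha_0 \ldots \alpha_k}$ does not depend on the particular chart chosen to witness the trivialization.
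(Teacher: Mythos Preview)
Your proposal is correct and follows essentially the same approach as the paper: both arguments use the trivialization $\phi_{\alpha_0}$ together with a diffeomorphism $U_{\alpha_0 \ldots \alpha_k} \cong \real^n$ from the good cover hypothesis to transport the inclusion $\Omega^\bullet_\ms(U_{\alpha_0 \ldots \alpha_k}) \hookrightarrow \Omega^\bullet(p^{-1}U_{\alpha_0 \ldots \alpha_k})$ to the inclusion $\Omega^\bullet_\ms(\real^n) \hookrightarrow \Omega^\bullet(\real^n \times F)$, and then invoke Proposition~\ref{prop.fhrnrntfiso}. The paper is slightly more explicit in verifying that the composite diffeomorphism restricts to an isomorphism between the two multiplicatively structured subcomplexes (checking both directions), whereas you assert this correspondence directly; but your assertion is correct and easily verified, so there is no gap.
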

\begin{proof}
Put $V = U_{\alpha_0 \ldots \alpha_k}$. Since $\mathfrak{U}$ is a good cover, there
exists a diffeomorphism
$\psi: V \stackrel{\cong}{\longrightarrow} \real^n.$
We obtain a commutative diagram
\[ \xymatrix{
p^{-1}(V) \ar[r]^{\phi_{\alpha_0}}_{\cong} \ar[d]_{p|} &
V \times F \ar[r]^{\psi \times \id_F}_{\cong} \ar[d]_{\pi_1} &
\real^n \times F \ar[d]_{\pi_1} \\
V \ar@{=}[r] & V \ar[r]^{\cong}_{\psi} & \real^n.
} \]
The induced isomorphism
\[ \xymatrix@C=40pt{ \Omega^\bullet (\real^n \times F) 
 \ar[r]^{\phi^\ast_{\alpha_0} \circ (\psi \times \id)^\ast}_{\cong} &
 \Omega^\bullet (p^{-1} (V)) } \]
restricts to a map
\[ \xymatrix@C=40pt{ \Omega^\bullet_\ms (\real^n) 
 \ar[r]^{\phi^\ast_{\alpha_0} \circ (\psi \times \id)^\ast} &
 \Omega^\bullet_\ms (V), } \]
as
\begin{eqnarray*}
\phi^\ast_{\alpha_0} (\psi \times \id)^\ast \sum \pi^\ast_1 \eta_j
\wedge \pi^\ast_2 \gamma_j & = &
\phi^\ast_{\alpha_0} \sum (\psi \times \id)^\ast \pi^\ast_1 \eta_j
\wedge (\psi \times \id)^\ast \pi^\ast_2 \gamma_j \\
& = & \phi^\ast_{\alpha_0} \sum \pi^\ast_1 (\psi^\ast \eta_j)
\wedge \pi^\ast_2 \gamma_j \in \Omega^\bullet_\ms (V).
\end{eqnarray*}
The restricted map is again an isomorphism, since an element
\[ \phi^\ast_{\alpha_0} \sum \pi^\ast_1 \eta_j
\wedge \pi^\ast_2 \gamma_j \in \Omega^\bullet_\ms (V), \]
$\eta_j \in \Omega^\bullet (V),$ $\gamma_j \in \Omega^\bullet (F),$
is the image
$\phi^\ast_{\alpha_0} (\psi \times \id)^\ast \sum \pi^\ast_1 
((\psi^{-1})^\ast \eta_j)
\wedge \pi^\ast_2 \gamma_j,$
with
\[ \sum \pi^\ast_1 ((\psi^{-1})^\ast \eta_j)
\wedge \pi^\ast_2 \gamma_j \in \Omega^\bullet_\ms (\real^n). \]
The commutative square
\[ \xymatrix@C=45pt{
\Omega^\bullet_\ms (\real^n) 
\ar[r]^{\phi^\ast_{\alpha_0} \circ (\psi \times \id)^\ast}_{\cong}
\ar@{^{(}->}[d] & \Omega^\bullet_\ms (V) \ar@{^{(}->}[d] \\
\Omega^\bullet (\real^n \times F) 
\ar[r]^{\phi^\ast_{\alpha_0} \circ (\psi \times \id)^\ast}_{\cong}
& \Omega^\bullet (p^{-1}V)
} \]
induces a commutative square
\[ \xymatrix{
H^\bullet (\Omega^\bullet_\ms (\real^n)) 
\ar[r]^{\cong}
\ar[d] & H^\bullet (\Omega^\bullet_\ms (V)) \ar[d] \\
H^\bullet (\real^n \times F) 
\ar[r]^{\cong} & H^\bullet (p^{-1}V)
} \]
on cohomology. By Proposition \ref{prop.fhrnrntfiso}, the left vertical
arrow is an isomorphism. Thus the right vertical arrow is an
isomorphism as well.
\end{proof}

Since $d$ and $\delta$ on $C^\bullet (\mathfrak{U}; \Omega^\bullet_\ms)$
were obtained by restricting $d$ and $\delta$ on $C^\bullet
(p^{-1} \mathfrak{U}; \Omega^\bullet),$ the natural inclusion
$C^\bullet (\mathfrak{U}; \Omega^\bullet_\ms) \hookrightarrow
  C^\bullet (p^{-1} \mathfrak{U}; \Omega^\bullet)$
is a morphism of double complexes.
\begin{thm} \label{thm.fhcomputescohtotspace}
The inclusion $\Omega^\bullet_\ms (B) \hookrightarrow \Omega^\bullet (E)$
induces an isomorphism
\[ H^\bullet (\Omega^\bullet_\ms (B)) \stackrel{\cong}{\longrightarrow}
H^\bullet (E) \]
on cohomology.
\end{thm}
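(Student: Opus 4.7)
The plan is to combine Propositions \ref{prop.simp1}, \ref{prop.simp2}, and \ref{prop.lociso} via a spectral sequence (or equivalently a direct double-complex) comparison argument. Consider the commutative square
\[ \xymatrix{
\Omega^\bullet_\ms (B) \ar@{^{(}->}[d] \ar[r]^-{r} & C^\bullet_\ms (\mathfrak{U}) \ar@{^{(}->}[d]^-{\iota} \\
\Omega^\bullet (E) \ar[r]^-{\overline{r}} & C^\bullet (p^{-1} \mathfrak{U}),
} \]
where the right vertical arrow is the inclusion of simple complexes induced by the inclusions $\Omega^\bullet_\ms (U_{\alpha_0 \ldots \alpha_k}) \hookrightarrow \Omega^\bullet (p^{-1} U_{\alpha_0 \ldots \alpha_k})$. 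By Propositions \ref{prop.simp1} and \ref{prop.simp2}, the two horizontal arrows induce isomorphisms on cohomology. Hence, by the two-out-of-three property, it suffices to show that $\iota$ induces an isomorphism on total cohomology.

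To prove this, I would filter both $(C^\bullet (\mathfrak{U}; \Omega^\bullet_\ms), \delta, d)$ and $(C^\bullet (p^{-1}\mathfrak{U}; \Omega^\bullet), \delta, d)$ by the horizontal (\v{C}ech) degree $k$, so that each associated graded column is the de Rham complex on $p^{-1} U_{\alpha_0 \ldots \alpha_k}$ (respectively its multiplicatively structured subcomplex). The finiteness of $\mathfrak{U}$ guarantees that the filtration is bounded, so the associated spectral sequences converge strongly to the cohomology of the simple complexes, and the inclusion $\iota$ induces a morphism of spectral sequences. The $E_1$-page consists of the column cohomologies
\[ E_1^{k,q} = \prod_{\alpha_0 < \cdots < \alpha_k} H^q (\Omega^\bullet_\ms (U_{\alpha_0 \ldots \alpha_k})), \quad \overline{E}_1^{k,q} = \prod_{\alpha_0 < \cdots < \alpha_k} H^q (p^{-1} U_{\alpha_0 \ldots \alpha_k}), \]
and $\iota$ induces on each factor exactly the map of Proposition \ref{prop.lociso}, which is an isomorphism. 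Therefore the comparison is an isomorphism on $E_1$, and hence on all subsequent pages and on the abutment.

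The main conceptual obstacle is really packed into Proposition \ref{prop.lociso}: once one knows the local statement on each $p^{-1} U_{\alpha_0 \ldots \alpha_k}$, the passage to the global statement is routine double-complex bookkeeping. If one prefers to avoid spectral sequences, one can instead augment each column by the inclusion and apply Proposition \ref{prop.rowex} to the resulting three-dimensional arrangement (columns exact against $d$ by \ref{prop.lociso}, augmentation rows exact by the generalized Mayer--Vietoris sequences) to chase through the same conclusion. Either way, the essential input is the finiteness of the good cover $\mathfrak{U}$ (ensuring convergence or boundedness of the bicomplex arguments) together with the local quasi-isomorphism already established.
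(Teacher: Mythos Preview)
Your proposal is correct and follows essentially the same approach as the paper. The paper sets up the identical commutative square, invokes Propositions \ref{prop.simp1} and \ref{prop.simp2} for the horizontal arrows, and then states as a black box that a morphism of double complexes inducing an isomorphism on vertical ($d$-) cohomology induces an isomorphism on the $D$-cohomology of the simple complexes; your spectral sequence comparison is exactly the standard justification of that black box.
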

\begin{proof}
By Proposition \ref{prop.lociso}, the morphism
$C^\bullet (\mathfrak{U}; \Omega^\bullet_\ms) \rightarrow
  C^\bullet (p^{-1} \mathfrak{U}; \Omega^\bullet)$ of double complexes
induces an isomorphism on vertical (i.e. $d$-) cohomology, since
\[ H^\bullet_d (C^k (\mathfrak{U}; \Omega^\bullet_\ms)) =
 H^\bullet_d (\prod \Omega^\bullet_\ms (U_{\alpha_0 \ldots \alpha_k})) =
 \prod H^\bullet (\Omega^\bullet_\ms (U_{\alpha_0 \ldots \alpha_k})) \]
and
\[ H^\bullet_d (C^k (p^{-1} \mathfrak{U}; \Omega^\bullet)) =
 H^\bullet_d (\prod \Omega^\bullet (p^{-1}U_{\alpha_0 \ldots \alpha_k})) =
 \prod H^\bullet (\Omega^\bullet (p^{-1}U_{\alpha_0 \ldots \alpha_k})). \]
Whenever a morphism of double complexes induces an isomorphism on
vertical ($d$-) cohomology, then it also induces an isomorphism of the
$D$-cohomology of the respective simple complexes. Thus 
$C^\bullet (\mathfrak{U}; \Omega^\bullet_\ms) \rightarrow
  C^\bullet (p^{-1} \mathfrak{U}; \Omega^\bullet)$
induces an isomorphism
$H^\bullet (C^\bullet_\ms (\mathfrak{U}),D) \stackrel{\cong}{\longrightarrow}
 H^\bullet (C^\bullet (p^{-1} \mathfrak{U}), D).$
Since the diagram 
\[ \xymatrix{
\Omega^\bullet_\ms (B) \ar[r]^r \ar@{^{(}->}[d] &
 C^0 (\mathfrak{U}; \Omega^\bullet_\ms) 
\ar@{^{(}->}[d] \\
\Omega^\bullet (E) \ar[r]^{\overline{r}} &
 C^0 (p^{-1} \mathfrak{U}; \Omega^\bullet)
} \]
commutes, we get a commutative diagram
\[ \xymatrix{
H^\bullet (\Omega^\bullet_\ms (B)) \ar[r]^{r^\ast} \ar[d] &
 H^\bullet (C^\bullet_\ms (\mathfrak{U}),D) 
\ar[d]_{\cong} \\
H^\bullet (E) \ar[r]^{\overline{r}^\ast} &
 H^\bullet (C^\bullet (p^{-1} \mathfrak{U}),D).
} \]
By Proposition \ref{prop.simp1}, $r^\ast$ is an isomorphism, while by
Proposition \ref{prop.simp2}, $\overline{r}^\ast$ is an isomorphism.
Consequently,
$H^\bullet (\Omega^\bullet_\ms (B)) \longrightarrow H^\bullet (E)$
is an isomorphism as well.
\end{proof}

\section{Truncation and Cotruncation Over a Point}
\label{sec.truncoverpoint}

Let $F$ be a closed, oriented, $m$-dimensional Riemannian manifold as in Section
\ref{sec.fibharmflat}. We shall use the Riemannian metric to define truncation
$\tau_{<k}$ and cotruncation $\tau_{\geq k}$ of the complex $\Omega^\bullet (F)$.
The bilinear form
\[ \begin{array}{rcl}
(\cdot, \cdot): \Omega^r (F) \times \Omega^r (F) & \longrightarrow & \real, \\
(\omega, \eta) & \mapsto & \int_F \omega \wedge *\eta,
\end{array} \]
where $*$ is the Hodge star, is symmetric and positive definite, thus defines an inner
product on $\Omega^\bullet (F)$. The Hodge star acts as an isometry with respect to
this inner product, $(*\omega, *\eta) = (\omega, \eta),$ and the codifferential
\[ d^\ast = (-1)^{m(r+1)+1} *d*: \Omega^r (F) \longrightarrow \Omega^{r-1} (F) \]
is the adjoint of the differential $d$, $(d\omega, \eta) = (\omega, d^\ast \eta).$
The classical Hodge decomposition theorem provides orthogonal splittings
\begin{eqnarray*}
\Omega^r (F) & = & \im d^\ast \oplus \Harm^r (F) \oplus \im d, \\
\ker d & = & \Harm^r (F) \oplus \im d, \\
\ker d^\ast & = & \im d^\ast \oplus \Harm^r (F),
\end{eqnarray*}
where $\Harm^r (F) = \ker d \cap \ker d^\ast$ are the closed and coclosed, i.e. harmonic,
forms on $F$. In particular,
\[ \Omega^r (F) = \im d^\ast \oplus \ker d = \ker d^\ast \oplus \im d. \]
Let $k$ be a nonnegative integer.
\begin{defn} \label{def.truncoverpoint}
The \emph{truncation} $\tau_{<k} \Omega^\bullet (F)$ of $\Omega^\bullet (F)$ is the
complex
\[ \tau_{<k} \Omega^\bullet (F) = \cdots \longrightarrow \Omega^{k-2}(F)
\longrightarrow \Omega^{k-1}(F) \stackrel{d^{k-1}}{\longrightarrow} \im d^{k-1}
 \longrightarrow 0 \longrightarrow 0 \longrightarrow \cdots, \]
where $\im d^{k-1} \subset \Omega^k (F)$ is placed in degree $k$.
\end{defn}
The inclusion $\tau_{<k} \Omega^\bullet (F) \subset \Omega^\bullet (F)$ is a 
morphism of complexes, since
\[ \xymatrix{
\Omega^k F \ar[r]^{d^k} & \Omega^{k+1} F \\
\im d^{k-1} \ar@{^{(}->}[u] \ar[r] & 0 \ar[u]
} \]
commutes. The induced map on cohomology, $H^r (\tau_{<k} \Omega^\bullet F)\to
H^r (F),$ is an isomorphism for $r<k,$ while $H^r (\tau_{<k} \Omega^\bullet F)=0$ for
$r\geq k$. Using the orthogonal projection
\[ \proj: \Omega^k (F) = \ker d^\ast \oplus \im d \twoheadrightarrow \im d, \]
we define a surjective morphism of complexes
\[ \xymatrix@C=12pt{
\Omega^\bullet (F) =\cdots \ar[d]_{\proj} \ar[r] & \Omega^{k-2} (F) \ar@{=}[d] \ar[r] &
\Omega^{k-1}(F) \ar@{=}[d] \ar[r]^{d^{k-1}} & \Omega^k (F) \ar[d]_{\proj} \ar[r] &
\Omega^{k+1}(F) \ar[d] \ar[r] & \cdots \\
\tau_{<k} \Omega^\bullet (F)  =\cdots \ar[r] & 
\Omega^{k-2} (F) \ar[r] &
\Omega^{k-1}(F) \ar[r]^{d^{k-1}} & \im d^{k-1} \ar[r] & 0 \ar[r] &  \cdots.
} \]
(Note that $\proj \circ d^{k-1} = d^{k-1}.$) The composition
\[ \tau_{<k} \Omega^\bullet (F) \hookrightarrow \Omega^\bullet (F)
 \stackrel{\proj}{\twoheadrightarrow} \tau_{<k} \Omega^\bullet (F) \]
is the identity. Taking cohomology, this implies in particular that
$\proj^\ast: H^r (F) \to H^r (\tau_{<k} \Omega^\bullet F)$ is an isomorphism
for $r<k$. We move on to cotruncation.
\begin{defn} \label{def.cotruncoverpoint}
The \emph{cotruncation} $\tau_{\geq k} \Omega^\bullet (F)$ of $\Omega^\bullet (F)$ is the
complex
\[ \tau_{\geq k} \Omega^\bullet (F) = \cdots \longrightarrow 0
\longrightarrow 0 \longrightarrow \ker d^\ast
 \stackrel{d^k|}{\longrightarrow} \Omega^{k+1}(F) 
  \stackrel{d^{k+1}}{\longrightarrow} \Omega^{k+2}(F) \longrightarrow \cdots, \]
where $\ker d^\ast \subset \Omega^k (F)$ is placed in degree $k$.
\end{defn}
The inclusion $\tau_{\geq k} \Omega^\bullet (F) \subset \Omega^\bullet (F)$ is a
morphism of complexes. By construction, $H^r (\tau_{\geq k} \Omega^\bullet F)=0$
for $r<k$. There are several ways to see that 
$\tau_{\geq k} \Omega^\bullet (F) \hookrightarrow \Omega^\bullet (F)$
induces an isomorphism $H^r (\tau_{\geq k} \Omega^\bullet F)
\stackrel{\cong}{\longrightarrow} H^r (F)$ in the range $r\geq k$. One way is to
compare $\tau_{\geq k} \Omega^\bullet (F)$ to the standard cotruncation
\[ \widetilde{\tau}_{\geq k} \Omega^\bullet (F) = \cdots \longrightarrow 0
\longrightarrow 0 \longrightarrow \cok d^{k-1}
 \stackrel{d^k}{\longrightarrow} \Omega^{k+1}(F) 
  \stackrel{d^{k+1}}{\longrightarrow} \Omega^{k+2}(F) \longrightarrow \cdots, \]
for which the canonical morphism $\Omega^\bullet (F)\to 
\widetilde{\tau}_{\geq k} \Omega^\bullet (F)$ induces an isomorphism
$H^r (F)\to H^r (\widetilde{\tau}_{\geq k} \Omega^\bullet F)$ when $r\geq k$.
The inclusion $\ker d^\ast \subset \Omega^k F$ induces an isomorphism
\[ \ker d^\ast \stackrel{\cong}{\longrightarrow}
\frac{\ker d^\ast \oplus \im d}{\im d} = \frac{\Omega^k F}{\im d} = \cok d^{k-1}, \]
which extends to an isomorphism of complexes
\[ \xymatrix@C=12pt{
\tau_{\geq k} \Omega^\bullet F =\cdots \ar[d]_{\cong} \ar[r] & 0 \ar@{=}[d] \ar[r] &
\ker d^\ast \ar[d]_{\cong} \ar[r]^{d^k} & \Omega^{k+1} (F) \ar@{=}[d] \ar[r] &
\Omega^{k+2}(F) \ar@{=}[d] \ar[r] & \cdots \\
\widetilde{\tau}_{\geq k} \Omega^\bullet F =\cdots \ar[r] & 0  \ar[r] &
\cok d^{k-1}  \ar[r]^{d^k} & \Omega^{k+1} (F) \ar[r] &
\Omega^{k+2}(F) \ar[r] & \cdots. 
} \]
The commutativity of
\[ \xymatrix{
\tau_{\geq k} \Omega^\bullet F \ar[rr]^{\cong} 
\ar@{^{(}->}[rd] & & \widetilde{\tau}_{\geq k} \Omega^\bullet F \\
& \Omega^\bullet F \ar[ru]&
} \]
shows that $\tau_{\geq k} \Omega^\bullet F  \hookrightarrow \Omega^\bullet F$
is a cohomology isomorphism in degrees $r\geq k$. Alternatively, one observes that
\[ H^k (\tau_{\geq k} \Omega^\bullet F) = \ker d \cap \ker d^\ast =
  \Harm^k (F) \cong H^k (F) \]
and
\[ H^{k+1} (\tau_{\geq k} \Omega^\bullet F) =
\frac{\ker d^{k+1}}{d^k (\ker d^\ast)} =
\frac{\ker d^{k+1}}{d^k (\ker d^\ast \oplus \im d^{k-1})} =
\frac{\ker d^{k+1}}{\im d^k} = H^{k+1} (F). \]
The kernel of $\proj: \Omega^\bullet (F) \twoheadrightarrow \tau_{<k} \Omega^\bullet F$
is precisely $\tau_{\geq k} \Omega^\bullet (F).$ Thus there is an exact sequence
\begin{equation} \label{equ.tgeqkomftlek}
0 \to \tau_{\geq k} \Omega^\bullet F \longrightarrow \Omega^\bullet F
\longrightarrow \tau_{<k} \Omega^\bullet F \to 0. 
\end{equation}
(The associated long exact cohomology sequence gives a third way to see that
$\tau_{\geq k} \Omega^\bullet F \hookrightarrow \Omega^\bullet F$
is a cohomology isomorphism in degrees $r\geq k$.)

A key advantage of cotruncation over truncation is that $\tau_{\geq k} \Omega^\bullet F$
is a subalgebra of $\Omega^\bullet F$, whereas $\tau_{<k} \Omega^\bullet F$ is not.
This property of cotruncation will entail that the cohomology theory $HI^\bullet_{\bar{p}} (X)$
has a $\bar{p}$-internal cup product for all $\bar{p}$, while intersection cohomology does not.
\begin{prop} \label{prop.cotruncsubdga}
The complex $\tau_{\geq k} \Omega^\bullet F$ is a sub-DGA of $(\Omega^\bullet (F), d, \wedge)$.
\end{prop}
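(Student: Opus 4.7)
The plan is straightforward: the differential part of the sub-DGA claim is essentially already done, so the substance of the proposition is closure under the wedge product, which will follow from a short degree count. Specifically, the text preceding the proposition observes that $\tau_{\geq k}\Omega^\bullet F \subset \Omega^\bullet F$ is a morphism of cochain complexes, because in degree $k$ one has $d(\ker d^\ast) \subset \Omega^{k+1} F$ and in every higher degree the cotruncation agrees with the full de Rham complex. Thus $(\tau_{\geq k}\Omega^\bullet F, d)$ is already a subcomplex, and the only remaining task is to check that it is closed under $\wedge$.

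I would prove this by taking $\omega \in \tau_{\geq k}^p \Omega F$ and $\eta \in \tau_{\geq k}^q \Omega F$ with $p, q \geq k$, and inspecting the degree of $\omega \wedge \eta$. The crucial observation is that the cotruncation imposes its one non-trivial linear constraint (``$\ker d^\ast$'') in a single degree, namely its bottom degree $k$; in every degree $r > k$ one simply has $\tau_{\geq k}^r \Omega F = \Omega^r F$, so membership there is automatic. Since $p+q \geq 2k$, for $k \geq 1$ we have $p+q \geq 2k > k$, which places $\omega \wedge \eta \in \Omega^{p+q} F = \tau_{\geq k}^{p+q} \Omega F$. The remaining case $p + q = k$ can only occur when $p = q = k = 0$, and for $k = 0$ the cotruncation coincides with $\Omega^\bullet F$ in its entirety, since $d^\ast$ on $\Omega^0 F$ has target $\Omega^{-1} F = 0$ and hence kernel all of $\Omega^0 F$; so the claim is trivial there as well.

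I do not anticipate any genuine obstacle: the proof is pure bookkeeping on degrees. The real conceptual content is the asymmetry highlighted just before the proposition: the cotruncation differs from $\Omega^\bullet F$ only at its bottom degree, and the wedge product can only push degrees up, so that special degree cannot arise as the degree of a nontrivial product of two cotruncated forms (except in the trivial case $k = 0$). The analogous argument visibly fails for the truncation $\tau_{<k}\Omega^\bullet F$, whose special degree is at the top, where wedging two allowed forms can readily land --- this is exactly why intersection cohomology lacks a $\bar{p}$-internal cup product while $HI^\bullet_{\bar{p}}$ will inherit one from this proposition.
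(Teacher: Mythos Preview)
Your proof is correct and follows essentially the same degree-counting argument as the paper: the paper likewise observes that the only nontrivial constraint sits in degree $k$, handles $p+q>k$ and $p+q<k$ trivially, and reduces the boundary case $p+q=k$ to $k=0$ where $\ker d^\ast = \Omega^0 F$. The one cosmetic difference is that the paper explicitly disposes of the cases $p<k$ or $q<k$ (where $\omega=0$ or $\eta=0$) rather than silently restricting to $p,q\geq k$ as you do, but the content is identical.
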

\begin{proof}
It remains to be shown that if $\omega, \eta \in \tau_{\geq k} \Omega^\bullet F$, then
$\omega \wedge \eta \in \tau_{\geq k} \Omega^\bullet F$. Let $p\geq 0$ be the degree of
$\omega$ and $q\geq 0$ the degree of $\eta$. If $p+q>k,$ then
$(\tau_{\geq k} \Omega^\bullet F)^{p+q} = \Omega^{p+q}(F)$ and there is nothing
to prove. If $p+q<k,$ then both $p$ and $q$ are less than k. In this case,
$(\tau_{\geq k} \Omega^\bullet F)^p = 0 = (\tau_{\geq k} \Omega^\bullet F)^q$ and
$\omega \wedge \eta = 0 \in \tau_{\geq k} \Omega^\bullet F$. Suppose
$p+q=k$. If one of $p,q$ is less than $k$, then $\omega \wedge \eta =
0\wedge \eta =0$ or $\omega \wedge \eta = \omega \wedge 0=0$ and the assertion
follows as before. If $p,q\geq k,$ then $k=p+q\geq 2k$ implies $k=0=p=q$.
But for $k=0$, $d^\ast =0: \Omega^0 F \to \Omega^{-1}F=0$ so that
$\ker d^\ast = \Omega^0 F.$ Thus for functions $\omega, \eta \in \Omega^0 F,$
we have $\omega \wedge \eta \in \Omega^0 (F) = \ker d^\ast =
(\tau_{\geq k} \Omega^\bullet F)^{p+q}$.
\end{proof}

\begin{prop} \label{prop.indepriemmetric}
The isomorphism type of $\tau_{\geq k} \Omega^\bullet F$ in the category of cochain complexes
is independent of the Riemannian metric on $F$.
\end{prop}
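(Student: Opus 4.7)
The plan is to compare $\tau_{\geq k} \Omega^\bullet F$, which depends on the metric $g$ through the codifferential $d^\ast$, with the \emph{standard} cotruncation
\[ \widetilde{\tau}_{\geq k} \Omega^\bullet (F) = \cdots \to 0 \to \cok d^{k-1} \stackrel{d^k}{\to} \Omega^{k+1}(F) \stackrel{d^{k+1}}{\to} \Omega^{k+2}(F) \to \cdots, \]
which is manifestly independent of any Riemannian structure. This reduces the claim to showing that $\tau_{\geq k} \Omega^\bullet F$ is isomorphic, as a cochain complex, to $\widetilde{\tau}_{\geq k} \Omega^\bullet F$, with an isomorphism valid for every choice of metric.

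The required isomorphism has in fact already been written down in the paragraph preceding the statement: the Hodge decomposition $\Omega^k F = \ker d^\ast \oplus \im d$ together with $\ker d^\ast \cap \im d = 0$ shows that the composition
\[ \ker d^\ast \hookrightarrow \Omega^k F \twoheadrightarrow \Omega^k F/\im d^{k-1} = \cok d^{k-1} \]
is a linear isomorphism, and it commutes with $d^k$ (since $d^k$ factors through $\cok d^{k-1}$ on the target side and is the literal restriction on the source side). Combined with the identity in all degrees $>k$ and the zero map in degrees $<k$, this yields a cochain isomorphism
\[ \Phi_g : \tau_{\geq k} \Omega^\bullet F \stackrel{\cong}{\longrightarrow} \widetilde{\tau}_{\geq k} \Omega^\bullet F \]
whose source depends on the metric $g$ but whose target does not.

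For any two Riemannian metrics $g_0, g_1$ on $F$, we therefore obtain a composite cochain isomorphism
\[ \tau_{\geq k}^{g_0} \Omega^\bullet F \xrightarrow{\;\Phi_{g_0}\;} \widetilde{\tau}_{\geq k} \Omega^\bullet F \xrightarrow{\;\Phi_{g_1}^{-1}\;} \tau_{\geq k}^{g_1} \Omega^\bullet F, \]
establishing the desired metric-independence of the isomorphism type. There is no substantive obstacle here; the proposition is essentially a corollary of the comparison with standard cotruncation already carried out in Section \ref{sec.truncoverpoint}, and the argument is purely formal once one observes that $\widetilde{\tau}_{\geq k} \Omega^\bullet F$ serves as a canonical metric-free model.
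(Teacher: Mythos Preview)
Your proof is correct and takes a genuinely different route from the paper's. You pass through the metric-independent standard cotruncation $\widetilde{\tau}_{\geq k}\Omega^\bullet F$ as a bridge, using the isomorphism $\Phi_g$ already exhibited in the paragraph preceding the proposition; the result then follows formally by composing $\Phi_{g_1}^{-1}\circ\Phi_{g_0}$. The paper, by contrast, constructs a direct isomorphism $\kappa:\ker d^\ast_g\to\ker d^\ast_{g'}$ using the finer Hodge splitting $\ker d^\ast=\im d^\ast\oplus\Harm^k$: it matches the coexact pieces via $d_{g'}^{-1}\circ d_g$ and the harmonic pieces via $h_{g'}^{-1}\circ h_g$, then checks $d^k\kappa=d^k$. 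Your argument is more economical and makes transparent why metric-independence holds (there is a canonical metric-free model); the paper's argument gives an explicit formula for the comparison isomorphism in degree $k$, which could be useful if one later needs to track how the identification interacts with other structures, but for the bare statement of the proposition your approach is entirely adequate and arguably cleaner.
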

\begin{proof}
Let $g$ and $g'$ be two Riemannian metrics on $F$, determining codifferentials
$d^\ast_g, d^\ast_{g'}$, harmonic forms $\Harm^\bullet_g (F), \Harm^\bullet_{g'} (F),$
and cotruncations $\tau^g_{\geq k} \Omega^\bullet F, \tau^{g'}_{\geq k} \Omega^\bullet F.$
We observe first that
$D:= d^k (\ker d^\ast_g) = d^k (\ker d^\ast_{g'}),$
as follows from
\begin{eqnarray*}
d^k (\ker d^\ast_g) & = & d^k  (\im d^{k-1} \oplus \ker d^\ast_g) = d^k (\Omega^k F) \\
& = & d^k  (\im d^{k-1} \oplus \ker d^\ast_{g'}) = d^k  (\ker d^\ast_{g'}).
\end{eqnarray*}
Furthermore, as harmonic forms are closed,
\begin{eqnarray*}
d^k (\im d^\ast_g) & = & d^k (\im d^\ast_g \oplus \Harm^k_g (F)) =
 d^k (\ker d^\ast_g) \\
& = & d^k (\ker d^\ast_{g'}) = d^k (\im d^\ast_{g'} \oplus \Harm^k_{g'} (F)) 
 =  d^k (\im d^\ast_{g'}).
\end{eqnarray*}
Let
\[ d_g: \im d^\ast_g \longrightarrow D,~
  d_{g'}: \im d^\ast_{g'} \longrightarrow D \]
be the restrictions of $d^k: \Omega^k F \to \Omega^{k+1} F$ to
$\im d^\ast_g$ and $\im d^\ast_{g'}$, respectively. By the above observations,
$d_g$ and $d_{g'}$ are surjective. Since the decomposition
$\Omega^k F = \im d^\ast_g \oplus \ker d^k$ is direct, $d_g$ and $d_{g'}$
are injective, thus both isomorphisms. Since $F$ is closed, the inclusions
$\Harm^\bullet_g (F), \Harm^\bullet_{g'} (F) \subset \Omega^\bullet (F)$
induce isomorphisms
\[ h_g: \Harm^k_g (F) \stackrel{\cong}{\longrightarrow} H^k (F),~
  h_{g'}: \Harm^k_{g'} (F) \stackrel{\cong}{\longrightarrow} H^k (F). \]
Define an isomorphism $\kappa: \ker d^\ast_g \longrightarrow \ker d^\ast_{g'}$ by
\[ \xymatrix@C=50pt{
\kappa: \ker d^\ast_g = \im d^\ast_g \oplus \Harm^k_g (F)
\ar[r]^{d^{-1}_{g'} d_g \oplus h^{-1}_{g'} h_g} &
\im d^\ast_{g'} \oplus \Harm^k_{g'} (F) = \ker d^\ast_{g'}.
} \]
For $\alpha \in \im d^\ast_g,$ $\beta \in \Harm^k_g (F),$ we have
\[
d^k \kappa (\alpha + \beta) = 
d^k d^{-1}_{g'} d_g (\alpha) + d^k h^{-1}_{g'} h_g (\beta) 
=  d_g (\alpha) = d^k (\alpha + \beta),
\]
since harmonic forms are closed,
which verifies that
\[ \xymatrix{
\ker d^\ast_g \ar[r]^{d^k} \ar[d]_{\kappa}^{\cong} &
\Omega^{k+1} F \ar@{=}[d] \\
\ker d^\ast_{g'} \ar[r]^{d^k} & \Omega^{k+1} F
} \]
commutes. This square can be embedded in an isomorphism of complexes
\[ \xymatrix{
\tau^g_{\geq k} \Omega^\bullet F = \cdots \ar[r] \ar[d]_{\cong} &
0 \ar[r] \ar@{=}[d] & \ker d^\ast_g \ar[r] \ar[d]_{\kappa}^{\cong} &
\Omega^{k+1} F \ar[r] \ar@{=}[d] & \Omega^{k+2}F \ar[r] \ar@{=}[d]
& \cdots \\
\tau^{g'}_{\geq k} \Omega^\bullet F  = \cdots \ar[r] &
0 \ar[r]  & \ker d^\ast_{g'} \ar[r]  &
\Omega^{k+1} F \ar[r]  & \Omega^{k+2}F \ar[r] & \cdots.
} \]
\end{proof}

\begin{lemma} \label{lem.isometrypreservcotrunc}
Let $f:F\to F$ be a smooth self-map. \\
\noindent (1) $f$ induces an endomorphism $f^\ast$ of $\tau_{<k} \Omega^\bullet F$. \\
\noindent (2) If $f$ is an isometry, then $f$ induces an automorphism $f^\ast$
 of $\tau_{\geq k}  \Omega^\bullet F$.
\end{lemma}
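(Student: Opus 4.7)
The plan is to deduce both statements from the fact that $f^{\ast}$ is a morphism of the de Rham algebra and, in the isometric case, additionally commutes with the codifferential $d^{\ast}$.

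For (1), the complex $\tau_{<k}\Omega^\bullet (F)$ agrees with $\Omega^\bullet (F)$ in degrees $r<k$, equals $\im d^{k-1}\subset \Omega^k(F)$ in degree $k$, and is zero above. Since $f^{\ast}:\Omega^\bullet(F)\to \Omega^\bullet(F)$ is a cochain map, it obviously restricts to $\Omega^r(F)$ for $r<k$. The only nontrivial point is to verify $f^{\ast}(\im d^{k-1})\subset \im d^{k-1}$, and this is immediate from $f^{\ast}d^{k-1} = d^{k-1} f^{\ast}$: if $\omega = d\eta$ for some $\eta\in \Omega^{k-1}(F)$, then $f^{\ast}\omega = d(f^{\ast}\eta)\in \im d^{k-1}$. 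Hence $f^{\ast}$ defines an endomorphism of the truncation complex.

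For (2), the key input is that an isometry $f$ commutes with the Hodge star up to an orientation sign: there exists $\epsilon \in \{\pm 1\}$, depending on whether $f$ preserves or reverses orientation, such that $f^{\ast}\circ * = \epsilon\, *\circ f^{\ast}$. Using $d^{\ast} = (-1)^{m(r+1)+1}\!*\!d\!*$ together with $f^{\ast} d = d f^{\ast}$, one computes
\[
f^{\ast} d^{\ast} = (-1)^{m(r+1)+1}f^{\ast}\!*\!d\!* = (-1)^{m(r+1)+1}\epsilon^{2}\!*\!d\!*\!f^{\ast} = d^{\ast} f^{\ast},
\]
so the sign cancels and $f^{\ast}$ commutes with $d^{\ast}$ regardless of whether $f$ is orientation-preserving. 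In particular $f^{\ast}(\ker d^{\ast})\subset \ker d^{\ast}$, so $f^{\ast}$ preserves the degree-$k$ piece of $\tau_{\geq k}\Omega^\bullet(F)$; in degrees $r>k$ it preserves $\Omega^r(F)$ trivially. This shows $f^{\ast}$ restricts to an endomorphism of $\tau_{\geq k}\Omega^\bullet(F)$.

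To upgrade from endomorphism to automorphism, note that an isometry in the sense of the isometry group is a diffeomorphism, and $f^{-1}$ is again an isometry. Applying the previous paragraph to both $f$ and $f^{-1}$ shows that $(f^{-1})^{\ast} = (f^{\ast})^{-1}$ also restricts to $\tau_{\geq k}\Omega^\bullet(F)$, so the restriction of $f^{\ast}$ is invertible with inverse the restriction of $(f^{-1})^{\ast}$, hence an automorphism. The only nonformal step in the whole argument is the $\epsilon$-commutation $f^{\ast}\!*\! = \epsilon *\!f^{\ast}$, which is standard and follows directly from the pointwise definition of $*$ via the metric pairing and the volume form, together with $f^{\ast}\mathrm{vol}_F = \epsilon\,\mathrm{vol}_F$ for an isometry.
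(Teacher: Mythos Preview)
Your proof is correct and follows essentially the same approach as the paper: part (1) is identical, and part (2) uses the same key computation $f^{\ast}d^{\ast}=d^{\ast}f^{\ast}$ via $f^{\ast}\!*\!=\epsilon\,*\!f^{\ast}$. The only difference is in how you upgrade to an automorphism: the paper argues directly that $f^{\ast}|:\ker d^{\ast}\to\ker d^{\ast}$ is surjective by writing an arbitrary $\omega\in\ker d^{\ast}$ as $f^{\ast}(\alpha+\beta)$ with $\alpha\in\im d$, $\beta\in\ker d^{\ast}$, and using $\ker d^{\ast}\cap\im d=0$ to force $f^{\ast}\alpha=0$; you instead invoke that $f^{-1}$ is again an isometry and apply the endomorphism statement to it. Both are clean; your route is slightly slicker and avoids revisiting the Hodge splitting.
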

\begin{proof}
(1) Since $f^\ast: \Omega^\bullet F \to \Omega^\bullet F$ commutes with $d$,
$f^\ast$ restricts to a map $f^\ast|: \im d^{k-1} \to \im d^{k-1}.$ 

(2) If $f$ is an isometry, then it preserves the orthogonal splitting
$\Omega^k F = \im d^{k-1} \oplus \ker d^\ast$: For an isometry, one has
$f^\ast \circ * = \epsilon \cdot *\circ f^\ast$ with $\epsilon =1$ if $f$ is
orientation preserving and $\epsilon = -1$ if $f$ is orientation reversing. Thus
\begin{eqnarray*}
d^\ast \circ f^\ast & = & (-1)^{m(k+1)+1} *d*f^\ast =
 (-1)^{m(k+1)+1} \epsilon \cdot *d f^\ast *  \\
 & = & (-1)^{m(k+1)+1} \epsilon \cdot * f^\ast d* =
 (-1)^{m(k+1)+1} \epsilon^2 \cdot f^\ast *d* \\
& = & f^\ast \circ d^\ast,
\end{eqnarray*}
which implies $f^\ast (\ker d^\ast) \subset \ker d^\ast.$
The preservation of $\im d^{k-1}$ was discussed in (1).
The restriction $f^\ast|: \ker d^\ast \to \ker d^\ast$ continues to be injective, and
is also onto: Given $\omega \in \ker d^\ast,$ there exist $\alpha \in \im d$,
$\beta \in \ker d^\ast$ such that $f^\ast (\alpha + \beta)=\omega,$
since $f^\ast: \Omega^k F \to \Omega^k F$ is onto. Then $f^\ast \alpha =
\omega - f^\ast \beta \in \ker d^\ast$ and $f^\ast \alpha \in \im d$ so that
$f^\ast \alpha \in \ker d^\ast \cap \im d = 0.$ Therefore, $f^\ast \beta = \omega$
and $f^\ast|: \ker d^\ast \to \ker d^\ast$ is surjective.
\end{proof}

\section{Fiberwise Truncation and Poincar\'e Duality}
\label{sec.fibtruncandpd}

\subsection{Local Fiberwise Truncation and Cotruncation}
\label{ssec.locfibtrunc}

Let $F$ be a closed, oriented, $m$-dimensional Riemannian manifold as
in Section \ref{sec.fibharmflat}. Regarding $\real^n \times F$ as a trivial
fiber bundle over $\real^n$ with projection $\pi_1$ and fiber $F$, a subcomplex
$\oms^\bullet (\real^n)\subset \Omega^\bullet (\real^n \times F)$ of
multiplicatively structured forms was defined in Section \ref{sec.fibharmflat} as
\[ \oms^\bullet (\real^n) = \{ \omega \in \Omega^\bullet (\real^n \times F) ~|~
  \omega = \sum_j \pi^\ast_1 \eta_j \wedge \pi^\ast_2 \gamma_j,~
  \eta_j \in \Omega^\bullet (\real^n),~ \gamma_j \in \Omega^\bullet (F) \}. \]
We shall here define the \emph{fiberwise truncation}
$\ft_{<k} \oms^\bullet (\real^n) \subset \oms^\bullet (\real^n)$ and the 
\emph{fiberwise cotruncation} $\ft_{\geq k} \oms^\bullet (\real^n) \subset
\oms (\real^n),$ depending on an integer $k$. Analogous concepts for
forms with compact supports will be introduced as well.
In Section \ref{sec.truncoverpoint}, a truncation $\tau_{<k} \Omega^\bullet (F)$
and a cotruncation $\tau_{\geq k} \Omega^\bullet (F)$ were defined using the
Riemannian metric on $F$.
Define
\begin{eqnarray*}
\ft_{<k} \oms^\bullet (\real^n) & = &\{ \omega \in \Omega^\bullet (\real^n \times F) ~|~
  \omega = \sum_j \pi^\ast_1 \eta_j \wedge \pi^\ast_2 \gamma_j, \\
& & \hspace{2cm} \eta_j \in \Omega^\bullet (\real^n),~ 
\gamma_j \in \tau_{<k} \Omega^\bullet (F) \}. 
\end{eqnarray*}
The Leibniz rule
\begin{equation} \label{equ.prodrule}
d(\pi^\ast_1 \eta \wedge \pi^\ast_2 \gamma) = \pi^\ast_1 (d\eta)\wedge \pi^\ast_2
 \gamma \pm \pi^\ast_1 \eta \wedge \pi^\ast_2 (d\gamma)
\end{equation}
shows that $\ft_{<k} \oms^\bullet (\real^n)$ is a
subcomplex of $\oms^\bullet (\real^n).$ Define
\begin{eqnarray*}
\ft_{\geq k} \oms^\bullet (\real^n) & = & \{ \omega \in \Omega^\bullet (\real^n \times F) ~|~
  \omega = \sum_j \pi^\ast_1 \eta_j \wedge \pi^\ast_2 \gamma_j, \\
& & \hspace{2cm} \eta_j \in \Omega^\bullet (\real^n),~ 
\gamma_j \in \tau_{\geq k} \Omega^\bullet (F) \}. 
\end{eqnarray*}
Again, this is a subcomplex of $\oms^\bullet (\real^n)$. Similar complexes can be defined
using compact supports. We define the complex $\omsc^\bullet (\real^n)$ of
\emph{multiplicatively structured forms with compact supports} on $\real^n \times F$ to be
\[ \omsc^\bullet (\real^n) = \{ \omega \in \Omega^\bullet (\real^n \times F) ~|~
  \omega = \sum_j \pi^\ast_1 \eta_j \wedge \pi^\ast_2 \gamma_j,~
  \eta_j \in \Omega^\bullet_c (\real^n),~ \gamma_j \in \Omega^\bullet (F) \}. \]
Since $d\eta$ has compact support if $\eta$ does, formula (\ref{equ.prodrule})
implies that $\omsc^\bullet (\real^n)$ is a complex. It is in fact a subcomplex of
$\Omega^\bullet_c (\real^n \times F)$, as $\pi^\ast_1 \eta \wedge \pi^\ast_2 \gamma$
has compact support if $\eta$ has compact support in $\real^n$. As above, fiberwise
truncations and cotruncations
\[ \ft_{<k} \omsc^\bullet (\real^n) \subset \omsc^\bullet (\real^n) \supset
 \ft_{\geq k} \omsc^\bullet (\real^n) \]
are defined by requiring the $\gamma_j$ to lie in $\tau_{<k} \Omega^\bullet (F)$ and
$\tau_{\geq k} \Omega^\bullet (F)$, respectively.

\subsection{Poincar\'e Lemmas for Fiberwise Truncations}
\label{ssec.poinclemmasfibtrunc}

Let
\[ s: \real^{n-1} \hookrightarrow \real^n,~
   S: \real^{n-1} \times F \hookrightarrow \real^n \times F,~
   q: \real^n \longrightarrow \real^{n-1},~
   Q: \real^n \times F \longrightarrow \real^{n-1} \times F  \]
be the standard inclusion and projection maps used in Section \ref{sec.fibharmflat}.
The formula
$S^\ast (\pi^\ast_1 \eta \wedge \pi^\ast_2 \gamma) =
  \pi^\ast_1 (s^\ast \eta)\wedge \pi^\ast_2 \gamma,$
$\gamma \in \tau_{<k} \Omega^\bullet (F),$ shows that
$S^\ast: \oms^\bullet (\real^n)\rightarrow \oms^\bullet (\real^{n-1})$ restricts
to a map
\[ S^\ast: \ft_{<k} \oms^\bullet (\real^n)\longrightarrow 
   \ft_{<k} \oms^\bullet (\real^{n-1}). \]
The formula
$Q^\ast (\pi^\ast_1 \eta \wedge \pi^\ast_2 \gamma) =
  \pi^\ast_1 (q^\ast \eta)\wedge \pi^\ast_2 \gamma,$
shows that
$Q^\ast: \oms^\bullet (\real^{n-1})\rightarrow \oms^\bullet (\real^n)$ restricts
to a map
\[ Q^\ast: \ft_{<k} \oms^\bullet (\real^{n-1})\longrightarrow 
   \ft_{<k} \oms^\bullet (\real^n). \]

\begin{lemma} \label{lem.821}
The maps
\[ \xymatrix{\ft_{<k} \oms^\bullet (\real^n) & \ft_{<k} \oms^\bullet (\real^{n-1}) 
\ar@<1ex>[l]^{Q^\ast}  \ar@<1ex>[l];[]^{S^\ast}
} \]
are chain homotopy inverses of each other and thus induce mutually inverse
isomorphisms
\[ \xymatrix{H^\bullet (\ft_{<k} \oms^\bullet (\real^n)) & 
  H^\bullet (\ft_{<k} \oms^\bullet (\real^{n-1})) 
\ar@<1ex>[l]^{Q^\ast}  \ar@<1ex>[l];[]^{S^\ast}
} \]
on cohomology.
\end{lemma}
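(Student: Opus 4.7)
The plan is to mimic the proof of Proposition \ref{prop.rnfrnm1f}, showing that the homotopy operator $K_\ms$ constructed there already restricts to the fiberwise-truncated subcomplex. Recall that $K_\ms$ was built from the standard homotopy $K$ on $\Omega^\bullet(\real^n \times F)$ satisfying $dK + Kd = \id - Q^\ast S^\ast$, and that on a multiplicatively structured form $\pi_1^\ast \eta \wedge \pi_2^\ast \gamma$ the operator $K$ either vanishes (when $\eta$ has no $dt$) or outputs $\pi_1^\ast(g\, q^\ast \eta_{n-1}) \wedge \pi_2^\ast \gamma$, where $g(t,x) = \int_0^t f(\tau,x)\,d\tau$. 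The crucial observation is that in both cases the fiber factor $\gamma$ is preserved unchanged; only the base factor $\eta$ is altered.

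First I would note that $S^\ast$ and $Q^\ast$ indeed restrict to maps between the fiberwise truncations, as verified just before the statement of the lemma using $S^\ast(\pi_1^\ast \eta \wedge \pi_2^\ast \gamma) = \pi_1^\ast(s^\ast \eta) \wedge \pi_2^\ast \gamma$ and similarly for $Q^\ast$; the factor $\gamma \in \tau_{<k}\Omega^\bullet(F)$ is carried through verbatim. Next I would define $\ft_{<k} K_\ms$ simply as the restriction of $K_\ms$ to $\ft_{<k} \oms^\bullet(\real^n)$; by the explicit formulas above, if $\gamma$ lies in $\tau_{<k}\Omega^\bullet(F)$ then $K_\ms(\pi_1^\ast \eta \wedge \pi_2^\ast \gamma)$ is either zero or again of the form $\pi_1^\ast(\text{something in } \Omega^\bullet(\real^n)) \wedge \pi_2^\ast \gamma$ with the same $\gamma$, hence lies in $\ft_{<k} \oms^{\bullet-1}(\real^n)$.

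The identity
\[ d (\ft_{<k} K_\ms)(\omega) + (\ft_{<k} K_\ms) d(\omega) = \omega - Q^\ast S^\ast(\omega) \]
for $\omega \in \ft_{<k}\oms^\bullet(\real^n)$ then follows immediately by restriction from the corresponding identity for $K_\ms$ proved in Proposition \ref{prop.rnfrnm1f}. Combined with the equation $S^\ast Q^\ast = \id$ on $\ft_{<k}\oms^\bullet(\real^{n-1})$ (which is inherited from $\oms^\bullet(\real^{n-1})$), this shows that $S^\ast$ and $Q^\ast$ are chain homotopy inverses, yielding the isomorphism on cohomology.

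There is essentially no obstacle here: the whole argument hinges on the fact that the standard homotopy operator $K$ is built by contraction and integration along $\partial/\partial t$, a vector field on the base, and therefore acts trivially on the fiber tensor factor. The compatibility with fiberwise truncation is thus automatic. The only minor point to record carefully is that the decomposition $\eta = f\,q^\ast \eta_{n-1} + f'\,dt \wedge q^\ast \eta_{n-1}$ used in Proposition \ref{prop.rnfrnm1f} is a base-only manipulation, so the assumption $\gamma \in \tau_{<k}\Omega^\bullet(F)$ is preserved term by term.
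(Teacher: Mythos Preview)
Your proposal is correct and follows essentially the same approach as the paper: take the homotopy operator $K_\ms$ from Proposition~\ref{prop.rnfrnm1f}, observe that it leaves the fiber factor $\gamma$ untouched and only transforms the base factor $\eta$, and conclude that it restricts to $\ft_{<k}\oms^\bullet(\real^n)$, whence the homotopy identity and $S^\ast Q^\ast = \id$ carry over verbatim.
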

\begin{proof}
Let $K_\ms: \oms^\bullet (\real^n) \rightarrow \oms^{\bullet -1} (\real^n)$ be the 
homotopy operator defined in the proof of Proposition \ref{prop.rnfrnm1f}.
In that proof, we have seen that $K_\ms$ applied to a form
$\omega = \pi^\ast_1 \eta \wedge \pi^\ast_2 \gamma$ yields a result that can be
written as $\pi^\ast_1 \eta' \wedge \pi^\ast_2 \gamma$ for some $\eta'$.
Thus $K_\ms$ does not transform $\gamma$ and if $\gamma \in \tau_{<k} 
\Omega^\bullet F,$ then $\pi^\ast_1 \eta' \wedge \pi^\ast_2 \gamma =
K_\ms (\omega)$ again lies in $\ft_{<k} \Omega^\bullet_\ms (\real^n)$.
Thus $K_\ms$ restricts to a homotopy operator
\[ K_\ms: \ft_{<k} \oms^\bullet (\real^n) \longrightarrow 
    (\ft_{<k} \oms^\bullet (\real^n))^{\bullet -1} \]
satisfying
$K_\ms d + dK_\ms = \id - Q^\ast S^\ast.$
Thus $Q^\ast S^\ast$ is chain homotopic to the identity on $\ft_{<k} \oms^\bullet
(\real^n)$. Since $S^\ast Q^\ast = \id,$ $S^\ast$ and $Q^\ast$ are thus chain
homotopy inverse chain homotopy equivalences through fiberwise truncated, 
multiplicatively structured forms.
\end{proof}
As in Section \ref{sec.fibharmflat}, let $S_0: F = \{ 0 \} \times F \hookrightarrow
\real^n \times F$ be the inclusion at $0$. If $\gamma \in \tau_{<k} \Omega^\bullet (F),$
then
\[ S^\ast_0 (\pi^\ast_1 \eta \wedge \pi^\ast_2 \gamma) = \begin{cases}
 \eta (0)\gamma, & \text{ if } \deg \eta =0 \\ 0, & \text{ if } \deg \eta >0 \end{cases} \]
lies in $\tau_{<k} \Omega^\bullet (F)$ for any $\eta \in \Omega^\bullet (\real^n).$
Thus $S^\ast_0: \oms^\bullet (\real^n) \rightarrow \Omega^\bullet (F)$ restricts to a map
\[ S^\ast_0: \ft_{<k} \oms^\bullet (\real^n) \longrightarrow \tau_{<k} \Omega^\bullet (F). \]
The map $\pi^\ast_2: \Omega^\bullet (F) \rightarrow \oms^\bullet (\real^n)$ restricts
to a map
\[  \pi^\ast_2: \tau_{<k} \Omega^\bullet (F) \rightarrow \ft_{<k}\oms^\bullet (\real^n) \]
by the definition of $\ft_{<k} \oms^\bullet (\real^n).$

\begin{lemma}(Poincar\'e Lemma, truncation version.) \label{lem.822}
The maps
\[ \xymatrix{\ft_{<k} \oms^\bullet (\real^n) & \tau_{<k} \Omega^\bullet (F) 
\ar@<1ex>[l]^{\pi^\ast_2}  \ar@<1ex>[l];[]^{S^\ast_0}
} \]
are chain homotopy inverses of each other and thus induce mutually inverse
isomorphisms
\[ \xymatrix{H^r (\ft_{<k} \oms^\bullet (\real^n)) & 
  H^r (\tau_{<k} \Omega^\bullet (F)) \cong 
\mbox{$\begin{cases} H^r (F),& r<k \\ 0, & r\geq k \end{cases}$} 
\ar@<1ex>[l]^<<<{\pi^\ast_2}  \ar@<1ex>[l];[]^<<<<{S^\ast_0}
} \]
on cohomology.
\end{lemma}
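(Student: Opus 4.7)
The plan is to mirror the proof of Proposition \ref{prop.rntfharmf}, proceeding by induction on $n$.

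For the base case $n=0$, the map $S_0: F = \{0\} \times F \hookrightarrow \real^0 \times F$ is the identity, as is $\pi_2: \real^0 \times F \to F$. Moreover, by its very definition, $\ft_{<k} \oms^\bullet(\real^0)$ coincides with $\tau_{<k} \Omega^\bullet(F)$, so both $S_0^\ast$ and $\pi_2^\ast$ become the identity on $\tau_{<k} \Omega^\bullet(F)$ and there is nothing to check.

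For the inductive step, I would factor $S_0$ as a chain of $n$ successive inclusions of the form $S: \real^{j-1} \times F \hookrightarrow \real^j \times F$, $u\mapsto (0,u)$, giving
\[ F = \real^0 \times F \stackrel{S}{\hookrightarrow} \real^1 \times F \stackrel{S}{\hookrightarrow} \cdots \stackrel{S}{\hookrightarrow} \real^n \times F, \]
and similarly factor $\pi_2$ as the composition of $n$ successive projections $Q: \real^j \times F \to \real^{j-1} \times F$. The equations $\pi_1\circ S_0 = c_0$ and $\pi_2\circ S_0 = \id_F$ (and their analogues along the chain) guarantee that these factorizations recover $S_0$ and $\pi_2$ exactly. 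Lemma \ref{lem.821} tells us that at each intermediate stage, $S^\ast$ and $Q^\ast$ restrict to $\ft_{<k} \oms^\bullet$ and are chain homotopy inverse to each other there. Since the composition of chain homotopy equivalences is a chain homotopy equivalence, iterating $n$ times yields that $S_0^\ast$ and $\pi_2^\ast$ are chain homotopy inverses between $\ft_{<k} \oms^\bullet(\real^n)$ and $\tau_{<k} \Omega^\bullet(F)$.

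The identification of the cohomology groups as stated then follows: on the one hand we get isomorphisms $H^r(\ft_{<k} \oms^\bullet(\real^n)) \cong H^r(\tau_{<k}\Omega^\bullet(F))$, and on the other hand the discussion following Definition \ref{def.truncoverpoint} shows that $H^r(\tau_{<k}\Omega^\bullet F) \cong H^r(F)$ for $r<k$ and vanishes for $r\geq k$. I do not expect any genuine obstacle here: the key technical point, namely that the homotopy operator $K_\ms$ of Proposition \ref{prop.rnfrnm1f} preserves the fiberwise truncation, is handled uniformly by Lemma \ref{lem.821} because $K_\ms$ leaves the $\pi_2^\ast\gamma$ factor untouched; the rest is bookkeeping of compositions.
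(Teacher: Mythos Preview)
Your proof is correct and follows essentially the same approach as the paper: both establish the base case $n=0$ directly and then handle positive $n$ by factoring $S_0$ and $\pi_2$ through the intermediate stages $\real^j\times F$ and invoking Lemma \ref{lem.821} at each step, exactly mirroring the proof of Proposition \ref{prop.rntfharmf}. The paper's version is simply more terse, leaving the factorization implicit by referring back to that earlier proof.
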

\begin{proof}
The statement holds for $n=0,$ since then $S_0$ and $\pi_2$ are both the
identity map and $\ft_{<k} \oms^\bullet (\real^0)= \tau_{<k} \Omega^\bullet (F).$
For positive $n,$ the statement follows, as in the proof of Proposition
\ref{prop.rntfharmf}, from an induction on $n$, using Lemma \ref{lem.821}.
\end{proof}
An analogous argument, replacing $\tau_{<k} \Omega^\bullet (F)$ by
$\tau_{\geq k} \Omega^\bullet (F),$ proves a version for fiberwise cotruncation:
\begin{lemma}(Poincar\'e Lemma, cotruncation version.) \label{lem.822cotrunc}
The maps
\[ \xymatrix{\ft_{\geq k} \oms^\bullet (\real^n) & \tau_{\geq k} \Omega^\bullet (F) 
\ar@<1ex>[l]^{\pi^\ast_2}  \ar@<1ex>[l];[]^{S^\ast_0}
} \]
are chain homotopy inverses of each other and thus induce mutually inverse
isomorphisms
\[ \xymatrix{H^r (\ft_{\geq k} \oms^\bullet (\real^n)) & 
  H^r (\tau_{\geq k} \Omega^\bullet (F)) \cong 
\mbox{$\begin{cases} H^r (F),& r\geq k \\ 0, & r< k. \end{cases}$} 
\ar@<1ex>[l]^<<<{\pi^\ast_2}  \ar@<1ex>[l];[]^<<<<{S^\ast_0}
} \]
on cohomology.
\end{lemma}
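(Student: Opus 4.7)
The plan is to mimic the proof of Lemma \ref{lem.822}, replacing $\tau_{<k}\Omega^\bullet(F)$ everywhere by $\tau_{\geq k}\Omega^\bullet(F)$, and to verify at each step that the relevant maps and homotopy operators preserve the condition ``$\gamma_j \in \tau_{\geq k}\Omega^\bullet(F)$'' instead of ``$\gamma_j \in \tau_{<k}\Omega^\bullet(F)$''. The crucial point is that all the operators manipulating multiplicatively structured forms in Section \ref{sec.fibharmflat} and in the proof of Lemma \ref{lem.821} act only on the base-factor $\eta_j$, leaving the fiber-factor $\gamma_j$ untouched. Hence they are blind to the distinction between truncation and cotruncation of $\Omega^\bullet(F)$.

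More precisely, first I would establish the cotruncated analogue of Lemma \ref{lem.821}: the maps $S^\ast$ and $Q^\ast$ restrict to
\[ \xymatrix{\ft_{\geq k} \oms^\bullet (\real^n) & \ft_{\geq k} \oms^\bullet (\real^{n-1})
\ar@<1ex>[l]^{Q^\ast}  \ar@<1ex>[l];[]^{S^\ast}
} \]
because the formulas $S^\ast(\pi_1^\ast \eta \wedge \pi_2^\ast \gamma) = \pi_1^\ast(s^\ast \eta) \wedge \pi_2^\ast \gamma$ and $Q^\ast(\pi_1^\ast \eta \wedge \pi_2^\ast \gamma) = \pi_1^\ast(q^\ast \eta) \wedge \pi_2^\ast \gamma$ leave $\gamma$ alone, and $\gamma \in \tau_{\geq k}\Omega^\bullet(F)$ by hypothesis. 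Moreover, the homotopy operator $K_\ms$ of Proposition \ref{prop.rnfrnm1f}, as exhibited in its proof, sends $\pi_1^\ast \eta \wedge \pi_2^\ast \gamma$ to a form of the shape $\pi_1^\ast \eta' \wedge \pi_2^\ast \gamma$ with the \emph{same} $\gamma$, hence restricts to a homotopy operator on $\ft_{\geq k}\oms^\bullet(\real^n)$ satisfying $K_\ms d + d K_\ms = \id - Q^\ast S^\ast$. Combined with $S^\ast Q^\ast = \id$, this makes $S^\ast$ and $Q^\ast$ mutually chain homotopy inverse on the cotruncated complexes.

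Next I would verify that $S_0^\ast$ and $\pi_2^\ast$ restrict to maps between $\ft_{\geq k}\oms^\bullet(\real^n)$ and $\tau_{\geq k}\Omega^\bullet(F)$. For $\pi_2^\ast$ this is immediate from $\pi_2^\ast \gamma = \pi_1^\ast(1) \wedge \pi_2^\ast \gamma$ with $\gamma \in \tau_{\geq k}\Omega^\bullet(F)$. For $S_0^\ast$, one has
\[ S_0^\ast(\pi_1^\ast \eta \wedge \pi_2^\ast \gamma) = \begin{cases} \eta(0)\,\gamma, & \deg\eta = 0, \\ 0, & \deg\eta > 0, \end{cases} \]
and $\eta(0)\gamma$ lies in $\tau_{\geq k}\Omega^\bullet(F)$ since the latter is a linear subspace closed under scaling.

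Finally I would run the induction on $n$. For $n=0$, both $S_0$ and $\pi_2$ are the identity, and $\ft_{\geq k}\oms^\bullet(\real^0) = \tau_{\geq k}\Omega^\bullet(F)$, so the statement is trivial. For the inductive step one factors
\[ F = \real^0 \times F \stackrel{S}{\hookrightarrow} \real^1 \times F \stackrel{S}{\hookrightarrow} \cdots \stackrel{S}{\hookrightarrow} \real^n \times F \]
and
\[ \real^n \times F \stackrel{Q}{\longrightarrow} \real^{n-1} \times F \stackrel{Q}{\longrightarrow} \cdots \stackrel{Q}{\longrightarrow} \real^0 \times F = F, \]
and uses the cotruncated analogue of Lemma \ref{lem.821} established above to pass inductively from $\real^{n-1}$ to $\real^n$. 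The identification of $H^r(\tau_{\geq k}\Omega^\bullet(F))$ with $H^r(F)$ for $r\geq k$ and $0$ for $r<k$ was already carried out below Definition \ref{def.cotruncoverpoint}. I do not foresee any real obstacle: the proof is a direct transcription of the truncation argument, the only subtle observation being that the homotopy operator $K_\ms$ in the proof of Proposition \ref{prop.rnfrnm1f} is designed so as not to touch the fiber factor, which is exactly what makes it compatible with both $\ft_{<k}$ and $\ft_{\geq k}$.
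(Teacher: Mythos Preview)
Your proposal is correct and is exactly the approach the paper takes: the paper simply states that an analogous argument, replacing $\tau_{<k}\Omega^\bullet(F)$ by $\tau_{\geq k}\Omega^\bullet(F)$, proves the cotruncation version. You have in fact spelled out more of the details than the paper does, correctly identifying the key point that the homotopy operator $K_\ms$ of Proposition~\ref{prop.rnfrnm1f} leaves the fiber factor $\gamma$ untouched.
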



In order to set up a Poincar\'e lemma for fiberwise cotruncation of multiplicatively
structured compactly supported forms, we need to discuss integration along the fiber.
Let $Y$ be a smooth manifold and $\pi_2: \real^k \times Y\to Y$ the
second-factor projection. Integration along the fiber $\real^k$ of $\pi_2$ is a map
$\pi_{2\ast}: \Omega^\bullet_c (\real^k \times Y)\to \Omega^{\bullet -k}_c (Y)$
of degree $-k,$ given as follows. Let $t = (t_1, \ldots, t_k)$ be the standard coordinates
on $\real^k$ and
let $dt$ denote the $k$-form $dt=dt_1 \wedge 
\cdots \wedge dt_k$. A compactly supported form on $\real^k \times Y$ is a linear
combination of two types of forms: those which do not contain $dt$ as a
factor and those which do.
The former can be written as
$f (t,y) dt_{i_1} \wedge \cdots \wedge
dt_{i_r} \wedge \pi^\ast_2 \gamma,$ $r<k,$ and the latter as 
$g(t,y) dt \wedge \pi^\ast_2 \gamma,$
where $\gamma \in \Omega^\bullet_c (Y)$,
$y$ is a (local) coordinate on $Y$, and $f$,$g$ have compact support. Define $\pi_{2\ast}$ by
\begin{eqnarray*}
\pi_{2\ast} (f (t,y) dt_{i_1} \wedge \cdots \wedge
dt_{i_r} \wedge \pi^\ast_2 \gamma) & = & 0 \hspace{2cm} (r<k), \\
\pi_{2\ast}(g(t,y) dt \wedge \pi^\ast_2 \gamma) & = &
(\int_{\real^k} g dt_1 \cdots dt_k)\cdot \gamma.
\end{eqnarray*}
This is a chain map $\pi_{2\ast}: \Omega^\bullet_c (\real^k \times Y)\to \Omega^{\bullet -k}_c (Y)$,
provided the shifted complex $\Omega^{\bullet -k}_c (Y)$ is given the differential
$d_{-k} = (-1)^k d$.
For $\omega \in \Omega^\bullet_c (\real^k \times Y),$ one has the \emph{projection formula}
\[ \pi_{2\ast} (\omega \wedge \pi^\ast_2 \gamma)= (\pi_{2\ast} \omega)\wedge \gamma. \]
In particular, for a multiplicatively structured form involving the pullback of 
$\eta \in \Omega^\bullet_c (\real^k),$ we obtain
\[ \pi_{2\ast} (\pi^\ast_1 \eta \wedge \pi^\ast_2 \gamma)=\pi_{2\ast} (\pi^\ast_1 \eta)\wedge
 \gamma. \]
Applying this concept to our $\pi_2: \real^n \times F\to F,$ we receive a map
$\pi_{2\ast}: \Omega^\bullet_c (\real^n \times F)\to \Omega^{\bullet -n}(F),$ and, by
restriction,
$\pi_{2\ast}: \omsc (\real^n) \to \Omega^{\bullet -n}(F).$
\begin{lemma} \label{lem.823}
For $\omega \in \omsc^r (\real^n)$ and $\gamma \in \Omega^{n+m-r} (F),$ the
integration formula
\[ \int_{\real^n \times F} \omega \wedge \pi^\ast_2 \gamma =
 \int_F (\pi_{2\ast} \omega) \wedge \gamma \]
holds.
\end{lemma}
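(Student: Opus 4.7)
The plan is to reduce to a single multiplicatively structured summand and then apply Fubini's theorem together with the projection formula already recorded in the excerpt. Write $\omega = \sum_j \pi_1^\ast \eta_j \wedge \pi_2^\ast \beta_j$ with $\eta_j \in \Omega^\bullet_c(\real^n)$ and $\beta_j \in \Omega^\bullet(F)$. Both sides of the claimed identity are linear in $\omega$, so it suffices to treat $\omega = \pi_1^\ast \eta \wedge \pi_2^\ast \beta$. Note that $\omega$ has compact support, hence so does $\omega \wedge \pi_2^\ast \gamma$, which puts us inside the scope of the projection formula stated just above the lemma.

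The quickest route is to combine two standard facts. First, the projection formula gives
\[ \pi_{2\ast}(\omega \wedge \pi_2^\ast \gamma) = (\pi_{2\ast} \omega) \wedge \gamma. \]
Second, for any compactly supported form $\alpha$ on $\real^n \times F$ of top degree $n+m$, the classical Fubini theorem for the trivial bundle $\pi_2: \real^n \times F \to F$ yields
\[ \int_{\real^n \times F} \alpha = \int_F \pi_{2\ast} \alpha. \]
Applying this with $\alpha = \omega \wedge \pi_2^\ast \gamma$ (whose degree is $r + (n+m-r) = n+m$) and composing with the projection formula immediately produces the desired identity.

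For completeness and to verify the Fubini step by hand, first I would check the vanishing case. If $\deg \eta < n$, then $\omega = \pi_1^\ast \eta \wedge \pi_2^\ast \beta$ contains fewer than $n$ of the base coordinate differentials $dt_1, \dots, dt_n$; by the defining formulas for $\pi_{2\ast}$ it follows that $\pi_{2\ast}\omega = 0$, and likewise $\omega \wedge \pi_2^\ast \gamma$ contains no $dt_1 \wedge \cdots \wedge dt_n$ factor and so integrates to zero on $\real^n \times F$ (its top-degree component vanishes). In the remaining case $\deg \eta = n$, write $\eta = f(t)\, dt_1 \wedge \cdots \wedge dt_n$ with $f \in C^\infty_c(\real^n)$. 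Then $\pi_{2\ast}\omega = \bigl(\int_{\real^n} f\, dt_1 \cdots dt_n\bigr)\, \beta$ by definition, and a direct Fubini computation on $\real^n \times F$ gives
\[ \int_{\real^n \times F} \pi_1^\ast \eta \wedge \pi_2^\ast(\beta \wedge \gamma) = \int_F \Bigl(\int_{\real^n} f\, dt_1 \cdots dt_n\Bigr)\, \beta \wedge \gamma, \]
which is exactly $\int_F (\pi_{2\ast}\omega) \wedge \gamma$.

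There is no serious obstacle here; the lemma is a bookkeeping statement combining Fubini on $\real^n \times F$ with the already-stated projection formula. The only minor care needed is to confirm that $\omega \wedge \pi_2^\ast \gamma$ remains compactly supported (immediate from the compact support of $\omega$) so that both sides are well defined and the projection formula applies, and to dispose of the low-$\eta$-degree terms via the vanishing argument above.
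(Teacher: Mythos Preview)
Your proof is correct. The paper actually states this lemma without proof, treating it as a standard consequence of the projection formula (recorded just before the lemma) and Fubini for the trivial bundle $\real^n \times F \to F$; your argument supplies exactly these details in the natural way.
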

Now suppose that $\gamma \in \tau_{\geq k} \Omega^\bullet (F)$ and $\deg \eta =n$,
so that $\pi^\ast_1 \eta \wedge \pi^\ast_2 \gamma$ lies in $\ft_{\geq k} \omsc^\bullet
(\real^n)$. Then
\[ \pi_{2\ast} (\pi^\ast_1 \eta \wedge \pi^\ast_2 \gamma) = \pm (\int_{\real^n} \eta)
\cdot \gamma \]
lies in $\tau_{\geq k} \Omega^\bullet (F)$ as well. Thus integration along the fiber
restricts to a map
\[ \pi_{2\ast}: \ft_{\geq k} \omsc^\bullet (\real^n) \longrightarrow
  (\tau_{\geq k} \Omega^\bullet (F))^{\bullet -n}. \]
Choose any compactly supported $1$-form $e_1 = \varepsilon (t)dt \in
\Omega^1_c (\real^1)$ with
\[ \int_{-\infty}^{+\infty} \varepsilon (t)dt =1. \]
Then
\[ e = e_1 \wedge e_1 \wedge \cdots \wedge e_1 =
\prod_{i=1}^n \varepsilon (t_i) dt_1 \wedge \cdots \wedge dt_n \]
is a compactly supported $n$-form on $\real^n$ with $\int_{\real^n} e =1.$
A chain map
\[ e_\ast: \Omega^{\bullet -n} (F) \longrightarrow \omsc^\bullet (\real^n) \]
is given by
$e_\ast (\gamma) = \pi^\ast_1 e \wedge \pi^\ast_2 \gamma,$
since
\[ de_\ast (\gamma) = d(\pi^\ast_1 e \wedge \pi^\ast_2 \gamma) =
\pi^\ast_1 (de) \wedge \pi^\ast_2 \gamma + (-1)^n \pi^\ast_1 e \wedge \pi^\ast_2 d\gamma =
  (-1)^n \pi^\ast_1 e \wedge \pi^\ast_2 d\gamma =
  e_\ast (d_{-n} \gamma). \]
By definition of $\ft_{\geq k} \omsc^\bullet (\real^n),$ $e_\ast$ restricts to a map
\[ e_\ast: (\tau_{\geq k} \Omega^\bullet (F))^{\bullet -n} \longrightarrow
 \ft_{\geq k} \omsc^\bullet (\real^n). \]

\begin{lemma} (Poincar\'e Lemma for Cotruncation with Compact Supports.) \label{lem.824}
The maps
\[ \xymatrix{\ft_{\geq k} \omsc^\bullet (\real^n) & 
  (\tau_{\geq k} \Omega^\bullet (F))^{\bullet -n} 
\ar@<1ex>[l]^{e_\ast}  \ar@<1ex>[l];[]^{\pi_{2\ast}}
} \]
are chain homotopy inverses of each other and thus induce mutually inverse
isomorphisms
\[ \xymatrix{H^r (\ft_{\geq k} \omsc^\bullet (\real^n)) & 
  H^r ((\tau_{\geq k} \Omega^\bullet (F))^{\bullet -n}) \cong 
\mbox{$\begin{cases} H^{r-n} (F),& r-n \geq k \\ 0, & r-n < k. \end{cases}$} 
\ar@<1ex>[l]^<<<{e_\ast}  \ar@<1ex>[l];[]^<<<<{\pi_{2\ast}}
} \]
on cohomology.
\end{lemma}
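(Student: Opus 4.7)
The plan follows the pattern of Lemmas \ref{lem.822} and \ref{lem.822cotrunc}, but using the \emph{compactly supported} Poincar\'e lemma on $\real^n$ in place of the ordinary one. The easy composition $\pi_{2\ast}\circ e_\ast$ is the identity: for $\gamma\in \tau_{\geq k}\Omega^\bullet(F)$, the projection formula combined with the normalization $\int_{\real^n}e=1$ gives
\[
\pi_{2\ast}(e_\ast \gamma)=\pi_{2\ast}(\pi_1^\ast e\wedge \pi_2^\ast \gamma)=(\pi_{2\ast}\pi_1^\ast e)\cdot \gamma = \Bigl(\int_{\real^n}e\Bigr)\gamma=\gamma.
\]
It remains to construct a chain homotopy between $e_\ast\circ \pi_{2\ast}$ and the identity on $\ft_{\geq k}\omsc^\bullet(\real^n)$.

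For this I will import the standard homotopy operator $K$ for the compactly supported Poincar\'e lemma on $\real^n$ (Bott--Tu). In its one-variable form on $\real\times M$ with coordinate $t$ on $\real$ and $M$ an arbitrary smooth manifold, $K$ sends a form $f(t,x)\pi^\ast \alpha$ (no $dt$) to $0$ and a form $f(t,x)\,dt\wedge\pi^\ast\alpha$ to
\[
\Bigl[\int_{-\infty}^t f(s,x)\,ds - A(t)\int_{-\infty}^{\infty}f(s,x)\,ds\Bigr]\pi^\ast\alpha,
\]
where $A(t)=\int_{-\infty}^t \varepsilon(s)\,ds$. Iterating over the $n$ coordinates of $\real^n$ (with $M=F$) yields an operator $K_\ms$ on $\Omega^\bullet_c(\real^n\times F)$ satisfying $dK_\ms + K_\ms d = \id - e_\ast \pi_{2\ast}$. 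The crucial point, completely analogous to the argument used in the proof of Lemma \ref{lem.821}, is that $K_\ms$ touches only the base coordinates $t=(t_1,\ldots,t_n)$: applied to a monomial $\pi_1^\ast \eta \wedge \pi_2^\ast \gamma$ with $\eta\in\Omega^\bullet_c(\real^n)$ and $\gamma\in\Omega^\bullet(F)$, the defining integrals involve only $\eta$, so the output has the form $\pi_1^\ast \eta'\wedge \pi_2^\ast \gamma$ for some $\eta'\in \Omega^\bullet_c(\real^n)$.

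Consequently $K_\ms$ restricts to $\omsc^\bullet(\real^n)$, and because the fiber factor $\gamma$ is preserved verbatim, $K_\ms$ further restricts to $\ft_{\geq k}\omsc^\bullet(\real^n)$. This preservation of the fiberwise cotruncated subcomplex is the heart of the argument. Together with $\pi_{2\ast}\circ e_\ast=\id$ established above, it yields that $e_\ast$ and $\pi_{2\ast}$ are mutually chain homotopy inverse. The main obstacle will be sign bookkeeping, in particular verifying $dK_\ms + K_\ms d = \id - e_\ast\pi_{2\ast}$ under the convention $d_{-n}=(-1)^n d$ on the shifted complex $(\tau_{\geq k}\Omega^\bullet(F))^{\bullet-n}$, and confirming that on a monomial $\pi_1^\ast\eta\wedge\pi_2^\ast\gamma$ with $\deg\eta<n$ both $\int_{\real^n}\eta$ and $\pi_{2\ast}(\pi_1^\ast\eta)$ vanish, so the identity holds trivially in those degrees while the nontrivial content sits entirely in the top-degree piece $\deg\eta=n$.
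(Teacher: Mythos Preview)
Your proposal is correct and follows essentially the same approach as the paper: both invoke the one-variable Bott--Tu homotopy operator for compactly supported forms, observe that it acts only on the base coordinates and hence preserves the fiber factor $\gamma\in\tau_{\geq k}\Omega^\bullet(F)$, and then iterate over the $n$ coordinates to reduce $\ft_{\geq k}\omsc^\bullet(\real^n)$ to $(\tau_{\geq k}\Omega^\bullet(F))^{\bullet-n}$. The paper merely spells out the induction step (peeling off one $\real$-factor at a time and checking that $\pi_\ast$, $e_{1\ast}$, and $K$ each restrict to the cotruncated multiplicatively structured subcomplex) more explicitly than you do.
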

\begin{proof}
The plan is to factor $\pi_{2\ast}$ and $e_\ast$ by peeling off one $\real^1$-factor
at a time. Each map in the factorization will be shown to be a homotopy equivalence.
Let $M$ be the manifold $M = \real^{n-1} \times F$ so that
$\real^n \times F = \real^1 \times \real^{n-1} \times F = \real^1 \times M.$
The coordinate on the $\real^1$-factor is $t_1$, coordinates on the 
$\real^{n-1}$-factor will be $u = (t_2, \ldots, t_n)$ and coordinates on $F$
will be $y$. We shall also write $x = (u,y)$ for points in $M$. Let 
$\pi: \real^1 \times M \rightarrow M$ be the projection given by $\pi (t_1,x)=x.$ \\

\textbf{Step 1.} We shall show that integration along the fiber of $\pi,$ 
$\pi_\ast: \Omega^\bullet_c (\real^1 \times M)\to \Omega^{\bullet -1}_c (M)$,
restricts to the complex of fiberwise cotruncated multiplicatively structured forms.
Let $\pi_1^\ast \eta \wedge \pi^\ast_2 \gamma \in \ft_{\geq k} \omsc^\bullet (\real^n)$
be a multiplicatively structured form, $\eta \in \Omega^p_c (\real^n),$
$\gamma \in \tau_{\geq k} \Omega^\bullet (F).$ The $p$-form $\eta$ can be
uniquely decomposed as
\[ \eta = \sum_I f_I (t_1,u) du_I + \sum_J g_J (t_1,u) dt_1 \wedge du_J, \]
\[ du_I = dt_{i_1}\wedge \cdots \wedge dt_{i_p},~
   du_J = dt_{j_1}\wedge \cdots \wedge dt_{j_{p-1}}, \]
where $I$ ranges over all strictly increasing multi-indices
$2\leq i_1 < i_2 < \ldots < i_p \leq n$ and $J$ over
$2\leq j_1 < j_2 < \ldots < j_{p-1} \leq n.$ 
The functions $f_I$ and $g_J$ have compact support.
As the terms
$\pi^\ast_1 (f_I (t_1,u) du_I)\wedge \pi^\ast_2 \gamma$
do not contain $dt_1$, they are sent to $0$ by $\pi_\ast$. Let
\[ \real^{n-1} \stackrel{\widehat{\pi}_1}{\longleftarrow} \real^{n-1}\times F
\stackrel{\widehat{\pi}_2}{\longrightarrow} F \]
be the standard projections $\widehat{\pi}_1 (u,y)=u,$ $\widehat{\pi}_2 (u,y)=y,$
and set
\[ G_J (u) = \int_{-\infty}^{+\infty} g_J (t_1,u)dt_1. \]
The map $\pi_\ast$ sends the term
\[ \pi^\ast_1 (g_J (t_1,u) dt_1 \wedge du_J)\wedge \pi^\ast_2 \gamma =
 g_J (t_1,u)dt_1 \wedge \pi^\ast (\widehat{\pi}^\ast_1 du_J 
    \wedge \widehat{\pi}^\ast_2 \gamma) \]
to
\[ G_J (u)\cdot (\widehat{\pi}^\ast_1 du_J \wedge \widehat{\pi}^\ast_2 \gamma) =
  \widehat{\pi}^\ast_1 (G_J (u) du_J) \wedge \widehat{\pi}^\ast_2 \gamma, \]
which lies in $(\ft_{\geq k} \omsc^\bullet (\real^{n-1}))^{\bullet -1}$. Thus
$\pi_\ast$ restricts to a map
\[ \pi_\ast: \ft_{\geq k} \omsc^\bullet (\real^n) \longrightarrow
 (\ft_{\geq k} \omsc^\bullet (\real^{n-1}))^{\bullet -1}. \]

\textbf{Step 2.} We shall construct a candidate $e_{1\ast}$ for a homotopy inverse
for $\pi_\ast$ and show that it, too, restricts to the complex of fiberwise cotruncated
multiplicatively structured forms. We define a chain map
$e_{1\ast}: \Omega^{\bullet -1}_c (M) \longrightarrow \Omega^\bullet_c 
 (\real^1 \times M),$
that is,
\[ e_{1\ast}: \Omega^{\bullet -1}_c (\real^{n-1}\times F) \longrightarrow 
  \Omega^\bullet_c (\real^n \times F), \]
by
$e_{1\ast} (\omega) = e_1 \wedge \pi^\ast \omega.$
By construction,
$\pi_\ast \circ e_{1\ast} = \id.$
(Recall that $\int_{\real^1} e_1 =1$.) 
The equations
$\hat{\pi} \circ \pi_1 = \widehat{\pi}_1 \circ \pi,~ \widehat{\pi}_2 \circ \pi = \pi_2$
hold, where $\hat{\pi}: \real \times \real^{n-1} \rightarrow \real^{n-1}$ is
the standard projection $\hat{\pi} (t,u)=u.$ The image of a form
$\widehat{\pi}^\ast_1 \eta \wedge \widehat{\pi}^\ast_2 \gamma \in
(\ft_{\geq k} \omsc^\bullet (\real^{n-1}))^{\bullet -1},$
$\eta \in \Omega^\bullet_c (\real^{n-1}),$
$\gamma \in \tau_{\geq k} \Omega^\bullet (F),$ under $e_{1\ast}$ is
\begin{eqnarray*}
e_{1\ast} (\widehat{\pi}^\ast_1 \eta \wedge \widehat{\pi}^\ast_2 \gamma) & = &
  e_1 \wedge \pi^\ast (\widehat{\pi}^\ast_1 \eta \wedge \widehat{\pi}^\ast_2 \gamma) 
 =  e_1 \wedge \pi^\ast \widehat{\pi}^\ast_1 \eta \wedge  
   \pi^\ast \widehat{\pi}^\ast_2 \gamma \\
& = & e_1 \wedge \pi^\ast_1 \hat{\pi}^\ast \eta \wedge  
   \pi^\ast_2 \gamma 
 =  \pi^\ast_1 (e_1 \wedge \hat{\pi}^\ast \eta) \wedge  
   \pi^\ast_2 \gamma, 
\end{eqnarray*}
which lies in $\ft_{\geq k} \omsc^\bullet (\real^n)$. Thus $e_{1\ast}$ restricts to a map
\[ e_{1\ast}: (\ft_{\geq k} \omsc^\bullet (\real^{n-1}))^{\bullet -1} \longrightarrow
   \ft_{\geq k} \omsc^\bullet (\real^n). \]

\textbf{Step 3.} We shall show that $e_{1\ast} \pi_\ast$ is homotopic to the identity
by exhibiting a homotopy operator
$K: \ft_{\geq k} \omsc^\bullet (\real^n)  \longrightarrow 
   (\ft_{\geq k} \omsc^\bullet (\real^n))^{\bullet -1}$
such that
\begin{equation} \label{equ.htpyopkofhc}
\id - e_{1\ast} \pi_\ast = dK + Kd 
\end{equation}
on $\ft_{\geq k} \omsc^\bullet (\real^n).$ First, define
$K: \Omega^\bullet_c (\real^1 \times M) \longrightarrow
 \Omega^{\bullet -1}_c (\real^1 \times M),$
that is,
\[ K: \Omega^\bullet_c (\real^n \times F) \longrightarrow
 \Omega^{\bullet -1}_c (\real^n \times F), \]
by
\begin{eqnarray*}
K(f(t_1,x)\cdot \pi^\ast \mu) & = & 0, \\
K(g(t_1,x)dt_1 \wedge \pi^\ast \mu) & = &  
  (G(t_1,x) - E_1 (t_1)G(\infty, x)) \cdot \pi^\ast \mu
\end{eqnarray*}
where
\[ G(t_1,x) = \int_{-\infty}^{t_1} g(\tau, x) d\tau,~
  E_1 (t_1) = \int_{-\infty}^{t_1} e_1. \]
Equation (\ref{equ.htpyopkofhc}) holds on $\Omega^\bullet_c (\real^1 \times M)$.
Let $\pi^\ast_1 \eta \wedge \pi^\ast_2 \gamma \in \ft_{\geq k} \omsc^\bullet
(\real^n)$ be a multiplicatively structured form, $\eta \in \Omega^p_c (\real^n),$
$\gamma \in \tau_{\geq k} \Omega^\bullet (F).$ The basic form $\eta$ is again
decomposed as in Step 1. As the terms
$\pi^\ast_1 (f_I (t_1,u) du_I)\wedge \pi^\ast_2 \gamma$ do not contain $dt_1$,
they are sent to $0$ by $K$. With
\[ H_J (t_1,u) = G_J (t_1,u) - E_1 (t_1) G_J (\infty, u), \]
which has compact support,
$K$ maps the terms
\[ \pi^\ast_1 (g_J (t_1,u) dt_1 \wedge du_J)\wedge \pi^\ast_2 \gamma =
  g_J (t_1,u)dt_1 \wedge 
 \pi^\ast (\widehat{\pi}^\ast_1 du_J \wedge \widehat{\pi}^\ast_2 \gamma) \]
to
\[ H_J (t_1,u)\cdot \pi^\ast (\widehat{\pi}^\ast_1 du_J \wedge \widehat{\pi}^\ast_2 \gamma)
 = H_J (t_1,u)\cdot \pi^\ast_1 du_J \wedge \pi^\ast_2 \gamma
 = \pi^\ast_1 (H_J (t_1,u)du_J)\wedge \pi^\ast_2 \gamma, \]
which lie in $(\ft_{\geq k} \omsc^\bullet (\real^n))^{\bullet -1}.$ Consequently,
$K$ restricts to a map
\[ K: \ft_{\geq k} \omsc^\bullet (\real^n)  \longrightarrow 
   (\ft_{\geq k} \omsc^\bullet (\real^n))^{\bullet -1}. \]
By equation (\ref{equ.htpyopkofhc}), it is a homotopy operator between
\[ e_{1\ast} \pi_\ast: \ft_{\geq k} \omsc^\bullet (\real^n)  \longrightarrow 
   \ft_{\geq k} \omsc^\bullet (\real^n) \]
and the identity.\\

\textbf{Step 4.} By Step 3 and $\pi_\ast e_{1\ast} = \id,$ the maps
\[ \xymatrix{\ft_{\geq k} \omsc^\bullet (\real^n) & 
  (\ft_{\geq k} \omsc^\bullet (\real^{n-1}))^{\bullet -1} 
\ar@<1ex>[l]^{e_{1\ast}}  \ar@<1ex>[l];[]^{\pi_\ast}
} \]
are mutually chain homotopy inverse chain homotopy equivalences. As $n$ was
arbitrary, we may iterate the application of these maps and obtain homotopy
equivalences

\[ \xymatrix@R=14pt{
\ft_{\geq k} \omsc^\bullet (\real^n) \ar@<1ex>[d]^{\pi_\ast} & \\
(\ft_{\geq k} \omsc^\bullet (\real^{n-1}))^{\bullet -1} \ar@<1ex>[u]^{e_{1\ast}}
   \ar@<1ex>[d]^{\pi_\ast} & \\
(\ft_{\geq k} \omsc^\bullet (\real^{n-2}))^{\bullet -2} \ar@<1ex>[u]^{e_{1\ast}} 
    \ar@{..}[d] & \\
(\ft_{\geq k} \omsc^\bullet (\real^{1}))^{\bullet -n+1} 
   \ar@<1ex>[d]^{\pi_\ast} & \\
(\ft_{\geq k} \omsc^\bullet (\real^{0}))^{\bullet -n} \ar@<1ex>[u]^{e_{1\ast}} &
 = (\tau_{\geq k} \Omega^\bullet (F))^{\bullet -n}.
} \] 
Let $\pi^n_\ast$ denote this $n$-fold iteration of $\pi_\ast$ and $e^n_{1\ast}$
the $n$-fold iteration of $e_{1\ast}$. Since, as is readily checked, 
$\pi^n_\ast = \pi_{2\ast}$ and $e^n_{1\ast} = e_\ast,$ the lemma is proved.
\end{proof}

\subsection{Local Poincar\'e Duality for Truncated Structured Forms}
\label{ssec.locpdtfhforms}

The Poincar\'e Lemmas of the previous section, together with the
integration formula of Lemma \ref{lem.823} imply local Poincar\'e duality
between fiberwise truncated multiplicatively structured forms and fiberwise
cotruncated compactly supported multiplicatively structured forms, as we will
demonstrate in this section. Given complementary perversities
$\bar{p}$ and $\bar{q}$, and the dimension $m$ of $F$, we define
truncation values
\[ K = m- \bar{p}(m+1),~ K^\ast = m- \bar{q}(m+1). \]
The bilinear form
\[ \begin{array}{ccl}
\Omega^r (\real^n \times F)\times \Omega^{n+m-r}_c (\real^n \times
F) & \longrightarrow & \real \\
(\omega, \omega') & \mapsto & \int_{\real^n \times F} \omega \wedge \omega'
\end{array} \]
restricts to
$\int: \oms^r (\real^n)\times \omsc^{n+m-r} (\real^n) \longrightarrow  \real$
and further to
\begin{equation} \label{equ.localpairing}
\int: (\ft_{<K} \oms^\bullet (\real^n))^r \times 
 (\ft_{\geq K^\ast} \omsc^{\bullet} (\real^n))^{n+m-r} \longrightarrow  \real.
\end{equation}
Stokes' theorem implies:
\begin{lemma} \label{lem.831}
The bilinear forms (\ref{equ.localpairing}) induce bilinear forms
\[ \int: H^r (\ft_{<K} \oms^\bullet (\real^n)) \times 
 H^{n+m-r}(\ft_{\geq K^\ast} \omsc^{\bullet} (\real^n)) \longrightarrow  \real \]
on cohomology.
\end{lemma}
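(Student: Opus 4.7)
The plan is the standard Stokes-theorem argument for descending a wedge-integration pairing to cohomology; the only substantive point is to verify that the relevant primitives can be integrated against one another and that compact supports are available where needed.

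First I would check that the pairing is well-defined on cochains, which is immediate: a form $\omega \in (\ft_{<K} \oms^\bullet (\real^n))^r$ is a smooth $r$-form on $\real^n \times F$ and $\omega' \in (\ft_{\geq K^\ast} \omsc^\bullet (\real^n))^{n+m-r}$ is a compactly supported smooth $(n+m-r)$-form on $\real^n \times F$, so $\omega \wedge \omega'$ is a compactly supported top form and its integral over the oriented manifold $\real^n \times F$ is a well-defined real number.

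To prove the pairing descends to cohomology, I would verify separately that it vanishes whenever one of the two arguments is a coboundary and the other a cocycle. Suppose first that $\omega = d\alpha$ with $\alpha \in (\ft_{<K} \oms^\bullet (\real^n))^{r-1}$ and $d\omega' = 0$. Then
\[
d(\alpha \wedge \omega') = d\alpha \wedge \omega' + (-1)^{r-1} \alpha \wedge d\omega' = \omega \wedge \omega'.
\]
Since $\omega'$ has compact support on $\real^n \times F$, so does $\alpha \wedge \omega'$. The manifold $\real^n \times F$ is oriented and has no boundary (because $F$ is closed), so Stokes' theorem gives
\[
\int_{\real^n \times F} \omega \wedge \omega' = \int_{\real^n \times F} d(\alpha \wedge \omega') = 0.
\]
Dually, if $d\omega = 0$ and $\omega' = d\alpha'$ with $\alpha' \in (\ft_{\geq K^\ast} \omsc^\bullet (\real^n))^{n+m-r-1}$, then $\alpha'$ has compact support, hence so does $\omega \wedge \alpha'$, and the same calculation with $d(\omega \wedge \alpha') = (-1)^r \omega \wedge \omega'$ together with Stokes' theorem again yields $\int_{\real^n \times F} \omega \wedge \omega' = 0$.

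Because the truncated and cotruncated subcomplexes sit inside $\Omega^\bullet(\real^n\times F)$ and $\Omega^\bullet_c(\real^n\times F)$ respectively as genuine subcomplexes (by construction and by the Leibniz-rule arguments of Section \ref{ssec.locfibtrunc}), all primitives $\alpha, \alpha'$ appearing above really are smooth forms on $\real^n \times F$ of the required support type, so no additional regularization is needed. Thus the induced bilinear form on $H^r(\ft_{<K}\oms^\bullet(\real^n)) \times H^{n+m-r}(\ft_{\geq K^\ast}\omsc^\bullet(\real^n))$ is well-defined. There is no real obstacle here beyond the book-keeping of supports, which is handled automatically by the fact that one factor sits in the compactly-supported complex.
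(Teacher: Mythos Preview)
Your proof is correct and is precisely the standard Stokes-theorem argument the paper has in mind; the paper itself merely states ``Stokes' theorem implies'' before the lemma, and your write-up supplies exactly the details that phrase abbreviates.
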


\begin{lemma} \label{lem.truncpd}
Integration induces a nondegenerate bilinear form
\[ H^r (\tau_{<K} \Omega^\bullet (F)) \times
  H^{m-r} (\tau_{\geq K^\ast} \Omega^\bullet (F))
 \longrightarrow \real. \]
\end{lemma}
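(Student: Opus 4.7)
The plan is to reduce the statement to classical Poincaré duality on $F$ by dimension counting, using that the pertinent cohomology groups are either zero or canonically identified with $H^\bullet(F)$.

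First I would record the numerical identity. Since $\bar{p}$ and $\bar{q}$ are complementary, $\bar{p}(m+1) + \bar{q}(m+1) = m-1$, so
\[ K + K^\ast = 2m - \bar{p}(m+1) - \bar{q}(m+1) = m+1. \]
In particular, $r < K$ if and only if $m-r \geq K^\ast$.

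Next I would split into two cases based on $r$. If $r \geq K$, then by construction $H^r(\tau_{<K} \Omega^\bullet(F)) = 0$, and by $K + K^\ast = m+1$ one has $m-r < K^\ast$, so also $H^{m-r}(\tau_{\geq K^\ast} \Omega^\bullet(F)) = 0$; the pairing is trivially nondegenerate. The interesting case is $r \leq K-1$, equivalently $m-r \geq K^\ast$. In that range, the inclusion $\tau_{<K} \Omega^\bullet(F) \hookrightarrow \Omega^\bullet(F)$ induces an isomorphism $H^r(\tau_{<K} \Omega^\bullet(F)) \stackrel{\cong}{\longrightarrow} H^r(F)$ (as recorded after Definition \ref{def.truncoverpoint}), and the inclusion $\tau_{\geq K^\ast} \Omega^\bullet(F) \hookrightarrow \Omega^\bullet(F)$ induces an isomorphism $H^{m-r}(\tau_{\geq K^\ast} \Omega^\bullet(F)) \stackrel{\cong}{\longrightarrow} H^{m-r}(F)$ (as discussed after Definition \ref{def.cotruncoverpoint}).

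The key compatibility step is that under these two inclusion-induced isomorphisms, the pairing
\[ ([\omega],[\eta]) \mapsto \int_F \omega \wedge \eta \]
becomes literally the classical de Rham Poincaré pairing on $F$. This is essentially automatic: a class in $H^r(\tau_{<K}\Omega^\bullet(F))$ is represented by a form $\omega$ whose cocycle condition in the truncated complex (including the boundary degree $r=K-1$, where the differential lands in $\im d^{K-1}$) is exactly $d\omega = 0$ in $\Omega^\bullet(F)$, and similarly a class in $H^{m-r}(\tau_{\geq K^\ast}\Omega^\bullet(F))$ is represented by a closed form $\eta$ (in the boundary degree $m-r=K^\ast$, the cocycle condition is again $d\eta = 0$). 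Since inclusion into $\Omega^\bullet(F)$ does not change the underlying forms and the pairing is defined by the same wedge-and-integrate formula on both sides, the commuting diagram
\[ \xymatrix@C=10pt{
H^r(\tau_{<K}\Omega^\bullet F) \times H^{m-r}(\tau_{\geq K^\ast}\Omega^\bullet F) \ar[r]^-{\int} \ar[d]_{\cong} & \real \ar@{=}[d] \\
H^r(F) \times H^{m-r}(F) \ar[r]^-{\int} & \real
} \]
commutes. Nondegeneracy of the bottom row is classical Poincaré duality for the closed oriented Riemannian manifold $F$, hence the top row is nondegenerate as well.

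The only point requiring a moment of care — and what I would treat as the main (minor) obstacle — is the boundary behavior at $r=K-1$ and at $m-r=K^\ast$, where the truncated/cotruncated complexes differ from $\Omega^\bullet(F)$ not just by zeroing out but by the subspaces $\im d^{K-1}$ and $\ker d^\ast$. One must verify that in these degrees the cocycle condition inside the (co)truncated complex really is $d\omega=0$ (resp.\ $d\eta=0$) as a form on $F$, so that the inclusion-induced map on cohomology is truly represented by the identity on forms; this is immediate from the definitions but should be stated explicitly before invoking classical Poincaré duality.
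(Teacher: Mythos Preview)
Your proof is correct and follows essentially the same approach as the paper: split into the cases $r\geq K$ and $r<K$ using the numerical identity $K+K^\ast=m+1$, observe that both groups vanish in the first case and are canonically $H^r(F)$, $H^{m-r}(F)$ in the second, then invoke classical Poincar\'e duality on $F$. Your additional remarks on compatibility of the pairing and the boundary degrees $r=K-1$, $m-r=K^\ast$ are fine but more detail than the paper supplies.
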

\begin{proof}
If $r\geq K,$ then $H^r (\tau_{<K} \Omega^\bullet (F))=0.$
The inequality $r\geq K$ implies the inequality
$m-r < K^\ast$. Thus
$H^{m-r} (\tau_{\geq K^\ast} \Omega^\bullet (F))=0$ as well
and the lemma is proved for $r\geq K$. When $r<K,$ then
$H^r (\tau_{<K} \Omega^\bullet (F)) = H^r (F).$
The inequality $r<K$ implies $m-r \geq K^\ast$. Hence
$H^{m-r} (\tau_{\geq K^\ast} \Omega^\bullet (F))
 = H^{m-r} (F).$
Classical Poincar\'e duality for the closed, oriented $m$-manifold
$F$ asserts that the bilinear form
\[ \begin{array}{ccc}
H^r (F) \times H^{m-r} (F) & \longrightarrow &
 \real \\
([\omega], [\eta]) & \mapsto & \int_{F} \omega \wedge \eta
\end{array} \]
is nondegenerate.
\end{proof}

\begin{lemma} \label{lem.832} (Local Poincar\'e Duality.)
The bilinear form
\[ \int: H^r (\ft_{<K} \oms^\bullet (\real^n)) \times 
 H^{n+m-r}(\ft_{\geq K^\ast} \omsc^{\bullet} (\real^n)) \longrightarrow  \real \]
is nondegenerate.
\end{lemma}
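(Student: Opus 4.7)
The plan is to reduce the nondegeneracy statement on $\real^n \times F$ to the nondegeneracy of the fiberwise pairing of Lemma \ref{lem.truncpd} via the Poincar\'e Lemmas of Section \ref{ssec.poinclemmasfibtrunc}. Concretely, I would establish a commutative (up to sign) square
\[ \xymatrix{
H^r (\ft_{<K} \oms^\bullet (\real^n)) \times H^{n+m-r}(\ft_{\geq K^\ast} \omsc^\bullet (\real^n)) \ar[r]^>>>>{\int} & \real \ar@{=}[d] \\
H^r (\tau_{<K} \Omega^\bullet F) \times H^{m-r} (\tau_{\geq K^\ast} \Omega^\bullet F) \ar[u]^{\pi_2^\ast \times\, e_\ast}_{\cong} \ar[r]^>>>>{\pm \int_F} & \real,
} \]
in which the vertical map is an isomorphism by Lemmas \ref{lem.822} and \ref{lem.824}, and the bottom pairing is nondegenerate by Lemma \ref{lem.truncpd}. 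Nondegeneracy of the top pairing then follows.

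For the commutativity, I would compute on representatives. A class in $H^r (\ft_{<K} \oms^\bullet (\real^n))$ is represented by $\pi_2^\ast \gamma$ for some closed $\gamma \in (\tau_{<K} \Omega^\bullet F)^r$, and a class in $H^{n+m-r}(\ft_{\geq K^\ast} \omsc^\bullet (\real^n))$ is represented by $e_\ast (\gamma') = \pi_1^\ast e \wedge \pi_2^\ast \gamma'$ for some closed $\gamma' \in (\tau_{\geq K^\ast} \Omega^\bullet F)^{m-r}$, where $e\in \Omega^n_c(\real^n)$ satisfies $\int_{\real^n} e =1$. Using the sign identity $\pi_2^\ast \gamma \wedge \pi_1^\ast e = (-1)^{rn} \pi_1^\ast e \wedge \pi_2^\ast \gamma$ and the integration formula of Lemma \ref{lem.823} applied to $\omega = \pi_1^\ast e \in \omsc^n(\real^n)$, I obtain
\[ \int_{\real^n \times F} \pi_2^\ast \gamma \wedge \pi_1^\ast e \wedge \pi_2^\ast \gamma'
= (-1)^{rn} \int_{\real^n \times F} \pi_1^\ast e \wedge \pi_2^\ast (\gamma \wedge \gamma')
= (-1)^{rn} \int_F \pi_{2\ast}(\pi_1^\ast e) \wedge \gamma \wedge \gamma', \]
and since $\pi_{2\ast}(\pi_1^\ast e) = \int_{\real^n} e = 1$, this reduces to $(-1)^{rn} \int_F \gamma \wedge \gamma'$, which is exactly (up to the uniform sign) the classical pairing on $F$ truncated as in Lemma \ref{lem.truncpd}.

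With the square verified, the argument concludes as follows. Given a nonzero class $[\omega] \in H^r (\ft_{<K} \oms^\bullet (\real^n))$, its image $S_0^\ast[\omega] \in H^r (\tau_{<K} \Omega^\bullet F)$ is nonzero (Lemma \ref{lem.822}), so by Lemma \ref{lem.truncpd} there exists a closed $\gamma' \in (\tau_{\geq K^\ast} \Omega^\bullet F)^{m-r}$ with $\int_F S_0^\ast(\omega) \wedge \gamma' \not= 0$. Pulling $\gamma'$ back by $e_\ast$ and using the commutative square produces a class pairing nontrivially with $[\omega]$; the symmetric argument handles the other factor. The main obstacle here is not conceptual but bookkeeping: ensuring that the Poincar\'e-lemma-inverse identifications land on representatives of the form $\pi_2^\ast \gamma$ and $e_\ast (\gamma')$ respectively, and tracking the signs from degree swaps, all of which are controlled once one invokes the explicit homotopy inverses constructed in Lemmas \ref{lem.822} and \ref{lem.824}.
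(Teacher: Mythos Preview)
Your proposal is correct and follows essentially the same route as the paper: reduce to the fiber pairing of Lemma \ref{lem.truncpd} via the Poincar\'e Lemmas \ref{lem.822} and \ref{lem.824}, checking compatibility through the integration formula of Lemma \ref{lem.823}. The only cosmetic difference is that the paper phrases the comparison using $\pi_{2\ast}$ (and its dual) on the compactly supported factor, whereas you use its homotopy inverse $e_\ast$; since $\pi_{2\ast}\circ e_\ast = \id$ on cohomology, the two diagrams are equivalent.
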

\begin{proof}
By Lemma \ref{lem.truncpd}, the map
\[ \begin{array}{rcl}
H^r (\tau_{<K} \Omega^\bullet (F)) & \longrightarrow &
H^{m-r} (\tau_{\geq K^\ast} \Omega^\bullet (F))^\dagger, \\
\mbox{[} \omega \mbox{]} & \mapsto & \int_F - \wedge \omega, 
\end{array} \]
is an isomorphism. We have to show that the map
\[ \begin{array}{rcl}
H^r (\ft_{<K} \oms^\bullet (\real^n)) & \longrightarrow &
H^{n+m-r} (\ft_{\geq K^\ast} \omsc^\bullet (\real^n))^\dagger, \\
\mbox{[} \omega \mbox{]} & \mapsto & \int_{\real^n \times F} - \wedge \omega, 
\end{array} \]
is an isomorphism. By the Poincar\'e Lemma \ref{lem.822},
\[ \pi^\ast_2: H^r (\tau_{<K} \Omega^\bullet (F)) \longrightarrow
  H^r (\ft_{<K} \oms^\bullet (\real^n)) \]
is an isomorphism. According to the Poincar\'e lemma for cotruncation with compact supports,
Lemma \ref{lem.824},
\[ \pi_{2\ast}: H^{n+m-r} (\ft_{\geq K^\ast} \omsc (\real^n))
 \longrightarrow H^{m-r} (\tau_{\geq K^\ast} \Omega^\bullet (F)) \]
is an isomorphism. The desired conclusion will follow once we have
verified that the diagram
\[ \xymatrix{
H^r (\ft_{<K} \oms^\bullet (\real^n)) \ar[d]_{\int} &
 H^r (\tau_{<K} \Omega^\bullet (F)) 
 \ar[l]^{\pi^\ast_2}_{\cong} \ar[d]_{\int}^{\cong} \\
H^{n+m-r} (\ft_{\geq K^\ast} \omsc^\bullet (\real^n))^\dagger
& H^{m-r} (\tau_{\geq K^\ast} \Omega^\bullet (F))^\dagger
\ar[l]^<<<{\pi^\dagger_{2\ast}}_<<<<{\cong}
} \]
commutes. Commutativity means that for $\gamma \in \tau_{<K}
\Omega^\bullet (F)$ and $\omega \in \ft_{\geq K^\ast} \omsc^\bullet
(\real^n),$ the identity
\[ \int_{\real^n \times F} \omega \wedge \pi^\ast_2 \gamma =
 \int_F \pi_{2\ast} \omega \wedge \gamma \]
holds. This is precisely the integration formula of Lemma \ref{lem.823}.
\end{proof}

\subsection{Global Poincar\'e Duality for Truncated Structured Forms}
\label{ssec.globalpdtruncfibharm}

Let $F \rightarrow E \stackrel{p}{\rightarrow} B$ be a flat fiber bundle as in 
Section \ref{sec.fibharmflat}. The manifold $F$ is Riemannian and 
we now assume that the
structure group of the bundle are the isometries of $F$. The smooth,
compact base $B$ is covered by a finite good open cover
$\mathfrak{U} = \{ U_\alpha \}$ with respect to which the bundle
trivializes. The local trivializations are denoted by
$\phi_\alpha: p^{-1} (U_\alpha) \stackrel{\cong}{\longrightarrow}
U_\alpha \times F,$ as before. 
For an open subset $U\subset B$, we set
\[ \oms^\bullet (U) = \{ \omega \in \Omega^\bullet (p^{-1} U) ~|~
 \omega|_{p^{-1}(U\cap U_\alpha)} \text{ is $\alpha$-multiplicatively structured
for all } \alpha \}. \]
A compactly supported version $\omsc^\bullet (U)$ is obtained by
setting
\[ \omsc^\bullet (U) = \{ \omega \in \Omega^\bullet (p^{-1}U) ~|~
 \omega = \sum_\alpha \omega_\alpha,~ \operatorname{supp}
 (\omega_\alpha) \subset p^{-1}(U\cap U_\alpha), \]
\[ \hspace{4cm} \omega_\alpha = \phi^\ast_\alpha \sum_j \pi^\ast_1 \eta_j \wedge
 \pi^\ast_2 \gamma_j,~ \eta_j \in \Omega^\bullet_c (U\cap U_\alpha),~
 \gamma_j \in \Omega^\bullet (F) \}. \]
Note that this is consistent with our earlier definition of $\omsc^\bullet (\real^n)$
for $U = \real^n$. This complex is indeed a subcomplex of $\Omega^\bullet_c
(p^{-1} U)$, since $\supp (\sum \omega_\alpha)\subset \bigcup_\alpha 
\supp (\omega_\alpha),$ the finite union of compact sets is compact, and a closed
subset of a compact set is compact. For any integer $k$, a subcomplex
\[ \ft_{<k} \oms^\bullet (U) \subset \oms^\bullet (U) \]
of \emph{fiberwise truncated} multiplicatively structured forms on $p^{-1} (U)$ is given
by requiring, for all $\alpha$, every $\gamma_j$ to lie in $\tau_{<k} \Omega^\bullet
(F)$. This is well-defined by the transformation law of Lemma
\ref{lem.abinvariant} together with Lemma \ref{lem.isometrypreservcotrunc}(1).
A subcomplex
\[ \ft_{\geq k} \oms^\bullet (U) \subset \oms^\bullet (U) \]
of \emph{fiberwise cotruncated} multiplicatively structured forms on $p^{-1} (U)$ is given
by requiring, for all $\alpha$, every $\gamma_j$ to lie in $\tau_ {\geq k} \Omega^\bullet
(F)$. This is well-defined by the transformation law and
 Lemma \ref{lem.isometrypreservcotrunc}(2).
(At this point it is used that the transition functions of the bundle are isometries.) 
A subcomplex
\[ \ft_{\geq k} \omsc^\bullet (U) \subset \omsc^\bullet (U) \]
of \emph{fiberwise cotruncated} multiplicatively structured compactly supported forms on $p^{-1} (U)$ is given
by requiring, for all $\alpha$, every $\gamma_j$ to lie in $\tau_ {\geq k} \Omega^\bullet
(F)$. Again, this is well-defined. Let $K = m - \bar{p}(m+1),$
$K^\ast = m - \bar{q}(m+1)$ be the truncation values defined in Section
\ref{ssec.locpdtfhforms}. The bilinear form
\[ \begin{array}{rcl}
\Omega^r (p^{-1} U) \times \Omega^{n+m-r}_c (p^{-1} U) & \longrightarrow &
 \real \\ (\omega, \omega') & \mapsto & \int_{p^{-1}U} \omega \wedge \omega'
\end{array} \]
restricts to
$\int: \oms^r (U) \times \omsc^{n+m-r} (U) \longrightarrow \real$
and further to
\begin{equation} \label{equ.nablanabla}
\int: (\ft_{<K} \oms^\bullet (U))^r \times (\ft_{\geq K^\ast} \omsc^\bullet 
(U))^{n+m-r} \longrightarrow \real.
\end{equation}
Replacing $\real^n$ by $U$ and $\real^n \times F$ by $p^{-1} U$ in the proof
of Lemma \ref{lem.831}, we obtain a globalized version of that lemma:
\begin{lemma} \label{lem.842}
The bilinear forms (\ref{equ.nablanabla}) induce bilinear forms
\[ \int: H^r (\ft_{<K} \oms^\bullet (U)) \times 
 H^{n+m-r}(\ft_{\geq K^\ast} \omsc^{\bullet} (U)) \longrightarrow  \real \]
on cohomology.
\end{lemma}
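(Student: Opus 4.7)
The plan is to verify that the pairing descends to cohomology by a direct application of Stokes' theorem to the compactly supported form $\eta \wedge \omega'$ (respectively $\omega \wedge \eta'$) on the manifold $p^{-1}(U)$, precisely as was done at the local level in Lemma \ref{lem.831}. Note first that $p^{-1}(U)$ is a smooth oriented manifold of dimension $n+m$, oriented from the orientations of the fiber $F$ and the base $B$, and it has no boundary (it is an open submanifold of $E$).

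First, I would show that the integral $\int_{p^{-1}U} \omega \wedge \omega'$ depends only on the cohomology class of $\omega$. Suppose $\omega = d\eta$ for some $\eta \in (\ft_{<K} \oms^\bullet(U))^{r-1}$, and $\omega' \in (\ft_{\geq K^\ast} \omsc^\bullet(U))^{n+m-r}$ is closed. The Leibniz rule gives
\[
d(\eta \wedge \omega') \;=\; d\eta \wedge \omega' + (-1)^{r-1} \eta \wedge d\omega' \;=\; \omega \wedge \omega'.
\]
Since $\omega' \in \omsc^\bullet(U)$ has compact support in $p^{-1}(U)$ by definition of $\omsc^\bullet(U)$, the product $\eta \wedge \omega'$ also has compact support there. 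Stokes' theorem on the boundaryless manifold $p^{-1}(U)$ then yields $\int_{p^{-1}U} d(\eta \wedge \omega') = 0$, so $\int_{p^{-1}U} \omega \wedge \omega' = 0$.

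Symmetrically, suppose now $\omega \in (\ft_{<K} \oms^\bullet(U))^r$ is closed and $\omega' = d\eta'$ for some $\eta' \in (\ft_{\geq K^\ast} \omsc^\bullet(U))^{n+m-r-1}$. Then
\[
d(\omega \wedge \eta') \;=\; d\omega \wedge \eta' + (-1)^r \omega \wedge d\eta' \;=\; (-1)^r \omega \wedge \omega',
\]
and $\omega \wedge \eta'$ has compact support in $p^{-1}(U)$ because $\eta'$ does. Stokes' theorem again gives $\int_{p^{-1}U} \omega \wedge \omega' = 0$.

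These two computations show that the bilinear form (\ref{equ.nablanabla}) descends to a well-defined bilinear form on the quotients $H^r(\ft_{<K} \oms^\bullet(U))$ and $H^{n+m-r}(\ft_{\geq K^\ast} \omsc^\bullet(U))$, as claimed. There is no real obstacle here; the only small point to verify is that at each step the relevant wedge product has compact support on $p^{-1}(U)$, which is immediate from the defining compact support condition on $\omsc^\bullet(U)$.
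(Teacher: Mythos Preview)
Your proof is correct and is essentially the same approach the paper takes: the paper simply says to replace $\real^n$ by $U$ and $\real^n \times F$ by $p^{-1}U$ in the Stokes' theorem argument of Lemma~\ref{lem.831}, which is exactly the computation you carry out. The only point worth making explicit (which you do) is that forms in $\omsc^\bullet(U)$ have compact support in $p^{-1}(U)$, so that Stokes' theorem applies on this boundaryless manifold.
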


\begin{lemma} \label{lem.bootstrap843} (Bootstrap.)
Let $U,V \subset B$ be open subsets such that
\begin{equation} \label{equ.pairingonw}
 \int: H^r (\ft_{<K} \oms^\bullet (W)) \times 
 H^{n+m-r}(\ft_{\geq K^\ast} \omsc^{\bullet} (W)) \longrightarrow  \real
\end{equation}
is nondegenerate for $W = U,V,U\cap V$. Then (\ref{equ.pairingonw}) is
nondegenerate for $W=U\cup V$.
\end{lemma}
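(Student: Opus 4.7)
The plan is to run the standard Mayer--Vietoris bootstrap argument, exactly as in Bott--Tu's proof of Poincar\'e duality, but carried out entirely within the complexes of fiberwise (co)truncated multiplicatively structured forms. The key input is two short exact Mayer--Vietoris sequences, one for $\ft_{<K}\oms^\bullet$ (behaving covariantly under restriction) and one for $\ft_{\geq K^\ast}\omsc^\bullet$ (behaving contravariantly, i.e.\ running in the opposite direction), together with the fact that the integration pairing is compatible with the connecting homomorphisms of both sequences.

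First I would establish the two short exact sequences
\[ 0 \longrightarrow \ft_{<K}\oms^\bullet(U\cup V) \longrightarrow
 \ft_{<K}\oms^\bullet(U)\oplus \ft_{<K}\oms^\bullet(V) \longrightarrow
 \ft_{<K}\oms^\bullet(U\cap V) \longrightarrow 0 \]
and
\[ 0 \longrightarrow \ft_{\geq K^\ast}\omsc^\bullet(U\cap V) \longrightarrow
 \ft_{\geq K^\ast}\omsc^\bullet(U)\oplus \ft_{\geq K^\ast}\omsc^\bullet(V) \longrightarrow
 \ft_{\geq K^\ast}\omsc^\bullet(U\cup V) \longrightarrow 0, \]
where the maps are the usual restriction/difference and extension-by-zero/sum, respectively. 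The main thing to verify here is surjectivity, which in both cases uses a smooth partition of unity $\{\rho_U,\rho_V\}$ subordinate to $\{U,V\}$ on the base, pulled back via $p$ to $\{p^\ast\rho_U,p^\ast\rho_V\}$ on $p^{-1}(U\cup V)$. Because multiplication by $p^\ast\rho_U$ only alters the base factors $\eta_j$ of a local expression $\phi_\alpha^\ast\sum\pi_1^\ast\eta_j\wedge\pi_2^\ast\gamma_j$ and leaves the fiber factors $\gamma_j\in\tau_{<K}\Omega^\bullet(F)$ (respectively $\tau_{\geq K^\ast}\Omega^\bullet(F)$) untouched, the fiberwise truncation and cotruncation properties are preserved. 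The proof of Lemma \ref{lem.gmv} provides the template for this calculation.

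Next I would pass to the associated long exact cohomology sequences and verify that the integration pairing of Lemma \ref{lem.842} intertwines the connecting homomorphism $d^\ast$ for $\ft_{<K}\oms^\bullet$ on $U\cap V$ with the connecting homomorphism $\partial$ for $\ft_{\geq K^\ast}\omsc^\bullet$ on $U\cup V$ up to a sign $(-1)^r$. This is a standard Stokes-theorem computation at the cochain level: for $[\omega]\in H^r(\ft_{<K}\oms^\bullet(U\cap V))$ and $[\tau]\in H^{n+m-r-1}(\ft_{\geq K^\ast}\omsc^\bullet(U\cup V))$, one expresses $d^\ast[\omega]$ via a splitting $\omega=\omega_U|-\omega_V|$ and $\partial[\tau]$ via $\tau=\tau_U+\tau_V$, and then integrates $d\omega_U\wedge\tau_U$ etc., using $d\tau=0$ and Stokes on the top stratum of the total space. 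Dualizing the second long exact sequence and placing it under the first gives a commutative (up to signs) ladder
\[ \xymatrix@C=14pt{
\cdots \ar[r] & H^r(\ft_{<K}\oms^\bullet(U\cup V)) \ar[r]\ar[d] &
 H^r(U)\oplus H^r(V) \ar[r]\ar[d]^\cong &
 H^r(U\cap V) \ar[r]^{d^\ast}\ar[d]^\cong &
 H^{r+1}(\ft_{<K}\oms^\bullet(U\cup V)) \ar[r]\ar[d] & \cdots \\
\cdots \ar[r] & H^{n+m-r}_c(U\cup V)^\dagger \ar[r] &
 H^{n+m-r}_c(U)^\dagger\oplus H^{n+m-r}_c(V)^\dagger \ar[r] &
 H^{n+m-r}_c(U\cap V)^\dagger \ar[r]^{\pm\partial^\dagger} &
 H^{n+m-r-1}_c(U\cup V)^\dagger \ar[r] & \cdots
} \]
(here I abbreviate the fiberwise truncated/cotruncated groups), in which the four outer vertical arrows are isomorphisms by hypothesis.

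The conclusion then follows from the five-lemma applied to this ladder: the central vertical map, which is the pairing on $U\cup V$, is an isomorphism, i.e.\ the pairing is nondegenerate on $U\cup V$. The main technical obstacle is verifying the sign-compatibility of the pairing with the connecting homomorphisms; the rest is the standard Bott--Tu bootstrap, once the fiberwise (co)truncated Mayer--Vietoris sequences have been set up.
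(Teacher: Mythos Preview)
Your proposal is correct and follows essentially the same approach as the paper: establish the two Mayer--Vietoris short exact sequences for $\ft_{<K}\oms^\bullet$ and $\ft_{\geq K^\ast}\omsc^\bullet$ (using a base partition of unity pulled back via $p$, which touches only the $\eta_j$'s and hence preserves the fiberwise (co)truncation), pair the resulting long exact sequences via integration, verify sign-compatibility of the connecting maps (the paper simply cites \cite[Lemma~5.6, p.~45]{botttu} here), and conclude by the five-lemma. The paper spends somewhat more care on exactness at the middle term of the compactly supported sequence---checking that the unique preimage of $(-\omega,\omega)$ actually lies in $\ft_{\geq k}\omsc^\bullet(U\cap V)$ by multiplying with a basic cutoff function---which you might flesh out, but the overall strategy is identical.
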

\begin{proof}
We start out by showing that for any $k\in \intg$ the map
\[ \begin{array}{rcl}
\ft_{<k} \oms^\bullet (U) \oplus \ft_{<k} \oms^\bullet (V) & \longrightarrow &
  \ft_{<k} \oms^\bullet (U\cap V) \\
(\omega, \tau) & \mapsto & \tau|_{p^{-1}(U\cap V)} - \omega|_{p^{-1}(U\cap V)} 
\end{array} \]
is surjective. Let $\{ \rho_U, \rho_V \}$ be a partition of unity subordinate to
$\{ U,V \}$. Given $\omega$ in $\ft_{<k} \oms^\bullet (U\cap V),$ 
$p^\ast (\rho_V)\omega$ is a form on $U$ and $p^\ast (\rho_U)\omega$ is a 
form on $V$ such that the pair $(-p^\ast (\rho_V)\omega, p^\ast (\rho_U)\omega)$
maps to $\omega$. We have to check that $p^\ast (\rho_V)\omega \in
\ft_{<k} \oms^\bullet (U)$ and $p^\ast (\rho_U)\omega \in \ft_{<k} \oms^\bullet
(V)$. Since
\[ \omega|_{p^{-1} (U\cap V\cap U_\alpha)} = \phi^\ast_\alpha \sum_j
 \pi^\ast_1 \eta_j \wedge \pi^\ast_2 \gamma_j, \]
$\eta_j \in \Omega^\bullet (U\cap V\cap U_\alpha),$
$\gamma_j \in \tau_{<k} \Omega^\bullet (F),$ we have
\begin{eqnarray*}
(p^\ast (\rho_V)\omega)|_{p^{-1} (U\cap U_\alpha)} & = &
 p^\ast \rho_V \cdot \phi^\ast_\alpha \sum_j
 \pi^\ast_1 \eta_j \wedge \pi^\ast_2 \gamma_j 
  = 
 \phi^\ast_\alpha \pi^\ast_1 (\rho_V) \cdot \phi^\ast_\alpha \sum_j
 \pi^\ast_1 \eta_j \wedge \pi^\ast_2 \gamma_j \\
& = &
 \phi^\ast_\alpha \sum_j
 \pi^\ast_1 (\rho_V \cdot \eta_j) \wedge \pi^\ast_2 \gamma_j, 
\end{eqnarray*}
which implies that  $p^\ast (\rho_V)\omega \in
\ft_{<k} \oms^\bullet (U)$. The corresponding fact for $p^\ast (\rho_U)\omega$
follows from symmetry. Thus the difference map is surjective as claimed.

Let us proceed to demonstrate the exactness of the sequence
\begin{equation} \label{equ.exmvftlkofh}
0 \rightarrow \ft_{<k} \oms^\bullet (U\cup V) \longrightarrow
\ft_{<k} \oms^\bullet (U) \oplus \ft_{<k} \oms^\bullet (V) \longrightarrow
\ft_{<k} \oms^\bullet (U\cap V) \rightarrow 0
\end{equation}
at the middle group. Given $\omega \in \ft_{<k} \oms^\bullet (U)$ and
$\tau \in \ft_{<k} \oms^\bullet (V)$ such that
\[ \omega|_{p^{-1} (U\cap V)} = \tau|_{p^{-1} (U\cap V)}, \]
there exists a unique differential form $\delta \in \Omega^\bullet (p^{-1}(U\cup V))$ with
$\delta|_{p^{-1}U} = \omega,$ $\delta|_{p^{-1}V} = \tau.$
We must show that $\delta$ lies in $\ft_{<k} \oms^\bullet (U\cup V)\subset
\Omega^\bullet (p^{-1}(U\cup V))$. The restriction of $\omega$ to
$p^{-1} (U\cap U_\alpha)$ can be written as
\[ \omega|_{p^{-1} (U\cap U_\alpha)} = \phi^\ast_\alpha \sum_i
 \pi^\ast_1 \eta^U_i \wedge \pi^\ast_2 \gamma^U_i, \]
$\eta^U_i \in \Omega^\bullet (U\cap U_\alpha),$
$\gamma^U_i \in \tau_{<k} \Omega^\bullet (F).$ 
The restriction of $\tau$ to
$p^{-1} (V\cap U_\alpha)$ can be written as
\[ \tau|_{p^{-1} (V\cap U_\alpha)} = \phi^\ast_\alpha \sum_j
 \pi^\ast_1 \eta^V_j \wedge \pi^\ast_2 \gamma^V_j, \]
$\eta^V_j \in \Omega^\bullet (V\cap U_\alpha),$
$\gamma^V_j \in \tau_{<k} \Omega^\bullet (F).$ 
Therefore,
\begin{eqnarray*}
\delta|_{p^{-1} ((U\cup V)\cap U_\alpha)} & = &
((p^\ast \rho_U + p^\ast \rho_V)\cdot \delta)|_{p^{-1} (U\cap U_\alpha) \cup
  p^{-1} (V\cap U_\alpha)} \\
& = & p^\ast \rho_U \cdot \phi^\ast_\alpha \sum_i \pi^\ast_1 \eta^U_i \wedge
 \pi^\ast_2 \gamma^U_i + p^\ast \rho_V \cdot \phi^\ast_\alpha \sum_j 
  \pi^\ast_1 \eta^V_j \wedge \pi^\ast_2 \gamma^V_j \\ 
& = & \phi^\ast_\alpha (\sum_i \pi^\ast_1 (\rho_U \eta^U_i) \wedge
 \pi^\ast_2 \gamma^U_i + \sum_j 
  \pi^\ast_1 (\rho_V \eta^V_j) \wedge \pi^\ast_2 \gamma^V_j), \\ 
\end{eqnarray*}
which places $\delta$ in $\ft_{<k} \oms^\bullet (U\cup V).$ Thus
(\ref{equ.exmvftlkofh}) is exact at the middle group.
Since
\[ \ft_{<k} \oms^\bullet (U\cup V) \longrightarrow
 \ft_{<k} \oms^\bullet (U) \oplus \ft_{<k} \oms^\bullet (V) \]
is clearly injective, the sequence (\ref{equ.exmvftlkofh}) is exact.

Our next immediate objective is to create a similar sequence for cotruncated
multiplicatively structured forms with compact supports. The sum of $\sum \omega_\alpha
\in \ft_{\geq k} \omsc^\bullet (U)$ and $\sum \omega'_\alpha
\in \ft_{\geq k} \omsc^\bullet (V)$ can be written as
$\sum \omega_\alpha + \sum \omega'_\alpha = \sum (\omega_\alpha +
  \omega'_\alpha)$
with
\begin{eqnarray*}
\supp (\omega_\alpha + \omega'_\alpha) & \subset &
  \supp (\omega_\alpha) \cup \supp (\omega'_\alpha) \\
& \subset & p^{-1} (U\cap U_\alpha) \cup p^{-1} (V\cap U_\alpha) 
 =  p^{-1} ((U\cup V)\cap U_\alpha) 
\end{eqnarray*}
and
\[ \omega_\alpha + \omega'_\alpha = \phi^\ast_\alpha (\sum_i \pi^\ast_1
  \eta_i \wedge \pi^\ast_2 \gamma_i + \sum_j \pi^\ast_1 \eta'_j \wedge
  \pi^\ast_2 \gamma'_j), \]
$\eta_i \in \Omega^\bullet_c (U\cap U_\alpha),$
$\eta'_j \in \Omega^\bullet_c (V\cap U_\alpha);$
$\gamma_i, \gamma'_j \in \tau_{\geq k} \Omega^\bullet (F).$
Since by extension by zero
\[ \Omega^\bullet_c (U\cap U_\alpha) \subset
  \Omega^\bullet_c ((U\cup V)\cap U_\alpha) \supset 
  \Omega^\bullet_c (V\cap U_\alpha), \]
the forms $\eta_i$ and $\eta'_j$ all lie in $\Omega^\bullet_c ((U\cup V)\cap U_\alpha)$.
Consequently,
\[ \sum \omega_\alpha + \sum \omega'_\alpha \in \ft_{\geq k}
  \omsc^\bullet (U\cup V) \]
so that taking the sum of two forms defines a map
\[ \ft_{\geq k} \omsc^\bullet (U) \oplus \ft_{\geq k} \omsc^\bullet (V)
  \longrightarrow \ft_{\geq k} \omsc^\bullet (U\cup V). \]
We claim that this map is onto. Given a form $\omega \in \ft_{\geq k}
\omsc^\bullet (U\cup V),$ consider the forms $p^\ast (\rho_U)\omega \in
\Omega^\bullet_c (p^{-1} U)$ and $p^\ast (\rho_V)\omega \in
\Omega^\bullet_c (p^{-1} V)$. We have
$p^\ast (\rho_U)\omega = \sum p^\ast (\rho_U)\omega_\alpha$
with 
\begin{eqnarray*}
\supp (p^\ast (\rho_U)\omega_\alpha) & \subset &
\supp (p^\ast \rho_U)\cap \supp (\omega_\alpha) \\
& \subset & p^{-1}(U)\cap p^{-1} ((U\cup V)\cap U_\alpha) 
 =  p^{-1} (U\cap U_\alpha)
\end{eqnarray*}
and
\[ p^\ast (\rho_U)\omega_\alpha = p^\ast (\rho_U)\cdot \phi^\ast_\alpha
   \sum_j \pi^\ast_1 \eta_j \wedge \pi^\ast_2 \gamma_j =
\phi^\ast_\alpha
   \sum_j \pi^\ast_1 (\rho_U \eta_j) \wedge \pi^\ast_2 \gamma_j. \]
Since $\eta_j \in \Omega^\bullet_c ((U\cup V)\cap U_\alpha),$
\[ \supp (\rho_U \eta_j) \subset \supp (\rho_U)\cap \supp (\eta_j)
 \subset U\cap ((U\cup V)\cap U_\alpha) = U\cap U_\alpha \]
is compact. Thus $\rho_U \eta_j \in \Omega^\bullet_c (U\cap U_\alpha)$
and $p^\ast (\rho_U)\omega$ is an element in $\ft_{\geq k}
\omsc^\bullet (U)$. By symmetry, $p^\ast (\rho_V)\omega$ lies in $\ft_{\geq k}
\omsc^\bullet (V)$. The summation map sends the pair
$(p^\ast (\rho_U)\omega, p^\ast (\rho_V)\omega)$ to
$(p^\ast \rho_U + p^\ast \rho_V)\omega = \omega.$
The claim is verified. Given a form $\omega \in \ft_{\geq k} \omsc^\bullet
(U\cap V),$ extension by zero $\iota_\ast: \Omega^\bullet_c (p^{-1}
(U\cap V)) \rightarrow \Omega^\bullet_c (p^{-1}U)$ allows us to regard
$\omega$ as a form $\iota_\ast \omega \in \Omega^\bullet_c (p^{-1}U)$.
We claim that this form lies in fact in $\ft_{\geq k} \omsc^\bullet (U)$.
This is obvious as $\iota_\ast \omega = \sum \iota_\ast \omega_\alpha$ and
\[ \iota_\ast \omega_\alpha = \iota_\ast \phi^\ast_\alpha \sum_j
 \pi^\ast_1 \eta_j \wedge \pi^\ast_2 \gamma_j =
 \phi^\ast_\alpha \sum_j \pi^\ast_1 (\iota_\ast \eta_j)\wedge \pi^\ast_2
 \gamma_j, \]
where $\eta_j \in \Omega^\bullet_c (U\cap V \cap U_\alpha)$ and
$\iota_\ast \eta_j \in \Omega^\bullet_c (U\cap U_\alpha).$ Similarly, we
may regard $\omega$ as a form $\iota_\ast \omega \in \ft_{\geq k}
\omsc^\bullet (V)$. Extension by zero thus defines a map
\[ \begin{array}{rcl}
\ft_{\geq k} \omsc^\bullet (U\cap V) & \longrightarrow &
 \ft_{\geq k} \omsc^\bullet (U) \oplus \ft_{\geq k} \omsc^\bullet (V), \\
\omega & \mapsto & (-\iota_\ast \omega, \iota_\ast \omega),
\end{array} \]
which is clearly injective. We obtain a sequence
\begin{equation} \label{equ.mvftgrekfhc}
0 \rightarrow \ft_{\geq k} \omsc^\bullet (U\cap V) \longrightarrow
\ft_{\geq k} \omsc^\bullet (U) \oplus \ft_{\geq k} \omsc^\bullet (V)
\longrightarrow \ft_{\geq k} \omsc^\bullet (U\cup V)
\rightarrow 0.
\end{equation}
Exactness in the middle follows from the exactness of the standard
sequence
\[ 0 \rightarrow \Omega^\bullet_c (p^{-1} (U\cap V))
\longrightarrow \Omega^\bullet_c (p^{-1} U)\oplus
 \Omega^\bullet_c (p^{-1} V) \longrightarrow
 \Omega^\bullet_c (p^{-1} (U\cup V)) \rightarrow 0, \]
since the unique form $\tau \in \Omega^\bullet_c (p^{-1}
(U\cap V))$ which hits a given $(-\omega, \omega) \in
\ft_{\geq k} \omsc^\bullet (U) \oplus \ft_{\geq k} \omsc^\bullet (V)$
must actually lie in $\ft_{\geq k} \omsc^\bullet (U\cap V)$, which can
be seen as follows: We have compact $\supp (\omega) \subset
p^{-1}(U\cap V),$ and $\tau = \omega|_{p^{-1}(U\cap V)}.$
Let $f:B\to \real$ be a smooth function such that $f\equiv 1$
on the compact set $p(\supp \omega)$ and $\supp f \subset U\cap V$
is compact. Then $f\circ p \equiv 1$ on $\supp \omega$, so
$f\circ p\cdot \omega = \omega$. Thus
$\omega = p^\ast f \cdot \sum \omega_\alpha = \sum (p^\ast f)\cdot
\omega_\alpha$ with
$(p^\ast f)\cdot \omega_\alpha = \phi^\ast_\alpha \sum_j
\pi^\ast_1 (f\eta_j)\wedge \pi^\ast_2 \gamma_j.$
Since
\[ \supp (f\eta_j) \subset \supp f \cap \supp \eta_j \subset
 (U\cap V)\cap (U\cap U_\alpha) = U\cap V\cap U_\alpha \]
is compact, we have $f\eta_j \in \Omega^\bullet_c (U\cap V \cap U_\alpha).$
We have shown that the sequence (\ref{equ.mvftgrekfhc}) is
exact. The long exact cohomology sequences induced by
(\ref{equ.exmvftlkofh}) and (\ref{equ.mvftgrekfhc}) are dually
paired by the bilinear forms of Lemma \ref{lem.842}:
\[ \xymatrix@R=15pt@C=5pt{
H^r (\ft_{<K} \oms^\bullet (U\cup V)) \ar[d] & \otimes &
  H^{n+m-r} (\ft_{\geq K^\ast} \omsc^\bullet (U\cup V)) 
     \ar[rrr]^<<<<<<{\int_{p^{-1}(U\cup V)}} & & & \real \\
\mbox{$\begin{array}{c} H^r (\ft_{<K} \oms^\bullet (U)) \\ 
 \oplus H^r (\ft_{<K} \oms^\bullet (V)) \end{array}$ } \ar[d]
& \otimes &
  \mbox{$\begin{array}{c} H^{n+m-r} (\ft_{\geq K^\ast} \omsc^\bullet (U)) \\ 
 \oplus H^{n+m-r} (\ft_{\geq K^\ast} \omsc^\bullet (V)) \end{array}$}
 \ar[u] \ar[rrr]^<<<<<<{\mbox{$\begin{array}{c} 
 \int_{p^{-1}(U)} \\ + \int_{p^{-1}(V)} \end{array}$}} & & & \real \\
H^r (\ft_{<K} \oms^\bullet (U\cap V)) \ar[d]_{d^\ast} & \otimes &
  H^{n+m-r} (\ft_{\geq K^\ast} \omsc^\bullet (U\cap V)) \ar[u] 
  \ar[rrr]^<<<<<<{\int_{p^{-1}(U\cap V)}} & & & \real \\
H^{r+1} (\ft_{<K} \oms^\bullet (U\cup V)) & \otimes &
  H^{n+m-r-1} (\ft_{\geq K^\ast} \omsc^\bullet (U\cup V)) \ar[u]^{d_\ast} 
\ar[rrr]^<<<<<<{\int_{p^{-1}(U\cup V)}} & & & \real
} \]
The proof of Lemma 5.6 on page 45 of \cite{botttu} shows that this diagram
commutes up to sign. Since Poincar\'e duality holds over $U,V$ and $U\cap V$
by assumption, the $5$-lemma implies that it holds over $U\cup V$ as well.
\end{proof}

\begin{lemma} \label{lem.844}
For $U=B$, the identities
\[ \omsc^\bullet (B) = \oms^\bullet (B),~ 
  \ft_{\geq k} \omsc^\bullet (B) = \ft_{\geq k} \oms^\bullet (B) \]
hold.
\end{lemma}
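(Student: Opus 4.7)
The plan is to establish both identities by a pair of inclusions each, exploiting that $B$ is compact and the good cover $\mathfrak{U}$ is finite, so that compact support in the base direction can always be arranged by multiplication by a partition of unity.

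For $\oms^\bullet (B) \subset \omsc^\bullet (B)$, I would choose a smooth partition of unity $\{ \rho_\alpha \}$ subordinate to $\mathfrak{U}$ (with compact supports $\supp \rho_\alpha \subset U_\alpha$, available since $B$ is compact), decompose $\omega = \sum_\alpha \omega_\alpha$ with $\omega_\alpha = p^\ast(\rho_\alpha) \omega$, and note $\supp \omega_\alpha \subset p^{-1}(U_\alpha)$. On $p^{-1}(U_\alpha)$ this replaces the local coefficients $\eta_j \in \Omega^\bullet (U_\alpha)$ by $\rho_\alpha \eta_j \in \Omega^\bullet_c (U_\alpha)$ without altering the fiber factors $\gamma_j$, exhibiting $\omega$ as an element of $\omsc^\bullet (B)$. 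Since the $\gamma_j$ are untouched, the same argument simultaneously proves $\ft_{\geq k} \oms^\bullet (B) \subset \ft_{\geq k} \omsc^\bullet (B)$.

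The reverse inclusion $\omsc^\bullet (B) \subset \oms^\bullet (B)$ is the more delicate step and the principal obstacle. Given $\omega_\alpha = \phi_\alpha^\ast \sum_j \pi_1^\ast \eta_j^\alpha \wedge \pi_2^\ast \gamma_j^\alpha$ with $\eta_j^\alpha \in \Omega^\bullet_c (U_\alpha)$, extended by zero to $E$, I must show that $\omega_\alpha|_{p^{-1}(U_\beta)}$ is $\beta$-multiplicatively structured on \emph{all} of $p^{-1}(U_\beta)$, not merely on the overlap $p^{-1}(U_\alpha \cap U_\beta)$. Because $\supp \eta_j^\alpha$ is compact in $U_\alpha$ and thus closed in $B$, extension by zero produces a smooth form $\bar{\eta}_j^\alpha \in \Omega^\bullet (B)$. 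Since $\mathfrak{U}$ is a good cover, $U_\alpha \cap U_\beta$ is connected, so the transition function $g_{\alpha\beta}: F \to F$ is a single globally defined diffeomorphism. Applying the transformation law from the proof of Lemma \ref{lem.abinvariant} on $p^{-1}(U_\alpha \cap U_\beta)$ and then verifying pointwise the identity
\[ \omega_\alpha|_{p^{-1}(U_\beta)} = \phi_\beta^\ast \sum_j \pi_1^\ast (\bar{\eta}_j^\alpha|_{U_\beta}) \wedge \pi_2^\ast (g_{\alpha\beta}^\ast \gamma_j^\alpha) \]
on all of $p^{-1}(U_\beta)$, where on $p^{-1}(U_\beta \setminus U_\alpha)$ both sides vanish (the left because $\supp \omega_\alpha \subset p^{-1}(U_\alpha)$, the right because $\bar{\eta}_j^\alpha \equiv 0$ off $U_\alpha$), establishes that $\omega_\alpha|_{p^{-1}(U_\beta)}$ is $\beta$-multiplicatively structured. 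Summing over $\alpha$ completes the inclusion.

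For the cotruncated identity $\ft_{\geq k} \omsc^\bullet (B) = \ft_{\geq k} \oms^\bullet (B)$, the same arguments carry over provided one knows that $g_{\alpha\beta}^\ast \gamma_j^\alpha \in \tau_{\geq k}\Omega^\bullet (F)$ whenever $\gamma_j^\alpha \in \tau_{\geq k}\Omega^\bullet (F)$. This is precisely the point where the standing hypothesis that the structure group lies in the isometries of $F$ becomes essential: $g_{\alpha\beta}$ is an isometry, hence induces an automorphism of $\tau_{\geq k} \Omega^\bullet (F)$ by Lemma \ref{lem.isometrypreservcotrunc}(2).
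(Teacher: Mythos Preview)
Your proposal is correct and follows essentially the same two-inclusion strategy as the paper. The partition-of-unity direction $\oms^\bullet(B)\subset\omsc^\bullet(B)$ is identical to the paper's argument; for the reverse inclusion the paper simply asserts in one line that the zero-extension $\iota^\alpha_\ast\omega_\alpha$ lies in $\ft_{\geq k}\oms^\bullet(B)$, whereas you spell out why this holds by extending each $\eta_j^\alpha$ by zero to all of $B$ and invoking the transformation law together with Lemma~\ref{lem.isometrypreservcotrunc}(2). Your added detail is a legitimate justification of a step the paper passes over quickly. (Minor remark: when $U_\alpha\cap U_\beta=\varnothing$ there is no $g_{\alpha\beta}$, but then $\omega_\alpha|_{p^{-1}(U_\beta)}=0$ is trivially $\beta$-structured, so no issue arises.)
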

\begin{proof}
Let $\omega = \sum \omega_\alpha$ be a form in $\ft_{\geq k} \omsc^\bullet (B)$.
Thus $\supp (\omega_\alpha) \subset p^{-1} U_\alpha,$
$\omega_\alpha = \phi^\ast_\alpha \sum_j \pi^\ast_1 \eta^\alpha_j \wedge
\pi^\ast_2 \gamma^\alpha_j,$ where $\eta^\alpha_j \in \Omega^\bullet_c 
(U_\alpha),$ $\gamma^\alpha_j \in \tau_{\geq k} \Omega^\bullet (F).$
Since the support of $\omega_\alpha$ is compact and contained in $p^{-1} U_\alpha$,
we may apply extension by zero $\iota^\alpha_\ast: \Omega^\bullet_c
(p^{-1} U_\alpha) \rightarrow \Omega^\bullet_c (E)$ to $\omega_\alpha$.
The result is a form $\iota^\alpha_\ast \omega_\alpha \in \ft_{\geq k}
\oms^\bullet (B).$ Then the finite sum $\sum_\alpha \iota^\alpha_\ast 
\omega_\alpha = \omega$ is in $\ft_{\geq k} \oms^\bullet (B)$ as well.

Let $\omega$ be a form in $\ft_{\geq k} \oms^\bullet (B)$. This means that
\[ \omega|_{p^{-1} U_\alpha} = \phi^\ast_\alpha
\sum_j \pi^\ast_1 \eta^\alpha_j \wedge \pi^\ast_2 \gamma^\alpha_j, \]
with $\eta^\alpha_j \in \Omega^\bullet (U_\alpha),$
$\gamma^\alpha_j \in \tau_{\geq k} \Omega^\bullet (F).$ Let $\{ \rho_\alpha \}$
be a partition of unity subordinate to $\mathfrak{U} = \{ U_\alpha \}$ such that
$\rho_\alpha$ has compact support contained in $U_\alpha$. Set
$\omega_\alpha = (p^\ast \rho_\alpha)\cdot \omega$. Then
$\omega = (\sum p^\ast \rho_\alpha)\omega = \sum \omega_\alpha,$
\[ \begin{array}{l}
\supp (\omega_\alpha) \subset \supp (p^\ast \rho_\alpha)\cap \supp (\omega)
  \subset p^{-1} (U_\alpha)\cap E = p^{-1} U_\alpha, \\
\omega_\alpha = \phi^\ast_\alpha \sum_j \pi^\ast_1 (\rho_\alpha \cdot
\eta^\alpha_j)\wedge \pi^\ast_2 \gamma^\alpha_j, 
\end{array} \]
with $\rho_\alpha \cdot \eta^\alpha_j$ having compact support
$\supp (\rho_\alpha \cdot \eta^\alpha_j) \subset \supp (\rho_\alpha)
 \subset U_\alpha.$
Hence $\omega \in \ft_{\geq k} \omsc^\bullet (B).$ Taking $k$ negative,
the first identity follows from the second.
\end{proof}

\begin{prop} \label{prop.globpdtruncfhforms}
(Global Poincar\'e Duality for Truncated Multiplicatively Structured Forms.)
Wedge product followed by integration induces a nondegenerate form
\[ H^r (\ft_{<K} \oms^\bullet (B)) \times
   H^{n+m-r} (\ft_{\geq K^\ast} \oms^\bullet (B)) \longrightarrow \real, \]
where $n=\dim B,$ $m=\dim F,$ $K=m-\bar{p}(m+1),$
$K^\ast = m-\bar{q}(m+1),$ and $\bar{p},\bar{q}$ are complementary
perversities. 
\end{prop}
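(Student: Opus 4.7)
The plan is to reduce the global duality statement to the local duality already established over $\real^n$ (Lemma \ref{lem.832}) by induction on the number of charts in the good cover $\mathfrak{U}=\{U_\alpha\}$, using Lemma \ref{lem.bootstrap843} as the inductive step. First, since $B$ is compact, Lemma \ref{lem.844} gives an equality $\ft_{\geq K^\ast}\omsc^\bullet(B)=\ft_{\geq K^\ast}\oms^\bullet(B)$, so it is equivalent to prove that the bilinear form
\[ \int: H^r(\ft_{<K}\oms^\bullet(B))\times H^{n+m-r}(\ft_{\geq K^\ast}\omsc^\bullet(B))\longrightarrow\real \]
of Lemma \ref{lem.842} is nondegenerate for $U=B$.

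Set $\mathcal{P}(U)$ to be the statement that this pairing is nondegenerate over the open set $U\subset B$. The base case is $\mathcal{P}(U_\alpha)$ for a single chart: since $\mathfrak{U}$ is a good cover, $U_\alpha$ is diffeomorphic to $\real^n$, and the bundle trivializes on $U_\alpha$. Under the induced isomorphisms of complexes $\ft_{<K}\oms^\bullet(U_\alpha)\cong\ft_{<K}\oms^\bullet(\real^n)$ and $\ft_{\geq K^\ast}\omsc^\bullet(U_\alpha)\cong\ft_{\geq K^\ast}\omsc^\bullet(\real^n)$ (compatible with integration, as the diffeomorphism can be chosen orientation-preserving), the local Poincar\'e duality of Lemma \ref{lem.832} yields $\mathcal{P}(U_\alpha)$. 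The same argument applies to any nonempty finite intersection $U_{\alpha_0\ldots\alpha_k}$, since such intersections are also diffeomorphic to $\real^n$ by the goodness of the cover.

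For the inductive step, suppose $\mathcal{P}(W)$ holds for every union of at most $N$ charts from $\mathfrak{U}$ (and for all their pairwise intersections with a further chart, which by goodness are again unions of $\real^n$-like pieces). Given a union $V=U_{\alpha_1}\cup\cdots\cup U_{\alpha_{N+1}}$, write $V=U\cup U_{\alpha_{N+1}}$ with $U=U_{\alpha_1}\cup\cdots\cup U_{\alpha_N}$. Then $U\cap U_{\alpha_{N+1}}=\bigcup_{i=1}^N (U_{\alpha_i}\cap U_{\alpha_{N+1}})$ is a union of $N$ good open sets (each of which, together with all their intersections, is diffeomorphic to $\real^n$), so by the inductive hypothesis $\mathcal{P}(U)$, $\mathcal{P}(U_{\alpha_{N+1}})$ and $\mathcal{P}(U\cap U_{\alpha_{N+1}})$ all hold. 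Lemma \ref{lem.bootstrap843} then yields $\mathcal{P}(V)$. Since $\mathfrak{U}$ is finite, finitely many inductive steps give $\mathcal{P}(B)$, and combined with Lemma \ref{lem.844} this establishes the proposition.

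The main technical obstacle is the slightly delicate bookkeeping in the induction on the number of charts: one must verify that $U\cap U_{\alpha_{N+1}}$ is again covered by at most $N$ good open sets whose intersections are diffeomorphic to $\real^n$, so that the inductive hypothesis applies there. This is a standard Bott--Tu type argument (cf.\ the induction underlying Proposition \ref{prop.rowex}), and since $\mathfrak{U}$ is a good cover, the necessary smaller good covers are obtained by restriction. Once this is observed, the bootstrap lemma drives the entire argument mechanically.
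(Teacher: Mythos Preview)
Your approach is essentially the same as the paper's: reduce via Lemma~\ref{lem.844} to the compact-support pairing, then induct using the Bootstrap Lemma~\ref{lem.bootstrap843}, with Lemma~\ref{lem.832} supplying the base case. The only issue is the one you yourself flag: your induction hypothesis is stated for unions of at most $N$ sets \emph{from $\mathfrak{U}$}, but $U\cap U_{\alpha_{N+1}}=\bigcup_{i=1}^N U_{\alpha_i\alpha_{N+1}}$ is a union of $N$ \emph{double intersections}, not elements of $\mathfrak{U}$, so the hypothesis as written does not literally apply.

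The paper handles this cleanly by strengthening the induction statement from the outset: it proves $\mathcal{P}(U)$ for all $U$ of the form $U=\bigcup_{i=1}^s U_{\alpha^i_0\ldots\alpha^i_{p_i}}$, i.e.\ finite unions of arbitrary multi-intersections, by induction on $s$. The base case $s=1$ is then any single multi-intersection $U_{\alpha_0\ldots\alpha_p}\cong\real^n$ (not just a single chart), and for the inductive step with $V=U_{\alpha^s_0\ldots\alpha^s_{p_s}}$ one has
\[
U\cap V=\bigcup_{i=1}^{s-1} U_{\alpha^i_0\ldots\alpha^i_{p_i}\alpha^s_0\ldots\alpha^s_{p_s}},
\]
which is again a union of $s-1$ multi-intersections, so the hypothesis applies directly. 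Adopting this formulation closes the bookkeeping gap you identified with no further work.
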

\begin{proof}
By Lemma \ref{lem.844}, this is equivalent to proving that
\[ H^r (\ft_{<K} \oms^\bullet (B)) \times
   H^{n+m-r} (\ft_{\geq K^\ast} \omsc^\bullet (B)) \longrightarrow \real \]
is nondegenerate. We will in fact prove that
\[ H^r (\ft_{<K} \oms^\bullet (U)) \times
   H^{n+m-r} (\ft_{\geq K^\ast} \omsc^\bullet (U)) \longrightarrow \real \]
is nondegenerate for all open subsets $U\subset B$ of the form
\[ U = \bigcup_{i=1}^s U_{\alpha^i_0 \ldots \alpha^i_{p_i}} \]
by an induction on $s$. For $s=1,$ so that $U = U_{\alpha_0 \ldots \alpha_p}
\cong \real^n,$ the statement holds by Local Poincar\'e Duality, Lemma
\ref{lem.832}. Suppose the bilinear form is nondegenerate for all $U$ of
the form $U = \bigcup_{i=1}^{s-1} U_{\alpha^i_0 \ldots \alpha^i_{p_i}}$ .
Let $V$ be a set $V= U_{\alpha^s_0 \ldots \alpha^s_{p_s}}.$ By induction
hypothesis, the form is nondegenerate for $U$ and for
\[
U\cap V  =  (\bigcup_{i=1}^{s-1} U_{\alpha^i_0 \ldots \alpha^i_{p_i}})
 \cap U_{\alpha^s_0 \ldots \alpha^s_{p_s}} 
=  \bigcup_{i=1}^{s-1} U_{\alpha^i_0 \ldots \alpha^i_{p_i}
  \alpha^s_0 \ldots \alpha^s_{p_s}}.
\]
Since it also holds for $V$ by the induction basis, it follows from the
Bootstrap Lemma \ref{lem.bootstrap843} that the form is nondegenerate for
\[ U\cup V = \bigcup_{i=1}^s U_{\alpha^i_0 \ldots \alpha^i_{p_i}}. \]
The statement for $U=B$ follows as $B$ is the finite union
$B = \bigcup_\alpha U_\alpha.$
\end{proof}

\section{The Complex $\Omega I^\bullet_{\bar{p}}$}

Let $X^n$ be a stratified, compact pseudomanifold as in Section \ref{sec.abcomplexes}.
We continue to use the notation $(M,\partial M)$, $p: \partial M \to B=\Sigma,$ $F,$
$N = \interi (M)$ as introduced in that section. The link bundle $p$ is assumed 
to be flat and has structure group the isometries of $F$. Let $b=\dim B$.
The \emph{end}
$E = (-1,1)\times \partial M \subset N$ is defined using a collar. 
Let $j: E\hookrightarrow N$ be the inclusion
and $\pi: E\rightarrow \partial M$ the second-factor projection.
To the bundle $p$ one can associate a complex $\Omega^\bullet_\ms (B)
\subset \Omega^\bullet (\partial M)$ of multiplicatively structured forms as
shown in Section \ref{sec.fibharmflat}. We define forms on $N$ that
are multiplicatively structured near the end of $N$ (i.e. near the boundary of $M$)
as
\[ \Omega^r_\bms (N) = \{ \omega \in \Omega^r (N) ~|~
 j^\ast \omega = \pi^\ast \eta, \text{ some } \eta \in
 \Omega^r_\ms (B) \}.
\]
Then $\Omega^\bullet_\bms (N) \subset \Omega^\bullet (N)$ is a 
subcomplex, since
$j^\ast (d\omega) = dj^\ast  \omega = d\pi^\ast \eta = 
  \pi^\ast (d\eta)$
and $d\eta \in  \Omega^{r+1}_\ms (B).$ We shall show below that
this inclusion is a quasi-isomorphism. Cutoff values $K$ and
$K^\ast$ are defined by
\[ K = m- \bar{p}(m+1),~ K^\ast = m- \bar{q}(m+1), \]
with $\bar{p}, \bar{q}$ complementary perversities. In Section
\ref{sec.fibtruncandpd}, we defined and investigated a fiberwise cotruncation
$\ft_{\geq K} \Omega^\bullet_\ms (B)$. Using this complex, we define a 
complex $\oip^\bullet (N)$ by
\[ \label{page.oipdef}
\oip^\bullet (N) = \{ \omega \in \Omega^\bullet (N)
 ~|~ j^\ast \omega = \pi^\ast \eta, \text{ some } \eta \in
 \ft_{\geq K} \Omega^\bullet_\ms (B) \}. \]
It is obviously a subcomplex of $\Omega^\bullet_\bms (N)$.
The cohomology theory $HI^\bullet_{\bar{p}} (X)$ is the cohomology of this
complex.
\begin{defn}
The cohomology groups $HI^\bullet_{\bar{p}}(X)$ are defined to be
\[ HI^r_{\bar{p}}(X) = H^r (\oip^\bullet (N)). \]
\end{defn}
It follows from Proposition \ref{prop.indepriemmetric} that
the groups $HI^\bullet_{\bar{p}}(X)$ are independent of the Riemannian
metric on the link, where the metric is allowed to vary within all metrics
such that the transition functions of the link bundle are isometries.
Let $\Omega^\bullet_\bms (E)$ be defined as
$\Omega^\bullet_\bms (E) =
\{ \omega \in \Omega^\bullet (E) ~|~
 \omega = \pi^\ast \eta, \text{ some } \eta \in
 \Omega^\bullet_\ms (B) \}.$

\begin{lemma} \label{lem.13star}
The maps
\[ \xymatrix{\Omega^\bullet_{\bms}(E) & \Omega^\bullet_\ms (B) 
\ar@<1ex>[l]^{\pi^\ast}  \ar@<1ex>[l];[]^{\sigma^\ast_0}
} \]
are mutually inverse isomorphisms of cochain complexes, where
$\sigma_0: \partial M \hookrightarrow E = (-1,+1)\times \partial M$
is given by $\sigma_0 (x) = (0,x)$.
\end{lemma}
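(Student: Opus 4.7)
The proof is essentially a direct verification resting on the single geometric fact that $\pi \circ \sigma_0 = \id_{\partial M}$, together with the very definition of $\Omega^\bullet_\bms(E)$. The plan is to check that each map is well-defined on the given subcomplex, that both are chain maps, and then that the two compositions are the identity.

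First I would verify that $\pi^*$ actually lands in $\Omega^\bullet_\bms(E)$: for $\eta \in \Omega^\bullet_\ms(B)\subset \Omega^\bullet(\partial M)$, the form $\pi^*\eta\in \Omega^\bullet(E)$ satisfies $\pi^*\eta = \pi^*\eta$ with $\eta \in \Omega^\bullet_\ms(B)$, so it lies in $\Omega^\bullet_\bms(E)$ by the very definition of that subspace. In fact this shows $\pi^*: \Omega^\bullet_\ms(B) \to \Omega^\bullet_\bms(E)$ is tautologically surjective. It is a cochain map because the exterior derivative commutes with pullback along smooth maps, and the de~Rham differential preserves $\Omega^\bullet_\ms(B)$ by Lemma~\ref{lem.domfh}.

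Next I would check that $\sigma_0^*$ restricts to a map $\Omega^\bullet_\bms(E) \to \Omega^\bullet_\ms(B)$. If $\omega \in \Omega^\bullet_\bms(E)$, write $\omega = \pi^*\eta$ with $\eta \in \Omega^\bullet_\ms(B)$. Then using $\pi \circ \sigma_0 = \id_{\partial M}$,
\[
\sigma_0^*\omega = \sigma_0^*\pi^*\eta = (\pi\circ \sigma_0)^*\eta = \eta,
\]
which lies in $\Omega^\bullet_\ms(B)$. This map is a cochain map since pullback along $\sigma_0$ commutes with $d$.

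Finally I would compose the two maps. For $\eta \in \Omega^\bullet_\ms(B)$, the computation just performed shows $\sigma_0^*\pi^*\eta = \eta$, so $\sigma_0^*\pi^* = \id_{\Omega^\bullet_\ms(B)}$. For $\omega = \pi^*\eta \in \Omega^\bullet_\bms(E)$, we get $\pi^*\sigma_0^*\omega = \pi^*\eta = \omega$, so $\pi^*\sigma_0^* = \id_{\Omega^\bullet_\bms(E)}$. This establishes that $\pi^*$ and $\sigma_0^*$ are mutually inverse isomorphisms of cochain complexes. There is no real obstacle here beyond keeping the subspace conditions straight; the key input that makes the restriction of $\sigma_0^*$ land in multiplicatively structured forms is precisely that the defining presentation $\omega = \pi^*\eta$ of an element of $\Omega^\bullet_\bms(E)$ recovers $\eta$ exactly upon applying $\sigma_0^*$.
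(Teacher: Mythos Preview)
Your proof is correct and is exactly the obvious verification the paper has in mind; the paper itself simply says ``The proof of this is obvious'' and gives no further argument. You have just spelled out that verification carefully, using the defining equation $\omega = \pi^\ast \eta$ of $\Omega^\bullet_\bms(E)$ together with $\pi\circ\sigma_0=\id_{\partial M}$.
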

The proof of this is obvious.
In Section \ref{sec.bh}, a complex $\Omega^\bullet_{\bc} (N)$ was defined by
\[ \Omega^\bullet_{\bc} (N) = \{ \omega \in \Omega^\bullet (N) ~|~
 j^\ast \omega = \pi^\ast \eta, \text{ some } \eta \in \Omega^\bullet
 (\partial M) \}; \]
likewise, one has $\Omega^\bullet_{\bc} (E)$. 
In a similar vein as Lemma \ref{lem.13star}, we also have that
\[ \xymatrix{\Omega^\bullet_{\bc}(E) & \Omega^\bullet (\partial M) 
\ar@<1ex>[l]^{\pi^\ast}  \ar@<1ex>[l];[]^{\sigma^\ast_0}
} \]
are mutually inverse isomorphisms of cochain complexes.

\begin{prop} \label{prop.11star}
The inclusion $\Omega^\bullet_\bms (N)\subset \Omega^\bullet (N)$
induces an isomorphism
\[ H^\bullet (\Omega^\bullet_\bms (N)) \cong H^\bullet (N) \]
on cohomology.
\end{prop}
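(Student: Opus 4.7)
The plan is to mimic the proof strategy of Proposition \ref{lem.ombcinom}, which established the analogous statement for $\Omega^\bullet_{\bc}(N)$, by fitting $\Omega^\bullet_\bms(N)$ into a short exact sequence with $\Omega^\bullet_\rel(N)$ as kernel and then comparing to the short exact sequence already exploited in Proposition \ref{lem.ombcinom}. The key structural input beyond that earlier proof is Theorem \ref{thm.fhcomputescohtotspace}, which tells us that multiplicative structuring does not change cohomology on the flat link bundle $\partial M \to B$.

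\textbf{Step 1: A short exact sequence.} I would first establish the short exact sequence of complexes
\[ 0 \longrightarrow \Omega^\bullet_\rel (N) \longrightarrow \Omega^\bullet_\bms (N)
\stackrel{\sigma^\ast_0 j^\ast}{\longrightarrow} \Omega^\bullet_\ms (B) \longrightarrow 0. \]
Exactness at the left and middle is immediate from the definitions: a form $\omega \in \Omega^\bullet_\bms (N)$ satisfies $j^\ast \omega = \pi^\ast \eta$ with $\eta = \sigma^\ast_0 \pi^\ast \eta = \sigma^\ast_0 j^\ast \omega$, and $\omega \mapsto 0$ precisely when $j^\ast \omega = 0$, i.e. $\omega \in \Omega^\bullet_\rel (N)$. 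For surjectivity, given $\eta \in \Omega^\bullet_\ms (B)$, use the wider collar $(-2,1] \times \partial M \subset U \subset M$: form $\pi^\ast \eta$ on this wider collar (using its second-factor projection), multiply by a smooth cutoff $f(t)$ that equals $1$ on $[-1,1]$ and vanishes near $t=-2$, and extend by zero to all of $N$. The resulting form lies in $\Omega^\bullet_\bms (N)$ and maps to $\eta$. Precomposing with the isomorphism $\pi^\ast: \Omega^\bullet_\ms(B) \stackrel{\cong}{\to} \Omega^\bullet_\bms(E)$ from Lemma \ref{lem.13star}, we can equally well write the right-hand term as $\Omega^\bullet_\bms(E)$.

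\textbf{Step 2: Comparison and 5-lemma.} The inclusions $\Omega^\bullet_\bms(N) \hookrightarrow \Omega^\bullet_\bc(N)$ and $\Omega^\bullet_\bms(E) \hookrightarrow \Omega^\bullet_\bc(E)$, together with the identity on $\Omega^\bullet_\rel(N)$, fit into a commutative ladder
\[ \xymatrix{
0 \ar[r] & \Omega^\bullet_\rel (N) \ar[r] \ar@{=}[d] & \Omega^\bullet_\bms (N) \ar[r] \ar@{^{(}->}[d] & \Omega^\bullet_\bms (E) \ar[r] \ar@{^{(}->}[d] & 0 \\
0 \ar[r] & \Omega^\bullet_\rel (N) \ar[r] & \Omega^\bullet_\bc (N) \ar[r] & \Omega^\bullet_\bc (E) \ar[r] & 0
} \]
whose bottom row is the sequence used in Proposition \ref{lem.ombcinom}. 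Taking cohomology yields a ladder of long exact sequences. The rightmost vertical arrow induces an isomorphism on cohomology: via the isomorphisms $\pi^\ast$ it identifies with the inclusion $\Omega^\bullet_\ms (B) \hookrightarrow \Omega^\bullet(\partial M)$, which is a quasi-isomorphism by Theorem \ref{thm.fhcomputescohtotspace}. By the five-lemma, the middle vertical arrow induces an isomorphism $H^\bullet(\Omega^\bullet_\bms (N)) \stackrel{\cong}{\to} H^\bullet(\Omega^\bullet_\bc (N))$. Composing with the isomorphism $H^\bullet(\Omega^\bullet_\bc (N)) \cong H^\bullet(N)$ from Proposition \ref{lem.ombcinom} gives the desired conclusion.

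\textbf{Main obstacle.} The only genuinely nontrivial input is the surjectivity in Step 1, where one must produce a \emph{multiplicatively structured} extension to $N$ of a given multiplicatively structured form on the end. The cutoff-in-the-wider-collar construction works because the pullback operation $\pi^\ast$ preserves the multiplicatively structured shape (factors of the form $\pi^\ast_1 \eta_j \wedge \pi^\ast_2 \gamma_j$ in each local trivialization $\phi_\alpha$), and multiplication by the pullback of a function of the collar coordinate only modifies the base-factor $\eta_j$. Everything else is a routine invocation of previously established machinery.
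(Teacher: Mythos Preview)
Your proof is correct and follows essentially the same route as the paper's own argument: both build the short exact sequence with kernel $\Omega^\bullet_\rel(N)$ and quotient $\Omega^\bullet_\bms(E)\cong\Omega^\bullet_\ms(B)$, compare it via inclusions to the analogous sequence for $\Omega^\bullet_\bc$, invoke Theorem~\ref{thm.fhcomputescohtotspace} for the rightmost vertical, apply the five-lemma, and finish with Proposition~\ref{lem.ombcinom}. Your discussion of surjectivity (the cutoff preserving multiplicative structure) is slightly more explicit than the paper's, but the idea is the same.
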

\begin{proof}
The restriction map $j^\ast: \Omega^\bullet_\bms (N)\rightarrow
\Omega^\bullet_\bms (E)$ is onto: Given a pullback $\pi^\ast \eta \in
\Omega^\bullet_\bms (E),$ extend a little further to
$E_{-2} = (-2,1) \times \partial M$ by taking $\pi^\ast_{-2}
\eta,$ where $\pi_{-2}: (-2,1)\times \partial M \rightarrow
\partial M$ is the second-factor projection, then multiply by a
cutoff function which is identically $1$ on $E$, zero on
$(-2, -\frac{3}{2})\times \partial M$ and depends only on the collar
coordinate, not on the coordinates in $\partial M$.
Since the kernel of $j^\ast$
is $\Omega^\bullet_{\rel} (N),$ we have an exact sequence
\[ 0 \rightarrow
 \Omega^\bullet_{\rel} (N) \longrightarrow
 \Omega^\bullet_\bms (N) \longrightarrow
 \Omega^\bullet_\bms (E) \rightarrow 0. \]
Similarly, the restriction map $\Omega^\bullet_{\bc} (N)
\rightarrow \Omega^\bullet_{\bc} (E)$ is onto. Its kernel is also
$\Omega^\bullet_{\rel} (N),$ and we get a commutative diagram
\[ \xymatrix{
0 \ar[r] & \Omega^\bullet_{\rel} (N) \ar[r] & \Omega^\bullet_{\bc} (N)
 \ar[r] & \Omega^\bullet_{\bc} (E) \ar[r] & 0 \\
0 \ar[r] & \Omega^\bullet_{\rel} (N) \ar[r] \ar@{=}[u] & 
\Omega^\bullet_{\bms} (N)
 \ar[r] \ar@{^{(}->}[u] & \Omega^\bullet_{\bms} (E) 
\ar[r] \ar@{^{(}->}[u] & 0.
} \]
By Lemma \ref{lem.13star}, $\sigma^\ast_0$ and $\pi^\ast$ induce
isomorphisms
\[ \Omega^\bullet_{\bms} (E)\cong \Omega^\bullet_\ms (B),~
 \Omega^\bullet_{\bc} (E) \cong \Omega^\bullet (\partial M), \]
and the square
\[ \xymatrix{
\Omega^\bullet_{\bc} (E) \ar@{=}[r]^{\sim} &
 \Omega^\bullet (\partial M) \\
\Omega^\bullet_{\bms} (E) \ar@{=}[r]^{\sim} \ar@{^{(}->}[u] &
 \Omega^\bullet_\ms (B) \ar@{^{(}->}[u]
} \]
commutes. On cohomology, we arrive at a commutative diagram with
long exact rows,
\[ \xymatrix{
H^\bullet_{\rel} (N) \ar[r] &
 H^\bullet_ {\bc} (N) \ar[r] & H^\bullet (\partial M) \ar[r] & 
  H^{\bullet +1}_{\rel} (N) \\
H^\bullet_{\rel} (N) \ar[r] \ar@{=}[u] &
 H^\bullet_{\bms} (N) \ar[r] \ar[u]
& H^\bullet (\Omega^\bullet_\ms (B)) \ar[r] \ar[u]_{\cong} & 
 H^{\bullet +1}_{\rel} (N). \ar@{=}[u] 
} \]
The vertical arrow $H^\bullet (\Omega^\bullet_\ms (B)) \rightarrow
H^\bullet (\partial M)$ is an isomorphism by Theorem
\ref{thm.fhcomputescohtotspace}. By the 5-lemma,
$H^\bullet_{\bms} (N) \rightarrow H^\bullet_{\bc} (N)$ is an
isomorphism. The inclusion $\Omega^\bullet_{\bc} (N)\subset
\Omega^\bullet (N)$ induces an isomorphism
$H^\bullet_{\bc} (N)\rightarrow H^\bullet (N)$ by 
Proposition \ref{lem.ombcinom}. Thus the composition
\[ \xymatrix{
H^\bullet_{\bms} (N) \ar[r]^{\cong} \ar[rd]_{\cong} &
H^\bullet_{\bc} (N) \ar[d]^{\cong} \\
& H^\bullet (N)
} \]
is an isomorphism as well.
\end{proof}
For an open subset $U\subset B,$ we set
\[ Q^\bullet (U) = \frac{\Omega^\bullet_\ms (U)}
 {\ft_{\geq K} \Omega^\bullet_\ms (U)}. \]
\begin{lemma} \label{lem.91}
Given open subsets $U,V \subset B,$ there is a Mayer-Vietoris
exact sequence
\[ \cdots \stackrel{\delta^\ast}{\rightarrow}
H^r Q^\bullet (U\cup V) \rightarrow
H^r Q^\bullet (U) \oplus H^r Q^\bullet (V) \rightarrow
H^r Q^\bullet (U\cap V) 
\stackrel{\delta^\ast}{\rightarrow}
H^{r+1} Q^\bullet (U\cup V) \rightarrow \cdots. \]
\end{lemma}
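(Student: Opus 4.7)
The plan is to obtain the Mayer--Vietoris sequence for $Q^\bullet$ as the long exact cohomology sequence of a short exact sequence of quotient complexes
\[ 0 \to Q^\bullet(U\cup V) \to Q^\bullet(U) \oplus Q^\bullet(V) \to Q^\bullet(U\cap V) \to 0, \]
which in turn is produced by the snake lemma from short exact Mayer--Vietoris sequences for both $\Omega^\bullet_\ms$ and its subcomplex $\ft_{\geq K}\Omega^\bullet_\ms$.

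First I would establish the two short exact sequences of cochain complexes
\begin{align*}
0 \to \Omega^\bullet_\ms(U\cup V) &\to \Omega^\bullet_\ms(U) \oplus \Omega^\bullet_\ms(V) \to \Omega^\bullet_\ms(U\cap V) \to 0, \\
0 \to \ft_{\geq K}\Omega^\bullet_\ms(U\cup V) &\to \ft_{\geq K}\Omega^\bullet_\ms(U) \oplus \ft_{\geq K}\Omega^\bullet_\ms(V) \to \ft_{\geq K}\Omega^\bullet_\ms(U\cap V) \to 0,
\end{align*}
in which the left map is $\omega \mapsto (\omega|_{p^{-1}U},\omega|_{p^{-1}V})$ and the right map is $(\omega,\tau) \mapsto \tau|_{p^{-1}(U\cap V)} - \omega|_{p^{-1}(U\cap V)}$. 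Injectivity on the left is trivial; surjectivity on the right and exactness in the middle are proved exactly as in Lemma \ref{lem.bootstrap843}, using a partition of unity $\{\rho_U,\rho_V\}$ on $U\cup V$ subordinate to $\{U,V\}$ and multiplying forms by the pullbacks $p^\ast \rho_U, p^\ast \rho_V$. The crucial observation is that in a local multiplicatively structured expression $\phi^\ast_\alpha \sum_j \pi^\ast_1 \eta_j \wedge \pi^\ast_2 \gamma_j$, multiplication by $p^\ast \rho_{\bullet}$ absorbs into the base factor via
\[ (p^\ast \rho_\bullet)\cdot \phi^\ast_\alpha \sum_j \pi^\ast_1 \eta_j \wedge \pi^\ast_2 \gamma_j = \phi^\ast_\alpha \sum_j \pi^\ast_1(\rho_\bullet \eta_j) \wedge \pi^\ast_2 \gamma_j, \]
leaving the fiber factors $\gamma_j$ unchanged. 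Consequently the whole argument of Lemma \ref{lem.bootstrap843} applies verbatim to both $\Omega^\bullet_\ms$ (no constraint on $\gamma_j$) and to $\ft_{\geq K}\Omega^\bullet_\ms$ (where each $\gamma_j \in \tau_{\geq K}\Omega^\bullet(F)$ is preserved).

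Next I would stack these two short exact sequences into a commutative diagram whose vertical maps are the subcomplex inclusions $\ft_{\geq K}\Omega^\bullet_\ms(W) \hookrightarrow \Omega^\bullet_\ms(W)$ for $W = U, V, U\cap V, U\cup V$. Since each vertical map is injective, the snake lemma (applied degreewise) produces the short exact sequence of quotient complexes
\[ 0 \to Q^\bullet(U\cup V) \to Q^\bullet(U) \oplus Q^\bullet(V) \to Q^\bullet(U\cap V) \to 0. \]
Passing to the associated long exact cohomology sequence yields precisely the Mayer--Vietoris sequence in the statement, with $\delta^\ast$ the connecting homomorphism.

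The main obstacle is really just bookkeeping: one must verify that every partition-of-unity manipulation performed in Lemma \ref{lem.bootstrap843} continues to respect the fiberwise cotruncation condition on the fiber factors $\gamma_j$. This is automatic because the partition of unity lives on $B$ and is pulled back via $p$, so it interacts only with the base factors and never with the fiber factors; the only place where isometric structure of the bundle is needed (via Lemma \ref{lem.isometrypreservcotrunc}(2)) is in the well-definedness of $\ft_{\geq K}\Omega^\bullet_\ms(W)$ itself, not in the Mayer--Vietoris patching.
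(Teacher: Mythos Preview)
Your proposal is correct and follows essentially the same approach as the paper: both establish the Mayer--Vietoris short exact sequences for $\Omega^\bullet_\ms$ and $\ft_{\geq K}\Omega^\bullet_\ms$ by the partition-of-unity argument from Lemma~\ref{lem.bootstrap843}, then deduce the corresponding sequence for the quotient $Q^\bullet$. The only cosmetic difference is that the paper organizes this as a $3\times 3$-diagram and invokes the $3\times 3$-lemma, whereas you phrase it as a degreewise application of the snake lemma to the two stacked rows; since the vertical maps are injective, these are the same argument.
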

\begin{proof}
The arguments in the proof of Lemma \ref{lem.bootstrap843}
that establish the exactness of the fiberwise truncation
sequence (\ref{equ.exmvftlkofh}),
\[ 0 \rightarrow \ft_{<K} \Omega^\bullet_\ms (U\cup V) \rightarrow
\ft_{<K} \Omega^\bullet_\ms (U) \oplus \ft_{<K} \Omega^\bullet_\ms
(V) \rightarrow \ft_{<K} \Omega^\bullet_\ms (U\cap V) 
\rightarrow 0 \]
also apply to show that there is an analogous exact fiberwise
cotruncation sequence
\[ 0 \rightarrow \ft_{\geq K} \Omega^\bullet_\ms (U\cup V) \rightarrow
\ft_{\geq K} \Omega^\bullet_\ms (U) \oplus \ft_{\geq K} \Omega^\bullet_\ms
(V) \rightarrow \ft_{\geq K} \Omega^\bullet_\ms (U\cap V) 
\rightarrow 0 \]
because the fiber forms $\gamma_j, \gamma^U_i, \gamma^V_j$ appearing
in these arguments may just as well come from
$\tau_{\geq k} \Omega^\bullet (F)$ instead of $\tau_{<k} \Omega^\bullet (F)$.
There is a unique map $Q^\bullet (U\cup V)\rightarrow Q^\bullet (U)
\oplus Q^\bullet (V)$ such that
\[ \xymatrix@C=10pt{
0 \ar[r] & \ft_{\geq K} \Omega^\bullet_\ms (U\cup V) \ar[r] \ar@{^{(}->}[d] &
 \Omega^\bullet_\ms (U\cup V) \ar[r] \ar@{^{(}->}[d] &
 Q^\bullet (U\cup V) \ar[r] \ar@{..>}[d] & 0 \\
0 \ar[r] & \ft_{\geq K} \Omega^\bullet_\ms (U) 
 \oplus \ft_{\geq K} \Omega^\bullet_\ms (V)
\ar[r] &
 \Omega^\bullet_\ms (U) \oplus \Omega^\bullet_\ms (V) \ar[r]  &
 Q^\bullet (U) \oplus Q^\bullet (V) \ar[r]  & 0
} \]
commutes and a unique map $Q^\bullet (U)\oplus Q^\bullet (V)
\rightarrow Q^\bullet (U\cap V)$ such that
\[ \xymatrix@C=10pt{
0 \ar[r] & \ft_{\geq K} \Omega^\bullet_\ms (U) 
 \oplus \ft_{\geq K} \Omega^\bullet_\ms (V)
\ar[r] \ar@{->>}[d] &
 \Omega^\bullet_\ms (U) \oplus \Omega^\bullet_\ms (V) \ar[r]  \ar@{->>}[d] &
 Q^\bullet (U) \oplus Q^\bullet (V) \ar[r]  \ar@{..>}[d] & 0 \\
0 \ar[r] & \ft_{\geq K} \Omega^\bullet_\ms (U\cap V) \ar[r] &
 \Omega^\bullet_\ms (U\cap V) \ar[r] &
 Q^\bullet (U\cap V) \ar[r] & 0
} \]
commutes.
We receive a commutative $3\times 3$-diagram
\[ \xymatrix@C=10pt@R=16pt{
& 0 \ar[d] & 0 \ar[d] & 0 \ar[d] & \\
0 \ar[r] & \ft_{\geq K} \Omega^\bullet_\ms (U\cup V) \ar[r] \ar[d] &
\ft_{\geq K} \Omega^\bullet_\ms (U) \oplus \ft_{\geq K} \Omega^\bullet_\ms
(V) \ar[r] \ar[d] &
\ft_{\geq K} \Omega^\bullet_\ms (U\cap V) \ar[r] \ar[d] & 0 \\
0 \ar[r] & \Omega^\bullet_\ms (U\cup V) \ar[r] \ar[d] &
\Omega^\bullet_\ms (U) \oplus \Omega^\bullet_\ms
(V) \ar[r] \ar[d] &
\Omega^\bullet_\ms (U\cap V) \ar[r] \ar[d] & 0 \\
0 \ar[r] & Q^\bullet (U\cup V) \ar[r] \ar[d] &
Q^\bullet (U) \oplus Q^\bullet
(V) \ar[r] \ar[d] &
Q^\bullet (U\cap V) \ar[r] \ar[d] & 0 \\
& 0 & 0 & 0 &
} \]
with all columns and the top two rows exact. By the $3\times 3$-lemma,
the bottom row is exact as well. By the standard zig-zag construction,
the bottom row induces a long exact sequence on cohomology.
\end{proof}
For every open subset $U\subset B,$ we define a canonical map
\[ \gamma_U: \ft_{<K} \Omega^\bullet_\ms (U) \longrightarrow
  Q^\bullet (U) \]
by composing
\[ \ft_{<K} \Omega^\bullet_\ms (U) \stackrel{\operatorname{incl}}
{\hookrightarrow} \Omega^\bullet_\ms (U)
 \stackrel{\operatorname{quot}}{\longrightarrow} 
Q^\bullet (U). \]
Our next goal is to show that $\gamma_B$ is a quasi-isomorphism.
To prove this, we will use the following bootstrap principle:
\begin{lemma} \label{lem.92}
Let $U,V \subset B$ be open subsets. If $\gamma_U, \gamma_V$
and $\gamma_{U\cap V}$ are quasi-isomorphisms, then
$\gamma_{U\cup V}$ is a quasi-isomorphism as well.
\end{lemma}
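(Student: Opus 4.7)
The plan is the standard Mayer--Vietoris comparison argument paired with the five-lemma. The key observation is that we already have a Mayer--Vietoris long exact sequence on the target side from Lemma \ref{lem.91}, and that an entirely analogous sequence exists on the source side for $\ft_{<K}\Omega^\bullet_\ms$. Indeed, in the course of proving Lemma \ref{lem.91}, the short exact ``fiberwise truncation sequence''
\[ 0 \to \ft_{<K}\Omega^\bullet_\ms(U\cup V) \to \ft_{<K}\Omega^\bullet_\ms(U) \oplus \ft_{<K}\Omega^\bullet_\ms(V) \to \ft_{<K}\Omega^\bullet_\ms(U\cap V) \to 0 \]
was shown to be exact (this was the model for the cotruncated version). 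The usual zig-zag thus yields a Mayer--Vietoris long exact sequence of the cohomology groups $H^\bullet(\ft_{<K}\Omega^\bullet_\ms(-))$.

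Next I would check that the $\gamma$'s assemble into a morphism between the two short exact sequences of complexes. For any inclusion $U' \subset U$, the restriction $\Omega^\bullet_\ms(U) \to \Omega^\bullet_\ms(U')$ sends $\ft_{<K}\Omega^\bullet_\ms(U)$ to $\ft_{<K}\Omega^\bullet_\ms(U')$ and $\ft_{\geq K}\Omega^\bullet_\ms(U)$ to $\ft_{\geq K}\Omega^\bullet_\ms(U')$, so it descends to a map $Q^\bullet(U) \to Q^\bullet(U')$, and these restrictions clearly commute with the inclusion-followed-by-projection that defines $\gamma$. Hence the maps $\gamma_U \oplus \gamma_V$, $\gamma_{U \cup V}$, $\gamma_{U \cap V}$ fit into a commutative diagram of short exact sequences, producing a commutative ladder diagram between the two Mayer--Vietoris long exact sequences.

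Finally, I would apply the five-lemma. In every degree, the vertical arrows $\gamma^*_U$, $\gamma^*_V$, and $\gamma^*_{U\cap V}$ are isomorphisms by hypothesis, and the direct sum $\gamma^*_U \oplus \gamma^*_V$ inherits this property. Looking at the segment
\[ H^{r-1}Q^\bullet(U\cap V) \to H^rQ^\bullet(U\cup V) \to H^rQ^\bullet(U) \oplus H^rQ^\bullet(V) \to H^rQ^\bullet(U\cap V) \to H^{r+1}Q^\bullet(U\cup V) \]
and the analogous segment on the $\ft_{<K}$ side with $\gamma_{U\cup V}^*$ as the middle vertical arrow, the five-lemma forces $\gamma_{U\cup V}^*$ to be an isomorphism in every degree $r$, which is the conclusion.

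There is no essential obstacle here; the only point requiring care is the naturality of the construction under restriction to smaller opens, which is built into the definitions, and the verification that the connecting homomorphisms of the two Mayer--Vietoris sequences are intertwined by the $\gamma$'s---a formal consequence of the commutative diagram of short exact sequences of complexes.
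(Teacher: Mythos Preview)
Your proposal is correct and follows essentially the same approach as the paper: both construct the Mayer--Vietoris long exact sequence for $\ft_{<K}\Omega^\bullet_\ms$ (the paper cites its appearance in the proof of Lemma~\ref{lem.bootstrap843}), map it to the $Q^\bullet$ Mayer--Vietoris sequence of Lemma~\ref{lem.91} via the $\gamma$'s, and conclude by the five-lemma. Your additional remarks on naturality of $\gamma$ under restriction and compatibility of connecting homomorphisms make explicit what the paper leaves implicit in asserting the commutativity of its ladder diagram.
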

\begin{proof}
In the proof of Lemma \ref{lem.bootstrap843}, we had developed an
exact Mayer-Vietoris sequence
\[ H^r (\ft_{<K} \Omega^\bullet_\ms (U\cup V)) \to
  H^r (\ft_{<K} \Omega^\bullet_\ms (U)) \oplus
 H^r (\ft_{<K} \Omega^\bullet_\ms (V)) \to \]
\[ H^r (\ft_{<K} \Omega^\bullet_\ms (U\cap V))
\stackrel{d^\ast}{\to} \cdots. \]
Mapping this sequence to the Mayer-Vietoris sequence of 
Lemma \ref{lem.91} via $\gamma$, we obtain a commutative
diagram
\[ \xymatrix@C=50pt@R=18pt{
H^r (\ft_{<K} \Omega^\bullet_\ms (U\cup V)) \ar[r]^{\gamma_{U\cup V}}
 \ar[d] & H^r Q^\bullet (U\cup V) \ar[d] \\
H^r (\ft_{<K} \Omega^\bullet_\ms (U)) \oplus
 H^r (\ft_{<K} \Omega^\bullet_\ms (V)) 
 \ar[r]^>>>>>>>>>>{\gamma_U \oplus \gamma_V}_>>>>>>>>>{\cong}
\ar[d] & H^r Q^\bullet (U) \oplus H^r Q^\bullet (V)
 \ar[d] \\
H^r (\ft_{<K} \Omega^\bullet_\ms (U\cap V)) 
\ar[r]^{\gamma_{U\cap V}}_{\cong}
\ar[d]_{d^\ast} & H^r Q^\bullet (U\cap V) \ar[d]^{d^\ast} \\
H^{r+1} (\ft_{<K} \Omega^\bullet_\ms (U\cup V)) 
\ar[r]^{\gamma_{U\cup V}}
 & H^{r+1} Q^\bullet (U\cup V) 
} \]
The $5$-lemma concludes the proof.
\end{proof}

\begin{lemma} \label{lem.93}
The map $\gamma_B: \ft_{<K} \Omega^\bullet_\ms (B) \to Q^\bullet (B)$
induces an isomorphism
\[ H^\bullet (\ft_{<K} \Omega^\bullet_\ms (B)) \longrightarrow
 H^\bullet Q^\bullet (B) \]
on cohomology.
\end{lemma}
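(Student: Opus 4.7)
The plan is to apply the bootstrap principle of Lemma \ref{lem.92} in an induction over the finite good cover $\mathfrak{U}=\{U_\alpha\}$ of $B$, reducing to the case of a single coordinate patch $U\cong\real^n$, and then to verify $\gamma_U$ directly in that case by computing both source and target with the Poincar\'e lemmas from Section \ref{sec.fibtruncandpd}.

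First I would show, in exact parallel with the induction in the proof of Proposition \ref{prop.globpdtruncfhforms}, that $\gamma_U$ is a quasi-isomorphism for every open set of the form $U=\bigcup_{i=1}^s U_{\alpha^i_0\ldots \alpha^i_{p_i}}$. The induction is on $s$: the inductive step is immediate from Lemma \ref{lem.92} (the bootstrap), because the intersection of two sets of this form is again of this form, and the conclusion for $U=B$ follows at $s$ equal to the number of sets in the cover. The induction base reduces the problem to proving that $\gamma_U$ is a quasi-isomorphism whenever $U=U_{\alpha_0\ldots\alpha_p}\cong\real^n$.

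For such $U\cong\real^n$ I would compute both sides explicitly. On the one hand, Lemma \ref{lem.822} combined with Proposition \ref{prop.fhrnrntfiso} gives $H^r(\ft_{<K}\Omega^\bullet_\ms(U))\cong H^r(F)$ for $r<K$ and $0$ for $r\geq K$. On the other hand, from the short exact sequence
\begin{equation*}
0\to\ft_{\geq K}\Omega^\bullet_\ms(U)\to\Omega^\bullet_\ms(U)\to Q^\bullet(U)\to 0,
\end{equation*}
Lemma \ref{lem.822cotrunc} shows that $H^r(\ft_{\geq K}\Omega^\bullet_\ms(U))\hookrightarrow H^r(\Omega^\bullet_\ms(U))$ is an isomorphism onto $H^r(F)$ for $r\geq K$ and the source is $0$ for $r<K$; the long exact cohomology sequence therefore collapses to $H^r Q^\bullet(U)\cong H^r(F)$ for $r<K$, and $H^r Q^\bullet(U)=0$ for $r\geq K$, where the isomorphism in degrees $r<K$ is induced by the quotient map $\Omega^\bullet_\ms(U)\twoheadrightarrow Q^\bullet(U)$.

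Finally I would read off that $\gamma_U$ realizes this isomorphism. In degrees $r\geq K$ both sides vanish and there is nothing to prove. In degrees $r<K$ the composition
\begin{equation*}
\xymatrix{
H^r(\ft_{<K}\Omega^\bullet_\ms(U))\ar[r]\ar@/_1pc/[rr]_{\gamma_U^\ast} & H^r(\Omega^\bullet_\ms(U))\ar[r] & H^r Q^\bullet(U)
}
\end{equation*}
consists of two maps, each of which has just been identified with the canonical isomorphism to $H^r(F)$, so the composition is an isomorphism as well. The point requiring care is the compatibility at this last step: one must check that the identifications of the three groups with $H^r(F)$ really agree, but this is a routine diagram chase since all three are induced by the same restriction $S^\ast_0$ (or its inverse $\pi^\ast_2$) of Section \ref{ssec.poinclemmasfibtrunc}, natural with respect to the inclusions $\ft_{<K}\Omega^\bullet_\ms\subset\Omega^\bullet_\ms$ and $\tau_{<K}\Omega^\bullet(F)\subset\Omega^\bullet(F)$. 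Once this naturality is spelled out the proof is complete; the only genuine obstacle would have been the bootstrap Mayer--Vietoris patching, but Lemmas \ref{lem.91} and \ref{lem.92} already handle that.
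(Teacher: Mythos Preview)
Your proposal is correct and follows essentially the same approach as the paper: induction over the good cover via the bootstrap Lemma~\ref{lem.92}, with the base case $U\cong\real^n$ handled by reducing to the fiber via the Poincar\'e lemmas of Section~\ref{ssec.poinclemmasfibtrunc}. The only organizational difference is in the base case: rather than computing $H^\bullet Q^\bullet(U)$ from the long exact sequence and then checking compatibility, the paper first observes that the fiber-level map $\gamma:\tau_{<K}\Omega^\bullet(F)\to\Omega^\bullet(F)/\tau_{\geq K}\Omega^\bullet(F)$ is an \emph{isomorphism of complexes} (via the Hodge splitting $\Omega^K F=\ker d^\ast\oplus\im d^{K-1}$), and then transports this through the commutative square of $S^\ast_0$'s, which packages the compatibility check you flag into a single diagram.
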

\begin{proof}
We shall show that $\gamma_U$ is a quasi-isomorphism for all open
$U$ of the form
\[ U = \bigcup_{i=1}^s U_{\alpha^i_0 \ldots \alpha^i_{p_i}} \]
by an induction on $s$, where $\{ U_\alpha \}$ is a finite good cover
of $B$ with respect to which the link bundle trivializes.
Let $s=1$ so that $U = U_{\alpha_0 \ldots \alpha_p} \cong
\real^b$. The inclusion $\im d^{K-1} \subset \Omega^K F$ induces
an isomorphism
\[ \im d^{K-1} \stackrel{\cong}{\longrightarrow}
\frac{\ker d^\ast \oplus \im d^{K-1}}{\ker d^\ast} =
\frac{\Omega^K F}{(\tau_{\geq K} \Omega^\bullet F)^K}, \]
which can be extended to an isomorphism of complexes
\[ \xymatrix@C=12pt@R=16pt{
\tau_{<K} \Omega^\bullet (F) =\cdots \ar[d]_{\gamma}^{\cong} \ar[r] & \Omega^{K-2} (F) \ar@{=}[d] \ar[r] &
\Omega^{K-1}(F) \ar@{=}[d] \ar[r] & \im d^{K-1} \ar[d]_{\cong} \ar[r] &
0 \ar[d] \ar[r] & \cdots \\
\Omega^\bullet F/\tau_{\geq K} \Omega^\bullet F  =\cdots \ar[r] & 
\Omega^{K-2} (F) \ar[r] &
\Omega^{K-1}(F) \ar[r] & \frac{\Omega^K F}{(\tau_{\geq K} \Omega^\bullet F)^K} 
\ar[r] & 0 \ar[r] &  \cdots.
} \]
This isomorphism can be factored as
\[ \gamma: \tau_{<K} \Omega^\bullet (F) 
 \stackrel{\operatorname{incl}}{\hookrightarrow}
 \Omega^\bullet (F) \stackrel{\operatorname{quot}}{\longrightarrow}
\frac{\Omega^\bullet (F)}{\tau_{\geq K} \Omega^\bullet (F)}. \]
According to the Poincar\'e Lemmas
\ref{lem.822} and \ref{lem.822cotrunc}, the restriction 
$S^\ast_0$ of a form on $\real^b \times F$ to
$\{ 0 \} \times F = F$ provides a homotopy equivalence
$S^\ast_0: \ft_{<K} \Omega^\bullet_\ms (\real^b) 
 \stackrel{\simeq}{\longrightarrow} \tau_{<K} \Omega^\bullet (F)$
and a homotopy equivalence
$S^\ast_0: \ft_{\geq K} \Omega^\bullet_\ms (\real^b) 
 \stackrel{\simeq}{\longrightarrow} \tau_{\geq K} \Omega^\bullet (F).$
Taking $K$ negative in the latter homotopy equivalence (or $K$ larger
than $m$ in the former), we get in particular a homotopy equivalence
\[ S^\ast_0: \Omega^\bullet_\ms (\real^b) 
 \stackrel{\simeq}{\longrightarrow} \Omega^\bullet (F). \]
The map $S^\ast_0$ induces a unique map
\[ Q^\bullet (\real^b) \longrightarrow
 \frac{\Omega^\bullet (F)}{\tau_{\geq K} \Omega^\bullet (F)} \]
such that
\[ \xymatrix{
0 \ar[r] & \ft_{\geq K} \Omega^\bullet_\ms (\real^b) \ar[r]
 \ar[d]_{\simeq}^{S^\ast_0} &
 \Omega^\bullet_\ms (\real^b) \ar[r] \ar[d]_{\simeq}^{S^\ast_0} &
 Q^\bullet (\real^b) \ar[r] \ar@{..>}[d] & 0 \\
0 \ar[r] & \tau_{\geq K} \Omega^\bullet (F) \ar[r] &
 \Omega^\bullet (F) \ar[r] &
\frac{\Omega^\bullet (F)}{\tau_{\geq K} \Omega^\bullet (F)}
\ar[r] & 0
} \]
commutes. This map is a quasi-isomorphism by the $5$-lemma.
By the commutativity of
\[ \xymatrix{
H^\bullet (\ft_{<K} \Omega^\bullet_\ms (\real^b))
 \ar[r]^\cong_{S^\ast_0} \ar[d]^{\operatorname{incl}^\ast} 
 \ar@/_3pc/[dd]_{\gamma^\ast_{\real^b}}  &
 H^\bullet (\tau_{<K} \Omega^\bullet (F)) 
 \ar[d]_{\operatorname{incl}^\ast} 
 \ar@/^3pc/[dd]^{\gamma^\ast,~ \cong}  \\
H^\bullet (\Omega^\bullet_\ms (\real^b))
 \ar[r]^\cong_{S^\ast_0} \ar[d]^{\operatorname{quot}^\ast} &
 H^\bullet (\Omega^\bullet (F)) 
 \ar[d]_{\operatorname{quot}^\ast} \\
H^\bullet Q^\bullet (\real^b) \ar[r]^>>>>>>>{\cong}_>>>>>>>{S^\ast_0} &
H^\bullet (\Omega^\bullet (F)/ \tau_{\geq K}
 \Omega^\bullet (F)),
} \]
the map $\gamma_{\real^b}$ is a quasi-isomorphism. This furnishes the
induction basis. Suppose $\gamma_U$ is a quasi-isomorphism for all $U$
of the form
$U = \bigcup_{i=1}^{s-1} U_{\alpha^i_0 \ldots \alpha^i_{p_i}}.$
Let $V$ be a set $V = U_{\alpha^s_0 \ldots \alpha^s_{p_s}}.$ By the
induction hypothesis, $\gamma_U$ is a quasi-isomorphism and
$\gamma_{U\cap V}$ is a quasi-isomorphism, as
$U\cap V = \bigcup_{i=1}^{s-1} 
  U_{\alpha^i_0 \ldots \alpha^i_{p_i} \alpha^s_0 \ldots \alpha^s_{p_s}}.$
Since $\gamma_V$ is a quasi-isomorphism as well ($s=1$), the
bootstrap Lemma \ref{lem.92} implies that $\gamma_{U\cup V}$
is a quasi-isomorphism,
$U \cup V = \bigcup_{i=1}^{s} U_{\alpha^i_0 \ldots \alpha^i_{p_i}}.$
The statement for $U=B$ follows as $B$ is the finite union
$B = \bigcup_\alpha U_\alpha.$
\end{proof}

Let $\Da (\real)$ denote the derived category of complexes of real vector
spaces. The exact sequence
\[ 0 \longrightarrow \ft_{\geq K} \Omega^\bullet_\ms (B)
 \longrightarrow \Omega^\bullet_\ms (B)
 \longrightarrow Q^\bullet (B) \longrightarrow 0 \]
induces a distinguished triangle 
\[ \xymatrix@C=10pt{
\ft_{\geq K} \Omega^\bullet_\ms (B) \ar[rr] & &
 \Omega^\bullet_\ms (B) \ar[ld] \\
& Q^\bullet (B) \ar[lu]^{+1} &
} \]
in $\Da (\real)$. Using the quasi-isomorphism $\gamma_B$ of Lemma
\ref{lem.93}, we may replace $Q^\bullet (B)$ in the triangle by
$\ft_{<K} \Omega^\bullet_\ms (B)$ and thus arrive at a distinguished
triangle
\begin{equation} \label{equ.dt1}
\xymatrix@C=10pt{
\ft_{\geq K} \Omega^\bullet_\ms (B) \ar[rr] & &
 \Omega^\bullet_\ms (B) \ar[ld] \\
& \ft_{<K} \Omega^\bullet_\ms (B). \ar[lu]^{+1} &
} \end{equation}
On the basis of this triangle, we shall next construct a distinguished
triangle
\begin{equation} \label{equ.dt2}
\xymatrix@C=10pt{
\oip^\bullet (N) \ar[rr] & &
 \Omega^\bullet_\bms (N) \ar[ld] \\
& \ft_{<K} \Omega^\bullet_\ms (B). \ar[lu]^{+1} &
} \end{equation}
Since $\oip^\bullet (N)$ is a subcomplex of
$\Omega^\bullet_\bms (N),$ there is an exact sequence
\[ 0 \longrightarrow \oip^\bullet (N)
 \longrightarrow \Omega^\bullet_\bms (N) \longrightarrow
 \frac{\Omega^\bullet_\bms (N)}{\oip^\bullet (N)}
 \longrightarrow 0. \]
The inclusion $j: E \hookrightarrow N$ induces a restriction map
$j^\ast: \Omega^\bullet_\bms (N) \longrightarrow
  \Omega^\bullet_\bms (E),$
which is surjective (cf. the proof of Proposition \ref{prop.11star}).
This map restricts further to a map
$j^\ast_{\bar{p}}: \oip^\bullet (N) \longrightarrow 
 \oip^\bullet (E),$
which is also surjective. Based on Lemma \ref{lem.13star}, there are
isomorphisms
\[ \sigma^\ast_0: \Omega^\bullet_{\bms} (E) \stackrel{\cong}{\longrightarrow} 
\Omega^\bullet_\ms (B),~
 \sigma^\ast_0: \oip^\bullet (E) \stackrel{\cong}{\longrightarrow}
 \ft_{\geq K} \Omega^\bullet_\ms (B), \]
which induce a unique isomorphism
\[ \frac{\Omega^\bullet_\bms (E)}{\oip^\bullet (E)}
 \stackrel{\cong}{\longrightarrow}
 \frac{\Omega^\bullet_\ms (B)}{\ft_{\geq K} \Omega^\bullet_\ms (B)}
 = Q^\bullet (B) \]
such that
\[ \xymatrix@R=16pt{
0 \ar[r] & \oip^\bullet (E) \ar[d]_{\sigma^\ast_0}^{\cong} \ar[r]
& \Omega^\bullet_\bms (E) \ar[d]_{\sigma^\ast_0}^{\cong} \ar[r] &
\frac{\Omega^\bullet_\bms (E)}{\oip^\bullet (E)} 
  \ar[r] \ar@{..>}[d]^{\cong}
& 0 \\
0 \ar[r] & \ft_{\geq K} \Omega^\bullet_\ms (B) \ar[r] &
\Omega^\bullet_\ms (B) \ar[r] & Q^\bullet (B) \ar[r] & 0
} \]
commutes. The surjective maps $j^\ast$ induce a unique surjective map
\[ \bar{\jmath}^\ast: \frac{\Omega^\bullet_\bms (N)}{\oip^\bullet (N)}
 \twoheadrightarrow \frac{\Omega^\bullet_\bms (E)}{\oip^\bullet (E)} \]
such that
\[ \xymatrix@R=16pt{
0 \ar[r] & \oip^\bullet (N) \ar@{->>}[d]_{j^\ast_{\bar{p}}} \ar[r]
& \Omega^\bullet_\bms (N) \ar@{->>}[d]_{j^\ast} \ar[r] &
\frac{\Omega^\bullet_\bms (N)}{\oip^\bullet (N)} 
  \ar[r] \ar@{..>>}[d]_{\bar{\jmath}^\ast}
& 0 \\
0 \ar[r] & \oip^\bullet (E) \ar[r]
& \Omega^\bullet_\bms (E) \ar[r] &
\frac{\Omega^\bullet_\bms (E)}{\oip^\bullet (E)} \ar[r]
& 0 
} \]
commutes. Composition yields surjective maps
\[ J^\ast = \sigma^\ast_0 j^\ast,~ J^\ast_{\bar{p}} = \sigma^\ast_0 j^\ast_{\bar{p}},~
  \overline{J}^\ast = \sigma^\ast_0 \bar{\jmath}^\ast \]
such that
\[ \xymatrix@R=16pt{
0 \ar[r] & \oip^\bullet (N) \ar@{->>}[d]_{J^\ast_{\bar{p}}} \ar[r]
& \Omega^\bullet_\bms (N) \ar@{->>}[d]_{J^\ast} \ar[r] &
\frac{\Omega^\bullet_\bms (N)}{\oip^\bullet (N)} 
  \ar[r] \ar@{->>}[d]_{\overline{J}^\ast}
& 0 \\
0 \ar[r] & \ft_{\geq K} \Omega^\bullet_\ms (B) \ar[r] &
\Omega^\bullet_\ms (B) \ar[r] & Q^\bullet (B) \ar[r] & 0
} \]
commutes. The kernel of both $J^\ast$ and $J^\ast_{\bar{p}}$ is
\[ \ker J^\ast = \ker j^\ast = \Omega^\bullet_{\rel} (N) =
 \ker j^\ast_{\bar{p}} = \ker J^\ast_{\bar{p}}. \]
We obtain a commutative $3\times 3$-diagram
\[ \xymatrix@R=17pt{
& 0 \ar[d] & 0 \ar[d] & 0 \ar[d] & \\
0 \ar[r] & \Omega^\bullet_{\rel} (N) \ar@{=}[r] \ar[d]
 & \Omega^\bullet_{\rel} (N) \ar[r] \ar[d] & 0 \ar[r] \ar[d]
& 0 \\
0 \ar[r] & \oip^\bullet (N) \ar[r] \ar[d]_{J^\ast_{\bar{p}}} &
 \Omega^\bullet_\bms (N) \ar[r] \ar[d]_{J^\ast} &
 \frac{\Omega^\bullet_\bms (N)}{\oip^\bullet (N)}
 \ar[r] \ar[d]_{\overline{J}^\ast} & 0 \\
0 \ar[r] & \ft_{\geq K} \Omega^\bullet_\ms (B) \ar[r] \ar[d]
& \Omega^\bullet_\ms (B) \ar[r] \ar[d] & Q^\bullet (B)
 \ar[r] \ar[d] & 0 \\
& 0 & 0 & 0 & 
} \]
with exact rows. Since the left hand and middle columns are
also exact, the $3\times 3$-lemma implies that the right hand
column is exact, too. This proves that $\overline{J}^\ast$ is
an isomorphism. Using the isomorphism
\[ \gamma^{-1}_B \circ \overline{J}^\ast:
\frac{\Omega^\bullet_\bms (N)}{\oip^\bullet (N)}
\stackrel{\cong}{\longrightarrow} \ft_{<K} \Omega^\bullet_\ms (B) \]
in $\Da (\real)$ to replace the quotient in the distinguished triangle
\[
\xymatrix@C=10pt{
\oip^\bullet (N) \ar[rr] & &
 \Omega^\bullet_\bms (N) \ar[ld] \\
& \frac{\Omega^\bullet_\bms (N)}{\oip^\bullet (N)} \ar[lu]^{+1} &
} \]
by $\ft_{<K} \Omega^\bullet_\ms (B),$ we arrive at the desired triangle
(\ref{equ.dt2}). As the kernel of the surjective map
$J^\ast_{\bar{p}}: \oip^\bullet (N) \twoheadrightarrow \ft_{\geq K}
\Omega^\bullet_\ms (B)$ is $\Omega^\bullet_{\rel} (N),$ there is also
a distinguished triangle
\begin{equation} \label{equ.dt3}
\xymatrix@C=10pt{
\Omega^\bullet_{\rel} (N) \ar[rr]^{\operatorname{incl}} & &
 \oip^\bullet (N) \ar[ld]^{J^\ast_{\bar{p}}} \\
& \ft _{\geq K} \Omega^\bullet_\ms (B). \ar[lu]^{+1} &
} \end{equation}
These triangles will be used in proving Poincar\'e duality for $HI^\bullet (X)$.

\section{Integration on $\Omega I^\bullet_{\bar{p}}$}
\label{sec.intnonisolbcomplex}

\begin{lemma}
Integration defines bilinear forms
\[ \begin{array}{rcl}
\int: \Omega^r_\bms (N) \times \Omega^{n-r}_\bms (N)
& \longrightarrow & \real \\
(\omega, \eta) & \mapsto & \int_N \omega \wedge \eta. 
\end{array} \]
\end{lemma}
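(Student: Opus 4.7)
The plan is to exploit the fact that forms in $\Omega^\bullet_\bms(N)$ are pullbacks from $\partial M$ near the end, together with the dimension constraint $\dim \partial M = n - 1$, to show that $\omega \wedge \eta$ has compact support in $N$. Consequently $\int_N \omega \wedge \eta$ converges, and bilinearity is immediate.

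First, I would unpack the definitions on the end. If $\omega \in \Omega^r_\bms(N)$ and $\eta \in \Omega^{n-r}_\bms(N)$, then by definition there exist $\alpha \in \Omega^r_\ms(B)$ and $\beta \in \Omega^{n-r}_\ms(B)$ with
\[ j^\ast \omega = \pi^\ast \alpha, \qquad j^\ast \eta = \pi^\ast \beta. \]
Since both $\alpha$ and $\beta$ are, in particular, differential forms on the closed manifold $\partial M$, we may form $\alpha \wedge \beta \in \Omega^n (\partial M)$. The key observation is that $\dim \partial M = b + m = b + (n - 1 - b) = n - 1$, so $\Omega^n (\partial M) = 0$, hence $\alpha \wedge \beta = 0$. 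Therefore
\[ j^\ast (\omega \wedge \eta) = j^\ast \omega \wedge j^\ast \eta = \pi^\ast (\alpha \wedge \beta) = 0, \]
that is, $\omega \wedge \eta$ vanishes on the entire end $E = (-1,+1) \times \partial M$.

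Next, I would argue that this vanishing forces compact support. Using the collar identification, the subset $c((-\tfrac{1}{2}, 1] \times \partial M)$ is open in $M$, so its complement $K := M \smallsetminus c((-\tfrac{1}{2}, 1] \times \partial M)$ is a closed subset of the compact manifold $M$, hence compact. Moreover $K \subset N$, since $\partial M \subset c((-\tfrac{1}{2}, 1] \times \partial M)$. A direct computation shows $N \smallsetminus c((-\tfrac{1}{2}, 1) \times \partial M) = K$, and since $\omega \wedge \eta$ vanishes on $E \supset c((-\tfrac{1}{2}, 1) \times \partial M)$, we conclude
\[ \supp (\omega \wedge \eta) \subset K, \]
which is compact. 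Thus $\omega \wedge \eta \in \Omega^n_c (N)$ and the integral $\int_N \omega \wedge \eta$ is a well-defined real number.

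Finally, bilinearity follows at once from the bilinearity of the wedge product and the linearity of integration. There is no real obstacle here; the only subtlety is the dimensional observation $\dim \partial M = n - 1$, which is what ensures the complementary-degree wedge product is automatically zero along the end and therefore compactly supported after passing to $N$.
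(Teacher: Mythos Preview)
Your proof is correct and follows essentially the same approach as the paper: both observe that $j^\ast(\omega\wedge\eta)=\pi^\ast(\alpha\wedge\beta)=0$ because $\alpha\wedge\beta$ is an $n$-form on the $(n-1)$-manifold $\partial M$, and then use this vanishing on the end to conclude finiteness of the integral. The paper phrases the last step as $\int_N \omega\wedge\eta = \int_{N-E}\omega\wedge\eta$ with $N-E$ compact, while you phrase it as $\omega\wedge\eta\in\Omega^n_c(N)$; these are the same argument.
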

\begin{proof}
Let $\omega \in \Omega^r_\bms (N),$
$\eta \in \Omega^{n-r}_\bms (N).$ By definition, there exists an
$r$-form $\omega_0 \in \Omega^r_\ms (B)$ and an $(n-r)$-form
$\eta_0 \in \Omega^{n-r}_\ms (B)$ such that
$j^\ast \omega = \pi^\ast \omega_0,$
$j^\ast \eta = \pi^\ast \eta_0.$ Note that
\[ j^\ast (\omega \wedge \eta) = j^\ast \omega \wedge j^\ast \eta
 = \pi^\ast \omega_0 \wedge \pi^\ast \eta_0 = \pi^\ast
 (\omega_0 \wedge \eta_0) =0, \]
as $\omega_0 \wedge \eta_0$ is an $n$-form on the
$(n-1)$-dimensional manifold $\partial M$. Consequently,
\[ \int_N \omega \wedge \eta = \int_{N-E} \omega \wedge \eta + 
 \int_E j^\ast (\omega \wedge \eta) =
 \int_{N-E} \omega \wedge \eta  \]
is finite, since $N-E$ is compact and $\omega \wedge \eta$ is smooth
on a neighborhood of $N-E$.
\end{proof}
Since $\oip^\bullet (N)$ is  a subcomplex of 
$\Omega^\bullet_\bms (N),$ we obtain in particular:
\begin{cor} \label{cor.10.2}
Integration defines bilinear forms
\[ 
\int: \oip^r (N) \times \oiq^{n-r} (N)
 \longrightarrow \real. \]
\end{cor}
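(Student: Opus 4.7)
The corollary is essentially immediate from the preceding lemma, so the proof proposal is short. The plan is simply to observe that $\oip^\bullet(N)$ and $\oiq^\bullet(N)$ are both subcomplexes of $\Omega^\bullet_\bms(N)$, and then restrict the bilinear form supplied by the preceding lemma.

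In more detail: by the very definition of $\oip^\bullet(N)$ and $\oiq^\bullet(N)$, a form $\omega \in \oip^r(N)$ satisfies $j^\ast \omega = \pi^\ast \eta$ for some $\eta \in \ft_{\geq K}\Omega^\bullet_\ms(B)$, and a form $\tau \in \oiq^{n-r}(N)$ satisfies $j^\ast \tau = \pi^\ast \eta'$ for some $\eta' \in \ft_{\geq K^\ast}\Omega^\bullet_\ms(B)$. Since $\ft_{\geq K}\Omega^\bullet_\ms(B)$ and $\ft_{\geq K^\ast}\Omega^\bullet_\ms(B)$ are both subcomplexes of $\Omega^\bullet_\ms(B)$, we have
\[ \oip^\bullet(N) \subset \Omega^\bullet_\bms(N) \supset \oiq^\bullet(N). \]
The bilinear form of the preceding lemma therefore restricts to a bilinear form
\[ \int: \oip^r(N) \times \oiq^{n-r}(N) \longrightarrow \real,~ (\omega,\tau) \mapsto \int_N \omega \wedge \tau, \]
which is well-defined and finite for exactly the reasons given in the proof of that lemma (the restriction $j^\ast(\omega \wedge \tau)$ vanishes as an $n$-form on the $(n-1)$-manifold $\partial M$, so integration reduces to integration over the compact set $N-E$).

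There is no real obstacle here; the only thing to verify is the inclusion $\oip^\bullet(N), \oiq^\bullet(N) \subset \Omega^\bullet_\bms(N)$, which is built into the definitions.
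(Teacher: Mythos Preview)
Your proposal is correct and matches the paper's approach exactly: the paper derives the corollary in one sentence by noting that $\oip^\bullet(N)$ is a subcomplex of $\Omega^\bullet_\bms(N)$, so the bilinear form of the preceding lemma restricts. Your write-up simply spells this out in slightly more detail.
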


\begin{lemma} \label{lem.10.3}
For forms $\nu_0 \in (\ft_{\geq K} \Omega^\bullet_\ms
(B))^{r-1}$ and $\eta_0 \in (\ft_{\geq K^\ast} \Omega^\bullet_\ms
(B))^{n-r},$ the vanishing result
$\int_{\partial M} \nu_0 \wedge \eta_0 =0$
holds.
\end{lemma}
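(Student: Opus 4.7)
The plan is to show the stronger statement that $\nu_0 \wedge \eta_0 = 0$ as an $(n-1)$-form on $\partial M$; the vanishing of the integral is then automatic. The argument is purely local and rests on a dimension count coming from complementarity of $\bar{p}$ and $\bar{q}$.

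First I would pass to a trivializing chart $\phi_\alpha: p^{-1}(U_\alpha) \stackrel{\cong}{\rightarrow} U_\alpha \times F$ of the flat link bundle. By the definitions of $\ft_{\geq K}\Omega^\bullet_\ms(B)$ and $\ft_{\geq K^\ast}\Omega^\bullet_\ms(B)$, the restrictions of $\nu_0$ and $\eta_0$ have the forms
\[ \nu_0|_{p^{-1}U_\alpha} = \phi^\ast_\alpha \sum_i \pi^\ast_1 a_i \wedge \pi^\ast_2 \gamma_i, \qquad \eta_0|_{p^{-1}U_\alpha} = \phi^\ast_\alpha \sum_j \pi^\ast_1 b_j \wedge \pi^\ast_2 \delta_j, \]
with $\gamma_i \in \tau_{\geq K}\Omega^\bullet(F)$ and $\delta_j \in \tau_{\geq K^\ast}\Omega^\bullet(F)$.

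Next, I would decompose each $\gamma_i$ and $\delta_j$ into its pure-degree components. This is legitimate because $\tau_{\geq K}\Omega^\bullet(F)$ splits as a direct sum over degrees $\geq K$ (its degree-$K$ part is $\ker d^\ast \subset \Omega^K(F)$ and its higher-degree parts are all of $\Omega^k(F)$); likewise for $\tau_{\geq K^\ast}$. Hence I may assume that each $\gamma_i$ has pure fiber degree $p_i \geq K$ and each $\delta_j$ has pure fiber degree $q_j \geq K^\ast$.

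The wedge product then takes the local form
\[ (\nu_0 \wedge \eta_0)|_{p^{-1}U_\alpha} = \phi^\ast_\alpha \sum_{i,j} \pm\, \pi^\ast_1 (a_i \wedge b_j) \wedge \pi^\ast_2 (\gamma_i \wedge \delta_j). \]
Because $\bar{p}$ and $\bar{q}$ are complementary, $\bar{p}(m+1) + \bar{q}(m+1) = m-1$, giving
\[ K + K^\ast = 2m - \bar{p}(m+1) - \bar{q}(m+1) = m+1 > m = \dim F. \]
Therefore $p_i + q_j \geq K + K^\ast = m+1$, so $\gamma_i \wedge \delta_j \in \Omega^{p_i+q_j}(F) = 0$ for every pair $(i,j)$. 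Consequently $\nu_0 \wedge \eta_0$ vanishes over every $U_\alpha$; since $\{U_\alpha\}$ covers $B$, it vanishes on all of $\partial M$, and the conclusion $\int_{\partial M} \nu_0 \wedge \eta_0 = 0$ is immediate. The argument is routine once one observes the pure-degree splitting of cotruncation; the only genuine input is the dimension inequality $K + K^\ast > \dim F$.
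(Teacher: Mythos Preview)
Your proof is correct. Both you and the paper reduce to the local multiplicatively structured representation and use the complementarity relation $K + K^\ast = m+1 > m = \dim F$. The difference is that the paper proves only the vanishing of the integral: it introduces a partition of unity $\{\rho_\alpha\}$ to localize, applies Fubini to split $\int_{U_\alpha \times F}$ into a product $\int_{U_\alpha}(\cdot) \cdot \int_F \gamma_i \wedge \overline{\gamma}_j$, and then argues by cases on $D = \deg \gamma_i$ that this fiber integral is zero (either $\gamma_i = 0$ when $D < K$, or $\overline{\gamma}_j = 0$ when $m - D < K^\ast$). Your argument is more direct: after passing to pure-degree fiber components, the wedge $\gamma_i \wedge \delta_j$ lands in $\Omega^{p_i + q_j}(F)$ with $p_i + q_j \geq m+1$, hence is identically zero, so the \emph{form} $\nu_0 \wedge \eta_0$ already vanishes on $\partial M$. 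This buys you the stronger statement without partitions of unity or Fubini; the paper's route, while slightly longer, has the minor virtue of not needing to first split the $\gamma_i$ into pure-degree pieces.
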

\begin{proof}
Let $\{ \rho_\alpha \}$ be a partition of unity subordinate to
$\mathfrak{U} = \{ U_\alpha \},$ $\supp (\rho_\alpha)\subset U_\alpha$
compact. Then $\{ \overline{\rho}_\alpha \},$ $\overline{\rho}_\alpha =
\rho_\alpha \circ p,$ is a partition of unity subordinate to
$p^{-1} \mathfrak{U} = \{ p^{-1} U_\alpha \}.$ Since 
\[ \int_{\partial M} \nu_0 \wedge \eta_0 = \int_{\partial M}
 (\sum \overline{\rho}_\alpha)\cdot \nu_0 \wedge \eta_0 =
 \sum \int_{\partial M} \overline{\rho}_\alpha \nu_0 \wedge \eta_0
 = \sum \int_{p^{-1} U_\alpha} \overline{\rho}_\alpha \nu_0
\wedge \eta_0, \]
it suffices to show that
\[ \int_{p^{-1} U_\alpha} \overline{\rho}_\alpha \nu_0
\wedge \eta_0 =0 \]
for all $\alpha$. Let $\phi_\alpha: p^{-1} U_\alpha 
\stackrel{\cong}{\longrightarrow} U_\alpha \times F$ be the
trivialization over $U_\alpha$. Over $U_\alpha,$ $\nu_0$ has
the form
\[ \nu_0|_{p^{-1} U_\alpha} = \phi^\ast_\alpha \sum_{i=1}^k
 \pi^\ast_1 \nu_i \wedge \pi^\ast_2 \gamma_i, \]
with $\nu_i \in \Omega^\bullet (U_\alpha),$
$\gamma_i \in \tau_{\geq K} \Omega^\bullet (F),$
for $1\leq i\leq k,$ $\deg \nu_i + \deg \gamma_i = r-1,$
and $\eta_0$ has the local form
\[ \eta_0|_{p^{-1} U_\alpha} = \phi^\ast_\alpha \sum_{j=1}^l
 \pi^\ast_1 \eta_j \wedge \pi^\ast_2 \overline{\gamma}_j, \]
with $\eta_j \in \Omega^\bullet (U_\alpha),$
$\overline{\gamma}_j \in \tau_{\geq K^\ast} \Omega^\bullet (F),$
$\deg \eta_j + \deg \overline{\gamma}_j = n-r,$ for 
$1\leq j \leq l$. We have
\[ (\overline{\rho}_\alpha \nu_0)|_{p^{-1} U_\alpha} =
\phi^\ast_\alpha \sum_i \pi^\ast_1 (\rho_\alpha \nu_i)
 \wedge \pi^\ast_2 \gamma_i, \]
where $\rho_\alpha \nu_i \in \Omega^\bullet_c (U_\alpha)$
has compact support in $U_\alpha$. Thus
\begin{eqnarray*}
\int_{p^{-1} U_\alpha} \overline{\rho}_\alpha \nu_0 \wedge \eta_0
& = & \int_{p^{-1} U_\alpha} \phi^\ast_\alpha \sum_{i,j}
 \pi^\ast_1 (\rho_\alpha \nu_i)\wedge \pi^\ast_2 \gamma_i
 \wedge \pi^\ast_1 \eta_j \wedge \pi^\ast_2 \overline{\gamma}_j \\
& = & \sum_{i,j} (\pm) \int_{U_\alpha \times F}
 \pi^\ast_1 (\rho_\alpha \nu_i \wedge \eta_j)\wedge
 \pi^\ast_2 (\gamma_i \wedge \overline{\gamma}_j) \\
& = & \sum_{i,j} (\pm) \int_{U_\alpha}
 \rho_\alpha \nu_i \wedge \eta_j \cdot
 \int_F \gamma_i \wedge \overline{\gamma}_j.
\end{eqnarray*}
We claim that
$\int_F \gamma_i \wedge \overline{\gamma}_j =0,$
which will finish the proof. Let $D$ denote the degree of $\gamma_i$;
we may assume that $\deg \overline{\gamma}_j = m-D$
($m=\dim F$). If $D<K,$ then $\gamma_i =0,$ so the claim is
verified for this case. Suppose that $D\geq K.$ Since
$K=m - \bar{p}(m+1),$ $K^\ast =m - \bar{q}(m+1),$ and
$\bar{p}(m+1) + \bar{q}(m+1) = m-1,$
the inequality $D\geq K$ implies that $m-D < K^\ast.$ Hence
$\overline{\gamma}_j =0$ and the claim is correct in the case
$D\geq K$ as well.
\end{proof}
The next lemma would immediately follow from Stokes' theorem
if we knew that $\nu \wedge \eta$ has compact support in $N$.
\begin{lemma} \label{lem.10.4}
If $\nu$ is a form in $\oip^{r-1} (N)$ and $\eta$
is a form in $\oiq^{n-r}(N),$ then
\[ \int_N d(\nu \wedge \eta)=0. \]
\end{lemma}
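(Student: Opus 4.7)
The plan is to reduce the assertion to Stokes' theorem on a compact manifold with boundary by truncating the open end $E \subset N$, and then to show that the resulting boundary integral vanishes by Lemma \ref{lem.10.3}. The issue, as hinted in the paragraph preceding the lemma, is that $\nu \wedge \eta$ need not have compact support on $N$, so Stokes cannot be applied directly.

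First I would observe that although $\nu \wedge \eta$ itself is not compactly supported, its exterior derivative $d(\nu \wedge \eta) = d\nu \wedge \eta + (-1)^{r-1} \nu \wedge d\eta$ vanishes identically on the end $E$. Indeed, since $\oip^\bullet (N)$ and $\oiq^\bullet (N)$ are subcomplexes, $d\nu \in \oip^r (N)$ and $d\eta \in \oiq^{n-r+1}(N)$. Hence there exist $d\nu_0 \in (\ft_{\geq K}\Omega^\bullet_\ms(B))^r$ and $\eta_0 \in (\ft_{\geq K^\ast}\Omega^\bullet_\ms(B))^{n-r}$ with
\[ j^\ast (d\nu \wedge \eta) = \pi^\ast (d\nu_0) \wedge \pi^\ast \eta_0 = \pi^\ast (d\nu_0 \wedge \eta_0). \]
But $d\nu_0 \wedge \eta_0$ has total degree $n$ on the $(n-1)$-dimensional manifold $\partial M$, so it vanishes. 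The term $\nu \wedge d\eta$ is treated analogously, yielding $j^\ast d(\nu \wedge \eta) = 0$.

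Next I would choose $t_0 \in (-1,+1)$ and consider the compact submanifold with boundary
\[ N_{t_0} = N \setminus \bigl((t_0,1) \times \partial M \bigr) \subset N, \]
whose boundary, with its induced orientation, is $\{ t_0 \} \times \partial M$. Since $d(\nu \wedge \eta)$ vanishes on $E \supset N \setminus N_{t_0}$,
\[ \int_N d(\nu \wedge \eta) = \int_{N_{t_0}} d(\nu \wedge \eta). \]
Stokes' theorem, applied on the compact manifold with boundary $N_{t_0}$, gives
\[ \int_{N_{t_0}} d(\nu \wedge \eta) = \int_{\{ t_0 \} \times \partial M} \nu \wedge \eta = \int_{\{ t_0 \} \times \partial M} \pi^\ast (\nu_0 \wedge \eta_0), \]
using $j^\ast \nu = \pi^\ast \nu_0$ and $j^\ast \eta = \pi^\ast \eta_0$ on the end $E$. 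Since $\pi$ restricts to a diffeomorphism $\{ t_0\} \times \partial M \xrightarrow{\cong} \partial M$, the last integral equals $\int_{\partial M} \nu_0 \wedge \eta_0$. By Lemma \ref{lem.10.3}, this vanishes, completing the proof.

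The main technical point, which I would want to verify carefully, is that the end-vanishing of $d(\nu \wedge \eta)$ permits exchanging the integral over $N$ for the integral over $N_{t_0}$; once this is done, the argument is a direct application of Stokes and Lemma \ref{lem.10.3}.
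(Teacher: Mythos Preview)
Your proof is correct and follows essentially the same approach as the paper: split off a compact piece $N_{\leq 0}$ (your $N_{t_0}$), apply Stokes there to obtain the boundary integral $\int_{\partial M}\nu_0\wedge\eta_0$, and invoke Lemma~\ref{lem.10.3}. The only cosmetic difference is that you first observe $j^\ast d(\nu\wedge\eta)=\pi^\ast d(\nu_0\wedge\eta_0)=0$ to discard the end contribution before integrating, whereas the paper writes the integral over $E_{>0}$ explicitly and then observes the same vanishing; the content is identical.
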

\begin{proof}
Set
$E_{>0} = (0,+1)\times \partial M \subset N,$
$N_{\leq 0} = N - E_{>0}.$
The compact manifold $N_{\leq 0}$ has boundary $0 \times \partial M.$
There is a form $\nu_0 \in (\ft_{\geq K} \Omega^\bullet_\ms (B))^{r-1}$
and a form $\eta_0 \in (\ft_{\geq K^\ast} \Omega^\bullet_\ms (B))^{n-r}$
such that $j^\ast \nu = \pi^\ast \nu_0,$ $j^\ast \eta = \pi^\ast \eta_0$.
Splitting the integral into integration over $N_{\leq 0}$ and $E_{>0},$ and
using Stokes' theorem for $N_{\leq 0}$ followed by an application of
Lemma \ref{lem.10.3}, we obtain
\begin{eqnarray*}
\int_N d(\nu \wedge \eta)
& = & \int_{N_{\leq 0}} d(\nu \wedge \eta) + \int_{E_{>0}} d(\nu \wedge
 \eta) \\
& = & \int_{0\times \partial M} \sigma^\ast_0 j^\ast (\nu \wedge \eta) + 
  \int_{E_{>0}} d \pi^\ast (\nu_0 \wedge \eta_0) \\
& = & \int_{\partial M} \nu_0 \wedge \eta_0 + 
  \int_{E_{>0}} d \pi^\ast (\nu_0 \wedge \eta_0) \\
& = & \int_{E_{>0}} \pi^\ast d(\nu_0 \wedge \eta_0), 
\end{eqnarray*}
$\sigma_0: \partial M = 0\times \partial M \hookrightarrow E,$
$\pi \sigma_0 = \id.$ Now $d(\nu_0 \wedge \eta_0) \in \Omega^n (\partial M)$
is an $n$-form on the $(n-1)$-dimensional manifold $\partial M$, thus
$d (\nu_0 \wedge \eta_0)=0$ and
$\int_{E_{>0}} \pi^\ast d(\nu_0 \wedge \eta_0)=0.$
\end{proof}

\section{Poincar\'e Duality for $HI^\bullet_{\bar{p}}$}
\label{sec.pdforhi}

\begin{prop} \label{prop.10.5}
The bilinear form of Corollary \ref{cor.10.2} induces a bilinear form
\[ \begin{array}{rcl}
\int: HI^r_{\bar{p}} (X) \times
 HI^{n-r}_{\bar{q}} (X) & \longrightarrow & \real \\
([\omega], [\eta]) & \mapsto & \int_N \omega \wedge \eta
\end{array} \]
on cohomology.
\end{prop}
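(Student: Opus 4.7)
The plan is to check that the bilinear form $\int: \oip^r(N) \times \oiq^{n-r}(N) \to \real$ of Corollary \ref{cor.10.2} descends to cohomology, i.e. that $\int_N \omega \wedge \eta$ depends only on the cohomology classes of a $d$-closed $\omega \in \oip^r(N)$ and a $d$-closed $\eta \in \oiq^{n-r}(N)$. By bilinearity, this reduces to two vanishing statements: $\int_N d\omega' \wedge \eta = 0$ whenever $\omega' \in \oip^{r-1}(N)$ and $\eta \in \oiq^{n-r}(N)$ is closed, and symmetrically $\int_N \omega \wedge d\eta' = 0$ whenever $\eta' \in \oiq^{n-r-1}(N)$ and $\omega \in \oip^{r}(N)$ is closed.

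For the first statement, I would expand $d(\omega' \wedge \eta) = d\omega' \wedge \eta + (-1)^{r-1} \omega' \wedge d\eta$. Using $d\eta = 0$, this reduces to $d(\omega' \wedge \eta) = d\omega' \wedge \eta$. Now Lemma \ref{lem.10.4} (applied to $\nu = \omega'$ and $\eta$) gives $\int_N d(\omega' \wedge \eta) = 0$, and hence $\int_N d\omega' \wedge \eta = 0$. The symmetric statement follows from the same argument with the roles of $\omega$ and $\eta$ interchanged, using that $\oip$ and $\oiq$ enter symmetrically into Lemma \ref{lem.10.4} (the perversities $\bar{p}$ and $\bar{q}$ are complementary, so Lemma \ref{lem.10.3} applies equally to the swapped pairing).

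The main technical work has already been carried out in Lemma \ref{lem.10.4}, whose proof is nontrivial precisely because $\omega' \wedge \eta$ need not have compact support in $N$, so Stokes' theorem cannot be applied globally; the argument compensates by splitting $N$ along a slice $0 \times \partial M$, applying Stokes on the compact piece $N_{\leq 0}$, and killing the boundary and end contributions via Lemma \ref{lem.10.3}, which exploits the complementary fiberwise (co)truncation degrees $K$ and $K^\ast$. Once Lemma \ref{lem.10.4} is in hand, the proof of the proposition is essentially a one-line bilinearity argument; there is no real obstacle remaining.
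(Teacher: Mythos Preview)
Your proposal is correct and follows essentially the same route as the paper's own proof: reduce to the two vanishing statements, use the Leibniz rule together with closedness to rewrite $(d\omega')\wedge \eta$ as $d(\omega'\wedge\eta)$, invoke Lemma~\ref{lem.10.4}, and finish the second case by symmetry of the roles of $\bar{p}$ and $\bar{q}$. Your remark that the symmetric application of Lemma~\ref{lem.10.4} is legitimate because Lemma~\ref{lem.10.3} is symmetric in $K$ and $K^\ast$ is exactly the point.
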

\begin{proof}
Let $\omega \in \oip^r (N)$ be a closed form, 
let $\eta \in \oiq^{n-r} (N)$ be a closed form,
let $\omega' \in \oip^{r-1} (N)$ and
$\eta' \in \oiq^{n-r-1} (N)$ be any forms.
Then
$\int_N d(\omega' \wedge \eta)=0$
by Lemma \ref{lem.10.4}. Since $\eta$ is closed,
$d(\omega' \wedge \eta) = (d\omega')\wedge \eta.$ Thus
\[ \int_N (\omega + d\omega')\wedge \eta =
 \int_N \omega \wedge \eta + \int_N (d\omega')\wedge \eta =
 \int_N \omega \wedge \eta. \]
By symmetry,
$\int_N \omega \wedge (\eta + d\eta') = \int_N \omega \wedge \eta$
as well.
\end{proof}

\begin{thm} \label{thm.10.6}
(Generalized Poincar\'e Duality.)
The bilinear form 
\[ 
\int: HI^r_{\bar{p}} (X) \times
 HI^{n-r}_{\bar{q}} (X) \longrightarrow \real 
\]
of Proposition \ref{prop.10.5} is nondegenerate.
\end{thm}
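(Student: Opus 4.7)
I will deduce nondegeneracy of the middle pairing from a five-lemma comparison of two long exact sequences, in which the four outer pairings are already known to be nondegenerate. Apply the distinguished triangle (\ref{equ.dt3}) with $\bar p$ to get the long exact sequence
\[
\cdots \to H^{r-1}(\ft_{\geq K}\Omega^\bullet_\ms(B)) \stackrel{\partial}{\to} H^r_\rel(N) \to HI^r_{\bar p}(X) \to H^r(\ft_{\geq K}\Omega^\bullet_\ms(B)) \stackrel{\partial}{\to} H^{r+1}_\rel(N) \to \cdots,
\]
and (\ref{equ.dt2}) with $\bar q$ to get
\[
\cdots \to H^{n-r-1}(\ft_{<K^\ast}\Omega^\bullet_\ms(B)) \stackrel{\partial}{\to} HI^{n-r}_{\bar q}(X) \to H^{n-r}(\Omega^\bullet_\bms(N)) \to H^{n-r}(\ft_{<K^\ast}\Omega^\bullet_\ms(B)) \to \cdots.
\]
Dualize the second sequence over $\real$ and pair the two sequences term-by-term by wedge product followed by integration over $N$. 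Since $b+m=n-1$, the four outer vertical arrows in the resulting ladder are: (i) the pairing of Proposition \ref{prop.globpdtruncfhforms} (with $\bar p$ and $\bar q$ interchanged) between $H^{r-1}(\ft_{\geq K}\Omega^\bullet_\ms(B))$ and $H^{n-r}(\ft_{<K^\ast}\Omega^\bullet_\ms(B))$, nondegenerate; (ii) classical Poincar\'e--Lefschetz duality on the oriented open $n$-manifold $N$, transported through $H^\bullet_\rel(N)\cong H^\bullet_c(N)$ (Proposition \ref{prop.rel}) and $H^\bullet(\Omega^\bullet_\bms(N))\cong H^\bullet(N)$ (Proposition \ref{prop.11star}); (iii) the middle pairing to be analyzed; (iv) another instance of Proposition \ref{prop.globpdtruncfhforms} between $H^r(\ft_{\geq K}\Omega^\bullet_\ms(B))$ and $H^{n-r-1}(\ft_{<K^\ast}\Omega^\bullet_\ms(B))$.

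Commutativity of the two squares that involve only the natural inclusions $\Omega^\bullet_\rel(N) \hookrightarrow \oip^\bullet(N) \hookrightarrow \Omega^\bullet_\bms(N)$ and their duals is immediate, because all three pairings are restrictions of the single bilinear form $(\omega,\eta)\mapsto \int_N \omega\wedge\eta$. The main obstacle, and the delicate step, is verifying that the two squares involving the connecting homomorphisms $\partial$ commute up to sign. This is the analogue of Bott--Tu, Lemma~5.6. I plan to verify it by Stokes' theorem applied on the truncated manifold $N_{\leq 0} = N \setminus ((0,1)\times \partial M)$, in the spirit of Lemmas \ref{lem.10.3} and \ref{lem.10.4}. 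Concretely, given a closed $\alpha_0 \in (\ft_{\geq K}\Omega^\bullet_\ms(B))^{r-1}$, choose a cutoff extension $\tilde\alpha \in \oip^{r-1}(N)$ with $j^\ast \tilde\alpha = \pi^\ast \alpha_0$ on a smaller collar; then $\partial[\alpha_0]$ is represented in $H^r_\rel(N)$ by $d\tilde\alpha$, and for a closed $\beta \in \Omega^{n-r}_\bms(N)$ with $j^\ast\beta = \pi^\ast\beta_0$ one computes
\[
\int_N d\tilde\alpha \wedge \beta \;=\; \int_{N_{\leq 0}} d(\tilde\alpha \wedge \beta) \;=\; \int_{\partial M} \alpha_0 \wedge \beta_0,
\]
the contribution from $E_{>0}=(0,1)\times\partial M$ vanishing because $\pi^\ast d(\alpha_0\wedge\beta_0)$ has degree $n$ on the $(n-1)$-manifold $\partial M$. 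The right-hand integral is exactly the outer pairing (i) evaluated on $[\alpha_0]$ and on the image of $[\beta]$ in $H^{n-r}(\ft_{<K^\ast}\Omega^\bullet_\ms(B))$ under the quasi-isomorphism $\gamma_B$ of Lemma \ref{lem.93}; this proves commutativity of the first boundary square up to sign. The second boundary square is handled by an entirely symmetric Stokes computation, pairing a class in $H^r(\ft_{\geq K}\Omega^\bullet_\ms(B))$ with the $\partial$-image of a class in $H^{n-r-1}(\ft_{<K^\ast}\Omega^\bullet_\ms(B))$.

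Once the ladder is shown to anti-commute, the five-lemma forces the central vertical arrow $HI^r_{\bar p}(X) \to HI^{n-r}_{\bar q}(X)^\dagger$ to be an isomorphism, which is precisely the claimed nondegeneracy. The only real work is the sign-tracking in the two Stokes computations above, which mirror the proofs of Lemmas \ref{lem.10.3}--\ref{lem.10.4}; everything else is formal and relies only on results already in place.
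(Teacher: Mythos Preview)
Your proposal is correct and follows essentially the same five-lemma strategy as the paper, with the cosmetic difference that you assign triangle (\ref{equ.dt3}) to $\bar p$ and (\ref{equ.dt2}) to $\bar q$, whereas the paper does the reverse; the Stokes verifications you outline correspond precisely to the paper's squares (TS) and (BS).

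Two small points to tighten when you write it up. First, of the four squares in the ladder only \emph{one} has inclusions on both sides; the remaining three each involve a connecting homomorphism on one side (your type-A square occurs twice), so you will invoke your Stokes argument three times, not twice. Second, in your first boundary square the identity $\int_{\partial M}\alpha_0\wedge\beta_0 = \int_{\partial M}\alpha_0\wedge\bar\beta_0$, where $\bar\beta_0\in\ft_{<K^\ast}\Omega^\bullet_\ms(B)$ is an honest representative of $\gamma_B^{-1}[\beta_0]$, is not automatic: one has $\bar\beta_0-\beta_0=\eta+d\xi$ with $\eta\in\ft_{\geq K^\ast}\Omega^\bullet_\ms(B)$, and the two extra terms vanish by Lemma~\ref{lem.10.3} and by Stokes on the closed manifold $\partial M$ (using that $\alpha_0$ is closed). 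The paper makes exactly this point explicit in its verification of (BS).
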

\begin{proof}
By Proposition \ref{prop.11star}, the inclusion $\Omega^\bullet_\bms (N)
\subset \Omega^\bullet (N)$ induces an isomorphism
$H^r_\bms (N) \stackrel{\cong}{\longrightarrow} H^r (N).$
Classical Poincar\'e duality asserts that
\[ 
H^r (N)  \longrightarrow  H^{n-r}_c (N)^\dagger,~ 
\mbox{[}\omega\mbox{]}  \mapsto  \int_N \omega \wedge -
\]
is an isomorphism. By Proposition \ref{prop.rel}, the inclusion
$\Omega^\bullet_{\rel} (N) \subset \Omega^\bullet_c (N)$
induces an isomorphism
$H^{n-r}_c (N)^\dagger \stackrel{\cong}{\longrightarrow}
 H^{n-r}_{\rel} (N)^\dagger.$
Composing these three isomorphisms, we obtain an isomorphism
\begin{equation} \label{equ.10.6}
H^r_\bms (N)  \stackrel{\cong}{\longrightarrow} 
 H^{n-r}_{\rel} (N)^\dagger,~
\mbox{[}\omega \mbox{]}  \mapsto  \int_N \omega \wedge -.
\end{equation}
The nondegenerate form of Proposition 
\ref{prop.globpdtruncfhforms} can be rewritten as an isomorphism
\begin{equation} \label{equ.10.7}
H^r (\ft_{<K} \Omega^\bullet_\ms (B)) \stackrel{\cong}{\longrightarrow}
 H^{n-r-1} (\ft_{\geq K^\ast} \Omega^\bullet_\ms (B))^\dagger,
\end{equation}
while the bilinear form of Proposition \ref{prop.10.5} can be
rewritten as a map
\begin{equation} \label{equ.10.8}
H^r (\oip^\bullet (N)) \longrightarrow
 H^{n-r} (\oiq^\bullet (N))^\dagger. 
\end{equation}
The distinguished triangle (\ref{equ.dt2}) induces a long exact
cohomology sequence
\small
\[ \cdots \rightarrow 
 H^{r-1} (\ft_{<K} \Omega^\bullet_\ms (B)) \rightarrow
 H^r (\oip^\bullet (N)) \rightarrow
 H^r (\Omega^\bullet_\bms (N)) \rightarrow
 H^r (\ft_{<K} \Omega^\bullet_\ms (B)) \rightarrow \cdots. \]
\normalsize
The distinguished triangle (\ref{equ.dt3}) induces a long exact
cohomology sequence
\[ \cdots \rightarrow
H^{n-r} (\ft_{\geq K^\ast} \Omega^\bullet_\ms (B))^\dagger
 \stackrel{(J^\ast_{\bar{p}})^\dagger}{\longrightarrow}
 H^{n-r} (\oiq^\bullet (N))^\dagger
 \stackrel{\operatorname{incl}^{\ast \dagger}}{\longrightarrow}
 H^{n-r} (\Omega^\bullet_{\rel} (N))^\dagger \longrightarrow \]
\[ H^{n-r-1} (\ft_{\geq K^\ast} \Omega^\bullet_\ms (B))^\dagger
 \longrightarrow \cdots. \]
Using the maps (\ref{equ.10.6}), (\ref{equ.10.7}) and 
(\ref{equ.10.8}), we map the former sequence to the latter:
\begin{equation} \label{equ.threestars}
\xymatrix@R=16pt{
H^{r-1} (\ft_{<K} \Omega^\bullet_\ms (B)) \ar[r]^{\cong} \ar[d] & 
   H^{n-r} (\ft_{\geq K^\ast} \Omega^\bullet_\ms (B))^\dagger 
\ar[d]^{(J^\ast_{\bar{p}})^\dagger} \\
H^r (\oip^\bullet (N)) \ar[r] \ar[d] & 
   H^{n-r} (\oiq^\bullet (N))^\dagger 
\ar[d]^{\operatorname{incl}^{\ast \dagger}} \\
H^r (\Omega^\bullet_\bms (N)) \ar[r]^{\cong} \ar[d] & 
H^{n-r} (\Omega^\bullet_{\rel} (N))^\dagger \ar[d] \\
H^r (\ft_{<K} \Omega^\bullet_\ms (B)) \ar[r]^{\cong} & 
H^{n-r-1} (\ft_{\geq K^\ast} \Omega^\bullet_\ms (B))^\dagger \\ 
}
\end{equation}
Let us denote the top square, middle square and bottom
square of this diagram by (TS), (MS), (BS), respectively.
We shall verify that all three squares commute up to sign.
Let us start with (TS). We begin by describing the map
\[ \delta: H^{r-1} (\ft_{<K} \Omega^\bullet_\ms (B))
 \longrightarrow H^r (\oip^\bullet (N)). \]
Let $\iota: \oip^\bullet (N) \hookrightarrow
 \Omega^\bullet_\bms (N)$ denote the subcomplex inclusion and
$C^\bullet (\iota)$ the algebraic mapping cone of $\iota$, that is,
$C^r (\iota) = \oip^{r+1} (N) \oplus
  \Omega^r_\bms (N)$
and $d: C^r (\iota) \to C^{r+1} (\iota)$ is given by
$d(\tau, \sigma) = (-d\tau, \tau + d\sigma).$
Let
\[ \begin{array}{rcl}
P: C^\bullet (\iota) & \longrightarrow & \oip^{\bullet +1} (N) \\
P(\tau,\sigma) & = & \tau
\end{array} \]
be the standard projection and
\[ f: C^\bullet (\iota) \longrightarrow \frac{\Omega^\bullet_\bms (N)}
 {\oip^\bullet (N)} \]
be the map given by
$f(\tau, \sigma) = q(\sigma),$
where
\[ q: \Omega^\bullet_\bms (N) \longrightarrow
\frac{\Omega^\bullet_\bms (N)}
 {\oip^\bullet (N)} \]
is the canonical quotient map. The map $f$ is a quasi-isomorphism.
Recall that
\[ \overline{J}^\ast: \frac{\Omega^\bullet_\bms (N)}
 {\oip^\bullet (N)}  
 \stackrel{\cong}{\longrightarrow} 
 \frac{\Omega^\bullet_\ms (B)}{\ft_{\geq K} \Omega^\bullet_\ms (B)} \]
is an isomorphism given by restriction of a form from $N$ to
$\{ 0 \} \times \partial M = \partial M.$ The quasi-isomorphism
\[ \gamma_B: \ft_{<K} \Omega^\bullet_\ms (B) \longrightarrow
\frac{\Omega^\bullet_\ms (B)}{\ft_{\geq K} \Omega^\bullet_\ms (B)} \]
was defined to be the composition
\[ \ft_{<K} \Omega^\bullet_\ms (B)
 \stackrel{\operatorname{incl}}{\hookrightarrow}
 \Omega^\bullet_\ms (B)
 \stackrel{\operatorname{quot}}{\longrightarrow}
\frac{\Omega^\bullet_\ms (B)}{\ft_{\geq K} \Omega^\bullet_\ms (B)}. \]
Let $\omega \in (\ft_{<K} \Omega^\bullet_\ms (B))^{r-1}$ be a closed
form. Then $d(\gamma_B \omega)=0$ as well.
As $\overline{J}^\ast$ is an isomorphism, there exists a unique element
$w\in \Omega^\bullet_\bms (N) / \oip^\bullet (N)$
such that $\overline{J}^\ast (w) = \gamma_B (\omega)$ and
$\overline{J}^\ast (dw) = d(\overline{J}^\ast w) = d\gamma_B (\omega)=0.$
The injectivity of $\overline{J}^\ast$ implies that
$dw=0 \in \Omega^\bullet_\bms (N) / \oip^\bullet (N).$
Let $\overline{\omega} \in \Omega^{r-1}_\bms (N)$ be a representative for 
$w$ so that $q(\overline{\omega})=w.$ From
$q(d\overline{\omega}) = dq(\overline{\omega}) = dw=0$
we conclude that
$d\overline{\omega} \in \oip^r (N).$
The element
\[ c = (-d\overline{\omega}, \overline{\omega}) \in
  C^{r-1} (\iota) = \oip^r (N) \oplus \Omega^{r-1}_\bms (N) \]
is a cocycle, since
$dc = (d^2 \overline{\omega}, -d\overline{\omega} + d\overline{\omega})
 = (0,0).$
Furthermore,
$f(c) = q(\overline{\omega}) =w$
and hence
$\overline{J}^\ast f(c) = \overline{J}^\ast w = \gamma_B (\omega),$
i.e. $c$ is a lift of $\gamma_B (\omega)$ to a cocycle in the mapping cone.
Since $P(c) = -d\overline{\omega} \in \oip^r (N),$
the element $\delta (\omega)$ can be described as
\[ \delta (\omega) = -d\overline{\omega}. \]
(Note that this does of course not mean that $\delta (\omega)$ represents the
zero class in cohomology, since only $d\overline{\omega}$ is known to
lie in $\oip^\bullet (N)$, but $\overline{\omega}$ itself lies only in
$\Omega^\bullet_\bms (N),$ not necessarily in $\oip^\bullet (N).$)
Since the restriction $\sigma^\ast_0 j^\ast (\overline{\omega})$ of
$\overline{\omega}$ to $\{ 0 \} \times \partial M$ satisfies
\[ [\sigma^\ast_0 j^\ast (\overline{\omega})] =
 [J^\ast \overline{\omega}] = \overline{J}^\ast q(\overline{\omega}) =
 \gamma_B (\omega) \in \frac{\Omega^\bullet_\ms (B)}
 {\ft_{\geq K} \Omega^\bullet_\ms (B)}, \]
we have
\[ \alpha := \sigma^\ast_0 j^\ast (\overline{\omega}) - \omega
 \in \ft_{\geq K} \Omega^\bullet_\ms (B). \]
Thus the restriction of $\overline{\omega}$ to $\{ 0 \} \times \partial M$
equals $\omega$ up to an element in $\ft_{\geq K} \Omega^\bullet_\ms (B).$

As $\overline{\omega} \in \Omega^{r-1}_\bms (N),$ there exists an
$\overline{\omega}_0 \in \Omega^{r-1}_\ms (B) \subset
\Omega^{r-1} (\partial M)$ such that $j^\ast \overline{\omega} =
\pi^\ast \overline{\omega}_0.$ Let $\eta \in \oiq^{n-r} (N)$
be a closed form. There exists an
$\eta_0 \in (\ft_{\geq K^\ast} \Omega^\bullet_\ms (B))^{n-r} \subset
\Omega^{n-r} (\partial M)$ with $j^\ast \eta =
\pi^\ast \eta_0.$ In order to verify the commutativity of (TS), we must
show that
\[ \int_N \delta (\omega)\wedge \eta = \pm \int_{0\times \partial M}
 \omega \wedge J^\ast_{\bar{q}} (\eta). \]
Since $\eta$ is closed,
$(d\overline{\omega})\wedge \eta = d(\overline{\omega} \wedge \eta)$
and
\[ \int_N \delta (\omega)\wedge \eta = - \int_N (d\overline{\omega})\wedge
 \eta = - \int_N d(\overline{\omega} \wedge \eta) 
 = -\int_{N_{\leq 0}} d(\overline{\omega} \wedge \eta) -
  \int_{E_{>0}} d(\overline{\omega} \wedge \eta), \]
where $E_{>0} = (0,1)\times \partial M \subset E,$
$N_{\leq 0} = N - E_{>0},$ $\partial N_{\leq 0} = 0\times \partial M$.
The integral over $E_{>0}$ vanishes, as on $E_{>0}$,
$d(\overline{\omega} \wedge \eta)|_{E_{>0}} = 
 \pi^\ast d (\overline{\omega}_0 \wedge \eta_0)=0,$
$d (\overline{\omega}_0 \wedge \eta_0)$ being an $n$-form on the
$(n-1)$-dimensional manifold $\partial M$. By Stokes' theorem
\[ \int_{N_{\leq 0}} d(\overline{\omega} \wedge \eta) =
 \int_{0 \times \partial M} \overline{\omega}|_{0\times \partial M}
 \wedge \eta|_{0 \times \partial M} =
 \int_{0\times \partial M} \sigma^\ast_0 j^\ast \overline{\omega}
 \wedge \sigma^\ast_0 j^\ast_{\bar{q}} \eta \]
\[ = \int_{\partial M} \omega \wedge J^\ast_{\bar{q}} \eta +
 \int_{\partial M} \alpha \wedge J^\ast_{\bar{q}} \eta. \]
From $\alpha \in (\ft_{\geq K} \Omega^\bullet_\ms (B))^{r-1},$
$J^\ast_{\bar{q}} \eta = \sigma^\ast_0 j^\ast_{\bar{q}} \eta = \sigma^\ast_0
\pi^\ast \eta_0 = \eta_0 \in (\ft_{\geq K^\ast} \Omega^\bullet_\ms
(B))^{n-r}$ and Lemma \ref{lem.10.3} it follows that
$\int_{\partial M} \alpha \wedge J^\ast_{\bar{q}} \eta =0.$
Thus (TS) commutes. \\

Let us move on to (BS). We begin by describing the map
\[ D: H^{n-r-1} (\ft_{\geq K^\ast} \Omega^\bullet_\ms (B))
 \longrightarrow H^{n-r} (\Omega^\bullet_{\rel} (N)). \]
Let $\rho: \Omega^\bullet_{\rel} (N) \hookrightarrow
 \oiq^\bullet (N)$ be the subcomplex inclusion
and $C^\bullet (\rho)$ its algebraic mapping cone. Let
$P: C^\bullet (\rho) \longrightarrow \Omega^{\bullet +1}_{\rel} (N),$
$P(\tau, \sigma) = \tau,$
be the projection and
\[ f: C^\bullet (\rho) \longrightarrow \ft_{\geq K^\ast} \Omega^\bullet_\ms
(B) \]
the quasi-isomorphism given by
$f(\tau, \sigma) = J^\ast_{\bar{q}} (\sigma).$
Recall that the kernel of $J^\ast_{\bar{q}}: \oiq^\bullet (N) 
\twoheadrightarrow \ft_{\geq K^\ast} \Omega^\bullet_\ms (B)$ is
$\im \rho = \Omega^\bullet_{\rel} (N)$.
Let $\eta \in (\ft_{\geq K^\ast} \Omega^\bullet_\ms (B))^{n-r-1}$ be a
closed form. Since $J^\ast_{\bar{q}}$ is surjective, there exists an
$\overline{\eta} \in \oiq^{n-r-1} (N)$ such that
$J^\ast_{\bar{q}} (\overline{\eta})=\eta$. We have
$J^\ast_{\bar{q}} (d\overline{\eta}) = dJ^\ast_{\bar{q}} (\overline{\eta})= d\eta =0.$
Thus $d\overline{\eta} \in \ker J^\ast_{\bar{q}} = \Omega^{n-r}_{\rel} (N).$
The element
\[ c = (-d\overline{\eta}, \overline{\eta})\in \Omega^{n-r}_{\rel} (N)
\oplus \oiq^{n-r-1} (N) = C^{n-r-1}(\rho) \]
is a cocycle, for
$dc = (d^2 \overline{\eta}, -d\overline{\eta} + d\overline{\eta}) = (0,0).$
Moreover,
$f(c) = J^\ast_{\bar{q}} (\overline{\eta})=\eta$ and $P(c) = -d\overline{\eta}.$
We conclude that the image $D(\eta)$ can be described as
\[ D(\eta) = -d\overline{\eta}. \]
We shall next describe the map
\[ Q: H^r (\Omega^\bullet_\bms (N)) \longrightarrow
  H^r (\ft_{<K} \Omega^\bullet_\ms (B)). \]
Let $\omega \in \Omega^r_\bms (N)$ be a closed form. Its image under
\[ \xymatrix{
\Omega^r_\bms (N) \ar@{->>}[r]^{q} &
\frac{\Omega^r_\bms (N)}{\oip^r (N)}
\ar[r]^{\cong}_{\overline{J}^\ast} &
\frac{\Omega^r_\ms (B)}{(\ft_{\geq K} \Omega^\bullet_\ms
 (B))^r}
} \]
is represented by $\omega|_{0 \times \partial M},$
\[ \overline{J}^\ast q(\omega) =
[ \omega|_{0\times \partial M} ] \in
\frac{\Omega^r_\ms (B)}{\ft_{\geq K} \Omega^\bullet_\ms
 (B))^r}. \]
Let $\lsem \overline{J}^\ast q (\omega) \rsem \in H^r (Q^\bullet (B))$ denote
the cohomology class determined by $\overline{J}^\ast q(\omega).$
Since $\gamma_B$ is a quasi-isomorphism, there exists a unique 
class $\lsem \overline{\omega} \rsem \in H^r (\ft_{<K} \Omega^\bullet_\ms
(B)),$ represented by a closed form $\overline{\omega} \in
(\ft_{<K} \Omega^\bullet_\ms (B))^r,$ with $\gamma^\ast_B \lsem
\overline{\omega} \rsem = \lsem \overline{J}^\ast q(\omega) \rsem.$
Consequently, there exists a form $\xi \in \Omega^{r-1}_\ms (B),$
representing an element $[\xi ] \in Q^{r-1} (B)$ with
\[ \gamma_B (\overline{\omega}) - \overline{J}^\ast q(\omega)
 = d[\xi ]. \]
We deduce that
$\alpha = \overline{\omega} - \omega|_{0\times \partial M} -
 d\xi \in \ft_{\geq K} \Omega^\bullet_\ms (B).$
The map $Q$ is described by
\[ Q(\omega) = \overline{\omega}. \]
In order to verify the commutativity of (BS), we must show that
\[ \int_N \omega \wedge D(\eta) = \pm\int_{\partial M}
 Q(\omega) \wedge \eta. \]
Using $d\omega =0,$ we split the left integral as
\[ - \int_N \omega \wedge d\overline{\eta} =
 (-1)^{r+1}\int_{N_{\leq 0}} d(\omega \wedge \overline{\eta}) -
 \int_{E_{>0}} \omega \wedge d\overline{\eta}. \]
The integral over $E_{>0}$ vanishes as
$d\overline{\eta} \in \Omega^{n-r}_{\rel} (N),$ so that
$d\overline{\eta}|_{E_{>0}}=0.$ By Stokes' theorem on
$N_{\leq 0},$ we are reduced to showing
\[ \int_{\partial M} \omega \wedge \overline{\eta} = \pm
 \int_{\partial M} \overline{\omega}\wedge \eta. \]
Rewriting the integrand on the left-hand side as
\[
\omega|_{0\times \partial M} \wedge \overline{\eta}|_{0\times \partial M}
 =  (\overline{\omega} -\alpha -d\xi)\wedge J^\ast_{\bar{q}} (\overline{\eta}) 
=  \overline{\omega} \wedge \eta - \alpha \wedge \eta -
  (d\xi)\wedge \eta,
\]
it remains to show that
\[ \int_{\partial M} \alpha \wedge \eta =0 \text{ and }
 \int_{\partial M} d\xi \wedge \eta =0. \]
The former statement is implied by Lemma \ref{lem.10.3}, as
$\alpha \in (\ft_{\geq K} \Omega^\bullet_\ms (B))^r$ and
$\eta \in (\ft_{\geq K^\ast} \Omega^\bullet_\ms (B))^{n-r-1}$.
The latter follows from Stokes' theorem, observing that
$(d\xi)\wedge \eta = d(\xi \wedge \eta)$ since $\eta$ is closed. \\

Finally (MS) commutes, since the map
$H^r (\oip^\bullet (N)) \longrightarrow
  H^r (\Omega^\bullet_\bms (N))$
is induced by the subcomplex inclusion
$\oip^\bullet (N) \subset \Omega^\bullet_\bms (N),$
and
$H^{n-r} (\Omega^\bullet_{\rel} (N)) \to
  H^{n-r} (\oiq^\bullet (N))$
is induced by the subcomplex inclusion
$\Omega^\bullet_{\rel} (N) \subset
  \oiq^\bullet (N),$
whence the two integrals whose equality has to be demonstrated are
both just
$\int_N \omega \wedge \eta,$
$\omega \in \oip^r (N),$
$\eta \in \Omega^{n-r}_{\rel} (N).$ Since the diagram
(\ref{equ.threestars}) is now known to commute (up to sign), the statement of the theorem
is implied by the $5$-lemma.
\end{proof}

\section{The de Rham Theorem to the Cohomology of Intersection Spaces}

\subsection{Partial Smoothing} \label{ssec.partialsmooth}

Our method to establish the de Rham isomorphism between $HI^\bullet_{\bar{p}}$
and the cohomology of the corresponding intersection space 
requires building an interface between smooth objects
and techniques, such as smooth differential forms and smooth singular
chains in a smooth manifold, and nonsmooth objects, such as the intersection
space, which arises from a homotopy-theoretic construction and is a
CW-complex, not generally a manifold. The interface will be provided by a
certain \emph{partial smoothing} technique that we shall now develop.

For a topological space $X$, let $S_{\bullet} (X)$ denote its singular
chain complex with real coefficients. Homology
$H_{\bullet} (X)$ will mean singular homology, $H_\bullet (S_{\bullet} (X))$.
For a smooth manifold $V$ (which is allowed to have a boundary),
let $\si_{\bullet} (V)$ denote its smooth singular chain complex with
real coefficients, generated by smooth singular simplices
$\Delta^k \rightarrow V$. For a continuous map $g: X\rightarrow V$,
we shall define the \emph{partially smooth chain complex}
$\sps_{\bullet} (g)$. In degree $k$, we set
\[ \sps_k (g) = H_{k-1} (X) \oplus \si_k (V). \]
Let $\iota: \si_{\bullet} (V) \hookrightarrow S_{\bullet} (V)$ be the
inclusion and
$s: S_{\bullet} (V) \longrightarrow \si_{\bullet} (V)$
Lee's smoothing operator, \cite{lee}, pp. 416 -- 424.
The map $s$ is a chain map such that $s \circ \iota$ is the identity
and $\iota \circ s$ is chain homotopic to the identity. Thus $s$ and
$\iota$ induce mutually inverse isomorphisms on homology. If $V$ has
a nonempty boundary $\partial V$ and $J: \partial V \hookrightarrow V$
is the inclusion, then a continuous singular simplex that lies in the boundary
can be smoothed within the boundary. Thus, we can assume that $s$ has been
arranged so that the square
\begin{equation} \label{equ.naturalbndry}
\xymatrix{
S_\bullet (\partial V) \ar[r]^s \ar[d]_{J_\ast} 
& \si_\bullet (\partial V) \ar[d]_{J_\ast} \\
S_\bullet (V) \ar[r]^s & \si_\bullet (V)
} \end{equation}
commutes.
Let $Z_k$ denote the subspace of $k$-cycles in $S_k (X)$ and
$B_k = \partial S_{k+1}(X)$ the subspace of $k$-boundaries.
Choosing direct sum decompositions
\[ S_k (X) = Z_k \oplus B'_k,~ Z_k = B_k \oplus H'_k, \]
we obtain a quasi-isomorphism
$q: H_{\bullet} (X) = H_{\bullet} (S_{\bullet} (X)) \longrightarrow
  S_{\bullet} (X),$
which is given in degree $k$ by the composition
\[ H_k (X) = \frac{Z_k}{B_k} = \frac{B_k \oplus H'_k}{B_k}
 \stackrel{\cong}{\longrightarrow} H'_k \hookrightarrow Z_k \hookrightarrow
  S_k (X).\]
Here, we regard $H_\bullet (X)$ as a chain
complex with zero boundary operators.
By construction, the formula
\begin{equation} \label{equ.bqxbisx}
[ q(x) ] = x
\end{equation}
holds for a homology class $x\in H_k (X),$ that is, $q(x)$ is a
cycle representative for $x$.
Let $x\in H_{k-1}(X)$ be a homology class in $X$ and $v: \Delta^k
\rightarrow V$ be a smooth singular simplex $v\in \si_k (V)$.
We define the boundary operator
$\partial: \sps_k (g) \longrightarrow \sps_{k-1} (g)$
by
\[ \partial (x,v) = (0, \partial v - s g_\ast q(x)), \]
where 
$g_\ast: S_{k-1}(X)\rightarrow S_{k-1}(V)$ is the chain map induced by $g$.
The equation $\partial^2 (x,v)=0$ holds.
The algebraic mapping cone $C_\bullet (g_\ast)$ of $g_\ast$ is given by
\[ C_k (g_\ast) = S_{k-1} (X) \oplus S_k (V),~
 \partial (x,v) = (-\partial x, \partial v - g_\ast (x)). \]
The homology $H_\bullet (g)$ of the map $g$ is
$H_{\bullet} (g) = H_{\bullet} (C_{\bullet} (g_\ast)).$
We wish to show that the partially smooth chain complex
$\sps_{\bullet} (g)$ computes $H_{\bullet} (g)$. To do this, we construct
an intermediate complex $U_{\bullet} (g)$, which underlies both
complexes,
\[ \xymatrix@C=7pt@R=15pt{
C_{\bullet}(g_\ast) \ar[rd] & & \sps_{\bullet} (g) \ar[ld] \\
& U_{\bullet} (g) &
} \]
such that the two maps are quasi-isomorphisms.
Set
\[ U_k (g) = S_{k-1} (X) \oplus \si_k (V),~
 \partial (x,v) = (-\partial x, \partial v - sg_\ast (x)). \]
The property $\partial^2 (x,v)=0$ is readily verified; thus
$U_{\bullet} (g)$ is a chain complex. The map
$\id \oplus s: C_{\bullet} (g_\ast) \longrightarrow U_{\bullet}(g)$
is a chain map.
\begin{lemma} \label{lem.muqis}
The map $\id \oplus s$ is a quasi-isomorphism.
\end{lemma}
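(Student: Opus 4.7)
The plan is to recognize both $C_\bullet(g_\ast)$ and $U_\bullet(g)$ as algebraic mapping cones and then apply the five lemma to the long exact sequences they generate. Explicitly, $C_\bullet(g_\ast)$ is by construction the mapping cone of $g_\ast:S_\bullet(X)\to S_\bullet(V)$, while the same formulas show that $U_\bullet(g)$ is the mapping cone of the composition $sg_\ast:S_\bullet(X)\to\si_\bullet(V)$. Thus one has short exact sequences of chain complexes
\[
0\to S_\bullet(V)\longrightarrow C_\bullet(g_\ast)\longrightarrow S_{\bullet-1}(X)\to 0,
\qquad
0\to \si_\bullet(V)\longrightarrow U_\bullet(g)\longrightarrow S_{\bullet-1}(X)\to 0,
\]
where in each case the first map is $v\mapsto(0,v)$ and the second is $(x,v)\mapsto -x$.

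Next I will check that the map $\id\oplus s$ is a morphism of these short exact sequences. The square
\[
\xymatrix{
S_\bullet(X)\ar@{=}[d]\ar[r]^{g_\ast} & S_\bullet(V)\ar[d]^{s} \\
S_\bullet(X)\ar[r]^{sg_\ast} & \si_\bullet(V)
}
\]
commutes by the very definition of $sg_\ast$, and under this square $\id\oplus s$ restricts to $s$ on the subcomplex $S_\bullet(V)\hookrightarrow C_\bullet(g_\ast)$ and induces $\id$ on the quotient $S_{\bullet-1}(X)$. A direct verification using the two boundary formulas shows that the square commutes with the respective differentials, so that $\id\oplus s$ is indeed a chain map of short exact sequences.

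Passing to homology, I get a ladder of long exact sequences in which the rungs are alternately $\id_\ast$ on $H_\bullet(X)$ and the map $s_\ast:H_\bullet(S_\bullet(V))\to H_\bullet(\si_\bullet(V))$ induced by Lee's smoothing operator. Since $s$ is a chain homotopy equivalence (with homotopy inverse $\iota$), $s_\ast$ is an isomorphism, hence every third rung is an isomorphism. The five lemma then forces $\id\oplus s$ to induce an isomorphism $H_\bullet(C_\bullet(g_\ast))\to H_\bullet(U_\bullet(g))$, i.e. $\id\oplus s$ is a quasi-isomorphism.

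The only thing that requires a moment's care is the naturality of the connecting homomorphism with respect to the map of short exact sequences; this follows from the standard zig-zag construction, since the connecting map in each row is implemented by the same formula $[x]\mapsto[g_\ast(x)]$ (respectively $[x]\mapsto[sg_\ast(x)]$), and the target is related by $s_\ast$. No further obstacle is anticipated.
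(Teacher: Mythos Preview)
Your proof is correct and follows essentially the same approach as the paper: both set up the short exact sequences $0\to S_\bullet(V)\to C_\bullet(g_\ast)\to S_{\bullet-1}(X)\to 0$ and $0\to \si_\bullet(V)\to U_\bullet(g)\to S_{\bullet-1}(X)\to 0$, observe that $\id\oplus s$ gives a morphism between them, and then apply the five lemma to the resulting ladder of long exact sequences, using that $s_\ast$ is an isomorphism. The only cosmetic differences are that the paper uses the projection $(x,v)\mapsto x$ rather than your $(x,v)\mapsto -x$ (both are chain maps to $S_{\bullet-1}(X)$ with its differential $-\partial$), and the paper does not explicitly remark on the naturality of the connecting homomorphism.
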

\begin{proof}
The inclusions
\[ 
\si_k (V)  \longrightarrow  S_{k-1} (X) \oplus \si_k (V),~
v  \mapsto  (0,v),
\]
form an injective chain map $\si_{\bullet}(V) \rightarrow U_{\bullet}(g).$
The projections
\[
S_{k-1}(X) \oplus \si_k (V)  \longrightarrow S_{k-1}(X),~ 
(x,v)  \mapsto  x,
\]
form a surjective chain map $U_\bullet (g) \rightarrow S_{\bullet -1} (X).$
(Recall that the shifted complex $S_{\bullet -1}(X)$ has boundary operator $-\partial$.)
We obtain an exact sequence
\[ 0 \rightarrow \si_\bullet (V) \longrightarrow U_\bullet (g)
 \longrightarrow S_{\bullet -1} (X) \rightarrow 0. \]
Similarly, we have the standard exact sequence
\[ 0 \rightarrow S_\bullet (V) \longrightarrow C_\bullet (g_\ast)
 \longrightarrow S_{\bullet -1} (X) \rightarrow 0. \]
The morphism of exact sequences
\[ \xymatrix@R=16pt{
0 \ar[r] & \si_\bullet (V) \ar[r] &
U_\bullet (g) \ar[r] & S_{\bullet -1} (X) \ar[r] & 0 \\
0 \ar[r] & S_\bullet (V) \ar[r] \ar[u]_s &
C_\bullet (g_\ast) \ar[r] \ar[u]_{\id \oplus s}
& S_{\bullet -1} (X) \ar[r] \ar@{=}[u] & 0
} \]
induces a commutative diagram on homology with exact rows:
\[ \xymatrix@R=16pt{
H_\bullet (X) \ar[r] & H_\bullet (\si_\bullet (V)) \ar[r] &
H_\bullet (U_\bullet (g)) \ar[r] & H_{\bullet -1} (X) \ar[r] & 
  H_{\bullet -1} (\si_\bullet (V)) \\
H_\bullet (X) \ar[r] \ar@{=}[u] & 
 H_\bullet (V) \ar[r] \ar[u]_{s_\ast}^{\cong} &
H_\bullet (g) \ar[r] \ar[u]_{(\id \oplus s)_\ast}
& H_{\bullet -1} (X) \ar[r] \ar@{=}[u] & H_{\bullet -1}(V)
 \ar[u]_{s_\ast}^{\cong}
} \]
The lemma follows from the $5$-lemma.
\end{proof}

The map
$q \oplus \id: \sps_{\bullet} (g) \longrightarrow U_{\bullet}(g)$
is a chain map, in fact:
\begin{lemma} \label{lem.spsuqis}
The map $q \oplus \id$ is a quasi-isomorphism.
\end{lemma}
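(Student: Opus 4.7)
The plan is to mimic the proof of Lemma \ref{lem.muqis}: display $\sps_\bullet (g)$ and $U_\bullet (g)$ as the middle terms of two short exact sequences of chain complexes whose vertical comparison is induced by $q \oplus \id$, and then conclude via the $5$-lemma on passage to homology.

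First I would verify that $q \oplus \id$ is a chain map. This reduces to the identity $\partial q(x) = 0$, which holds because by construction $q(x) \in S_{k-1}(X)$ is a cycle representative of $x \in H_{k-1}(X)$, cf.\ formula (\ref{equ.bqxbisx}). Thus
\begin{equation*}
\partial (q \oplus \id)(x,v) = (-\partial q(x),~ \partial v - s g_\ast q(x)) = (0,~ \partial v - s g_\ast q(x)) = (q \oplus \id) \partial (x,v).
\end{equation*}

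Next I assemble the two short exact sequences
\begin{equation*}
0 \rightarrow \si_\bullet (V) \hookrightarrow \sps_\bullet (g) \twoheadrightarrow H_{\bullet - 1}(X) \rightarrow 0,
\end{equation*}
where $v \mapsto (0,v)$ and $(x,v)\mapsto x$ and $H_{\bullet -1}(X)$ is given the zero differential, and
\begin{equation*}
0 \rightarrow \si_\bullet (V) \hookrightarrow U_\bullet (g) \twoheadrightarrow S_{\bullet - 1}(X) \rightarrow 0,
\end{equation*}
which is precisely the sequence appearing in the proof of Lemma \ref{lem.muqis}. The chain map $q \oplus \id$ produces a morphism between these sequences, given by $\id$ on $\si_\bullet (V)$ and by $q$ on the right-hand terms. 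On homology, $q_\ast : H_{k-1}(X) \rightarrow H_{k-1}(S_{\bullet - 1}(X)) = H_{k-1}(X)$ is the identity, again by (\ref{equ.bqxbisx}).

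The remaining point, which is the only step requiring care, is to verify the compatibility of the two connecting homomorphisms, so that the hypotheses of the $5$-lemma are actually met. Given $x \in H_{k-1}(X)$, lift to $(x,0) \in \sps_k (g)$ and compute $\partial (x,0) = (0, -s g_\ast q(x))$; this represents the class $-s_\ast g_\ast (x) \in H_{k-1}(\si_\bullet (V)) = H_{k-1}(V)$. Likewise, interpreting $q(x) \in S_{k-1}(X)$ as a cycle and lifting to $(q(x),0) \in U_k(g)$, we obtain $\partial (q(x),0) = (0, -s g_\ast q(x))$, which represents the same class. Hence both connecting maps send $x$ to $-s_\ast g_\ast (x)$, so the right-hand square of the $5$-lemma diagram commutes; the other squares commute by functoriality. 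The $5$-lemma then yields that $(q \oplus \id)_\ast$ is an isomorphism, completing the proof.
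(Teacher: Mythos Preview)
Your proof is correct and follows essentially the same route as the paper's: both set up the short exact sequence $0 \to \si_\bullet (V) \to \sps_\bullet (g) \to H_{\bullet -1}(X) \to 0$, map it to the sequence for $U_\bullet (g)$ via $q \oplus \id$, and apply the $5$-lemma using $q_\ast = \id$. Your additional explicit checks (that $q \oplus \id$ is a chain map, and that the connecting homomorphisms agree) are sound but not strictly necessary, since the latter is automatic for a morphism of short exact sequences of chain complexes.
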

\begin{proof}
The inclusions
\[ 
\si_k (V) \longrightarrow H_{k-1} (X) \oplus \si_k (V),~ 
v \mapsto  (0,v),
\]
form an injective chain map $\si_{\bullet}(V) \rightarrow \sps_{\bullet}(g).$
The projections
\[
H_{k-1}(X) \oplus \si_k (V) \longrightarrow H_{k-1}(X),~
(x,v) \mapsto x,
\]
form a surjective chain map $\sps_\bullet (g) \rightarrow H_{\bullet -1} (X).$
We obtain an exact sequence
\[ 0 \rightarrow \si_\bullet (V) \longrightarrow \sps_\bullet (g)
 \longrightarrow H_{\bullet -1} (X) \rightarrow 0. \]
Recall that we had constructed an exact sequence
\[ 0 \rightarrow \si_\bullet (V) \longrightarrow U_\bullet (g)
 \longrightarrow S_{\bullet -1} (X) \rightarrow 0 \]
in the proof of Lemma \ref{lem.muqis}.
The morphism of exact sequences
\[ \xymatrix@R=16pt{
0 \ar[r] & \si_\bullet (V) \ar[r] &
U_\bullet (g) \ar[r] & S_{\bullet -1} (X) \ar[r] & 0 \\
0 \ar[r] & \si_\bullet (V) \ar[r] \ar@{=}[u] &
\sps_\bullet (g) \ar[r] \ar[u]_{q \oplus \id}
& H_{\bullet -1} (X) \ar[r] \ar[u]^q & 0
} \]
induces a commutative diagram on homology with exact rows:
\[ \xymatrix@R=16pt{
H_\bullet (X) \ar[r] & H_\bullet (\si_\bullet (V)) \ar[r] &
H_\bullet (U_\bullet (g)) \ar[r] & H_{\bullet -1} (X) \ar[r] & 
  H_{\bullet -1} (\si_\bullet (V)) \\
H_\bullet (X) \ar[r] \ar@{=}[u] & 
 H_\bullet (\si_\bullet (V)) \ar[r] \ar@{=}[u] &
H_\bullet (\sps_\bullet (g)) \ar[r] \ar[u]_{(q \oplus \id)_\ast}
& H_{\bullet -1} (X) \ar[r] \ar@{=}[u] & H_{\bullet -1}(\si_\bullet (V)),
 \ar@{=}[u]
} \]
using equation (\ref{equ.bqxbisx}), which implies that $q_\ast = \id$ on
homology.
The lemma follows from the $5$-lemma.
\end{proof}
Lemma \ref{lem.muqis} and Lemma \ref{lem.spsuqis} imply:
\begin{prop}(Partial Smoothing.) \label{prop.partialsmooth}
The maps $\id \oplus s$ and $q \oplus \id$ induce an isomorphism
\[ H_\bullet (\sps_\bullet (g)) \cong H_\bullet (g). \]
\end{prop}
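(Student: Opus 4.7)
The plan is essentially to combine the two preceding lemmas through the intermediate complex $U_\bullet(g)$. Since Lemma \ref{lem.muqis} establishes that the chain map $\id \oplus s: C_\bullet(g_\ast) \to U_\bullet(g)$ induces an isomorphism on homology, and Lemma \ref{lem.spsuqis} establishes the same for $q \oplus \id: \sps_\bullet(g) \to U_\bullet(g)$, the diagram
\[
\xymatrix@C=7pt@R=15pt{
C_{\bullet}(g_\ast) \ar[rd]_{\id \oplus s} & & \sps_{\bullet} (g) \ar[ld]^{q \oplus \id} \\
& U_{\bullet} (g) &
}
\]
gives, upon passage to homology, a zig-zag of isomorphisms $H_\bullet(g) = H_\bullet(C_\bullet(g_\ast)) \xrightarrow{\cong} H_\bullet(U_\bullet(g)) \xleftarrow{\cong} H_\bullet(\sps_\bullet(g))$. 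Thus one defines the desired isomorphism as the composition $(\id \oplus s)_\ast^{-1} \circ (q \oplus \id)_\ast$.

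There is essentially no obstacle here, as all the substantive work has been carried out in Lemmas \ref{lem.muqis} and \ref{lem.spsuqis}: the intermediate complex $U_\bullet(g)$ was constructed precisely so that both $C_\bullet(g_\ast)$ and $\sps_\bullet(g)$ map into it via evident chain maps, and both applications of the $5$-lemma in those proofs depend only on the standard facts that Lee's smoothing operator $s$ is a quasi-isomorphism and that $q$ induces the identity on homology by equation (\ref{equ.bqxbisx}). Hence the proposition is immediate, and one need only state the two applications and take the composition.
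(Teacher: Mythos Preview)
Your proposal is correct and follows exactly the paper's approach: the proposition is stated immediately after the sentence ``Lemma \ref{lem.muqis} and Lemma \ref{lem.spsuqis} imply:'' with no further argument given, so the paper intends precisely the zig-zag of quasi-isomorphisms through $U_\bullet(g)$ that you describe.
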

This concludes the construction of the partially smooth model to
compute the homology of the map $g$. \\

\subsection{Background on Intersection Spaces}
\label{ssec.backinterspace}

We provide a quick review
of the construction of intersection spaces. For more details,
we ask the reader to consult \cite{banagl-intersectionspaces}. Let $k$ be an integer and
let $C_\bullet (K)$ denote the integral cellular chain complex of a CW-complex $K$.
\begin{defn}
The category $\CWkcb$ of
\emph{$k$-boundary-split CW-complexes} consists of the following
objects and morphisms: Objects are pairs $(K,Y)$, where
$K$ is a simply connected CW-complex and $Y \subset C_k (K)$ is
a subgroup that arises
as the image $Y = s(\im \partial)$ of some splitting
$s: \im \partial \rightarrow C_k (K)$ of the boundary map
$\partial: C_k (K) \rightarrow \im \partial (\subset C_{k-1} (K))$.
(Given $K$, such a splitting always exists, 
since $\im \partial$ is free abelian.) A morphism
$(K,Y_K) \rightarrow (L,Y_L)$ is a cellular map
$f: K \rightarrow L$ such that $f_\ast (Y_K)\subset Y_L$.
\end{defn}
Let $\HoCW_{k-1}$ denote the category whose objects are
CW-complexes and whose morphisms are rel $(k-1)$-skeleton homotopy
classes of cellular maps. Let
\[ t_{<\infty}: \CWkcb \longrightarrow
   \HoCW_{k-1} \]
be the natural projection functor, that is,
$t_{<\infty} (K,Y_K) = K$ for an object $(K, Y_K)$ in 
$\CWkcb$, and $t_{<\infty} (f) = [f]$ for a morphism
$f: (K,Y_K) \rightarrow (L,Y_L)$ in $\CWkcb$.
The following theorem is proved in \cite{banagl-intersectionspaces}.
\begin{thm} \label{thm.generaltruncation}
Let $k\geq 3$ be an integer.
There is a covariant assignment 
$t_{<k}: \CWkcb 
 \longrightarrow \HoCW_{k-1}$ of objects and morphisms
together with a natural transformation
$\emb_k: t_{<k} \rightarrow t_{<\infty}$ such that for an object
$(K, Y)$ of $\CWkcb,$ one has
$H_r (t_{<k} (K,Y);\intg)=0$ for $r\geq k,$ and
\[ \emb_k (K,Y)_\ast: H_r (t_{<k} (K,Y);\intg) \stackrel{\cong}{\longrightarrow}
  H_r (K;\intg) \]
is an isomorphism for $r<k.$ 
\end{thm}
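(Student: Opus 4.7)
The plan is to build $t_{<k}(K,Y)$ by replacing the $k$-cells of $K$ with new $k$-cells attached to the unchanged $(k-1)$-skeleton $K^{(k-1)}$, with attaching maps engineered so that the boundary on the new $k$-chain group realizes an isomorphism from a basis of $Y$ onto a basis of $\im \partial_k \subset C_{k-1}(K)$. By the splitting hypothesis $\partial \circ s = \id_{\im \partial}$, this produces a complex whose cellular chain complex agrees with $C_\bullet(K)$ below degree $k$, has the appropriate boundary in degree $k$, and vanishes above; hence $H_r$ is unchanged for $r < k$ and vanishes for $r \ge k$.

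Concretely, I would first choose a $\intg$-basis $\{y_i\}$ of the free abelian group $Y$ and express $y_i = \sum_\alpha n_{i\alpha} e_\alpha$ in terms of the $k$-cells of $K$. The attaching map $\phi_\alpha : S^{k-1}\to K^{(k-1)}$ of $e_\alpha$ represents the cellular cycle $\partial e_\alpha \in H_{k-1}(K^{(k-1)})$. Because $k-1\ge 2$, the pinch map $S^{k-1}\to\bigvee_\alpha S^{k-1}$ is available, so one builds $\phi_{y_i} : S^{k-1}\to K^{(k-1)}$ representing $\partial y_i$ by composing pinch with $\bigvee_\alpha n_{i\alpha}\phi_\alpha$ (taking a reflection of $S^{k-1}$ to account for negative coefficients). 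Applying the same recipe to the characteristic maps $\Phi_\alpha : D^k\to K^{(k)}$ yields a specific null-homotopy $\Phi_{y_i} : D^k\to K^{(k)}\subset K$ of $\phi_{y_i}$. I would then set
\[
t_{<k}(K,Y) = K^{(k-1)} \cup_{\bigsqcup_i \phi_{y_i}} \bigsqcup_i D^k_i,
\]
and define $\emb_k(K,Y)$ to be the identity on $K^{(k-1)}$ and $\Phi_{y_i}$ on the $i$-th new cell.

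The homological claims then follow by a direct cellular computation: $C_k(t_{<k}(K,Y)) = \bigoplus_i \intg\langle f_i\rangle$ with $\partial f_i = \partial y_i$, and since $\{\partial y_i\}$ is a basis of $\im \partial_k$ by the splitting condition, the boundary map is injective with image $\im \partial_k$. This gives $H_k = 0$ and $H_{k-1}(t_{<k}(K,Y)) = Z_{k-1}/\im\partial_k = H_{k-1}(K)$, while lower degrees are unaffected. The map $\emb_k(K,Y)$ induces the identity on $(k-1)$-cycles and hence the asserted isomorphism on $H_r$ for $r<k$.

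The main obstacle is promoting this construction to a functor landing in $\HoCW_{k-1}$ together with naturality of $\emb_k$. For a morphism $f : (K,Y_K)\to (L,Y_L)$ one puts $t_{<k}(f)=f$ on $K^{(k-1)}$; on the $i$-th new cell the hypothesis $f_\ast y_i \in Y_L$ permits writing $f_\ast y_i = \sum_j m_{ij} z_j$ in the chosen basis of $Y_L$, and the pinch-and-wedge recipe, now using the new characteristic maps $\Psi_{z_j} : D^k\to t_{<k}(L,Y_L)$, produces a candidate extension of $f\circ \phi_{y_i}$. The delicate point is to show that this extension genuinely agrees on $S^{k-1}$ with $f\circ\phi_{y_i}$ up to homotopy in $L^{(k-1)}$, which requires comparing two maps with the same homology class $f_\ast\partial y_i$ in $H_{k-1}(L^{(k-1)})$; this is where one must exploit simple-connectivity of $L$, a Hurewicz-type argument, and above all the freedom built into the target category of rel-$(k-1)$-skeleton homotopy classes, in which any two such extensions become identified because they coincide on the entire $(k-1)$-skeleton. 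The same flexibility simultaneously establishes that $t_{<k}(K,Y)$ is independent, as an object of $\HoCW_{k-1}$, of the choice of basis for $Y$ and of the choices made in constructing the $\phi_{y_i}$, and yields functoriality and naturality of $\emb_k$ by applying the argument once more to the relevant commuting squares.
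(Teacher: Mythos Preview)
The paper does not prove this theorem here; it is quoted from \cite{banagl-intersectionspaces}. Your construction of the object $t_{<k}(K,Y)$ and of the map $\emb_k(K,Y)$, together with the cellular-chain verification that $H_r$ vanishes for $r\ge k$ and agrees with $H_r(K)$ for $r<k$, is correct and is precisely the construction carried out in the cited reference.

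There is, however, a genuine gap in your handling of functoriality. The sentence ``any two such extensions become identified because they coincide on the entire $(k-1)$-skeleton'' is false: two cellular maps $t_{<k}(K,Y_K)\to t_{<k}(L,Y_L)$ agreeing on $K^{(k-1)}$ differ, on each new $k$-cell, by a class in $\pi_k\bigl(t_{<k}(L,Y_L)\bigr)$, and a rel-$(k-1)$-skeleton homotopy exists exactly when these obstruction classes vanish. They need not; already for $k=3$ there are targets homotopy equivalent to $S^2$, with $\pi_3(S^2)=\intg$. So the passage to $\HoCW_{k-1}$ does not by itself absorb the ambiguity, and your argument does not establish that $t_{<k}(f)$ is well-defined, nor that $t_{<k}$ respects composition, nor that $\emb_k$ is natural.

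What your Hurewicz remark should be replaced by is the \emph{relative} Hurewicz theorem for the pair $(L^{(k)},L^{(k-1)})$: since $L^{(k-1)}$ is simply connected (here $k\ge 3$ is used) and the pair is $(k-1)$-connected, $\pi_k(L^{(k)},L^{(k-1)})\cong H_k(L^{(k)},L^{(k-1)})=C_k(L)$, and the homotopy boundary sends the generator $e'_\beta$ to $[\chi_\beta]\in\pi_{k-1}(L^{(k-1)})$. Tracing $f\circ\Phi_{y_i}$ through this identification gives the \emph{exact} equality $[f\circ\phi_{y_i}]=\sum_j m_{ij}[\psi_{z_j}]$ in $\pi_{k-1}(L^{(k-1)})$, not merely equality of homology classes; this is what furnishes the extension. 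The remaining well-definedness and naturality statements still require substantial further argument along these lines, and constitute the bulk of the work in the cited reference.
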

This means in particular that given a morphism $f$, one has squares
\[ \xymatrix{
t_{<k}(K,Y_K)  \ \ \ar[r]^{\emb_k (K,Y_K)} \ar[d]_{t_{<k}(f)}  & 
 \  \ t_{<\infty} (K,Y_K) \ar[d]^{t_{<\infty}(f)} \\
t_{<k}(L,Y_L) \ \ \ar[r]^{\emb_k (L,Y_L)} & \  \
t_{<\infty} (L,Y_L)
} \]
that commute in $\HoCW_{k-1}$.
If $k\leq 2$ (and the CW-complexes are simply connected), then it is of course
a trivial matter to construct such truncations. \\

Let $X$ be an $n$-dimensional pseudomanifold with one isolated singularity.
For a given perversity $\bar{p},$ set $c = n-1-\bar{p}(n)$.
As usual, $M$ denotes the complement of an open cone neighborhood of the
singularity and $N$ continues to denote the interior of $M$. The notation
$E,j,\pi$ is as in Section \ref{sec.abcomplexes}.
To be able to apply
the general spatial homology truncation Theorem \ref{thm.generaltruncation}, we require
the link $L = \partial M$ to be simply connected. This assumption
is not always necessary, as in many non-simply connected situations, ad hoc truncation constructions can
be used.
If $c\geq 3,$ we can and do fix a
completion $(L,Y)$ of $L$ so that $(L,Y)$ is an object in
$\CWccb$. If $c\leq 2,$ no group $Y$ has to be chosen.
Applying the truncation $t_{<c}: \CWccb \rightarrow \HoCW_{c-1}$, we obtain a CW-complex
$t_{<c} (L,Y) \in Ob \HoCW_{c-1}$.
The natural transformation $\emb_c: t_{<c} \rightarrow t_{<\infty}$
of Theorem \ref{thm.generaltruncation} gives a homotopy class $\emb_c (L, Y)$ 
represented by a map
$f: t_{<c}(L, Y) \to L$
such that for $r<c,$ 
$f_{\ast}: H_r (t_{<c}(L, Y)) \cong H_r (L),$
while $H_r (t_{<c}(L, Y))=0$ for $r\geq c$.
The intersection space $\ip X$ is defined to be
\[ \ip X = \cone (g), \]
where $g$ is the composition
\[ \xymatrix{
t_{<c} (L,Y) \ar[r]^f \ar[dr]_g & L \ar@{^{(}->}[d]^J \\
& M.
} \]
Thus, to form the intersection space, we attach the cone on a suitable spatial homology
truncation of the link to the exterior of the singularity along the
boundary of the exterior. Let us briefly write $t_{<c} L$ for $t_{<c} (L,Y)$.
More generally, $\ip X$ has at present been constructed, and
Poincar\'e duality established, 
for the following classes of $X$, where all links are generally assumed
to be simply connected: \\

\noindent $\bullet$ $X$ has stratification depth $1$ and every connected component of
the singular set $\Sigma$ has trivializable link bundle (\cite{banagl-intersectionspaces}).
This includes all $X$ with only isolated singularities (and simply connected links). \\

\noindent $\bullet$ $X$ has depth $1$ and $\Sigma$ is a simply connected sphere,
whose link either has no odd-degree homology or has a cellular chain complex all of
whose boundary operators vanish (\cite{gaisenphdthesis}, the link bundle may be
twisted here), \\

\noindent $\bullet$ $X$ has depth $2$ with one-dimensional $\Sigma$ such that the
links of the components of the pure one-dimensional stratum satisfy a condition similar
to Weinberger's \emph{antisimplicity} condition \cite{weinbhigherrho}, which itself is an
algebraic version of a somewhat stronger geometric condition due to Hausmann, requiring a
manifold to have a handlebody without middle-dimensional handles.

\subsection{$\Omega I^\bullet_{\bar p}$ in the Isolated Singularity Case}

In the isolated singularity case,
\[ \Omega^k_{\bms} (N) = \{ \omega \in \Omega^k (N) ~|~
   j^\ast \omega = \pi^\ast \eta ,~ \text{ some } 
  \eta \in \Omega^k (\partial M) \} \]
and
\[ \oip^k (N) = \{ \omega \in \Omega^k (N) ~|~
   j^\ast \omega = \pi^\ast \eta, \text{ some } \eta \in \tau_{\geq c} \Omega^k (\partial M) \}. \]
Let $\sigma_0: \partial M \hookrightarrow E = (-1,+1)\times \partial M$
be given by $\sigma_0 (x) = (0,x) \in E$. The identity
$\pi \sigma_0 = \id_{\partial M}$
holds. We recall:
\begin{lemma} \label{lem.ombheisoharmbm}
The maps
\[ \xymatrix{\Omega^\bullet_{\bms}(E) & \Omega^\bullet (\partial M) 
\ar@<1ex>[l]^{\pi^\ast}  \ar@<1ex>[l];[]^{\sigma^\ast_0}
} \]
are mutually inverse isomorphisms of cochain complexes.
\end{lemma}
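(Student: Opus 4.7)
The plan is to verify this directly by exploiting the single key identity $\pi \circ \sigma_0 = \id_{\partial M}$. There is no real obstacle here; the content is formal, and the statement essentially records that in the isolated singularity case, where $B=\Sigma$ is a finite set of points and hence $\Omega^\bullet_\ms(B) = \Omega^\bullet(\partial M)$, the definition of $\Omega^\bullet_\bms(E)$ forces the pullback $\pi^\ast$ to be a tautological bijection onto its image.

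First I would check that both maps are well-defined chain maps. The map $\pi^\ast: \Omega^\bullet(\partial M) \to \Omega^\bullet(E)$ has image by construction inside $\Omega^\bullet_\bms(E)$ (take $\eta$ to be the form itself); and the restriction $\sigma_0^\ast: \Omega^\bullet(E) \to \Omega^\bullet(\partial M)$ restricts to any subspace of the source, so in particular gives a map $\Omega^\bullet_\bms(E) \to \Omega^\bullet(\partial M)$. Both are pullbacks along smooth maps, so they commute with the de Rham differential.

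Next I would compute the two compositions. Since $\pi \circ \sigma_0 = \id_{\partial M}$, functoriality of pullback gives $\sigma_0^\ast \circ \pi^\ast = (\pi \circ \sigma_0)^\ast = \id$ on $\Omega^\bullet(\partial M)$. For the other composition, take any $\omega \in \Omega^\bullet_\bms(E)$ and, by definition, write $\omega = \pi^\ast \eta$ for some $\eta \in \Omega^\bullet(\partial M)$. Then
\[
\pi^\ast \sigma_0^\ast \omega = \pi^\ast \sigma_0^\ast \pi^\ast \eta = \pi^\ast (\sigma_0^\ast \pi^\ast \eta) = \pi^\ast \eta = \omega,
\]
using the previous identity. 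Thus $\pi^\ast \circ \sigma_0^\ast = \id$ on $\Omega^\bullet_\bms(E)$, completing the proof. The only conceptual observation worth isolating is that the defining condition of $\Omega^\bullet_\bms(E)$ is precisely what allows this last computation: without restricting to forms that are pulled back along $\pi$, the composition $\pi^\ast \sigma_0^\ast$ would only be chain homotopic, not equal, to the identity.
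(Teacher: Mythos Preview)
Your proof is correct and is exactly the direct verification the paper has in mind; the paper itself gives no proof here (it just writes ``We recall'' and states the lemma), and for the more general Lemma \ref{lem.13star} it simply says ``The proof of this is obvious.'' Your argument using $\pi\circ\sigma_0 = \id_{\partial M}$ together with the defining condition $\omega = \pi^\ast\eta$ is the intended one.
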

In Section \ref{sec.truncoverpoint}, an orthogonal projection 
$\proj: \Omega^\bullet (\partial M)\to \tau_{<c} \Omega^\bullet (\partial M)$
was defined. Composing, we obtain an epimorphism
$\proj \circ \sigma^\ast_0: \Omega^\bullet_{\bms} (E) \twoheadrightarrow
 \tau_{<c} \Omega^\bullet (\partial M).$
The inclusion $j:E \hookrightarrow N$ induces a surjective restriction map
$j^\ast: \Omega^\bullet_{\bms} (N)\twoheadrightarrow \Omega^\bullet_{\bms}(E).$
\begin{lemma} \label{lem.kernelprojsigmaj}
The kernel of
\[ \proj \circ \sigma^\ast_0 \circ j^\ast: \Omega^\bullet_{\bms} (N) 
\twoheadrightarrow \tau_{<c} \Omega^\bullet (\partial M) \]
is $\oip^\bullet (N)$.
\end{lemma}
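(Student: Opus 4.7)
The plan is to chase through the definitions and use the two key facts already established earlier in the excerpt: that $\sigma_0^\ast$ is a left inverse of $\pi^\ast$ (so the form $\eta$ on $\partial M$ realising a given $\omega \in \Omega^\bullet_{\bms}(N)$ is uniquely determined), and that the orthogonal projection $\proj$ of Section \ref{sec.truncoverpoint} has kernel precisely $\tau_{\geq c}\Omega^\bullet(\partial M)$ (this is the content of the exact sequence (\ref{equ.tgeqkomftlek})).

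First, for any $\omega \in \Omega^\bullet_{\bms}(N)$, by definition there exists $\eta \in \Omega^\bullet(\partial M)$ with $j^\ast \omega = \pi^\ast \eta$. I would observe that this $\eta$ is unique and can be recovered from $\omega$ by the formula $\eta = \sigma_0^\ast j^\ast \omega$, since
\[ \sigma_0^\ast j^\ast \omega = \sigma_0^\ast \pi^\ast \eta = (\pi \sigma_0)^\ast \eta = \eta, \]
using $\pi \sigma_0 = \id_{\partial M}$. Consequently,
\[ \proj \circ \sigma_0^\ast \circ j^\ast (\omega) = \proj(\eta). \]

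Next I would argue the two inclusions separately. For $\omega \in \oip^\bullet(N)$, the form $\eta$ can be chosen in $\tau_{\geq c}\Omega^\bullet(\partial M)$; but since $\eta$ is unique, the equality $\eta = \sigma_0^\ast j^\ast \omega$ forces $\eta \in \tau_{\geq c}\Omega^\bullet(\partial M)$, and therefore $\proj(\eta)=0$, showing $\oip^\bullet(N) \subset \ker(\proj \circ \sigma_0^\ast \circ j^\ast)$. Conversely, if $\omega \in \Omega^\bullet_{\bms}(N)$ lies in the kernel, then the unique $\eta = \sigma_0^\ast j^\ast \omega$ satisfies $\proj(\eta)=0$, hence $\eta \in \ker\proj = \tau_{\geq c}\Omega^\bullet(\partial M)$, and $j^\ast \omega = \pi^\ast \eta$ now exhibits $\omega$ as an element of $\oip^\bullet(N)$.

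The argument is essentially a definition chase; there is no real obstacle, and no nontrivial technical step. The only point worth flagging explicitly in the write-up is the uniqueness of $\eta$, which underlies the passage from ``there exists such an $\eta$ in $\tau_{\geq c}\Omega^\bullet(\partial M)$'' to ``the $\eta$ produced by $\sigma_0^\ast j^\ast$ lies in $\tau_{\geq c}\Omega^\bullet(\partial M)$.''
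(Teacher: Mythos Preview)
Your proposal is correct and follows essentially the same approach as the paper's proof: both use $\sigma_0^\ast \pi^\ast = \id$ to identify $\eta$ as $\sigma_0^\ast j^\ast \omega$, and then invoke the exact sequence (\ref{equ.tgeqkomftlek}) to conclude that $\proj(\eta)=0$ is equivalent to $\eta \in \tau_{\geq c}\Omega^\bullet(\partial M)$. Your write-up is slightly more explicit about the uniqueness of $\eta$, which the paper uses implicitly.
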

\begin{proof}
Let $\omega \in \Omega^\bullet_{\bms} (N)$ be a form such that
$\proj \circ \sigma^\ast_0 \circ j^\ast (\omega)=0.$ There is an
$\eta \in \Omega^\bullet (\partial M)$ with $j^\ast \omega = \pi^\ast \eta.$
Thus
$0 = \proj \circ \sigma^\ast_0 j^\ast (\omega) = \proj \circ \sigma^\ast_0 \pi^\ast
\eta = \proj (\eta).$
The exact sequence (\ref{equ.tgeqkomftlek}) in Section \ref{sec.truncoverpoint},
\[
0 \to \tau_{\geq c} \Omega^\bullet \partial M \longrightarrow \Omega^\bullet \partial M
\longrightarrow \tau_{<c} \Omega^\bullet \partial M \to 0, 
\]
shows that $\eta \in \tau_{\geq c} \Omega^\bullet (\partial M)$.
Thus $\omega \in \oip (N)$. Conversely, every form in $\oip (N)$ is mapped to zero
by $\proj \circ \sigma^\ast_0 j^\ast.$
\end{proof}
By Lemma \ref{lem.kernelprojsigmaj}, we have an exact sequence
\begin{equation} \label{equ.Bbbhtlcbhe2}
0 \longrightarrow \oip^\bullet (N)
\longrightarrow \Omega^\bullet_{\bms} (N) \longrightarrow
\tau_{<c} \Omega^\bullet (\partial M) \longrightarrow 0. 
\end{equation}
In degrees less than $c$, the surjective map in this sequence is given
by restricting to the slice $0 \times \partial M \subset E \subset N.$

\subsection{The de Rham Theorem}

Let us define a map
\[ \Psi_L: H^{k-1} (\tau_{<c} \Omega^\bullet (L)) \longrightarrow
  H_{k-1} (t_{<c}L)^\dagger. \]
For $k-1\geq c,$ $\Psi_L =0,$ since both
$H^{k-1} (\tau_{<c} \Omega^\bullet (L))$ and
$H_{k-1} (t_{<c}L)$ are zero in this case. Suppose $k-1<c.$ Then
$H^{k-1} (\tau_{<c} \Omega^\bullet (L)) = H^{k-1} (L)$
and we define
\[ \widetilde{\Psi}_L: H^{k-1} (L) \longrightarrow
  H_{k-1} (\si_\bullet (L))^\dagger \]
by
\[ \widetilde{\Psi}_L [\omega][b] = \int_b \omega \]
for a smooth singular cycle $b\in \si_{k-1} (L)$. If $b' \in \si_{k-1} (L)$
is another chain such that $b - b' = \partial B$ for a smooth $k$-chain
$B\in \si_k (L),$ then
\[ \int_b \omega - \int_{b'} \omega = \int_{\partial B} \omega =
 \int_B d\omega = 0 \]
by Stokes' theorem for chains and
using $d\omega =0$. Adding an exact form does not change the integral either
because
$\int_b d\nu = \int_{\partial b} \nu = 0,$
as $b$ is a cycle.
Thus $\widetilde{\Psi}_L$
is well-defined. The smoothing operator $s$ induces on homology an
isomorphism
$s_\ast: H_\bullet (L) \stackrel{\cong}{\longrightarrow}
 H_{\bullet} (\si_\bullet (L)).$
The map $f$ induces an isomorphism 
$f_\ast: H_{k-1} (t_{<c}L) \stackrel{\cong}{\longrightarrow}
 H_{k-1} (L)$
since $k-1<c$. The map $\Psi_L$ is defined to be the composition
\[ \xymatrix{
H^{k-1} (L) \ar[r]^{\widetilde{\Psi}_L} &
 H_{k-1} (\si_\bullet (L))^\dagger \ar[r]^{\cong}_{s^\dagger_\ast} &
 H_{k-1} (L)^\dagger \ar[r]^{\cong}_{f^\dagger_\ast} &
 H_{k-1} (t_{<c} L)^\dagger
} \]
for $k-1< c.$

\begin{lemma} \label{lem.aboot}
The map
\[ \Psi_L: H^{k-1} (\tau_{<c} \Omega^\bullet (L)) \longrightarrow
  H_{k-1} (t_{<c}L)^\dagger \]
is an isomorphism for all $k$.
\end{lemma}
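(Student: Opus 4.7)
The plan is to split the proof into the trivial case $k-1 \geq c$ and the substantive case $k-1 < c$, and in the latter case to recognize $\Psi_L$ as essentially the classical smooth de Rham isomorphism.

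First, if $k-1 \geq c$, then $H^{k-1}(\tau_{<c}\Omega^\bullet(L)) = 0$ by construction of the truncation (Definition \ref{def.truncoverpoint}), and $H_{k-1}(t_{<c}L) = 0$ by the defining property of spatial homology truncation in Theorem \ref{thm.generaltruncation}. Hence $\Psi_L = 0$ is an isomorphism between zero vector spaces.

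Suppose now $k-1 < c$. By definition, $\Psi_L$ is the composition
\[
H^{k-1}(\tau_{<c}\Omega^\bullet(L)) = H^{k-1}(L)
\xrightarrow{\widetilde{\Psi}_L} H_{k-1}(\si_\bullet(L))^\dagger
\xrightarrow{s^\dagger_\ast} H_{k-1}(L)^\dagger
\xrightarrow{f^\dagger_\ast} H_{k-1}(t_{<c}L)^\dagger.
\]
The map $s^\dagger_\ast$ is an isomorphism because Lee's smoothing operator $s$ induces an isomorphism $s_\ast: H_\bullet(L) \xrightarrow{\cong} H_\bullet(\si_\bullet(L))$, and dualizing into $\real$ preserves isomorphisms. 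The map $f^\dagger_\ast$ is an isomorphism because $f_\ast: H_{k-1}(t_{<c}L) \xrightarrow{\cong} H_{k-1}(L)$ by Theorem \ref{thm.generaltruncation} in the range $k-1 < c$. Thus the proof reduces to showing that $\widetilde{\Psi}_L$ is an isomorphism.

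The composition $s^\dagger_\ast \circ \widetilde{\Psi}_L: H^{k-1}(L) \to H_{k-1}(L)^\dagger$ sends a closed form $\omega$ to the functional $[c] \mapsto \int_{s(c)} \omega$ on singular homology classes. This is precisely the pairing appearing in the classical smooth de Rham theorem: integration of a closed smooth form over smooth approximations of singular cycles induces an isomorphism between de Rham cohomology and $\Hom(H_{k-1}(L), \real) = H_{k-1}(L)^\dagger$ (see, e.g., Lee, \emph{Introduction to Smooth Manifolds}, Chapter 18, where the de Rham homomorphism is constructed in exactly this manner via the smoothing operator). Since $s^\dagger_\ast$ is an isomorphism, $\widetilde{\Psi}_L$ is an isomorphism as well, completing the proof.

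I expect no substantive obstacle here: the lemma is essentially a repackaging of the smooth de Rham theorem combined with the two structural isomorphisms ($s_\ast$ and $f_\ast$) that have already been established. The only subtlety is keeping the Poincaré-duality-style dualization $(-)^\dagger$ consistent throughout, which is straightforward since all spaces are finite-dimensional real vector spaces (as $L$ is a compact manifold) and dualization is exact on them.
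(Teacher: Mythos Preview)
Your proof is correct and follows essentially the same approach as the paper: split into the trivial range $k-1\geq c$ where both sides vanish, and for $k-1<c$ reduce to the classical de Rham isomorphism given by integration over smooth singular chains. The only cosmetic difference is that the paper identifies $\widetilde{\Psi}_L$ directly as the de Rham isomorphism $H^\bullet(\Omega^\bullet(L))\cong H_\bullet(\si_\bullet(L))^\dagger$, whereas you pass through $s^\dagger_\ast\circ\widetilde{\Psi}_L$; this amounts to the same thing.
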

\begin{proof}
For $k-1 \geq c,$ both domain and target of $\Psi_L$ are zero. Thus
$\Psi_L$ is an isomorphism in this range of degrees. For $k-1<c,$
we only have to show that $\widetilde{\Psi}_L$ is an isomorphism.
But $\widetilde{\Psi}_L$ is the classical de Rham isomorphism
\[ H^\bullet (\Omega^\bullet (L)) \stackrel{\cong}{\longrightarrow}
  H_\bullet (\si_\bullet (L))^\dagger \]
given by integration on smooth singular chains (cf. \cite{lee},
Theorem 16.12, page 428).
\end{proof}

Next, we shall define an isomorphism
\[ \Psi_M: H^\bullet (\Omega^\bullet_{\bms} (N)) 
 \stackrel{\cong}{\longrightarrow} H_\bullet (\si_\bullet (M))^\dagger. \]
By Proposition \ref{prop.11star}, the inclusion
$\Omega^\bullet_{\bms} (N) \subset \Omega^\bullet (N)$ induces an
isomorphism
\[ H^\bullet (\Omega^\bullet_{\bms} (N)) \stackrel{\cong}{\longrightarrow}
 H^\bullet (\Omega^\bullet (N)). \]
The classical de Rham isomorphism
\[ \Psi_N: H^\bullet (\Omega^\bullet (N)) \stackrel{\cong}{\longrightarrow}
 H_\bullet (\si_\bullet (N))^\dagger \]
is given by
$\Psi_N [\omega][a] = \int_a \omega.$
Since the open manifold $N$ deformation retracts onto the compact
manifold $N_{\leq 0} = N - (0,1)\times L,$ the inclusion
$i_{\leq 0}: N_{\leq 0} \hookrightarrow N$ is a homotopy equivalence
and induces an isomorphism
$i_{\leq 0 \ast}: H_\bullet (\si_\bullet (N_{\leq 0}))
 \stackrel{\cong}{\longrightarrow} H_\bullet (\si_\bullet (N)).$
Let $\alpha: M \rightarrow N_{\leq 0}$ be a diffeomorphism which 
agrees with the diffeomorphism $\partial M \cong \partial N_{\leq 0}$ given
by the collar, so that the diagram
\begin{equation} \label{equ.d1}
\xymatrix{
M \ar[r]^{\alpha} & N_{\leq 0} \\
\partial M \ar@{^{(}->}[u]^J \ar[r]_{\operatorname{collar}} &
\partial N_{\leq 0}
\ar@{^{(}->}[u]
}
\end{equation}
commutes. It induces an isomorphism
$\alpha_\ast: H_\bullet (\si_\bullet (M))
 \stackrel{\cong}{\longrightarrow}
 H_\bullet (\si_\bullet (N_{\leq 0})).$
The isomorphism $\Psi_M$ is defined by the composition
\begin{eqnarray*}
\xymatrix{
H^\bullet (\Omega^\bullet_{\bms} (N)) \ar[r]^{\cong} &
H^\bullet (\Omega^\bullet (N)) \ar[r]^{\cong}_{\Psi_N} &
H_\bullet (\si_\bullet (N))^\dagger 
\ar[r]^>>>>{\cong}_>>>>{i_{\leq 0 \ast}^\dagger} &
} \\
\xymatrix{
H_\bullet (\si_\bullet (N_{\leq 0}))^\dagger 
  \ar[r]^>>>>>{\cong}_>>>>>{\alpha_\ast^\dagger} &
H_\bullet (\si_\bullet (M))^\dagger.\hspace{1.3cm}
} \end{eqnarray*}

\begin{lemma} \label{lem.bboot}
The diagram
\[ \xymatrix{
H^k (\Omega^\bullet_{\bms} (N)) \ar[r]^{\operatorname{j^\ast}} 
\ar[d]^{\cong}_{\Psi_M} &
H^k (\Omega^\bullet_{\bms} (E)) 
 \ar[r]^{\cong}_{\proj \circ \sigma^\ast_0} &
H^k (\tau_{<c} \Omega^\bullet (L)) \ar[d]_{\cong}^{\Psi_L} \\
H_k (\si_\bullet (M))^\dagger \ar[r]^{\cong}_{s^\dagger_\ast} &
H_k (M)^\dagger \ar[r]^{g^\dagger_\ast} &
H_k (t_{<c} L)^\dagger
} \]
commutes.
\end{lemma}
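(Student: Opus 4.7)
The plan is to chase both compositions on a representative cycle $b \in S_k(t_{<c}L)$ of a class in $H_k(t_{<c}L)$ and on a closed form $\omega \in \Omega^k_{\bms}(N)$ with boundary datum $\eta \in \Omega^k(L)$ satisfying $j^\ast \omega = \pi^\ast \eta$. First, I would reduce to the case $k<c$: for $k\geq c$ one has $H_k(t_{<c}L)=0$ and $H^k(\tau_{<c}\Omega^\bullet L)=0$, so both compositions automatically land in the zero group and there is nothing to check.

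For $k<c$, the orthogonal projection $\proj$ in this degree is simply the identity $\Omega^k(L) \to (\tau_{<c}\Omega^\bullet L)^k = \Omega^k(L)$, so the top-right composite sends $[\omega]$ to $[\eta]$. Unravelling $\Psi_L = f^\dagger_\ast \circ s^\dagger_\ast \circ \widetilde{\Psi}_L$ then yields
\[
\Psi_L(\proj \sigma_0^\ast j^\ast [\omega])[b] \;=\; \int_{s(f_\ast b)}\eta.
\]
For the left-bottom composite, the definition of $\Psi_M$ together with Proposition \ref{prop.11star} (to identify the $\Omega^\bullet_{\bms}$-class of $\omega$ with its $\Omega^\bullet$-class) gives
\[
\Psi_M[\omega]\bigl(s_\ast g_\ast [b]\bigr) \;=\; \int_{(i_{\leq 0}\circ \alpha)_\ast\, s(g_\ast b)} \omega.
\]

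The decisive simplification uses that $g = J\circ f$, where $J:\partial M \hookrightarrow M$ is the boundary inclusion (as in Section \ref{ssec.backinterspace}). The boundary-naturality of the smoothing operator recorded in (\ref{equ.naturalbndry}) gives $s(g_\ast b) = s(J_\ast f_\ast b) = J_\ast\, s(f_\ast b)$. Setting $\beta = s(f_\ast b) \in \si_k(\partial M)$, the chain appearing in the lower integral is then $(i_{\leq 0}\circ \alpha \circ J)_\ast \beta$. By the commutative square (\ref{equ.d1}), $\alpha \circ J$ equals the collar embedding $\partial M \xrightarrow{\cong} \partial N_{\leq 0} = \{0\}\times \partial M$, and postcomposition with $i_{\leq 0}:N_{\leq 0}\hookrightarrow N$ produces exactly the embedding $j\circ \sigma_0:\partial M \to E \hookrightarrow N$ of $\partial M$ as the slice $\{0\}\times \partial M \subset N$.

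A standard change of variables then gives
\[
\int_{(j\sigma_0)_\ast \beta} \omega \;=\; \int_\beta (j\sigma_0)^\ast \omega \;=\; \int_\beta \sigma_0^\ast \pi^\ast \eta \;=\; \int_\beta \eta,
\]
using $\pi\circ \sigma_0 = \id_{\partial M}$, which equals the top-path value $\int_{s(f_\ast b)}\eta = \int_\beta \eta$. Hence the diagram commutes on classes. The only point of substance in the argument is the boundary-naturality of $s$, which is precisely the arrangement recorded in (\ref{equ.naturalbndry}); everything else is a matter of carefully composing the canonical identifications entering the definitions of $\Psi_L$ and $\Psi_M$, so I do not expect a serious obstacle.
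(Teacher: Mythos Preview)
Your proof is correct and follows essentially the same approach as the paper: reduce to $k<c$, use that $\proj$ is the identity in that degree, apply the boundary-naturality of the smoothing operator (\ref{equ.naturalbndry}) together with diagram (\ref{equ.d1}), and finish with the pullback formula for integration over chains. The only cosmetic difference is that you package the identification $i_{\leq 0}\circ\alpha\circ J = j\circ\sigma_0$ as a single observation, whereas the paper unwinds it step by step inside the chain of equalities.
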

\begin{proof}
The statement holds trivially for $k\geq c,$ since then
$H_k (t_{<c} L)=0$. Assume that $k<c.$ We must prove that for all (closed) $k$-forms
$\omega \in \Omega^\bullet_{\bms} (N)$ and all classes
$[a]\in H_k (t_{<c} L),$ $a\in S_k (t_{<c}L)$ a $k$-cycle, the equation
\[ \Psi_L (\proj \circ \sigma^\ast_0 \omega|_E)[a] = \Psi_M (\omega)(s g_\ast
 (a)) \]
holds. The following computation verifies this, observing that in degrees $k<c,$
$\proj$ is the identity:
\begin{eqnarray*}
\Psi_L (\sigma^\ast_0 \omega|_E)[a] 
& = & f^\dagger_\ast s^\dagger_\ast \widetilde{\Psi}_L
  (\sigma^\ast_0 \omega|_E)[a] 
 =  \widetilde{\Psi}_L (\sigma^\ast_0 \omega|_E)[s f_\ast (a)] \\
& = & \int_{sf_\ast (a)} \omega|_{ \{ 0 \} \times \partial M = \partial
   N_{\leq 0}} 
 =  \int_{sf_\ast (a)} (i^\ast_{\leq 0} \omega)|_{\partial N_{\leq 0}} \\
& = & \int_{sf_\ast (a)} J^\ast \alpha^\ast (i^\ast_{\leq 0} \omega)
 \hspace{1cm} \text{ by (\ref{equ.d1})} \\
& = & \int_{i_{\leq 0 \ast} \alpha_\ast J_\ast sf_\ast (a)} \omega \\
& = & \int_{i_{\leq 0 \ast} \alpha_\ast sJ_\ast f_\ast (a)} \omega 
 \hspace{1.5cm} \text{ by (\ref{equ.naturalbndry})} \\
& = & \alpha^\dagger_\ast i^\dagger_{\leq 0 \ast} \Psi_N (\omega)
  (sJ_\ast f_\ast (a)) 
=  \Psi_M (\omega)(sg_\ast (a)).
\end{eqnarray*}
\end{proof}
Let us define a map
\[ \Psi_{\bar{p}}: H^k (\oip^\bullet (N)) \longrightarrow H_k (\sps_\bullet
 (g))^\dagger. \]
Given a closed form $\omega \in \oip^k (N)$ and a cycle
$(x,v) \in \sps_k (g) = H_{k-1} (t_{<c} L) \oplus \si_k (M),$ we set
\[ \Psi_{\bar{p}} [\omega][(x,v)] = \int_{i_{\leq 0 \ast} \alpha_\ast (v)} \omega, \]
where
\[ \xymatrix{ \si_k (M) \ar[r]^{\cong}_{\alpha_\ast} &
 \si_k (N_{\leq 0}) \ar[r]^{\simeq}_{i_{\leq 0 \ast}} & \si_k (N) } \]
are the chain maps induced by $\alpha$ and $i_{\leq 0}$.
\begin{prop} \label{prop.psibwelldefd}
The map $\Psi_{\bar{p}}$ is well-defined.
\end{prop}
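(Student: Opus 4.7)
Well-definedness requires two independent checks: that the value $\int_{i_{\leq 0\ast}\alpha_\ast(v)} \omega$ is unchanged when the cocycle $\omega \in \oip^k(N)$ is replaced by $\omega + d\eta$ with $\eta \in \oip^{k-1}(N)$, and when the cycle $(x,v)\in \sps_k(g)$ is replaced by $(x,v) + \partial(y,w)$ with $(y,w)\in \sps_{k+1}(g)$. In both cases I would apply Stokes' theorem on smooth chains in $N$, reducing matters to a boundary term supported in $\{0\}\times \partial M \subset E$, and then use the multiplicatively/pullback structure of forms in $\oip^\bullet(N)$ near the end together with the degree constraint on $\tau_{\geq c}\Omega^\bullet(L)$ to force that boundary term to vanish.

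First, for independence of the form representative, given $\eta \in \oip^{k-1}(N)$, Stokes' theorem gives
\[ \int_{i_{\leq 0\ast}\alpha_\ast(v)} d\eta = \int_{\partial(i_{\leq 0\ast}\alpha_\ast v)} \eta = \int_{i_{\leq 0\ast}\alpha_\ast(\partial v)} \eta, \]
and since $(x,v)$ is a cycle in $\sps_\bullet(g)$ we have $\partial v = sg_\ast q(x)$. Because $g = J\circ f$ factors through $\partial M$, and diagram (\ref{equ.naturalbndry}) guarantees $s$ commutes with $J_\ast$, the chain $i_{\leq 0\ast}\alpha_\ast sg_\ast q(x)$ lies entirely in the slice $\{0\}\times\partial M$. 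By the pullback form $j^\ast\eta = \pi^\ast\nu$ with $\nu \in \tau_{\geq c}\Omega^{k-1}(\partial M)$, the integral reduces to $\int_{sf_\ast q(x)} \nu$. The key dichotomy now closes the argument: if $k-1<c$ then $\nu=0$ because $\tau_{\geq c}\Omega^{k-1}(L)=0$; if $k-1\geq c$ then $x\in H_{k-1}(t_{<c}L)=0$ by Theorem \ref{thm.generaltruncation}, so $q(x)=0$. Either way the integral vanishes.

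Second, for independence of the cycle representative, suppose $(x-x', v-v') = \partial(y,w) = (0,\partial w - sg_\ast q(y))$. The difference of the two candidate values is
\[ \int_{i_{\leq 0\ast}\alpha_\ast(\partial w)} \omega - \int_{i_{\leq 0\ast}\alpha_\ast sg_\ast q(y)} \omega. \]
The first summand equals $\int_{i_{\leq 0\ast}\alpha_\ast w} d\omega = 0$ by Stokes' theorem and the closedness of $\omega$. The second summand is treated exactly as in the previous paragraph: it is an integral of $\omega$ against a chain supported in $\{0\}\times\partial M$, which by the defining property of $\oip^\bullet(N)$ converts to $\int_{sf_\ast q(y)} \nu$ with $\nu \in \tau_{\geq c}\Omega^{k}(\partial M)$, and this vanishes for degree reasons (either $k<c$, whence $\nu=0$, or $k\geq c$, whence $y\in H_k(t_{<c}L)=0$ and $q(y)=0$).

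The main obstacle in writing out the details will be managing the identification of chains along the collar: one must keep careful track of how $\sigma_0$, $\alpha$, $i_{\leq 0}$, $J$, and the smoothing operator $s$ interact. The commutativity (\ref{equ.naturalbndry}) of $s$ with boundary inclusions (which was built into our choice of $s$) and the compatibility square (\ref{equ.d1}) for $\alpha$ are precisely what make these identifications routine once set up, so nothing deeper than the two Stokes-plus-truncation arguments above is required.
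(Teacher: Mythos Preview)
Your proof is correct and follows essentially the same approach as the paper: Stokes' theorem reduces each check to an integral over a chain supported in $\{0\}\times\partial M$, and then the dichotomy ``degree $<c$ forces the cotruncated form to vanish, degree $\geq c$ forces the homology class in $t_{<c}L$ to vanish'' kills the boundary term. The paper organizes the two checks as showing $\Psi_{\bar{p}}(d\eta)(x,v)=0$ and $\Psi_{\bar{p}}(\omega)(\partial(y,w))=0$ directly rather than as differences, and splits on the degree dichotomy before computing rather than after, but the content is identical; the diagrams (\ref{equ.naturalbndry}) and (\ref{equ.d1}) you flag are exactly the bookkeeping tools the paper invokes.
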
 
\begin{proof}
Let $\omega \in \oip^{k-1} (N)$ be any form and $(x,v)\in \sps_k (g)$ a cycle.
Suppose $k-1<c.$ This implies by definition of $\oip^\bullet (N)$
that $j^\ast \omega =0,$ $j: E\hookrightarrow N.$ Furthermore,
$0 = \partial (x,v) = (0, \partial v - sg_\ast q(x))$
so that
$\partial v = sg_\ast q (x) = J_\ast s f_\ast q(x).$
Hence,
\[ \Psi_{\bar{p}} (d\omega)(x,v) = \int_{i_{\leq 0\ast} \alpha_\ast (v)}
 d\omega = \int_v \alpha^\ast i^\ast_{\leq 0} d\omega 
 = \int_v d(\alpha^\ast i^\ast_{\leq 0} \omega) \]
\[ \hspace{2cm}= \int_{\partial v} \alpha^\ast i^\ast_{\leq 0} \omega =
 \int_{J_\ast sf_\ast q(x)} \alpha^\ast i^\ast_{\leq 0} \omega =
\int_{sf_\ast q(x)} J^\ast \alpha^\ast i^\ast_{\leq 0} \omega \]
\[ \hspace{1cm} = \int_{sf_\ast q(x)} (i^\ast_{\leq 0} 
\omega)|_{ \{ 0 \} \times \partial M} =0, \]
using Stokes' theorem for chains and
$(i^\ast_{\leq 0} \omega)|_{ \{ 0 \} \times \partial M} =
(j^\ast \omega)|_{ \{ 0 \} \times \partial M} =0.$
Suppose that $k-1\geq c.$ Then $x \in H_{k-1} (t_{<c} L)=0$ and
\[ \Psi_{\bar{p}} (d\omega)(x,v) = 
 \int_{sf_\ast q(x)} J^\ast \alpha^\ast i^\ast_{\leq 0} \omega =0. \]

Let $\omega \in \oip^{k-1} (N)$ be a closed form and
$(x,v) \in \sps_k (g)$ any chain. If $k-1 \geq c,$ then
$x \in H_{k-1} (t_{<c} L)=0$ is zero and
\[ \Psi_{\bar{p}} (\omega)(\partial (x,v)) = \Psi_{\bar{p}} (\omega)(0,\partial v) 
 \hspace{2cm} \]
\[ \hspace{2cm} = \int_{i_{\leq 0\ast} \alpha_\ast (\partial v)} \omega =
 \int_{\partial i_{\leq 0\ast} \alpha_\ast (v)} \omega =
\int_{i_{\leq 0\ast} \alpha_\ast (v)} d\omega =0, \]
as $\omega$ is closed. If $k-1<c,$ then $j^\ast \omega =0$ and
\begin{eqnarray*}
\Psi_{\bar{p}} (\omega)(\partial (x,v)) & = &
\Psi_{\bar{p}} (\omega)(0, \partial v - sg_\ast q(x)) 
 =  \int_{i_{\leq 0\ast} \alpha_\ast (\partial v)} \omega -
  \int_{i_{\leq 0\ast} \alpha_\ast sg_\ast q(x)} \omega \\
& = & \int_{i_{\leq 0\ast} \alpha_\ast (v)} d\omega -
  \int_{sf_\ast q(x)} J^\ast \alpha^\ast i^\ast_{\leq 0} \omega 
 =  - \int_{sf_\ast q(x)} (j^\ast \omega)|_{ \{ 0 \} \times \partial M} \\
& = & 0.
\end{eqnarray*}
\end{proof}

The inclusion $\oip^\bullet (N) \subset \Omega^\bullet_{\bms} (N)$
induces a map $HI^\bullet_{\bar{p}} (X)\rightarrow
H^\bullet (\Omega^\bullet_{\bms} (N)).$ The standard inclusions
$\si_k (M) \hookrightarrow H_{k-1}(t_{<c} L)\oplus \si_k (M) =
\sps_k (g),$ $v\mapsto (0,v),$ form a chain map
$\operatorname{inc}: \si_\bullet (M) \hookrightarrow \sps_\bullet (g),$
which induces on homology a map
$\operatorname{inc}_\ast: H_{\bullet} (\si_\bullet (M)) \rightarrow
H_\bullet (\sps_\bullet (g)).$
\begin{lemma} \label{lem.cboot}
The square
\[ \xymatrix{
HI^k_{\bar{p}} (X) \ar[r] \ar[d]_{\Psi_{\bar{p}}} &
  H^k (\Omega^\bullet_{\bms} (N)) \ar[d]_{\cong}^{\Psi_M} \\
H_k (\sps_\bullet (g))^\dagger \ar[r]^{\operatorname{inc}^\dagger_\ast}
& H_k (\si_\bullet (M))^\dagger
} \]
commutes.
\end{lemma}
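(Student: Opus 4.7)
The plan is to verify this commutative square by unwinding all four maps on a representative cocycle and cycle, and observing that both compositions reduce literally to the same integral. Since $\Psi_{\bar p}$ was defined by restriction of the formula defining $\Psi_M$ (both use integration of $\omega$ over $i_{\leq 0 \ast}\alpha_\ast(v)$), the commutativity should be essentially tautological; the only real content is checking that the chain-level map $\operatorname{inc}$ intertwines the two evaluation rules.

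First I would fix representatives: a closed form $\omega \in \oip^k(N)$ representing a class in $HI^k_{\bar p}(X)$, and a cycle $v \in \si_k(M)$ representing a class in $H_k(\si_\bullet(M))$. Going along the top and then down, the inclusion $\oip^\bullet(N)\subset \Omega^\bullet_\bms(N)$ sends $[\omega]$ to the class of the same form in $H^k(\Omega^\bullet_\bms(N))$, and then by definition
\[ \Psi_M[\omega][v] \;=\; \int_{i_{\leq 0\ast}\alpha_\ast(v)} \omega, \]
using the chain of isomorphisms built from $\Psi_N$, $i_{\leq 0\ast}^\dagger$ and $\alpha_\ast^\dagger$ (noting that the inclusion $\Omega^\bullet_\bms(N)\subset \Omega^\bullet(N)$ does not alter the form itself, so $\Psi_N$ is applied to the same $\omega$).

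Next I would compute the other composition. Under $\operatorname{inc}: \si_\bullet(M)\hookrightarrow \sps_\bullet(g)$, the cycle $v$ maps to the cycle $(0,v)\in \sps_k(g)$ — which is indeed a cycle since $\partial(0,v) = (0,\partial v - sg_\ast q(0)) = (0,\partial v)=0$. Therefore
\[ (\operatorname{inc}^\dagger_\ast \Psi_{\bar p}[\omega])[v] \;=\; \Psi_{\bar p}[\omega][(0,v)] \;=\; \int_{i_{\leq 0\ast}\alpha_\ast(v)} \omega \]
directly from the definition of $\Psi_{\bar p}$. The two resulting numbers agree on the nose, so the square commutes.

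The only conceivable obstacle is a bookkeeping concern: one must make sure that the isomorphisms $\alpha_\ast^\dagger \circ i_{\leq 0\ast}^\dagger \circ \Psi_N$ used to assemble $\Psi_M$ yield precisely the integral $\int_{i_{\leq 0\ast}\alpha_\ast(v)}\omega$ when evaluated on $[v]\in H_k(\si_\bullet(M))$, and that no sign or orientation discrepancy is introduced when factoring $\Psi_M$ through the inclusion $\Omega^\bullet_\bms(N)\subset\Omega^\bullet(N)$ (which is known to be a quasi-isomorphism by Proposition \ref{prop.11star}). Both checks are immediate from the definitions of the maps, and the fact that $\Psi_{\bar p}$ was \emph{defined} by the same formula, restricted to the subcomplex $\oip^\bullet(N)$, guarantees that there is nothing further to verify.
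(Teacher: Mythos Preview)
Your proposal is correct and follows essentially the same approach as the paper's proof: both pick a closed form $\omega\in\oip^k(N)$ and a smooth cycle $v\in\si_k(M)$, unwind the definitions of $\Psi_{\bar p}$, $\Psi_M$, and $\operatorname{inc}$, and observe that each composition evaluates to the same integral $\int_{i_{\leq 0\ast}\alpha_\ast(v)}\omega$. Your additional remarks about bookkeeping and the cycle check for $(0,v)$ are fine but not strictly needed.
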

\begin{proof}
For a closed form $\omega \in \oip^k (N)$ and a cycle 
$v\in \si_k (M),$ we calculate
\begin{eqnarray*}
\operatorname{inc}^\dagger_\ast \Psi_{\bar{p}} [\omega][v] 
& = & \Psi_{\bar{p}} [\omega] [\operatorname{inc} (v)] 
 =  \Psi_{\bar{p}} [\omega] [(0,v)] 
 =  \int_{i_{\leq 0\ast} \alpha_\ast (v)} \omega \\
& = & \Psi_N [\omega][i_{\leq 0\ast} \alpha_\ast (v)] 
 =  \alpha^\dagger_\ast i^\dagger_{\leq 0\ast} \Psi_N [\omega][v] 
 =  \Psi_M [\omega][v]. 
\end{eqnarray*}
\end{proof}

The short exact sequence (\ref{equ.Bbbhtlcbhe2}),
\[
0 \longrightarrow \oip^\bullet (N)
\longrightarrow \Omega^\bullet_{\bms} (N) \longrightarrow
\tau_{<c} \Omega^\bullet (L) \longrightarrow 0, 
\]
induces a long exact sequence on cohomology, which contains the 
connecting homomorphism
$\delta^\ast: H^{k-1} (\tau_{<c} \Omega^\bullet (L)) \to
H^k (\oip^\bullet (N)).$
The standard projections $\operatorname{pro}: \sps_k (g) =
H_{k-1}(t_{<c} L) \oplus \si_k (M) \rightarrow H_{k-1} (t_{<c}L),$
$(x,v)\mapsto x,$ form a chain map $\operatorname{pro}: \sps_\bullet (g)
\rightarrow H_{\bullet -1} (t_{<c}L),$ which induces on homology
$\operatorname{pro}_\ast: H_\bullet (\sps_\bullet (g))
\rightarrow H_{\bullet -1} (t_{<c}L).$
\begin{lemma} \label{lem.dboot}
The square
\[ \xymatrix{
H^{k-1} (\tau_{<c} \Omega^\bullet (L)) \ar[r]^{\delta^\ast}
 \ar[d]_{\Psi_L}^{\cong} & H^k (\oip^\bullet (N)) \ar[d]^{\Psi_{\bar{p}}} \\
H_{k-1} (t_{<c} L)^\dagger \ar[r]^{\operatorname{pro}^\dagger_\ast} &
H_k (\sps_\bullet (g))^\dagger
} \]
commutes.
\end{lemma}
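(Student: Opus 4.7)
The plan is to unravel the connecting homomorphism $\delta^\ast$ explicitly at the cochain level, lift a cocycle, apply Stokes' theorem on the compact piece $N_{\leq 0}$, and identify the remaining boundary integral on $\{0\}\times \partial M$ with $\Psi_L(\eta)$. The computation is essentially a variant of the calculations already carried out in Proposition \ref{prop.psibwelldefd} and Lemma \ref{lem.cboot}; the only subtle point is that $\proj$ behaves as the identity in the relevant degree.

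First, dispose of the trivial range. If $k-1 \geq c$, then $(\tau_{<c}\Omega^\bullet(L))^{k-1}$ has trivial cohomology (either it vanishes outright, or, when $k-1 = c$, the cocycles $\im d^{c-1}$ coincide with the coboundaries), so the upper-left corner is zero and commutativity is automatic. Thus assume $k-1 < c$ from now on.

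Next, describe $\delta^\ast$ concretely. Given a cocycle $\eta \in (\tau_{<c}\Omega^\bullet(L))^{k-1}$, surjectivity of $\proj \circ \sigma_0^\ast \circ j^\ast$ (from the exact sequence (\ref{equ.Bbbhtlcbhe2})) yields an $\omega \in \Omega^{k-1}_{\bms}(N)$ with $\proj\,\sigma_0^\ast j^\ast \omega = \eta$. Because $k-1 < c$, the projection $\proj$ is the identity in degree $k-1$, so in fact $\sigma_0^\ast j^\ast \omega = \eta$. One checks that $d\omega$ lies in $\oip^k(N)$ (the image of $d\omega$ in $\tau_{<c}\Omega^\bullet(L)$ is $d\eta = 0$, so by the exact sequence $d\omega \in \oip^k(N)$), and by definition $\delta^\ast[\eta] = [d\omega]$.

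Now compute $\Psi_{\bar{p}}[d\omega](x,v)$ for a cycle $(x,v)\in \sps_k(g)$. The cycle condition in degree $k \leq c$ reads $\partial v = s g_\ast q(x) = J_\ast s f_\ast q(x)$. Combining Stokes' theorem with the commutative square (\ref{equ.d1}) and the naturality (\ref{equ.naturalbndry}), we obtain
\[
\int_{i_{\leq 0\,\ast}\alpha_\ast(v)} d\omega
 = \int_{i_{\leq 0\,\ast}\alpha_\ast(\partial v)} \omega
 = \int_{i_{\leq 0\,\ast}\alpha_\ast J_\ast s f_\ast q(x)} \omega
 = \int_{s f_\ast q(x)} J^\ast \alpha^\ast i_{\leq 0}^\ast \omega
 = \int_{s f_\ast q(x)} \sigma_0^\ast j^\ast \omega,
\]
and by the identification $\sigma_0^\ast j^\ast \omega = \eta$ this equals $\widetilde{\Psi}_L(\eta)(s f_\ast q(x)) = \Psi_L(\eta)(x) = \Psi_L(\eta)(\operatorname{pro}_\ast[(x,v)])$. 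This is exactly the required identity, so the square commutes.

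The main (minor) obstacle is keeping the degree conventions straight: one has to verify that $d\omega$ actually lands in $\oip^k(N)$ and that $\proj$ acts as the identity in degree $k-1$, both of which rely on the inequality $k-1<c$; everything else follows from Stokes plus the naturality diagrams already established.
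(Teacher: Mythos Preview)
Your proof is correct and follows essentially the same route as the paper's own argument: handle the trivial range $k-1\geq c$, lift $\eta$ to $\omega\in\Omega^{k-1}_{\bms}(N)$ so that $\delta^\ast[\eta]=[d\omega]$, apply Stokes' theorem on $i_{\leq 0\ast}\alpha_\ast(v)$, and then identify the resulting boundary integral over $sf_\ast q(x)$ with $\Psi_L(\eta)(x)$ via the naturality diagrams (\ref{equ.d1}) and (\ref{equ.naturalbndry}). The only cosmetic difference is that the paper constructs the lift explicitly as a cut-off extension of $\pi^\ast\eta$, whereas you invoke surjectivity of $\proj\circ\sigma_0^\ast\circ j^\ast$; both are equivalent once one notes (as you do) that $\proj$ is the identity in degree $k-1<c$.
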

\begin{proof}
If $k-1\geq c,$ then $H^{k-1}(\tau_{<c} \Omega^\bullet (L)) =0$ and the
statement of the lemma is correct. Assume that $k-1<c.$ Let
$\omega \in (\tau_{<c} \Omega^\bullet (L))^{k-1} = \Omega^{k-1} (L)$ 
be a closed form on $L=\partial M$.
We shall first describe $\delta^\ast (\omega).$ The form $\pi^\ast \omega$
can be smoothly extended to a form $\overline{\omega} \in 
\Omega^{k-1}_{\bms} (N).$ Its differential $d\overline{\omega}$ lies
in $\oip^k (N)\subset \Omega^k_{\bms} (N)$, since 
$j^\ast d\overline{\omega} = dj^\ast \overline{\omega} =
  d\pi^\ast \omega = \pi^\ast d\omega =0.$
The connecting homomorphism is then described as
\[ \delta^\ast (\omega) = d\overline{\omega}. \]
Let $(x,v)\in \sps_k (g)$ be a cycle, i.e.
$0 = \partial (x,v) = (0, \partial v - sg_\ast q(x)).$
The required commutativity is verified as follows:
\begin{eqnarray*}
\Psi_{\bar{p}} [\delta^\ast \omega](x,v) 
& = & \Psi_{\bar{p}} [d\overline{\omega}](x,v) \\
& = & \int_{i_{\leq 0\ast} \alpha_\ast (v)} d\overline{\omega} =
   \int_{i_{\leq 0\ast} \alpha_\ast (\partial v)} \overline{\omega} =
   \int_{i_{\leq 0\ast} \alpha_\ast sg_\ast q(x)} \overline{\omega} \\
& = & \int_{sf_\ast q(x)} J^\ast \alpha^\ast i^\ast_{\leq 0} \overline{\omega}
  = \int_{sf_\ast q(x)} \overline{\omega}|_{ \{ 0 \}\times \partial M} 
=  \int_{sf_\ast q(x)} \omega \\
& = & \widetilde{\Psi}_L (\omega)(s_\ast f_\ast [q(x)]) =
    \widetilde{\Psi}_L (\omega)(s_\ast f_\ast x) \hspace{2cm}
      \text{ by (\ref{equ.bqxbisx})} \\
& = & f^\dagger_\ast s^\dagger_\ast \widetilde{\Psi}_L (\omega)(x) =
     \Psi_L (\omega)(x) = \Psi_L (\omega)(\operatorname{pro} (x,v)) \\
& = & \operatorname{pro}^\dagger_\ast \Psi_L (\omega)(x,v).      
\end{eqnarray*}
\end{proof}

\begin{thm} \label{thm.derham}
(De Rham Description of $HI^\bullet_{\bar{p}}$.)
The map $\Psi_{\bar{p}},$ induced by integrating a form in $\oip^\bullet (N)$
over a smooth singular simplex in $N$, defines an isomorphism
\[ HI^\bullet_{\bar{p}} (X) \stackrel{\cong}{\longrightarrow}
 H_\bullet (\sps_\bullet (g))^\dagger \cong \redh_\bullet (\ip X)^\dagger
 \cong \redh^\bullet_s (\ip X). \]
\end{thm}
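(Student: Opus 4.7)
The plan is to compare the long exact cohomology sequence arising from the short exact sequence (\ref{equ.Bbbhtlcbhe2}),
\[ 0 \to \oip^\bullet(N) \to \Omega^\bullet_\bms(N) \to \tau_{<c}\Omega^\bullet(L) \to 0, \]
to the dual of the long exact homology sequence obtained from the tautological short exact sequence
\[ 0 \to \si_\bullet(M) \xrightarrow{\operatorname{inc}} \sps_\bullet(g) \xrightarrow{\operatorname{pro}} H_{\bullet-1}(t_{<c}L) \to 0 \]
that is built into the definition of the partially smooth complex. The three comparison maps $\Psi_L$, $\Psi_{\bar p}$, $\Psi_M$ have already been defined, and Lemmas \ref{lem.bboot}, \ref{lem.cboot}, \ref{lem.dboot} show that each square of the resulting ladder commutes (up to an overall sign that is harmless for applying the five-lemma). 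The two outer vertical maps, instances of $\Psi_L$, are isomorphisms by Lemma \ref{lem.aboot}, and the middle vertical map $\Psi_M$ is an isomorphism by its very definition (built from the classical de Rham theorem on $N$ together with Proposition \ref{prop.11star}, which guarantees that $\Omega^\bullet_\bms(N)\hookrightarrow \Omega^\bullet(N)$ is a quasi-isomorphism).

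With these pieces in place, the five-lemma applied to the ladder
\[
\xymatrix@C=10pt@R=16pt{
H^{k-1}\tau_{<c}\Omega^\bullet L \ar[r]^{\delta^\ast}\ar[d]_{\Psi_L}^\cong & H^k \oip(N)\ar[r]\ar[d]_{\Psi_{\bar p}} & H^k\Omega_\bms(N) \ar[r]\ar[d]_{\Psi_M}^\cong & H^k\tau_{<c}\Omega^\bullet L\ar[d]_{\Psi_L}^\cong \\
H_{k-1}(t_{<c}L)^\dagger \ar[r] & H_k(\sps_\bullet g)^\dagger\ar[r] & H_k(\si_\bullet M)^\dagger \ar[r] & H_k(t_{<c}L)^\dagger
}
\]
yields that $\Psi_{\bar p}\colon HI^k_{\bar p}(X)\to H_k(\sps_\bullet(g))^\dagger$ is an isomorphism for every $k$. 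To conclude the theorem, it then suffices to identify the target. Proposition \ref{prop.partialsmooth} provides a canonical isomorphism $H_\bullet(\sps_\bullet(g))\cong H_\bullet(g)$, and the standard cofibre-sequence identification $H_\bullet(g)\cong \redh_\bullet(\cone(g))=\redh_\bullet(\ip X)$ gives the second isomorphism. Finally, the universal coefficient theorem over $\real$ delivers $\redh_\bullet(\ip X)^\dagger\cong \redh^\bullet_s(\ip X)$.

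The main obstacle is not any one of these steps individually but rather the careful bookkeeping needed to be sure that the ladder really is a well-defined morphism of long exact sequences: the connecting homomorphism $\delta^\ast$ coming from (\ref{equ.Bbbhtlcbhe2}) must be matched with $\operatorname{pro}^\dagger_\ast$ on the partially smooth side, and the computation verifying this (encapsulated in Lemma \ref{lem.dboot}) rests on an explicit cochain-level representation of $\delta^\ast$ via a smooth extension $\overline\omega\in\Omega^{k-1}_\bms(N)$ of $\pi^\ast\omega$ together with the compatibility (\ref{equ.naturalbndry}) of the smoothing operator with the boundary inclusion $J$. Similarly, Lemma \ref{lem.cboot} uses the fact that on simplices coming purely from $\si_\bullet(M)$, integration of a form in $\oip^\bullet(N)$ reduces to the classical de Rham pairing on $N$ after transporting via the diffeomorphism $\alpha$ and the homotopy equivalence $i_{\leq 0}$. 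Once these compatibilities are established, the proof is purely homological.
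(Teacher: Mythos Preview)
Your proposal is correct and follows essentially the same approach as the paper's own proof: both set up the identical ladder between the long exact sequence of (\ref{equ.Bbbhtlcbhe2}) and the dual of the long exact sequence for $\sps_\bullet(g)$, invoke Lemmas \ref{lem.aboot}--\ref{lem.dboot} for the commutativity and the outer isomorphisms, apply the five-lemma, and finish with Proposition \ref{prop.partialsmooth}. Your added remarks on the cofibre identification $H_\bullet(g)\cong\redh_\bullet(\ip X)$ and the universal coefficient step make explicit what the paper leaves implicit, but otherwise the arguments coincide.
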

\begin{proof}
The short exact sequence (\ref{equ.Bbbhtlcbhe2}),
\[
0 \longrightarrow \oip^\bullet (N)
\longrightarrow \Omega^\bullet_{\bms} (N) \longrightarrow
\tau_{<c} \Omega^\bullet (L) \longrightarrow 0, 
\]
induces a long exact cohomology sequence
\[ H^{k-1}(\tau_{<c} \Omega^\bullet (L)) \longrightarrow
  H^k (\oip^\bullet (N)) \longrightarrow
  H^k (\Omega^\bullet_{\bms} (N)) \longrightarrow
  H^k (\tau_{<c} \Omega^\bullet (L)). \]
The short exact sequence
\[ 0 \longrightarrow \si_\bullet (M) 
  \stackrel{\operatorname{inc}}{\longrightarrow} \sps_\bullet (g)
 \stackrel{\operatorname{pro}}{\longrightarrow} H_{\bullet -1}
  (t_{<c}L) \longrightarrow 0 \]
induces a long exact sequence
\[ H_{k-1} (t_{<c}L)^\dagger \stackrel{\operatorname{pro}^\dagger_\ast}{\longrightarrow}
  H_k (\sps_\bullet (g))^\dagger \stackrel{\operatorname{inc}^\dagger_\ast}
 {\longrightarrow} H_k (\si_\bullet (M))^\dagger
 \stackrel{g^\dagger_\ast s^\dagger_\ast}{\longrightarrow}
 H_k (t_{<c} L)^\dagger. \]
By Lemmas \ref{lem.bboot}, \ref{lem.cboot} and \ref{lem.dboot}, the diagram
\[ \xymatrix{
H^{k-1}(\tau_{<c} \Omega^\bullet L) \ar[r] \ar[d]_{\Psi_L}^{\cong} &
  H^k (\oip^\bullet (N)) \ar[r] \ar[d]_{\Psi_{\bar{p}}} &
  H^k (\Omega^\bullet_{\bms} (N)) \ar[r] \ar[d]_{\Psi_M}^{\cong} &
  H^k (\tau_{<c} \Omega^\bullet L) \ar[d]_{\Psi_L}^{\cong} \\
H_{k-1} (t_{<c}L)^\dagger \ar[r] &
  H_k (\sps_\bullet (g))^\dagger \ar[r] &
 H_k (\si_\bullet (M))^\dagger \ar[r] &
 H_k (t_{<c} L)^\dagger
} \]
commutes. The maps $\Psi_L$ are isomorphisms by Lemma \ref{lem.aboot}.
The maps $\Psi_M$ are isomorphisms by construction. By the $5$-lemma,
$\Psi_{\bar{p}}$ is an isomorphism. The identification
$H_\bullet (\sps_\bullet (g))^\dagger \cong \redh_\bullet (\ip X)^\dagger$
follows from Proposition \ref{prop.partialsmooth} (Partial Smoothing).
\end{proof}

\section{The Differential Graded Algebra Structure}
\label{sec.dga}

The theory $HI^\bullet_{\bar{p}}$
possesses a perversity-internal cup product structure, as we shall now show.
\begin{thm} \label{thm.dga}
For every perversity $\bar{p}$,
the DGA structure $(\Omega^\bullet (N),d,\wedge)$ restricts to a DGA structure
$(\oip^\bullet (N),d,\wedge)$. In particular, the wedge product of forms induces a 
cup product
\[ \cup: HI^r_{\bar{p}} (X) \otimes HI^s_{\bar{p}}(X)\longrightarrow 
  HI^{r+s}_{\bar{p}}(X). \] 
\end{thm}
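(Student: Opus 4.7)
The plan is to reduce the DGA claim to two ingredients that are already in place: the Leibniz rule for $d$ on $\Omega^\bullet(N)$ (which we already know restricts to $\oip^\bullet$ since $\oip^\bullet$ is a subcomplex of $\Omega^\bullet_\bms(N)$), and closure of $\oip^\bullet(N)$ under the wedge product. Everything therefore comes down to proving closure under wedge, and that in turn reduces to the analogous closure statement for $\ft_{\geq K}\Omega^\bullet_\ms(B)$.

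First, given $\omega_1,\omega_2\in\oip^\bullet(N)$ with $j^\ast\omega_i=\pi^\ast\eta_i$ and $\eta_i\in\ft_{\geq K}\Omega^\bullet_\ms(B)$, the naturality of pullback yields
\[ j^\ast(\omega_1\wedge\omega_2)=j^\ast\omega_1\wedge j^\ast\omega_2=\pi^\ast(\eta_1\wedge\eta_2), \]
so the entire task is to show $\eta_1\wedge\eta_2\in\ft_{\geq K}\Omega^\bullet_\ms(B)$. I would verify this locally over each trivializing open set $U_\alpha$. If $\eta_i|_{p^{-1}U_\alpha}=\phi^\ast_\alpha\sum_{l}\pi^\ast_1\mu^i_l\wedge\pi^\ast_2\gamma^i_l$ with $\mu^i_l\in\Omega^\bullet(U_\alpha)$ and $\gamma^i_l\in\tau_{\geq K}\Omega^\bullet(F)$, then a direct computation, using the multiplicativity of $\phi^\ast_\alpha$ and the Koszul sign rule for $\pi^\ast_1\mu\wedge\pi^\ast_2\gamma$, gives
\[ (\eta_1\wedge\eta_2)|_{p^{-1}U_\alpha}=\phi^\ast_\alpha\sum_{l,l'}(\pm)\pi^\ast_1(\mu^1_l\wedge\mu^2_{l'})\wedge\pi^\ast_2(\gamma^1_l\wedge\gamma^2_{l'}). \]
Thus $\eta_1\wedge\eta_2$ is again $\alpha$-multiplicatively structured over every $U_\alpha$, so $\eta_1\wedge\eta_2\in\Omega^\bullet_\ms(B)$.

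The crucial point is now that each fiber factor $\gamma^1_l\wedge\gamma^2_{l'}$ lies in $\tau_{\geq K}\Omega^\bullet(F)$, which is precisely Proposition \ref{prop.cotruncsubdga}: cotruncation is a sub-DGA of $(\Omega^\bullet(F),d,\wedge)$. Feeding this back into the local expression shows $\eta_1\wedge\eta_2\in\ft_{\geq K}\Omega^\bullet_\ms(B)$, and therefore $\omega_1\wedge\omega_2\in\oip^\bullet(N)$. The Leibniz rule $d(\omega_1\wedge\omega_2)=(d\omega_1)\wedge\omega_2+(-1)^{|\omega_1|}\omega_1\wedge d\omega_2$ holds on all of $\Omega^\bullet(N)$ and hence on the subcomplex $\oip^\bullet(N)$, so $(\oip^\bullet(N),d,\wedge)$ is a sub-DGA. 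The induced cup product on $HI^\bullet_{\bar{p}}(X)=H^\bullet(\oip^\bullet(N))$ is then a formal consequence of the general fact that cohomology of a DGA is a graded ring.

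The main (and really the only) obstacle is ensuring that cotruncation survives under wedge in the fiber direction, and this is already the content of Proposition \ref{prop.cotruncsubdga}; the subtlety there, which I do not need to revisit, was the degree-count argument that forces one of the two factors to vanish when $p+q\neq k$ lies in the truncation range. No hypothesis on $\bar{p}$ beyond the standing framework is used, which is why the cup product exists for every perversity, in contrast to ordinary intersection cohomology.
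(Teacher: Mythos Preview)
Your proof is correct and follows essentially the same route as the paper's: reduce closure of $\oip^\bullet(N)$ under wedge to the local statement that $\ft_{\geq K}\Omega^\bullet_\ms(B)$ is closed under wedge, then invoke Proposition~\ref{prop.cotruncsubdga} to handle the fiber factors $\gamma^1_l\wedge\gamma^2_{l'}$. The only difference is cosmetic---you spell out the Leibniz-rule and cup-product remarks a bit more explicitly than the paper does.
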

\begin{proof}
Let $\omega, \omega'$ be two forms in $\Omega I^\bullet_{\bar{p}} (N).$ Choose
$\eta, \eta' \in \ft_{\geq K} \Omega^\bullet_\ms (B)$ so that $j^\ast \omega = \pi^\ast \eta$ and
$j^\ast \omega' = \pi^\ast \eta'$. Over $p^{-1}(U_\alpha),$  $\eta$ and $\eta'$ have the forms
\[ \eta|_{p^{-1}U_\alpha} = \phi^\ast_\alpha \sum_i \pi^\ast_1 \eta_i \wedge \pi^\ast_2 \gamma_i,~
\eta'|_{p^{-1}U_\alpha} = \phi^\ast_\alpha \sum_j \pi^\ast_1 \eta'_j \wedge \pi^\ast_2 \gamma'_j, \]
with $\gamma_i, \gamma'_j \in \tau_{\geq K} \Omega^\bullet (F).$
Then the product $\gamma_i \wedge \gamma'_j$ again lies in $\tau_{\geq K} \Omega^\bullet (F)$ by
Proposition \ref{prop.cotruncsubdga}.
(Note that the direction in which we truncate enters
crucially here --- if we had used $\tau_{<K}$, the product would not usually lie 
in the truncated complex.)
The proof is completed by observing $j^\ast (\omega \wedge \omega') = \pi^\ast (\eta \wedge \eta')$
and
\begin{eqnarray*}
(\eta \wedge \eta')|_{p^{-1}U_\alpha} & = & \phi^\ast_\alpha \sum_{i,j} \pi^\ast_1 \eta_i 
\wedge \pi^\ast_2 \gamma_i \wedge \pi^\ast_1 \eta'_j \wedge \pi^\ast_2 \gamma'_j \\
& = & \phi^\ast_\alpha \sum_{i,j} (-1)^{\deg \gamma_i \deg \eta'_j}
\pi^\ast_1 (\eta_i \wedge \eta'_j)\wedge \pi^\ast_2 (\gamma_i \wedge \gamma'_j) 
\end{eqnarray*}
with $\gamma_i \wedge \gamma'_j \in \tau_{\geq K} \Omega^\bullet (F)$.
\end{proof}

\section{Foliated Stratified Spaces}
\label{sec.folstrat}

We shall here give a precise definition of what we mean by a stratified foliation. Since
this paper is mostly concerned with depth-1 spaces, we shall restrict our discussion
of foliations to the depth-1 case as well, though the definition can easily be
recursively extended to arbitrary stratified spaces. We will compare our definition
to the one given by Farrell and Jones in \cite{farrelljonesfolcont1} and to the
conical foliations of \cite{wolaksaralbicabgrpisom}.
The main formal difference is that our definition is purely topological, whereas
the definition of Farrell and Jones requires a system of metrics on the strata
satisfying a number of conditions with respect to Mather-type control data of
the stratification. The main result of this section (Theorem \ref{thm.strfolflatlinkbundle}) 
explains how flat link bundles arise in foliated stratified spaces.
To frame the discussion, it
is advantageous to lay down the definition of a stratified space, as understood in 
this paper. We shall work with spaces that possess Mather-type control data,
see for example \cite{matherstratmap} or \cite{piazzasignpackwitt}.
Again, we limit the definition to depth 1 although it is available in full generality.

\begin{defn} \label{def.twostrataspace}
A \emph{$2$-strata space} is a pair $(X,\Sigma)$ such that \\

\noindent (1) $X$ is a locally compact, Hausdorff, second-countable topological space, 
$\Sigma \subset X$ is a closed subspace and a closed, connected, smooth manifold, $X-\Sigma$
is a smooth manifold dense in $X$; \\

\noindent (2) $\Sigma$ possesses control data $(T,\pi,\rho),$ where \\
(2.1) $T\subset X$ is an open neighborhood of $\Sigma$, \\
(2.2) $\pi: T\to \Sigma$ is a continuous retraction, \\
(2.3) $\rho: T\to [0,2)$ is a continuous radial function such that $\rho^{-1}(0)=\Sigma,$ and \\
(2.4) the restrictions of $\pi$ and $\rho$ to $T-\Sigma$ are smooth; \\

\noindent (3) $\pi: T\to \Sigma$ is a locally trivial fiber bundle with fiber the cone
$cL = (L\times [0,2))/(L\times 0)$ over some closed smooth manifold $L$ (the \emph{link} of
$\Sigma$) and structure group given by homeomorphisms $cL\to cL$ of the form $c(\phi)$, where
$\phi: L\to L$ is a diffeomorphism. These $\phi$ are to vary smoothly with points in charts of
$\Sigma$; \\

\noindent (4) Locally, the radius $\rho$ is the cone-line coordinate: If $U\subset \Sigma$ is an open set
and 
\[ \xymatrix@C=17pt@R=17pt{
U \times cL \ar[rr]^{\psi}_{\cong} \ar[rd]_{\operatorname{proj}_1} & & \pi^{-1}(U) 
\ar[ld]^{\pi|} \\
& U
} \] 
a local trivialization with $\psi$ the identity on $U \times \{ c \}$ (where $c$ is the cone vertex),
then
\begin{equation} \label{equ.rhoconelinecoord}
\xymatrix@R=16pt{
U \times cL \ar[r]^{\psi} \ar[d]_{\operatorname{proj}_2} & \pi^{-1}(U) 
  \ar[d]^{\rho|} \\
 cL \ar[r]^{\tau} & [0,2) 
} 
\end{equation}
commutes, where $\tau (l,t)=t,$ $l\in L,$ $t\in [0,2)$.
\end{defn}

For $E=\rho^{-1}(1),$ the above axioms imply that the restriction $\pi|: E\to \Sigma$ is a
smooth fiber bundle with fiber $L$. We call this bundle the \emph{link bundle} of $\Sigma$.
Note that a space $X$ satisfying (1) is metrizable by Urysohn's metrization theorem.

\begin{defn} \label{def.stratdepthone}
A \emph{stratified space of depth 1} (or \emph{depth-1 space} for short) is a tuple
$(X,\Sigma_1,\ldots,\Sigma_r)$ such that $X$ is a locally compact, Hausdorff, second-countable
topological space and the $\Sigma_i$ are mutually disjoint, closed subspaces of $X$ such that
$(X-\bigcup_{j\not= i} \Sigma_j, \Sigma_i)$ is a $2$-strata space for every $i=1,\ldots,r$.
\end{defn}
(A locally compact, Hausdorff, second-countable space is normal --- thus every $\Sigma_i$
has an open neighborhood $T_i$ in $X$ such that $T_i \cap T_j =\varnothing$ for $i\not= j$.)

Recall that a (smooth) $k$-dimensional foliation $\Fa$ of a manifold $M^m$ without boundary
is a decomposition $\Fa = \{ F_j \}_{j\in J}$ of $M$ into connected immersed smooth 
submanifolds of dimension $k$ (called \emph{leaves}) so that the following local triviality condition
is satisfied: each point in $M$ has an open neighborhood $U\cong \real^m$ such that the partition
of $U$ into the connected components of the $U\cap F_j$, $j\in J,$ corresponds under the 
diffeomorphism $\phi: U\cong \real^m$ to the decomposition of $\real^m = \real^k \times \real^{m-k}$
into the parallel affine subspaces $\real^k \times \pt$. Such a $(U,\phi)$ is called a
\emph{foliation chart} and the connected components of the $U\cap F_j$ are called \emph{plaques}.
The plaques contained in a leaf constitute a basis for the topology of the leaf.
This topology does not, in general, coincide with the topology induced on the leaf by the topology
on $M$. Thus $F_j$ is not generally an embedded submanifold. The foliation $\Fa$ induces a foliation
$\Fa_V$ on any open subset $V\subset M$ by taking $\Fa_V$ to consist of the connected components
of all the $V\cap F_j$. 

\begin{defn} \label{def.coneoffol}
The \emph{cone on a foliation} $(M,\Fa)$ is the pair $(cM,c\Fa)$, where $cM$ is the cone on $M$ with
cone vertex $c$ and $c\Fa$ is the decomposition of $cM$ given by
\[ c\Fa = \{ F\times \{ t \} ~|~ F\in \Fa, t\in (0,2) \} \cup \{ c \}. \]
\end{defn} 
Note that $c\Fa$ is a ``singular foliation'' of $cM$, since it contains leaves of different dimensions.
The collection $c\Fa - \{ c \}$ is a smooth foliation of the manifold $cM - \{ c \} = M \times (0,2)$.

\begin{defn} \label{def.stratfol}
A \emph{stratified foliation} of a $2$-strata space $(X,\Sigma)$ is a pair $(\Xa, \Sa)$
such that \\
(1) $\Xa$ is a smooth foliation of the top stratum $X-\Sigma$, \\
(2) $\Sa$ is a smooth foliation of the singular stratum $\Sigma$, and \\
(3) every point in $\Sigma$ has an open neighborhood $U$ with a local trivialization
$\psi: U\times cL \stackrel{\cong}{\longrightarrow} \pi^{-1}(U)$
as in Definition \ref{def.twostrataspace} (4), such that the leaves of the product foliation
$\Sa_U \times (c\La - \{ c \})$ correspond under $\psi$ to the leaves of $\Xa_{\pi^{-1}(U)-\Sigma}$
for some smooth foliation $\La$ on $L$.
\end{defn}

(Note that the leaves of $\Sa_U \times \{ c \}$ are taken to the leaves of $\Sa_U$ automatically, as
$\psi$ is the identity on $U\times \{ c \}$.)

\begin{defn} \label{def.stratfoldepth1}
A \emph{stratified foliation} of a depth-1 space $(X,\Sigma_1,\ldots, \Sigma_r)$ is a tuple
$(\Xa, \Sa_1,\ldots, \Sa_r)$ such that, with $X_i = X- \bigcup_{j\not= i} \Sigma_j$,
$(\Xa_{X_i}, \Sa_i)$ is a stratified foliation of the $2$-strata space $(X_i, \Sigma_i)$ for every $i$.
\end{defn}

\begin{example}
The following type of foliated $2$-strata space plays a role in the work of Farrell and Jones on
the topological rigidity of negatively curved manifolds, \cite{farrelljonespnas}. 
Let $(Y,\Sigma)$ be a $2$-strata space and let $M$ be a connected manifold whose fundamental group 
$G$ acts on $Y$ preserving the two strata such that $\Sigma$ has a $G$-invariant tube $T$ with
equivariant retraction $\pi: T\to \Sigma$.
Let $\widetilde{M}$ be the universal cover of $M$.
The quotient
\[ X = \widetilde{M} \times_G Y \]
of $\widetilde{M} \times Y$ under the diagonal action of $G$ is a $2$-strata space with top
stratum $\widetilde{M} \times_G (Y-\Sigma)$ and bottom stratum  $\widetilde{M} \times_G \Sigma.$
A stratified foliation $(\Xa, \Sa)$ of $X$ is given by taking
\[ \begin{array}{lcl}
\Xa & = & \{ p(\widetilde{M} \times \{ y \}) ~|~ y\in Y-\Sigma \} \text{ and} \\
\Sa & = & \{ p(\widetilde{M} \times \{ y \}) ~|~ y\in \Sigma \},
\end{array} \]
where $p$ is the covering projection $p: \widetilde{M} \times Y \to X.$
To see this, trivialize locally the flat $Y$-bundle $X\to M$ induced by $\widetilde{M}\times Y \to
\widetilde{M},$ trivialize locally $\pi: T\to \Sigma$ and equip the link $L$ with the $0$-dimensional
foliation $\La$.
\end{example}

\begin{prop} \label{prop.dpidrho}
For a stratified foliation $(\Xa, \Sa)$ of a $2$-strata space $(X,\Sigma)$ with control data
$(T,\pi,\rho)$, the following statements hold: \\
$(i)$ If $v$ is a vector at a point in $T-\Sigma$ which is tangent to a leaf of $\Xa$, then $\pi_\ast (v)$
is tangent to a leaf of $\Sa$. \\
$(ii)$ The radial function $\rho$ is constant along the leaves of $\Xa_{T-\Sigma}$. In particular,
$\rho_\ast (v)=0$ for $v$ tangent to $\Xa_{T-\Sigma}$.
\end{prop}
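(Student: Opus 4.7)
The plan is to reduce both statements to a direct computation inside a local product chart near a point $x \in T-\Sigma$, using Definition \ref{def.stratfol}(3) and the compatibility (\ref{equ.rhoconelinecoord}) of $\rho$ with the cone-line coordinate.

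Fix $x\in T-\Sigma$, set $\sigma = \pi(x)\in \Sigma$, and choose an open neighborhood $U\subset \Sigma$ of $\sigma$ together with a local trivialization $\psi: U\times cL \stackrel{\cong}{\to} \pi^{-1}(U)$ as in Definition \ref{def.twostrataspace}(4). Write $\psi^{-1}(x) = (u,l,t)$ with $t\in (0,2)$. By Definition \ref{def.stratfol}(3), on $\pi^{-1}(U)-\Sigma$ the foliation $\Xa$ corresponds under $\psi$ to the product foliation $\Sa_U \times (c\La - \{c\}) = \Sa_U \times \La \times \{t\}_{t\in(0,2)}$; that is, the leaf of $\Xa$ through $x$ is the image under $\psi$ of $F\times G\times\{t\}$, where $F\in \Sa_U$ is the leaf through $u$ and $G\in \La$ is the leaf through $l$. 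In these coordinates, the diagrams in Definition \ref{def.twostrataspace}(3)--(4) identify $\pi\circ\psi$ with the projection $\operatorname{proj}_1: U\times cL\to U$ and $\rho\circ\psi$ with $\tau\circ \operatorname{proj}_2: U\times cL \to [0,2)$, $(u,l,s)\mapsto s$.

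For (i), let $v\in T_x (X-\Sigma)$ be tangent to the leaf of $\Xa$ through $x$. Pulling back by $\psi$, the vector $\psi^{-1}_\ast v$ is tangent to $F\times G\times\{t\}$ at $(u,l,t)$, hence decomposes as $v_F + v_G + 0$ with $v_F \in T_u F$, $v_G\in T_l G$. Applying $\pi_\ast = \operatorname{proj}_{1\ast}\circ\psi^{-1}_\ast$ gives $\pi_\ast v = v_F$, which is tangent to $F$, a leaf of $\Sa_U$, hence of $\Sa$.

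For (ii), the same identifications show that on the plaque $F\times G\times\{t\}$ the function $\rho\circ\psi$ is the constant $t$, so $\rho$ is constant on the plaque of $\Xa$ through $x$; differentiating gives $\rho_\ast(v) = 0$ for every $v$ tangent to $\Xa_{T-\Sigma}$. Hence $\rho$ is locally constant along each leaf of $\Xa_{T-\Sigma}$; since leaves are by definition connected, $\rho$ is globally constant along each leaf. The main point requiring care is simply that the leaf through $x$, viewed as an immersed submanifold, may re-enter $\pi^{-1}(U)$ in plaques different from $F\times G\times\{t\}$; but each such plaque is again of product form with its own constant $\tau$-value, and connectedness of the leaf together with the local-constancy just established forces a single value of $\rho$ throughout. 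No deeper obstacle arises, as everything is a formal consequence of the product structure imposed by Definition \ref{def.stratfol}(3) and of (\ref{equ.rhoconelinecoord}).
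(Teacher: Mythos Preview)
Your proof is correct and follows essentially the same approach as the paper: both arguments work in a local trivialization $\psi$ given by Definition \ref{def.stratfol}(3), identify $\pi$ with $\operatorname{proj}_1$ and $\rho$ with the cone-line coordinate via (\ref{equ.rhoconelinecoord}), and then read off (i) and (ii) from the product form $S\times K\times\{t\}$ of the plaques, invoking connectedness of leaves to globalize (ii). Your added remark about a leaf possibly re-entering $\pi^{-1}(U)$ in different plaques is a nice clarification but not strictly needed, since local constancy plus connectedness already suffices.
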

\begin{proof}
$(i)$ Let $U\subset \Sigma$ be a chart such that $v$ is based at a point of $\pi^{-1}(U)-\Sigma$ and
consider the commutative diagram
\[ \xymatrix@R=18pt@C=5pt{
TU \times T(L\times (0,2)) \ar[rr]^>>>>>>{\psi_\ast}_>>>>>>{\cong} \ar[dr]_{\operatorname{proj}_1} & &
 T(\pi^{-1} (U)-\Sigma) \ar[dl]^{\pi_\ast} \\
& TU. & \\
} \]
Let $F \in \Xa_{\pi^{-1}(U)-\Sigma}$ be the leaf that $v$ is tangent to. Then by Definition
\ref{def.stratfol} (3), there exists a leaf $S\times K \times \{ t \},$ $S\in \Sa_U,$
$K\in \La,$ $t\in (0,2),$ such that $\psi (S\times K \times \{ t \})=F.$ Hence there is a vector
$(u,w)\in TS \oplus TK$ with $\psi_\ast (u,w,0)=v$. Then
\[ \pi_\ast (v) = \pi_\ast (\psi_\ast (u,w,0)) = \operatorname{proj}_1 (u,w,0) =u \]
with $u$ tangent to $S$, which is an open subset of a leaf of $\Sa$. \\

\noindent $(ii)$ It suffices to prove that $\rho$ is locally constant along the leaves of
$\Xa_T$, since leaves are connected. Let $F$ be a leaf in $\Xa_{\pi^{-1}(U)-\Sigma}$ and let
$S\in \Sa_U,$ $K\in \La,$ $t$ be such that $\psi (S\times K \times \{ t \})=F,$ as in $(i)$.
Using the commutative diagram (\ref{equ.rhoconelinecoord}) in
Definition \ref{def.twostrataspace}, we have
\[ \rho (F) = \rho \psi (S\times K \times \{ t \}) = \tau \circ \operatorname{proj}_2
 (S\times K \times \{ t \}) = \tau (K\times \{ t \})= \{ t \}. \]
Hence $\rho$ is constant on $F$.
\end{proof}
It follows from this proposition that our definition of a stratified foliation is compatible with
the definition of Farrell and Jones as given in \cite[Def. 1.4]{farrelljonesfolcont1}.
The latter requires essentially that \\

\noindent (a) for vectors $v$ tangent to $\Xa_{T-\Sigma},$ the ratio of the length
of $\pi_\ast (v)^\perp$ to the length of $v$, where $\pi_\ast (v)^\perp$ is the component of
$\pi_\ast (v)$ perpendicular to the leaves of $\Sa$, becomes as small as we like by taking the
base point of $v$ sufficiently close to $\Sigma$ as measured by $\rho$, and \\

\noindent (b) the same statement for the ratio of the size of
 $\rho_\ast (v)$ to the length of $v$. \\

\noindent Note that this definition requires endowing the strata with a system of
Riemannian metrics. Suppose that a $2$-strata space has a stratified foliation
in the sense of our Definition \ref{def.stratfol}.
As $\pi_\ast (v)^\perp =0$ by Proposition \ref{prop.dpidrho}$(i)$, condition (a) is
satisfied. As $\rho_\ast (v)=0$ by Proposition \ref{prop.dpidrho}$(ii)$, condition (b) is
satisfied as well. 

Furthermore, our stratified foliations are compatible with the ``conical foliations''
of \cite{wolaksaralbicabgrpisom}, which the authors define only for spherical links, that is, 
for $X$ a manifold. They do allow, however, singular foliations on the links, which we
do not. On the other hand, we allow the $0$-dimensional foliation on the link, which they
disable.

Let $(M,\Fa)$ be a foliated manifold and $N\subset M$ an immersed submanifold. One
says that $\Fa$ is \emph{tangent to $N$} if for each leaf $F$ in $\Fa$, either 
$F\cap N = \varnothing$ or $F\subset N$.
\begin{lemma} \label{lem.tangentfol}
If $\Fa$ is tangent to $N$, then
\[ \Ga = \{ F\in \Fa ~|~ F\cap N \not= \varnothing \} \]
is a smooth foliation of $N$.
\end {lemma}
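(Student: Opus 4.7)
The plan is to verify the three defining properties of a foliation for $\Ga$ on $N$: that $\Ga$ partitions $N$ into connected immersed submanifolds of a common dimension $k$, and that $N$ admits foliation charts in which the plaques of $\Ga$ correspond to parallel affine $k$-planes. Partitioning is immediate: every point $q\in N$ lies on a unique leaf $F$ of $\Fa$, and tangency forces $F\subset N$, hence $F\in\Ga$; thus $\Ga$ decomposes $N$ disjointly into the $k$-dimensional leaves of $\Fa$ that are contained in $N$, where $k=\dim\Fa$. Each leaf $F\in\Ga$ is then a connected smooth immersed submanifold of $N$ because leaves of $\Fa$ are weakly embedded in $M$, which guarantees that the smooth inclusion $F\hookrightarrow M$ factors through a smooth immersion $F\hookrightarrow N$; the tangency condition $T_qF\subset T_qN$ at each $q\in F$ ensures the map is an immersion of the correct dimension.

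The substantive step is local triviality. Given $p\in N$, I would begin with a foliation chart $(U,\phi)$ of $\Fa$ at $p$, with $\phi\colon U\xrightarrow{\cong} \real^k\times\real^{m-k}$ sending plaques to the slices $\real^k\times\{y\}$. By shrinking, I may assume the inclusion of some $N$-neighborhood $V$ of $p$ into $M$ is an embedding onto a submanifold $V'\subset U$ of dimension $n=\dim N$. The key observation is that tangency of $\Fa$ to $N$ implies $T_qF\subset T_qN=T_qV'$ for every $q\in V'$, where $F$ is the leaf through $q$; translated into the chart, the subspace $\real^k\times\{0\}$ sits inside $\phi_*(T_{\phi(q)}\phi(V'))$ at every point. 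Consequently the horizontal vector fields $\partial/\partial x_1,\dots,\partial/\partial x_k$ of the chart are everywhere tangent to $\phi(V')$, and their flows preserve $\phi(V')$. This saturation forces $\phi(V')=\real^k\times W$ (after a further shrinking), where $W=\phi(V')\cap(\{0\}\times\real^{m-k})$.

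To see that $W$ is a smooth $(n-k)$-dimensional submanifold of $\{0\}\times\real^{m-k}$, I would invoke transversality: the transversal $T=\phi^{-1}(\{0\}\times\real^{m-k})$ meets $V'$ transversely at $p$, since $T_pT=\{0\}\times\real^{m-k}$ and $T_pV'$ contains $\real^k\times\{0\}$, so $T_pT+T_pV'=T_pU$. Hence $V'\cap T$ is a smooth $(n-k)$-dimensional submanifold of $U$ near $p$, and $W=\phi(V'\cap T)$ inherits that structure. Composing $\phi|_{V'}$ with $\mathrm{id}_{\real^k}\times\psi$, where $\psi$ is a chart for $W$, produces a diffeomorphism from an open piece of $V$ onto $\real^k\times\real^{n-k}$ under which the pieces $V\cap F$ correspond exactly to the slices $\real^k\times\{y\}$. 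This is the required foliation chart for $\Ga$ on $N$.

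The main obstacle in this argument is the passage from pointwise tangency to the global product decomposition $\phi(V')=\real^k\times W$: one needs the tangency condition to hold at every point of the neighborhood (not merely at $p$) in order to conclude saturation by plaques, and then transversality of $V'$ with a slice $T$ to extract the smooth structure on $W$. Once these two ingredients are in place, the resulting chart automatically aligns the plaques of $\Ga$ with those of $\Fa$, so all $\Ga$-leaves in $V$ are $k$-dimensional and the atlas so obtained is smooth.
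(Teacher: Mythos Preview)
The paper states Lemma~\ref{lem.tangentfol} without proof, treating it as a standard fact from foliation theory, so there is no argument in the paper to compare your proposal against. Your outline is the natural one and is essentially correct: verify that $\Ga$ partitions $N$ into the $k$-dimensional leaves of $\Fa$ that meet $N$, then produce foliation charts on $N$ by restricting a chart $(U,\phi)$ of $\Fa$, observing that the embedded local sheet $\phi(V')$ has tangent space containing $\real^k\times\{0\}$ at every point and is therefore locally a product $\real^k\times W$, with $W$ cut out as a transversal slice.

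One small correction: your appeal to weak embedding runs in the wrong direction. The fact that leaves of $\Fa$ are weakly embedded in $M$ lets smooth maps \emph{into} $M$ with image in $F$ factor through $F$; it does not by itself let the inclusion $F\hookrightarrow M$ factor smoothly through $N$. What is actually needed for the tangent-space inclusion $T_qF\subset T_qN$ is that $N$ be weakly embedded (an initial submanifold), so that the smooth curve in a plaque through $q$, which lies set-theoretically in $N$, is smooth as a curve in $N$. In the paper's two applications this holds: $E=\rho^{-1}(1)$ is an embedded submanifold, and $E_S=(\pi|_E)^{-1}(S)$ is the preimage of a leaf $S$ (itself weakly embedded in $\Sigma$) under a submersion, hence weakly embedded in $E$. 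With that understood, your local chart argument goes through, and the partition-plus-charts verification you give is exactly what is required.
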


\begin{thm} \label{thm.strfolflatlinkbundle}
Let $(X,\Sigma)$ be a $2$-strata space endowed with a stratified foliation which
is $0$-dimensional on the links. Then the restrictions of the link bundle to the leaves
of the singular stratum are flat bundles.
\end{thm}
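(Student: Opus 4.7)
The plan is to exhibit, on the restricted link bundle $\pi|_S: E_S := (\pi|_E)^{-1}(S)\to S$, a smooth foliation whose leaves are transverse to the $L$-fibers and project as local diffeomorphisms onto $S$. Such a foliation is an integrable Ehresmann connection, and its existence is equivalent to the transition functions of $E_S \to S$ being locally constant with values in $\Diff(L)$, which is the definition of flatness. The foliation will be obtained by restricting $\Xa$ twice, first from $X-\Sigma$ to $E=\rho^{-1}(1)$, then from $E$ to $E_S$.

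First I would check that the two restrictions are well-defined foliations. By Proposition~\ref{prop.dpidrho}(ii), $\rho$ is constant along the leaves of $\Xa_{T-\Sigma}$; hence every leaf of $\Xa$ that meets $E$ is entirely contained in $E$, and Lemma~\ref{lem.tangentfol} produces a smooth foliation $\Xa_E$ of $E$. By Proposition~\ref{prop.dpidrho}(i), the image $\pi(F)$ of any leaf $F$ of $\Xa_E$ is connected and tangent to $\Sa$, so $\pi(F)$ is contained in a single leaf of $\Sa$. In particular, if $F$ meets $E_S$ then $\pi(F)\subset S$, i.e.\ $F\subset E_S$. A second application of Lemma~\ref{lem.tangentfol} yields a smooth foliation $\Xa_S$ of $E_S$.

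Next I would verify the local product structure of $\Xa_S$ and extract flatness. For a chart $U\subset \Sigma$ with trivialization $\psi: U\times cL\to \pi^{-1}(U)$ as in Definition~\ref{def.twostrataspace}(3), restriction to $t=1$ yields a local trivialization $\psi|: U\times L\stackrel{\cong}{\longrightarrow} \pi^{-1}(U)\cap E$ of $\pi|_E$. Since $\La$ is zero-dimensional, Definition~\ref{def.stratfol}(3) says that the leaves of $\Xa_E\cap \pi^{-1}(U)$ correspond under $\psi|$ to the connected components of the sets $S'\times \{l\}$, for $S'$ a plaque of $\Sa_U$ and $l\in L$. Intersecting with $E_S$ singles out those $S'$ lying in $S$, so the plaques of $\Xa_S$ in this chart are precisely (plaques of $S\cap U$)$\times \{l\}$. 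These are transverse to the fibers $\{s\}\times L$ and project as diffeomorphisms onto plaques of $S$. Translating this into transition functions, if $U,U'$ are two trivializing charts with transition $g_{U,U'}:U\cap U'\to \Diff(L)$, the compatibility of the two local pictures of $\Xa_S$ across the overlap forces $s\mapsto g_{U,U'}(s)(l)$ to be constant along each plaque of $S\cap U\cap U'$ for every $l\in L$; equivalently, $g_{U,U'}|_{S\cap U\cap U'}$ is locally constant as a map into $\Diff(L)$. This is exactly the statement that $E_S\to S$ is flat.

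The main (mild) obstacle is the bookkeeping in the last step: one must check that the leaf-matching across chart overlaps really yields local constancy of $g_{U,U'}$ along $S$, rather than some weaker tangency condition. This is however a direct unpacking of Definition~\ref{def.stratfol}(3) combined with the fact that $\La$ is zero-dimensional, which rigidifies the $L$-direction completely; once a plaque of $\Xa_S$ is pinned down at one point, the value of $l\in L$ is uniquely determined along the entire plaque, and this is what propagates across trivializations to give flatness.
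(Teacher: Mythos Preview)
Your proposal is correct and follows the same strategy as the paper: restrict $\Xa$ to $E=\rho^{-1}(1)$ via Proposition~\ref{prop.dpidrho}(ii), then to $E_S$, and use the $0$-dimensionality of $\La$ in local charts to exhibit the resulting foliation as transverse to the fibers, hence a flat connection. The only difference is in the second restriction: where you invoke Proposition~\ref{prop.dpidrho}(i) together with the standard fact that a smooth map whose differential lands in the tangent distribution of a foliation sends a connected domain into a single leaf, the paper instead spells this out by an explicit path-and-chart argument (covering a path in the leaf by finitely many trivializing charts via the Lebesgue number lemma), but this is a minor variation of the same idea.
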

\begin{proof}
The total space $E= \rho^{-1}(1)$ of the link bundle $\pi|:E\to \Sigma$ is a
submanifold of $X-\Sigma$ and $\Xa$ is tangent to $E$. Indeed, if $F$ is a leaf
of $\Xa$ such that $F\cap E \not= \varnothing,$ then there is a point
$x\in F$ such that $\rho (x)=1$. By Proposition \ref{prop.dpidrho}$(ii)$,
$\rho$ is constant along $F$. Thus $\rho|_F \equiv 1$ and so $F\subset E$.
By Lemma \ref{lem.tangentfol},
\[ \Ea = \{ F\in \Xa ~|~ F\cap E \not= \varnothing \} \]
is a foliation of $E$. Let $S$ be a leaf in $\Sigma$ and set
$E_S = \pi^{-1}(S)\cap E$. Then $E_S$ is an immersed submanifold of $E$.
We claim that 
\begin{center}
$\Ea$ is tangent to $E_S$. \hspace{1.5cm} $(\ast)$ \\
\end{center}

\noindent In order to see this, let $F\in \Ea$ be a leaf that touches $E_S$,
$F\cap E_S \not= \varnothing$. We have to show that $F\subset E_S$.
Since $F\cap E_S \not= \varnothing,$ there is a point $x_0 \in F$ with
$\pi (x_0)\in S$. We must show that $\pi (x)\in S$ for all $x\in F$.
Since $F$ is connected, we may join $x_0$ and $x$ by a path
$\gamma: [0,1]\to F,$ $\gamma (0)=x_0,$ $\gamma (1)=x$.
The compact space $\pi \gamma [0,1] \subset \Sigma$ can be covered by
finitely many open sets $U_0, \ldots, U_k \subset \Sigma$, each of which comes
with a diffeomorphism $\psi_i: U_i \times L \times \{ 1 \} \to \pi^{-1}(U_i)\cap E$
such that $\pi \psi_i = \operatorname{proj}_1$. By the Lebesgue number lemma,
there is an $N$ such that each $\pi \gamma (I_j),$
$I_j = [j/N, (j+1)/N],$ lies in some $U_i$. Then the claim $(\ast)$ is implied by
the following statement:
\begin{center}
\begin{tabular}{lr}
For all $0\leq j <N$: If $\pi \gamma (j/N)\in S$, then & \\
$\pi \gamma (t)\in S$ for all $t\in I_j.$ & $(\ast \ast)$
\end{tabular}
\end{center}
To prove $(\ast \ast)$, assume that $\pi \gamma (j/N)\in S$ and let $i$ be such that
$\pi \gamma (I_j)\subset U_i$. Let $F_0$ be the unique connected component
of $F\cap \pi^{-1}(U_i)$ that contains $\gamma (j/N)$. Then, as $\gamma (I_j)$
is connected and contained in $F\cap \pi^{-1}(U_i),$ we have
$\gamma (t)\in F_0$ for all $t\in I_j$. By the definition of a stratified foliation,
there is a leaf $S'$ in $\Sa$ and a leaf $K\in \La$ such that
$\psi_i (S'_0 \times K \times \{ 1 \})=F_0,$ where $S'_0$ is a connected component
of $S' \cap U_i$. Since $\pi \gamma (j/N)\in S$ and
\[ \pi \gamma (j/N) = \operatorname{proj}_1 \circ \psi^{-1}_i \circ \gamma
 (j/N) \in \operatorname{proj}_1 \circ \psi^{-1}_i (F_0) =
 \operatorname{proj}_1 (S'_0 \times K \times \{ 1 \}) = S'_0 \subset S', \]
the leaves $S$ and $S'$ have a point in common, which implies that $S'=S$.
In particular, $S'_0 \subset S$. Consequently, as $\gamma (t) \in F_0$ for all
$t\in I_j,$
\[ \pi \gamma (t) = \operatorname{proj}_1 \circ \psi^{-1}_i \circ \gamma (t)
 \in \operatorname{proj}_1 \circ \psi^{-1}_i (F_0) = S'_0 \subset S \]
for all $t\in I_j,$ which establishes statement $(\ast \ast)$, and thus also the
claim $(\ast)$. 
By Lemma \ref{lem.tangentfol}, 
\[
\Ea_S  =  \{ F\in \Ea ~|~ F\cap E_S \not= \varnothing \} 
  =  \{ F\in \Xa ~|~ F\cap E_S \not= \varnothing \}
\]
is a smooth foliation of $E_S$. So far, we have not used the assumption that the
foliations $\La$ on the links are zero-dimensional. We shall now use that
assumption to prove that $(\pi|:E_S \to S, \Ea_S)$ is a transversely foliated
bundle. Let $s = \dim \Sa$.
For every point $x\in S,$ we must find an open neighborhood
$V\subset S,$ $V \cong \real^s,$ and a diffeomorphism
$\varphi: V\times L \to \pi^{-1}(V)\cap E$ such that $\pi \varphi = 
\operatorname{proj}_1$ and $\varphi$ carries the product foliation
$\{ V\times \{ l \} \}_{l\in L}$ to the foliation $(\Ea_S)_{\pi^{-1}(V)\cap E}$.
This implies that $\Ea_S$ is transverse to the fibers of the link bundle and that
the restriction of $\pi$ to each leaf of $\Ea_S$ is a covering map. Let
$U\subset \Sigma$ be an open neighborhood of $x$ such that there is a
diffeomorphism $\psi: U\times L\times \{ 1 \} \to \pi^{-1}(U)\cap E$ with
$\pi \psi = \operatorname{proj}_1$. We may moreover take such a $U$ to be
the domain of a foliation chart $\phi: U \stackrel{\cong}{\longrightarrow}
\real^s \times \real^{\dim \Sigma -s}.$ Let $V$ be the unique plaque of $S$
in $U$ that contains $x$. Under $\phi,$ $V$ is mapped to $\real^s \times \pt$.
Let $\varphi: V\times L \to \pi^{-1}(V)\cap E$ be the restriction of $\psi$
to $V\times L$. A leaf $F_0$ in $(\Ea_S)_{\pi^{-1}(V)\cap E}$ is a connected
component of $F\cap \pi^{-1}(V),$ where $F$ is a leaf of $\Xa$ which maps to $S$
under $\pi$ and to $1$ under $\rho$. Let $F_1$ be the connected component
of $F\cap \pi^{-1}(U)$ which contains $F_0$. By definition of a stratified
foliation, there is a leaf $\{ l \}$ in $\La$, $l\in L,$ and a plaque $V'$ of $S$ in $U$
such that $\psi (V' \times \{ l \} \times \{ 1 \})=F_1.$ We have
$\pi (F_0)\subset V,$ as $F_0 \subset F \cap \pi^{-1}(V).$ Also,
$\pi (F_0)\subset \pi (F_1)\subset V'$ so that $\pi (F_0) \subset V\cap V'.$
But $V\cap V' =\varnothing$ unless $V=V'$. Since $\pi (F_0)$ is not empty,
we have $V=V'$ and thus $\psi (V \times \{ l \} \times \{ 1 \})=F_1.$
In particular, $\pi (F_1) =$ $\pi \psi (V\times \{ l \} \times \{ 1 \})=$
$\operatorname{proj}_1 (V\times \{ l \} \times \{ 1 \})= V$.
Hence $F_1 \subset F\cap \pi^{-1}(V).$ Since $F_1$ is connected,
$F_0 \subset F_1$, and $F_0$ is a connected component of $F\cap \pi^{-1}(V),$
we conclude that $F_1 = F_0$. Thus any leaf $F_0$ in  $(\Ea_S)_{\pi^{-1}(V)\cap E}$
corresponds under $\varphi$ to a leaf of the form $V\times \{ l \}$ for some
$l\in L$. We have shown that $\Ea_S$ is a transverse foliation of the link bundle over $S$.
This transverse foliation defines a flat connection on $\pi|:E_S \to S$, see also
\cite[Theorem 2.1.9]{candelconlon}.
\end{proof}

\bibliographystyle{amsalpha}
\bibliography{../../mybib}

\end{document}